\def\cprime{$'$}
\newtheorem{thm}{Theorem}[section]
\newtheorem{prop}[thm]{Proposition}
\newtheorem{defn}[thm]{Definition}
\newtheorem{lem}[thm]{Lemma}
\newtheorem{cor}[thm]{Corollary}
\newtheorem{rem}[thm]{Remark}
\theoremstyle{remark}
\newcommand{\subsubsubsection}{\@startsection{paragraph}{4}{\z@}%
 {1.0\Cvs \@plus.5\Cdp \@minus.2\Cdp}%
 {.1\Cvs \@plus.3\Cdp}%
 {\reset@font\sffamily\normalsize}
 }
\DeclareMathOperator{\Gal}{Gal}
\DeclareMathOperator{\End}{End}
\DeclareMathOperator{\Hom}{Hom}
\DeclareMathOperator{\Spf}{Spf}
\DeclareMathOperator{\Aut}{Aut} 
\DeclareMathOperator{\Ind}{Ind} 
\DeclareMathOperator{\Tr}{Tr}
\DeclareMathOperator{\Nr}{Nr}
\DeclareMathOperator{\Irr}{Irr}
\DeclareMathOperator{\Disc}{Disc}
\begin{document}

\title
{Stable models of Lubin-Tate curves with level three} 

\author{Naoki Imai and Takahiro Tsushima}
\date{}
\maketitle

\footnotetext{2010 \textit{Mathematics Subject Classification}. 
 Primary: 11G20; Secondary: 11F80. } 
\footnotetext{Key words: Lubin-Tate curve, stable model, Galois representation} 

\begin{abstract}
We construct a stable formal model of 
a Lubin-Tate curve with level three, 
and study 
the action of a Weil group 
and a division algebra on its stable reduction. 
Further, we study a structure of cohomology of 
the Lubin-Tate curve. 
Our study is purely local and includes the case where 
the characteristic of the residue field of a local field is two. 
\end{abstract}

\section*{Introduction}
Let $K$ be a non-archimedean local field 
with a finite residue field $k$ of characteristic $p$. 
Let $\mathfrak{p}$ be the maximal ideal of 
the ring of integers $\mathcal{O}_K$ of $K$. 
Let $n$ be a natural number. 
We write $\mathrm{LT}(\mathfrak{p}^n)$ 
for the Lubin-Tate curve with 
full level $n$ 
as a deformation space of 
formal $\mathcal{O}_K$-modules by quasi-isogenies. 
Let $D$ be the central division algebra  over $K$ of 
invariant $1/2$. 
Let $\ell$ be a prime number different from $p$. 
We write $\mathbf{C}$ for the 
completion of an algebraic closure of $K$. 
Then the groups $W_K$, $\mathrm{GL}_2 (K)$ and $D^{\times}$ 
act on 
\[
 \varinjlim_m H^1 _{\mathrm{c}} 
 (\mathrm{LT}(\mathfrak{p}^m)_{\mathbf{C}}, 
 \overline{\mathbb{Q}}_{\ell} ), 
\]
and these actions partially 
realize the local Langlands correspondence 
and the local Jacquet-Langlands correspondence for $\mathrm{GL}_2$. 
The realization of 
the local Langlands correspondence was proved by 
global automorphic methods in \cite{CaNALT}. 
Since Lubin-Tate curves are purely local objects, 
it is desirable to have a purely local proof 
which only makes use of the geometry of Lubin-Tate curves. 

We put 
\[
 K_1 (\mathfrak{p}^n) =\biggl\{ 
 \begin{pmatrix}
 a & b \\
 c & d
 \end{pmatrix} 
 \in \mathrm{GL}_2 (\mathcal{O}_K ) \ \bigg| \ 
 c \equiv 0 ,\ d \equiv 1 \bmod \mathfrak{p}^n \biggr\}. 
\]
Let $\mathrm{LT}_1(\mathfrak{p}^n)$ be 
the Lubin-Tate curve with 
level $K_1 (\mathfrak{p}^n)$ 
as a deformation space of 
formal $\mathcal{O}_K$-modules by quasi-isogenies. 
Then the cohomology group 
\[
 H^1 _{\mathrm{c}} 
 (\mathrm{LT}_1(\mathfrak{p}^n)_{\mathbf{C}}, 
 \overline{\mathbb{Q}}_{\ell} ) = 
 \Bigl( \varinjlim_m H^1 _{\mathrm{c}} 
 (\mathrm{LT}(\mathfrak{p}^m)_{\mathbf{C}}, \overline{\mathbb{Q}}_{\ell} )
 \Bigr) ^{K_1 (\mathfrak{p}^n)} 
\]
will give 
representations of $W_K$ and $D^{\times}$ 
that correspond to smooth irreducible representations 
of $\mathrm{GL}_2(K)$ with conductor less than or equal to $n$. 
The purpose of this paper is to study this cohomology 
in the case $n=3$. 
We note that 
$3$ is the smallest conductor of 
a two-dimensional representation of $W_K$ 
which can not be written as an induction of a character. 
Such a representation is called a primitive representation. 

Our method is purely local and geometric. 
In fact, we construct a stable model 
of the connected Lubin-Tate curve 
$\mathbf{X}_1(\mathfrak{p}^3)$ with level 
$K_1 (\mathfrak{p}^3)$ 
by using the theory of semi-stable coverings 
(cf. \cite[Section 2.3]{CMp^3}). 
Our study includes the case where 
$p=2$, and 
in this case, 
primitive Galois representations of conductor $3$ appear 
in the cohomology of $\mathbf{X}_1(\mathfrak{p}^3)$. 
It gives a geometric understanding of 
a realization of the primitive Galois representations. 

Our method of the calculation of the stable reduction 
is similar to that in \cite{CMp^3}. 
In \cite{CMp^3}, Coleman-McMurdy calculate the stable reduction of 
the modular curve $X_0(p^3)$ under the assumption 
$p \geq 13$. 
The calculation of the stable reductions in the modular curve 
setting is 
equivalent to 
that in the Lubin-Tate setting where $K=\mathbb{Q}_p$. 
As for the calculation of the stable reduction of 
the modular curve $X_1(p^n)$, 
it is given in \cite{DRmc} if $n=1$. 

We explain the contents of this paper. 
In Section \ref{prel}, 
we recall a definition of the connected 
Lubin-Tate curve, and study 
the action of a division algebra in a general setting. 
In Section \ref{cohLT}, 
we study the cohomology of Lubin-Tate curves 
as representation of $\mathrm{GL}_2(K)$ by 
purely local methods. 
By this result, we can calculate the genus 
of some Lubin-Tate curves. 
In Section \ref{level2}, 
we construct a stable covering of the 
connected Lubin-Tate curve with level 
$K_1(\mathfrak{p}^2)$, 
which is used to study a covering of 
$\mathbf{X}_1(\mathfrak{p}^3)$. 

In Section \ref{redaff}, 
we define several affinoid subspaces 
$\mathbf{Y}_{1,2}$, 
$\mathbf{Y}_{2,1}$ and 
$\mathbf{Z}^0_{1,1}$ 
of $\mathbf{X}_1(\mathfrak{p}^3)$, 
and calculate their reductions. 
Let $k^{\mathrm{ac}}$ be the residue field of $\mathbf{C}$. 
We put $q=|k|$ and 
\[
 \mathcal{S}_1=
 \begin{cases}
 \mu_{2(q^2-1)}(k^{\mathrm{ac}}) &\quad  
 \textrm{if $q$ is odd,}\\
 \mu_{q^2-1}(k^{\mathrm{ac}}) 
 &\quad \textrm{if $q$ is even.}
 \end{cases}
\]
The reductions of 
$\mathbf{Y}_{1,2}$ and 
$\mathbf{Y}_{2,1}$ 
are isomorphic to the affine curve 
defined by $x^qy-xy^q=1$. 
This affine curve has genus $q(q-1)/2$, 
and is called the 
Deligne-Lusztig curve for 
$\mathrm{SL}_2 (\mathbb{F}_q)$ 
or the Drinfeld curve. 
Here, the genus of a curve means 
the genus of the smooth compactification of the 
normalization of the curve. 
The reduction $\overline{\mathbf{Z}}^0_{1,1}$ of 
$\mathbf{Z}^0_{1,1}$ 
is isomorphic to the affine curve defined by 
$Z^q+X^{q^2-1}+X^{-(q^2-1)}=0$. 
This affine curve has genus $0$ 
and singularities at $X \in \mathcal{S}_1$. 

Next, we analyze tubular neighbourhoods 
$\{\mathcal{D}_{\zeta}\}_{\zeta \in \mathcal{S}_1}$ 
of the singular points of $\overline{\mathbf{Z}}^0_{1,1}$. 
If $q$ is odd, 
$\mathcal{D}_{\zeta}$ 
is a basic wide open space with 
the underlying affinoid 
$\mathbf{X}_{\zeta}$. 
See \cite[2B]{CMp^3} for the precise definition of 
a basic wide open space. 
Roughly speaking, 
it is a smooth geometrically connected 
one-dimensional rigid space which contains an affinoid such that 
the reduction of the affinoid is irreducible and 
has at worst ordinary double points as singularities, 
and the complement of the affinoid is 
a disjoint union of open annuli. 
The reduction of $\mathbf{X}_{\zeta}$ 
is isomorphic to the Artin-Schreier affine curve of degree $2$ 
defined by $z^q-z=w^2$. 
This affine curve has genus $(q-1)/2$. 

On the other hand, if $q$ is even, 
it is harder 
to analyze 
$\mathcal{D}_{\zeta}$, 
because the space 
$\mathcal{D}_{\zeta}$ is 
not a basic wide open space. 
First, we find an affinoid 
$\mathbf{P}^0_{\zeta}$. 
The reduction 
$\overline{\mathbf{P}}^0_{\zeta}$ 
of $\mathbf{P}^0_{\zeta}$ 
has genus $0$ and 
singular points 
parametrized by $\zeta' \in k^{\times}$. 
Secondly, we analyze the tubular 
neighborhoods 
of singular points of $\overline{\mathbf{P}}^0_{\zeta}$. 
As a result, we find an affinoid 
$\mathbf{X}_{\zeta,\zeta'}$, 
whose reduction 
$\overline{\mathbf{X}}_{\zeta,\zeta'}$ is 
isomorphic to the affine curve defined by $z^2+z=w^3$. 
The smooth compactification of this curve is 
the unique supersingular elliptic curve over 
$k^{\mathrm{ac}}$, whose 
$j$-invariant is $0$, and 
its cohomology gives a primitive 
Galois representation. 
By using these affinoid spaces, 
we construct a covering $\mathcal{C}_1(\mathfrak{p}^3)$ of 
$\mathbf{X}_1(\mathfrak{p}^3)$. 

In Section \ref{actdiv}, 
we calculate the action of 
$\mathcal{O}_D^{\times}$ 
on the reductions of the affinoid spaces in 
$\mathbf{X}_1(\mathfrak{p}^3)$, 
where $\mathcal{O}_D$ is the ring of integers of $D$. 
In Section \ref{actine}, 
we calculate an action of a Weil group 
on the reductions. 
In the case where $q$ is even, 
we construct an 
$\mathrm{SL}_2 (\mathbb{F}_3)$-Galois extension 
of $K^{\mathrm{ur}}$, and 
show that the Weil action on 
$\overline{\mathbf{X}}_{\zeta,\zeta'}$ 
up to translations 
factors through the Weil group of the constructed extension. 
For such a Galois extension, see also \cite[31]{WeilExe}. 

In Section \ref{cohLT3}, 
we show that the covering 
$\mathcal{C}_1(\mathfrak{p}^3)$ 
is semi-stable. 
To show this, we calculate the summation of 
the genera of the reductions of the affinoid spaces in 
$\mathbf{X}_1(\mathfrak{p}^3)$, 
and compare it with the genus of 
$\mathbf{X}_1(\mathfrak{p}^3)$. 
Using the constructed semi-stable model, 
we study a structure of cohomology of 
$\mathbf{X}_1(\mathfrak{p}^3)$. 

The dual graph of the semi-stable reduction of 
$\mathbf{X}_1(\mathfrak{p}^3)$ in the case where $q$ is even is 
the following: \\ 
\centerline{
\xygraph{
\circ ([]!{+(+.1,+.3)} {\overline{\mathbf{Y}}^{\mathrm{c}}_{1,2}})
(-[r]
\circ ([]!{+(+.1,+.3)} {\overline{\mathbf{Z}}^{0,\mathrm{c}}_{1,1}})
(-[r] 
 \circ ([]!{+(+.1,+.3)} {\overline{\mathbf{Y}}^{\mathrm{c}}_{2,1}}), 
 -[dl] 
 \circ([]!{+(-.35,+.1)} 
 {\overline{\mathbf{P}}^{0,\mathrm{c}}_{\zeta_1}})
 ( [r] 
 ([]!{+(-.57,+0)} {\cdots} ) 
 ([]!{+(-.19,+0)} {\cdots} ) 
 ([]!{+(+.19,+0)} {\cdots} )
 ([]!{+(+.57,+0)} {\cdots} ) , 
 -[dl] 
 \circ([]!{+(+.2,-.35)} 
 {\overline{\mathbf{X}}^{\mathrm{c}}_{\zeta_1,\zeta'_1}}), 
 -[d] 
 \circ 
 ([]!{+(-.5,+0)} {\cdots} )
 ([]!{+(+.4,-.35)}
 {\overline{\mathbf{X}}^{\mathrm{c}}_{\zeta_1,\zeta'_{q-1}}}) ), 
 -[dr] 
 \circ([]!{+(+.6,+.1)} 
 {\overline{\mathbf{P}}^{0,\mathrm{c}}_{\zeta_{q^2 -1}}}) 
 (-[d] 
 \circ([]!{+(+.3,-.35)} 
 {\overline{\mathbf{X}}^{\mathrm{c}}_{\zeta_{q^2-1},\zeta'_1}}), 
 -[dr] 
 \circ 
 ([]!{+(-.5,+0)} {\cdots} )
 ([]!{+(+.7,-.35)} 
 {\overline{\mathbf{X}}^{\mathrm{c}}_{\zeta_{q^2-1},\zeta'_{q-1}}}) ) 
 ) 
}} 
where 
$\mu_{q^2 -1} (k^{\mathrm{ac}})=
 \{ \zeta_1 ,\ldots ,\zeta_{q^2 -1} \}$, 
$k^{\times} = 
 \{ \zeta'_1 ,\ldots ,\zeta'_{q-1} \}$ and 
$X^{\mathrm{c}}$ denotes 
the smooth compactification 
of the normalization of $X$ for a curve $X$ over $k^{\mathrm{ac}}$. 
The constructed semi-stable model is in fact stable, 
except in the case where $q = 2$. 
If $q=2$, we get the stable model by 
blowing down some $\mathbb{P}^1$-components. 

The realization of the local Jacquet-Langlands correspondence 
in cohomology of Lubin-Tate curves 
was proved in \cite{MieGeomJL} by a purely local method. 
Therefore, the remaining essential part of 
the study of the realization of the local Langlands correspondence 
is to study actions of Weil groups and division algebras. 
In the paper \cite{ITreal3}, 
we give a purely local proof 
of the realization of the local Langlands correspondence 
for representations of conductor three 
using the result of this paper. 

At last, we mention some recent progress on related topics 
according to a suggestion of a referee. 
In \cite{WeSemi}, Weinstein constructs 
semi-stable models of Lubin-Tate curves for arbitrary level 
in the case where the residue characteristic is not equal to two 
using Lubin-Tate perfectoid spaces. 
In \cite{ITepitame} and \cite{ITepiwild}, 
some of our results in this paper are generalized to 
arbitrary dimensional cases for Lubin-Tate perfectoid spaces. 
In \cite{ITLTsurf}, 
we construct an affinoid in the two-dimensional Lubin-Tate space 
such that 
the cohomology of the reduction of the affinoid 
realizes representations 
which are a bit more ramified than 
the epipelagic representations. 

\subsection*{Acknowledgment}
The authors thank Seidai Yasuda for helpful discussion on the 
subject in Paragraph \ref{eveninertia}. 
They are grateful to a referee for suggestions for improvements. 

\subsection*{Notation}
In this paper, we use the following notation. 
Let $K$ be a non-archimedean local field. 
Let $\mathcal{O}_K$ denote the 
ring of integers of $K$, 
and $k$ denote the residue field of $K$. 
Let $p$ be the characteristic of $k$. 
We fix a uniformizer $\varpi$ 
of $K$. 
Let $q=|k|$. 
We fix an algebraic closure 
$K^\mathrm{ac}$ of $K$. 
For any finite extension $F$ of $K$ in $K^\mathrm{ac}$, 
let $G_F$ denote the absolute Galois group of $F$, 
$W_F$ denote the Weil group of $F$ 
and $I_F$ denote the inertia subgroup of $W_F$. 
The completion of $K^\mathrm{ac}$ is denoted 
by $\mathbf{C}$. 
Let $\mathcal{O}_{\mathbf{C}}$ 
be the ring of integers of 
$\mathbf{C}$ and $k^\mathrm{ac}$ 
the residue field of 
$\mathbf{C}$. 
For an element 
$a \in \mathcal{O}_{\mathbf{C}}$, we write 
$\bar{a}$ for the image of $a$ 
by the reduction map 
$\mathcal{O}_{\mathbf{C}} \to k^\mathrm{ac}$. 
Let $v(\cdot)$ denote the valuation of 
$\mathbf{C}$ such that $v(\varpi)=1$. 
Let $K^{\mathrm{ur}}$ 
denote the maximal 
unramified extension of $K$ 
in $K^{\mathrm{ac}}$. 
The completion of 
$K^{\mathrm{ur}}$ is denoted by 
$\widehat{K}^{\mathrm{ur}}$. 
For $a,b \in \mathbf{C}$ and 
a rational number $\alpha \in 
\mathbb{Q}_{\geq 0}$, 
we write $a \equiv b \pmod{\alpha}$
if we have $v(a-b) \geq \alpha$, and 
$a \equiv b \pmod{\alpha +}$
if we have $v(a-b)>\alpha$. 
For a curve $X$ over $k^{\mathrm{ac}}$, 
we denote by $X^c$ 
the smooth compactification 
of the normalization of $X$, and 
the genus of $X$ means the genus of $X^{\mathrm{c}}$. 
For an affinoid 
$\mathbf{X}$, we write 
$\overline{\mathbf{X}}$ for its reduction. 
The category of sets is denoted by 
$\mathbf{Set}$. 
For a representation $\tau$ of a group, 
the dual representation of $\tau$ is denoted by 
$\tau^*$. 
We take rational powers of $\varpi$ compatibly 
as needed. 

\section{Preliminaries}\label{prel}
\subsection{The universal deformation}
Let $\Sigma$ 
denote a formal $\mathcal{O}_K$-module of 
dimension $1$ and height $2$ over $k^{\mathrm{ac}}$, 
which is unique up to isomorphism. 
Let $n$ be a natural number. 
We define $K_1 (\mathfrak{p}^n)$ as in the introduction. 
In the following, we define 
the connected Lubin-Tate curve 
$\mathbf{X}_1 (\mathfrak{p}^n )$ 
with level $K_1 (\mathfrak{p}^n)$. 

Let $\mathcal{C}$ 
be the category of 
Noetherian complete local 
$\mathcal{O}_{\widehat{K}^{\mathrm{ur}}}$-algebras 
with residue field $k^{\mathrm{ac}}$. 
For $A \in \mathcal{C}$, 
a formal $\mathcal{O}_K$-module 
$\mathcal{F} =\Spf A[[X]]$ over $A$ and 
an $A$-valued point $P$ of $\mathcal{F}$, 
the corresponding element of the maximal ideal of $A$ 
is denoted by $x(P)$. 
We consider the functor
\[
 \mathcal{A}_1 (\mathfrak{p}^n)\colon \mathcal{C} \to \mathbf{Set}; 
 A \mapsto [(\mathcal{F}, \iota, P)], 
\]
where 
$\mathcal{F}$ is a formal $\mathcal{O}_K$-module over $A$ 
with an isomorphism 
$\iota\colon \Sigma \simeq \mathcal {F} \otimes_A k^{\mathrm{ac}}$ and 
$P$ is a $\varpi^n$-torsion point of $\mathcal{F}$ 
such that 
\[
 \prod_{a \in \mathcal{O}_K /\varpi^n \mathcal{O}_K} 
 \bigl( X- x([a]_{\mathcal{F}} (P)) \bigr) \biggm| 
 [\varpi^n ]_{\mathcal{F}}(X) 
\]
in $A[[X]]$. This functor is represented by a regular local 
ring $\mathcal{R}_1(\mathfrak{p}^n)$ 
by \cite[\S 4. B) Lemma]{DrEmod}. 
We write $\mathfrak{X}_1(\mathfrak{p}^n)$ for 
$\Spf \mathcal{R}_1(\mathfrak{p}^n)$. 
Its generic fiber 
is denoted by $\mathbf{X}_1(\mathfrak{p}^n)$, 
which we call the connected Lubin-Tate curve 
with level $K_1 (\mathfrak{p}^n)$. 
The space  $\mathbf{X}_1(\mathfrak{p}^n)$ 
is a rigid analytic curve over ${\widehat{K}^{\mathrm{ur}}}$. 
We can define the Lubin-Tate curve 
$\mathrm{LT}_1(\mathfrak{p}^n)$ with level $n$ 
by changing 
$\mathcal{C}$ to be the category of 
$\mathcal{O}_{\widehat{K}^{\mathrm{ur}}}$-algebras where 
$\varpi$ is nilpotent, and 
$\iota$ to be a quasi-isogeny 
$\Sigma \otimes_{k^{\mathrm{ac}}} A/\varpi A \to 
 \mathcal {F} \otimes_A A/\varpi A$. 
We consider $\mathrm{LT}_1(\mathfrak{p}^n)$ 
as a rigid analytic curve over ${\widehat{K}^{\mathrm{ur}}}$. 

The ring $\mathcal{R}_1(1)$ 
is isomorphic to the ring of 
formal power series 
$\mathcal{O}_{{\widehat{K}^{\mathrm{ur}}}}[[u]]$. 
We simply write 
$\mathcal{B}(1)$ for 
$\Spf \mathcal{O}_{{\widehat{K}^{\mathrm{ur}}}}[[u]]$. 
Let $B(1)$ denote an open unit ball such that 
$B(1)(\mathbf{C})=\{ u \in \mathbf{C} \mid v(u)>0 \}$. 
The generic fiber of $\mathcal{B}(1)$ 
is equal to $B(1)$. 
Then, the space $\mathbf{X}_1(1)$ 
is identified with 
$B(1)$. 
Let $\mathcal{F}^{\mathrm{univ}}$ 
denote the universal formal 
$\mathcal{O}_K$-module 
over $\mathfrak{X}_1(1)$. 

In this subsection, we choose a parametrization of 
$\mathfrak{X}_1(1) \simeq \mathcal{B}(1)$ 
such that 
the universal formal $\mathcal{O}_K$-module has a simple 
form. 
Let $\mathcal{F}$ be a formal 
$\mathcal{O}_K$-module of dimension $1$ over a flat 
$\mathcal{O}_K$-algebra $R$. 
For a nontrivial invariant differential $\omega$ on 
$\mathcal{F}$, 
a logarithm of $\mathcal{F}$ 
means a unique isomorphism 
$F \colon \mathcal{F} \stackrel{\sim}{\to} \mathbb{G}_a$ 
over $R \otimes K$ 
with $dF=\omega$ (cf. \cite[3]{GHev}). 
In the sequel, 
we always take an invariant differential $\omega$ on 
$\mathcal{F}$ so that 
a logarithm $F$ has the following form:  
\[
 F(X)=X+\sum_{i \geq 1} f_i X^{q^i} \ 
 \textrm{with}\ f_i \in R \otimes K. 
\]

Let 
$F(X)=\sum_{i \geq 0} f_i X^{q^i} \in K[[u,X]]$ 
be the universal logarithm over 
$\mathcal{O}_K[[u]]$. 
By \cite[(5.5), (12.3), Proposition 12.10]{GHev}, 
the coefficients $\{f_i\}_{i \geq 0}$ 
satisfy $f_0=1$ and 
$\varpi f_i =\sum_{0 \leq j \leq i-1} f_j v_{i-j}^{q^{j}}$ 
for $i \geq 1$, 
where $v_1=u$, $v_2=1$ and $v_i=0$ for $i \geq 3$. 
Hence, we have the following: 
\begin{equation}\label{for}
 f_0=1,\quad 
 f_1=\frac{u}{\varpi},\quad 
 f_2=\frac{1}{\varpi}\biggl(1+\frac{u^{q+1}}{\varpi}\biggr),\quad 
 f_3=\frac{1}{\varpi^2}\biggl(u+u^{q^2}+\frac{u^{q^2+q+1}}{\varpi}
 \biggr), \quad \cdots. 
\end{equation}
By \cite[Proposition 5.7]{GHev} 
or \cite[21.5]{HazFG}, if we set 
\begin{equation}\label{sos}
 \mathcal{F}^{\mathrm{univ}}(X,Y)=F^{-1}(F(X)+F(Y)),\quad 
 [a]_{\mathcal{F}^{\mathrm{univ}}}(X)=F^{-1}(aF(X))
\end{equation}
for $a \in \mathcal{O}_K$, 
it is known that 
these power series have coefficients in 
$\mathcal{O}_K[[u]]$ 
and define the universal 
formal $\mathcal{O}_K$-module 
$\mathcal{F}^{\mathrm{univ}}$ over 
$\mathcal{O}_{{\widehat{K}^{\mathrm{ur}}}}[[u]]$ 
of dimension $1$
and height $2$ with logarithm $F(X)$. 
We have the following approximation formula for 
$[\varpi]_{\mathrm{u}}(X)$. 
\begin{lem}\label{all}
We have the following congruence:  
\begin{align*}
 [\varpi]_{\mathcal{F}^{\mathrm{univ}}}(X) \equiv 
 \varpi X+uX^q +X^{q^2} 
 -\frac{u}{\varpi}\{(uX^q+X^{q^2})^q&-u^qX^{q^2}-X^{q^3}\}\\
 &\bmod (\varpi^2 X^q, u\varpi X^q, 
 \varpi X^{q^2}, X^{q^3+1}). 
\end{align*}
\end{lem}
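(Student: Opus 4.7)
The plan is to use the defining identity for $[\varpi]_{\mathcal{F}^{\mathrm{univ}}}$ in terms of the logarithm, namely that $G(X):=[\varpi]_{\mathcal{F}^{\mathrm{univ}}}(X)$ is characterized by the functional equation
\[
F(G(X)) = \varpi F(X), \qquad\text{i.e.}\qquad \sum_{i\geq 0} f_i\, G(X)^{q^i} \;=\; \varpi\sum_{i\geq 0} f_i\, X^{q^i},
\]
coming from \eqref{sos}. Since the target precision truncates at $X^{q^3+1}$, only $i=0,1,2,3$ contribute on each side, and the coefficients $f_0,f_1,f_2,f_3$ are already written down explicitly in \eqref{for}. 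First I would verify the leading approximation $G(X)\equiv \varpi X + uX^q + X^{q^2}$ by noting that $G(X)\equiv X^{q^2}\pmod{\varpi}$ (because the special fiber of $\mathcal{F}^{\mathrm{univ}}$ is $\Sigma$, for which $[\varpi](X)=X^{q^2}$), together with matching the linear and degree-$q$ terms of $\varpi F(X)\pmod{\varpi^2}$. Write $G(X)=\varpi X + uX^q + X^{q^2} + R(X)$ and solve for the correction $R(X)$ modulo the given ideal.

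Next I would substitute the ansatz into the functional equation and expand each $G(X)^{q^i}$ by the multinomial theorem. The key arithmetic observation is that $p\in(\varpi)$ and $p\mid\binom{q}{k}$ for $1\le k\le q-1$, which lets us absorb every binomial-coefficient middle term involving a factor of $\varpi X$ into the ideal $(\varpi^3,\varpi^2X^q,u\varpi X^q,\varpi X^{q^2})$; the surviving piece of $(\varpi X + uX^q + X^{q^2})^q$ modulo the ideal is just $(uX^q+X^{q^2})^q$. The terms $G^{q^2}$ and $G^{q^3}$ consist, up to the ideal, of $(uX^q+X^{q^2})^{q^2}$ and $X^{q^3}$ respectively, but when they are paired with the denominator-carrying coefficients $f_2,f_3$ most of what remains is still killed by $X^{q^3+1}$. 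In particular, the only $f_3$-contribution that is relevant is $\varpi f_3 X^{q^3}$ on the right, matched by $f_3 X^{q^3}$ on the left.

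The main balance therefore happens at the $f_2$-level: expanding $f_2\,G(X)^{q^2}$ one obtains $f_2\bigl((uX^q+X^{q^2})^{q^2}+\text{lower}\bigr)$, and comparing with the $\varpi f_2 X^{q^2}=(1+u^{q+1}/\varpi)X^{q^2}$ on the right-hand side isolates exactly the discrepancy that $R(X)$ must cancel. After simplification—using $\varpi f_1 = u$, the definition of $f_2$, and the identity $(uX^q+X^{q^2})^q - u^qX^{q^2} - X^{q^3}$ (which records precisely the binomial middle-terms)—the correction emerges in the form $R(X) = -\tfrac{u}{\varpi}\bigl((uX^q+X^{q^2})^q-u^qX^{q^2}-X^{q^3}\bigr)$ modulo the ideal, which is the claimed formula.

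The main obstacle is the careful bookkeeping of which terms can be discarded: the expansion of $G(X)^{q^i}$ produces many cross-terms whose coefficients have mixed powers of $\varpi$ and $u$, and each must be checked against the five generators of the ideal. In particular, the factor $1/\varpi$ in $f_2$ threatens to make contributions from $f_2\,G(X)^{q^2}$ grow, and the argument succeeds only because the combination $(uX^q+X^{q^2})^q-u^qX^{q^2}-X^{q^3}$ kills exactly the two endpoint monomials that would otherwise produce coefficients of valuation $-1$, so that the remaining middle terms—each with a factor of $\binom{q}{k}\in(p)\subset(\varpi)$—multiply the $u/\varpi$ back into an integral expression modulo the ideal. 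Verifying these absorptions in the mixed-characteristic case (including $p=2$, where $q=|k|$ is even and the binomial analysis is most delicate) is the technical heart of the computation.
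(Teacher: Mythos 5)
Your strategy coincides with what the paper itself calls ``a direct computation'' from the functional equation $F([\varpi]_{\mathcal{F}^{\mathrm{univ}}}(X))=\varpi F(X)$ together with the explicit coefficients in \eqref{for}, and your central observation --- that $(uX^q+X^{q^2})^q - u^qX^{q^2}-X^{q^3}$ records exactly the $\binom{q}{j}$-middle terms of the $q$-th power, each divisible by $p\in(\varpi)$, so that $u/\varpi$ times this bracket is integral while the $\varpi X$-cross terms are absorbed into the ideal --- is the right mechanism at the $f_1$-level. One step is misstated, however. You claim that $\varpi f_3 X^{q^3}$ on the right is ``matched by $f_3 X^{q^3}$ on the left,'' but no such term appears on the left: there one has $f_3\,G(X)^{q^3}$, and the only monomial of $G(X)^{q^3}$ of $X$-degree $\leq q^3$ is $(\varpi X)^{q^3}=\varpi^{q^3}X^{q^3}$, so $f_3\varpi^{q^3}\in(\varpi^{q^3-2})\subset(\varpi^3)$ for $q\geq 2$, while every other monomial of $G(X)^{q^3}$ has $X$-degree at least $q^4>q^3$; hence $f_3 G(X)^{q^3}$ contributes nothing modulo the ideal. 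The right-hand coefficient $\varpi f_3 = \tfrac{u}{\varpi}+\tfrac{u^{q^2}}{\varpi}+\tfrac{u^{q^2+q+1}}{\varpi^2}$ is instead balanced by the degree-$q^3$ pieces of the $f_1$- and $f_2$-levels on the left: $\tfrac{u}{\varpi}$ from the $X^{q^3}$-endpoint of $f_1(uX^q+X^{q^2})^q$, and $f_2\cdot u^{q^2}=\tfrac{u^{q^2}}{\varpi}+\tfrac{u^{q^2+q+1}}{\varpi^2}$ from the term $u^{q^2}X^{q^3}$ of $(uX^q+X^{q^2})^{q^2}$. The cancellation is exact, so $R(X)$ indeed has no $X^{q^3}$-term, but as written your sketch does not display this balance. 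With that one line repaired, the degree-by-degree computation you outline does give the stated congruence.
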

\begin{proof}
This follows
from a direct computation using 
the relation $F([\varpi]_{\mathcal{F}^{\mathrm{univ}}}(X))=\varpi F(X)$ 
and \eqref{for}. 
\end{proof}
In the sequel, $\mathcal{F}^{\mathrm{univ}}$ 
means the universal formal $\mathcal{O}_K$-module 
with the identification 
$\mathfrak{X}_1(1) \simeq \mathcal{B}(1)$ 
given by \eqref{sos}, 
and we simply write
$[a]_{\mathrm{u}}$ for 
$[a]_{\mathcal{F}^{\mathrm{univ}}}$. 
The reduction of \eqref{sos} gives 
a simple model of $\Sigma$ such that 
\begin{equation}\label{smodel}
 X+_{\Sigma} Y=X+Y, \quad 
 [\zeta]_{\Sigma}(X)=\bar{\zeta}X \quad 
 \textrm{for} \quad \zeta \in \mu_{q-1} (\mathcal{O}_K ),\quad  
 [\varpi]_{\Sigma}(X)=X^{q^2}. 
\end{equation}
We put 
\[
 \mathfrak{A}_n=
 \mathcal{O}_{{\widehat{K}^{\mathrm{ur}}}}[[u,X_n]]/
 \bigl( [\varpi^n]_{\mathrm{u}}(X_n)/[\varpi^{n-1}]_{\mathrm{u}}(X_n) \bigr). 
\]
Then there is a natural identification 
\begin{equation}\label{th}
\mathfrak{X}_1(\mathfrak{p}^n) \simeq 
\Spf \mathfrak{A}_n 
\end{equation}
that is compatible with the identification 
$\mathfrak{X}_1(1) \simeq \mathcal{B}(1)$. 
The Lubin-Tate curve $\mathbf{X}_1(\mathfrak{p}^n)$ 
is identified with the generic fiber of the right hand side of
\eqref{th}. 
We set $X_{i}=[\varpi^{n-i}]_{\mathrm{u}}(X_n)$ 
for $1 \leq i \leq n-1$. 
We write $\mathfrak{X}(1)$ for $\mathfrak{X}_1(1)$. 

\subsection{Action of a division algebra on $\mathfrak{X}_1(\mathfrak{p}^n)$}\label{d_1}
Let $D$ be the central 
division algebra over $K$ of invariant $1/2$. 
We write $\mathcal{O}_D$ 
for the ring of integers of $D$. 
In this subsection, 
we recall the left action of 
$\mathcal{O}_D^{\times}$ 
on the space $\mathfrak{X}_1(\mathfrak{p}^n)$. 

Let $K_2$ be the unramified quadratic extension of $K$. 
Let $k_2$ be the residue field of $K_2$, and 
$\sigma \in \Gal (K_2/K)$ be 
the non-trivial element. 
The ring $\mathcal{O}_D$ 
has the following description: 
$\mathcal{O}_D = \mathcal{O}_{K_2} \oplus \varphi \mathcal{O}_{K_2}$ 
with $\varphi^2=\varpi$ and 
$a \varphi=\varphi a^{\sigma}$ for 
$a \in \mathcal{O}_{K_2}$. 
We define an action of $\mathcal{O}_D$ on 
$\Sigma$ by 
$\zeta(X)=\bar{\zeta} X$ for 
$\zeta \in \mu_{q^2 -1} (\mathcal{O}_{K_2} )$ 
and $\varphi(X)=X^q$. 
Then this give an isomorphism 
$\mathcal{O}_D \simeq \End (\Sigma)$ 
by \cite[Proposition 13.10]{GHev}. 

Let $d=d_1 +\varphi d_2 \in \mathcal{O}_D^{\times}$, 
where $d_1 \in \mathcal{O}^{\times}_{K_2}$ and 
$d_2 \in \mathcal{O}_{K_2}$. 
By the definition of the action of 
$\mathcal{O}_D$ on $\Sigma$, we have 
\begin{equation}\label{g_1}
 d(X) \equiv \bar{d}_1 X+(\bar{d}_2 X)^q 
 \bmod (X^{q^2}). 
\end{equation}
We take a lifting 
$\tilde{d}(X) \in \mathcal{O}_{K_2}[[X]]$ of $d(X) \in k_2[[X]]$. 
Let 
$\mathcal{F}_{\tilde{d}}$ 
be the formal 
$\mathcal{O}_K$-module defined by 
\[
 \mathcal{F}_{\tilde{d}}(X,Y)={\tilde{d}} 
 \bigl( \mathcal{F}^{\mathrm{univ}}
 ( \tilde{d}^{-1}(X),\tilde{d}^{-1}(Y) ) \bigr), \quad 
 [a]_{\mathcal{F}_{\tilde{d}}}(X)=\tilde{d} 
 \bigl( [a]_{\mathrm{u}} ( \tilde{d}^{-1}(X) ) \bigr) 
\]
for $a \in \mathcal{O}_K$. 
Then, we have an isomorphism 
\[
 \tilde{d} \colon \mathcal{F}^{\mathrm{univ}} 
 \stackrel{\sim}{\longrightarrow} 
 \mathcal{F}_{\tilde{d}}; \ 
 (u,X) \mapsto (u,\tilde{d}(X)). 
\]
By \cite[Proposition 14.7]{GHev}, 
the formal $\mathcal{O}_K$-module $\mathcal{F}_{\tilde{d}}$ 
with 
\[
 \Sigma \stackrel{d^{-1}}{\longrightarrow} \Sigma 
 \stackrel{\iota}{\longrightarrow} 
 \mathcal{F}^{\mathrm{univ}} \otimes k^{\mathrm{ac}} 
 \xrightarrow{\tilde{d} \otimes k^{\mathrm{ac}}} 
 \mathcal{F}_{\tilde{d}} \otimes k^{\mathrm{ac}}
\] 
gives an isomorphism 
\begin{equation}\label{caa-1}
 d\colon \mathfrak{X}(1) \to \mathfrak{X}(1), 
\end{equation}
which is independent of a choice 
of a lifting $\tilde{d}$, such that 
there is the unique isomorphism 
\[
 j\colon d^{\ast} \mathcal{F}^{\mathrm{univ}} 
 \stackrel{\sim}{\longrightarrow} 
 \mathcal{F}_{\tilde{d}};\ (u,X) \mapsto (u,j(X))
\]
satisfying 
$j(X) \equiv X \bmod (\varpi, u)$, 
where $d^{\ast} \mathcal{F}^{\mathrm{univ}}$ 
denotes the pull-back of 
$\mathcal{F}^{\mathrm{univ}}$ over $\mathfrak{X}(1)$ 
by the map \eqref{caa-1}. 
Hence, we have 
\begin{equation}\label{caa2}
 [\varpi]_{d^{\ast} \mathcal{F}^{\mathrm{univ}}}(j^{-1}(X))
 =j^{-1}([\varpi]_{\mathcal{F}_{\tilde{d}}}(X)).
\end{equation}
On the other hand, we have the following isomorphism:
\[
 d^{\ast} \mathcal{F}^{\mathrm{univ}} \stackrel{\sim}{\to}
 \mathcal{F}^{\mathrm{univ}}; \ 
 (u,X') \mapsto (d(u),X').\ 
\]
Furthermore, 
we consider the following
isomorphism under the identification \eqref{th}: 
\begin{equation}\label{g_2}
 \psi_d \colon \mathfrak{X}_1(\mathfrak{p}^n) 
 \longrightarrow 
 \mathfrak{X}_1(\mathfrak{p}^n);\ 
 (u,X_n) \mapsto 
 \bigl( d(u),j^{-1} (\tilde{d}(X_n)) \bigr), 
\end{equation}
which depends only on $d$ as 
in \cite[Proposition 14.7]{GHev}. 
We put 
\[
 d^{\ast}(X)=j^{-1} (\tilde{d}(X)). 
\]
We define a left action of $d$ on $\mathfrak{X}_1(\mathfrak{p}^n)$ 
by 
\[
 [(\mathcal{F},\iota,P)] \mapsto [(\mathcal{F},\iota \circ d^{-1} ,P)]. 
\]
Then this action coincides with 
$\psi_d$ by the definition.

By \eqref{g_1}, we have 
\begin{equation}\label{div}
 \tilde{d}^{-1}(X)=
 d_1^{-1} X -d_1^{-(q+1)} d_2^q X^q \bmod (\varpi,X^{q^2}) 
\end{equation}
in $\mathcal{O}_{K_2}[[X]]$. 
We use the following lemma later 
to compute the 
$\mathcal{O}^{\times}_D$-action 
on the stable reduction 
of $\mathbf{X}_1(\mathfrak{p}^3)$. 
\begin{lem}
We assume $v(u)=1/(2q)$. 
Let $d=d_1+\varphi d_2 \in \mathcal{O}^{\times}_D$. 
We set $u'=d(u)$. 
We change variables as 
$u=\varpi^{1/(2q)} \tilde{u}$ and 
$u' =\varpi^{1/(2q)} \tilde{u}'$. 
Then, we have the following: 
\begin{align}
 u' &\equiv d_1^{-(q-1)} u 
 ( 1+d_1^{-q} d_2 u ) 
 \bmod (\varpi, u^3), \label{gh1} \\ 
 j^{-1}(X) &\equiv 
 X+d_1^{-q} d_2 uX \bmod (\varpi, u^2 X, uX^2). \label{gh2}
\end{align}
\end{lem}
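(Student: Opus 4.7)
The plan is to exploit \eqref{caa2}, rewritten using
$[\varpi]_{d^\ast \mathcal{F}^{\mathrm{univ}}}(X) = [\varpi]_{\mathrm{u}}(X)|_{u \mapsto u'}$ as
\[
 [\varpi]_{\mathrm{u}}\bigl(j^{-1}(X)\bigr)\big|_{u \mapsto u'}
 \;=\; j^{-1}\!\bigl( \tilde{d}\bigl([\varpi]_{\mathrm{u}}(\tilde{d}^{-1}(X))\bigr) \bigr),
\]
and to compare coefficients of $X^q$ on both sides modulo the target ideals, using Lemma \ref{all} and \eqref{div} as the arithmetic input.

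First I would invert \eqref{div} to obtain $\tilde{d}(X) \equiv d_1 X + d_2^q X^q \pmod{(\varpi, X^{q^2})}$, and compute the coefficient $\alpha$ of $X^q$ in $[\varpi]_{\mathcal{F}_{\tilde{d}}}(X) = \tilde{d}([\varpi]_{\mathrm{u}}(\tilde{d}^{-1}(X)))$ via Lemma \ref{all}. The dominant contribution is from $u \tilde{d}^{-1}(X)^q$, which has $X^q$-coefficient $u d_1^{-q}$; after applying the $d_1$-part of $\tilde d$ this becomes $u d_1^{-(q-1)}$. All other contributions---$\varpi\tilde{d}^{-1}(X)$, $d_2^q(\cdots)^q$, and the Lemma \ref{all} error terms of $X$-degree $\geq 2q^2 - q$---lie in $\varpi\mathcal{O}$ under $v(u) = 1/(2q)$. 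Hence $\alpha \equiv u\, d_1^{-(q-1)} \pmod{\varpi}$.

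Next, writing $j^{-1}(X) = (1 + \lambda) X + \mu X^q + (\text{higher})$ with $\lambda, \mu \in (\varpi, u)$ (from the normalization $j(X) \equiv X \pmod{(\varpi, u)}$), substituting and extracting the $X^q$-coefficient yields
\[
 u'(1 + \lambda)^q + \varpi \mu = (1 + \lambda)\,\alpha + \mu \varpi^q.
\]
Under $v(u) = 1/(2q)$, the binomial coefficients $\binom{q}{k}$ for $0 < k < q$ lie in $p\mathcal O_K \subset \varpi\mathcal O_K$, so $(1+\lambda)^q \equiv 1 + \lambda^q \pmod{\varpi}$; further $u'\lambda^q$ and $\mu(\varpi^q - \varpi)$ both fall into $(\varpi, u^3)$, and the identity collapses to $u' \equiv (1 + \lambda)\,\alpha \pmod{(\varpi, u^3)}$. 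Given $\lambda \equiv d_1^{-q} d_2 u \pmod{(\varpi, u^2)}$, which is \eqref{gh2}, this directly yields \eqref{gh1}.

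To complete \eqref{gh2} one must determine $\lambda \pmod{(\varpi, u^2)}$ independently; this can be done either by going one step further in the same hierarchy (comparing the $X^{q^2}$-coefficient) or by working on the logarithm side via the identity $F \circ \tilde{d}^{-1} \circ j = c\, F_{u'}$ with $c = d_1^{-1} j'(0)$, exploiting the explicit form $f_1 = u/\varpi$ from \eqref{for}. The main obstacle is the bookkeeping: the hypothesis $v(u) = 1/(2q)$ is essentially tight, and one must verify that marginal contributions---such as $u'\lambda^q$ of valuation $(q+1)/(2q)$, which is exactly $3/4$ when $q=2$, or the Lemma \ref{all} error piece $u\varpi Y^q$---indeed fall inside $(\varpi, u^3)$ and $(\varpi, u^2 X, u X^2)$.
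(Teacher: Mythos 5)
You correctly extract from \eqref{caa2} and Lemma \ref{all} an $X^q$-coefficient relation that, with the estimates you describe, simplifies to $u'\equiv(1+\lambda)\alpha\pmod{(\varpi,u^3)}$ where $\alpha\equiv u\,d_1^{-(q-1)}\pmod{\varpi}$; this is essentially the relation the paper records as $u'\,j^{-1}(X)^q\equiv j^{-1}\bigl(u\,d_1^{-(q-1)}X^q\bigr)\pmod{(\varpi,X^{q^2})}$. But you then substitute \eqref{gh2} (that is, the value of $\lambda\pmod{(\varpi,u^2)}$) to conclude \eqref{gh1}, while acknowledging that \eqref{gh2} remains to be proved. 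As it stands the argument is circular: the single $X^q$-equation relates the two unknowns $u'$ and $\lambda$ and determines neither. Your suggested remedies (comparing $X^{q^2}$-coefficients, or a logarithm-side identity) are only gestured at; in particular the $X^{q^2}$-equation brings in further unknowns $\mu$, $\nu$ and uncontrolled higher coefficients of the arbitrary lift $\tilde d$, and you have not verified that the resulting system actually pins down $\lambda\pmod{(\varpi,u^2)}$ at $v(u)=1/(2q)$.

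The paper avoids the circularity by proving \eqref{gh1} first and independently of $j$, through the Gross--Hopkins period function: the quantity $w(u)$ of \cite[(25.11)]{GHev}, well-approximated by $\varpi u(\varpi+u^{q+1})^{-1}$ when $v(u)=1/(2q)$, transforms under $d\in\mathcal{O}_D^{\times}$ by the explicit fractional-linear formula of \cite[(25.13)]{GHev}; inverting that congruence and introducing the auxiliary parameter $\tilde{u}'-d_1^{-(q-1)}\tilde{u}=\varpi^{1/(2q)}x$ yields \eqref{gh1}. Only then does the paper derive \eqref{gh2} from \eqref{gh1} and the $X^q$-relation---the opposite implication to the one you establish. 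The period-map formula is precisely the external input your proposal is missing; without it, or a fully executed version of one of your alternatives, the proof is incomplete.
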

\begin{proof}
We set 
$d^{-1} =d_1' +\varphi d_2'$. 
Then $d_1' \equiv d_1^{-1},\ d_2' \equiv -d_1^{-(q+1)} d_2 \pmod{1}$. 
First, we prove \eqref{gh1}. 
If $v(u)=1/(2q)$, a function $w(u)$
in \cite[(25.11)]{GHev}
is well-approximated by 
a function
$\varpi u(\varpi+u^{q+1})^{-1}$. 
By \cite[(25.13)]{GHev}, 
we have 
\[
 \frac{\varpi u'}{\varpi+{u'}^{q+1}}  \equiv 
 \frac{d_1'^q \varpi u(\varpi+u^{q+1})^{-1} +\varpi d_2'^q}
 {d_2' \varpi u(\varpi+u^{q+1})^{-1} +d_1'} 
 \equiv 
 \frac{\varpi u(d_1 - d_2^{q} u^q )}
 {d_1^q (\varpi+u^{q+1})-d_2 \varpi u} 
 \pmod{1+}. 
\]
Hence, we acquire the following by
$u=\varpi^{1/(2q)} \tilde{u}$ and 
$u' =\varpi^{1/(2q)} \tilde{u}'$: 
\begin{equation}\label{gi}
 \frac{\tilde{u}'}{\tilde{u}'^{q+1}+\varpi^{\frac{q-1}{2q}}} 
 \equiv 
 \frac{\tilde{u} (d_1 -\varpi^{\frac{1}{2}} d_2^q \tilde{u}^q )} 
 {d_1^q \tilde{u}^{q+1}
 +\varpi^{\frac{q-1}{2q}} d_1^q -\varpi^{\frac{1}{2}} d_2 \tilde{u}} 
 \pmod{\frac{1}{2}+}. 
\end{equation}
By taking an inverse of 
the congruence \eqref{gi}, 
we obtain 
\begin{equation}\label{qsa}
 (\tilde{u}'-d_1^{-(q-1)}\tilde{u})^q 
 \equiv \varpi^{\frac{q-1}{2q}} \biggl(
 \frac{\tilde{u}'-d_1^{-(q-1)}\tilde{u}}
 {d_1^{-(q-1)} \tilde{u} \tilde{u}'}\biggr)
 +\varpi^{\frac{1}{2}} 
 (d_1^{q-2} d_2^q \tilde{u}^{2q} -d_1^{-1} d_2 ) \pmod{\frac{1}{2}+}. 
\end{equation}
Now, we set $\tilde{u}'-d_1^{-(q-1)} \tilde{u}=\varpi^{1/(2q)} x$. 
By substituting this to \eqref{qsa} and dividing
it by $\varpi^{1/2}$, we obtain
\[
 (x-d_1^{1-2q} d_2 \tilde{u}^2)^q 
 \equiv d_1^{2q-2} \tilde{u}^{-2}
 (x-d_1^{1-2q} d_2 \tilde{u}^2) \pmod{0+}. 
\]
Since $x$ is an analytic function of 
$\tilde{u}$, a congruence
$x \equiv d_1^{1-2q} d_2 \tilde{u}^2 \pmod{0+}$ 
must hold. 
Hence we have 
\[
 \tilde{u}' \equiv d_1^{-(q-1)} \tilde{u} 
 ( 1+\varpi^{\frac{1}{2q}} d_1^{-q} d_2 \tilde{u} ) 
 \pmod{\frac{1}{2q}+} 
\]
using $\tilde{u}'-d_1^{q-1}\tilde{u}=\varpi^{1/(2q)} x$. 
This implies \eqref{gh1}, 
because $u'$ is an analytic function of $u$. 

By Lemma \ref{all}, 
\eqref{caa2} and \eqref{div}, we have 
$u' j^{-1}(X)^q \equiv 
 j^{-1}(ud_1^{-(q-1)} X^q) 
 \bmod (\varpi, X^{q^2})$. 
Hence, 
the assertion 
\eqref{gh2} follows from \eqref{gh1} 
and $j^{-1} (X) \equiv X \bmod (\varpi, u)$. 
\end{proof}

\section{Cohomology of Lubin-Tate curve}\label{cohLT}
Let $\ell$ be a prime number different from $p$. 
We take an algebraic closure 
$\overline{\mathbb{Q}}_{\ell}$ of 
$\mathbb{Q}_{\ell}$. 
Let $\mathrm{LT}(\mathfrak{p}^n)$ 
be the Lubin-Tate curve with 
full level $n$ over $\widehat{K}^{\mathrm{ur}}$ 
(cf.~\cite[3.2]{DaLTell}). 
We put 
\[
 H^i _{\mathrm{LT},\varpi} = 
 \varinjlim_n H^i _{\mathrm{c}} 
 \bigl( ( \mathrm{LT}(\mathfrak{p}^n) /
 \varpi^{\mathbb{Z}} )_{\mathbf{C}}, 
 \overline{\mathbb{Q}}_{\ell} \bigr) 
\] 
for any non-negative integer $i$, 
where 
$\mathrm{LT}(\mathfrak{p}^n) / \varpi^{\mathbb{Z}}$ 
denote the quotient of 
$\mathrm{LT}(\mathfrak{p}^n)$ by the action of 
$\varpi^{\mathbb{Z}} \subset D^{\times}$. 
Then we can define an action of 
$\mathrm{GL}_2 (K) \times D^{\times} \times W_K$ 
on $H^i _{\mathrm{LT},\varpi}$ for a non-negative integer $i$ 
(cf. \cite[3.2, 3.3]{DaLTell}). 

We write 
$\Irr (D^{\times},\overline{\mathbb{Q}}_{\ell})$ 
for the set of 
isomorphism classes of irreducible smooth representations of 
$D^{\times}$ over 
$\overline{\mathbb{Q}}_{\ell}$, 
and 
$\Disc (\mathrm{GL}_2 (K),\overline{\mathbb{Q}}_{\ell})$ 
for the set of 
isomorphism classes of irreducible 
discrete series representations of 
$\mathrm{GL}_2 (K)$ over 
$\overline{\mathbb{Q}}_{\ell}$. 
Let 
\[
 \mathrm{JL} \colon 
 \Irr (D^{\times},\overline{\mathbb{Q}}_{\ell}) 
 \to \Disc (\mathrm{GL}_2 (K),\overline{\mathbb{Q}}_{\ell}) 
\]
be the local Jacquet-Langlands correspondence. 
We denote by $\mathrm{LJ}$ the inverse of 
$\mathrm{JL}$. 
For an irreducible smooth representation $\pi$ of 
$\mathrm{GL}_2 (K)$, 
let $\omega_{\pi}$ denote the central character of $\pi$. 
We write $\mathrm{St}$ for the Steinberg representation of 
$\mathrm{GL}_2(K)$. 

The following fact is well known as a 
corollary of the Deligne-Carayol conjecture. 
Here, we give a purely local proof of this fact. 

\begin{prop}\label{cohGL}
We have isomorphisms 
\begin{align*}
 H^1 _{\mathrm{LT},\varpi} &\simeq 
 \bigoplus_{\pi} 
 \pi ^{\oplus 2\dim \mathrm{LJ}(\pi)} \oplus 
 \bigoplus_{\chi} (\mathrm{St} \otimes (\chi \circ \det)),\\ 
 H^2 _{\mathrm{LT},\varpi} &\simeq 
 \bigoplus_{\chi} (\chi \circ \det)
\end{align*}
as representations of 
$\mathrm{GL}_2 (K)$, 
where $\pi$ runs through irreducible cuspidal representations of 
$\mathrm{GL}_2 (K)$ such that 
$\omega_{\pi} (\varpi)=1$, and 
$\chi$ runs through characters of $K^{\times}$ 
satisfying $\chi (\varpi^2)=1$. 
\end{prop}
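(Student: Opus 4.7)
The plan is to extract the stated $\mathit{GL}_2(K)$-decomposition from the description of $H^*_{\mathrm{LT},\varpi}$ as a $\mathit{GL}_2(K) \times D^\times \times W_K$-module furnished by non-abelian Lubin-Tate theory. For $\mathit{GL}_2$ over a local field, this description goes back to Deligne's analysis of the formal group, and in the tripled form (with Weil and division algebra actions) to Carayol, Boyer, Harris-Taylor and Faltings; an entirely local argument for the $D^\times$-part is provided by Mieda in \cite{MieGeomJL}. The upshot is that, for a supercuspidal $\pi$ of $\mathit{GL}_2(K)$ with central character trivial on $\varpi$, its isotypic component in $H^1_{\mathrm{LT},\varpi}$ is
\[
 \pi \boxtimes \mathrm{LJ}(\pi) \boxtimes \sigma(\pi)^{*}
\]
up to a twist, where $\sigma(\pi)$ is the $2$-dimensional local Langlands parameter.

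First I would impose the condition coming from the quotient by $\varpi^{\mathbb{Z}}$: this restricts to $\pi$ whose central character is trivial on $\varpi$ and to characters $\chi$ of $K^{\times}$ with $\chi(\varpi^2)=1$, since both $\mathrm{St} \otimes (\chi \circ \det)$ and $\chi \circ \det$ have central character $\chi^2$. Next, forgetting the $D^\times \times W_K$-action on each supercuspidal isotypic component gives multiplicity $\dim \mathrm{LJ}(\pi) \cdot \dim \sigma(\pi) = 2 \dim \mathrm{LJ}(\pi)$ for $\pi$, which is the first summand in $H^1$. For $H^2$, the one-dimensional characters $\chi \circ \det$ appear with multiplicity one; this can be read off from the non-abelian Lubin-Tate formula in each degree, or alternatively from Poincar\'e duality applied to the $H^0$-component.

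The main obstacle is the correct apportioning of the contributions of non-supercuspidal discrete series between $H^1$ and $H^2$. A naive dimension count from the indecomposable $2$-dimensional special parameter $\sigma(\mathrm{St} \otimes (\chi \circ \det))$, whose semisimplification is $\chi \oplus \chi|\cdot|$, would predict two copies of $\mathrm{St} \otimes (\chi \circ \det)$ in $H^1$, whereas the statement gives only one. The resolution is that one ``half'' of the Langlands parameter pairs with the character $\chi \circ \det$ in $H^2$ rather than with the Steinberg in $H^1$; pinning this down requires the refined formula of Boyer or Dat (cf.~\cite{DaLTell}) for the non-elliptic part of the cohomology. Once this has been verified for Steinberg twists, both formulae for $H^1$ and $H^2$ follow immediately, and the proposition is established.
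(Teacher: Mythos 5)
Your proposal reaches the right destination, but along a different road from the paper's, and one of your steps is left unfinished. The paper's proof is resolutely a purely local one: for $H^2$ it invokes Strauch's explicit description of $H^2_{\mathbf{X}}$ as a $\mathit{GL}_2(\mathcal{O}_K)$-module (\cite[Theorem 4.4(i)]{StrCcomp}) and then upgrades this to a $\mathit{GL}_2(K)^0$-module by the group-theoretic observation that a normal subgroup of $\mathit{GL}_2(K)^0$ containing $\mathit{SL}_2(\mathcal{O}_K)$ must be $\mathit{SL}_2(K)$ (Deligne), followed by compact induction to $\mathit{GL}_2(K)/\varpi^{\mathbb{Z}}$. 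For the cuspidal part of $H^1$ it cites Mieda's purely local theorem \cite{MieGeomJL} rather than the global non-abelian Lubin-Tate theory you invoke. You do correctly derive the multiplicity $2\dim\mathrm{LJ}(\pi)$ from that theory, and one could run the argument your way at the cost of injecting a global input that the paper deliberately avoids.

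The genuine gap in your proposal is the treatment of the non-cuspidal discrete series. You observe, correctly, that a naive count based on a $2$-dimensional special parameter would overpredict the multiplicity of $\mathrm{St}\otimes(\chi\circ\det)$ in $H^1$, and you gesture toward ``the refined formula of Boyer or Dat'' to fix this, but you never actually close the gap. The paper closes it cleanly: by Fargues' theorem on Poincar\'e duality and the Zelevinsky involution (\cite[Th\'eor\`eme 4.3]{FaDZel}) together with the Faltings--Fargues isomorphism, the non-cuspidal part of $H^1_{\mathrm{LT},\varpi}$ is the Zelevinsky dual of $H^2_{\mathrm{LT},\varpi}$, and since the Zelevinsky involution takes $\chi\circ\det$ to $\mathrm{St}\otimes(\chi\circ\det)$, the multiplicity-one conclusion falls out immediately. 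Your alternative suggestion of computing $H^2$ by Poincar\'e duality from $H^0$ is likewise a sketch rather than an argument; the Strauch route is what is actually needed. In short: the picture you draw is right, but both the $H^2$ computation and the non-cuspidal multiplicity need the specific inputs the paper cites rather than the heuristics you offer.
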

\begin{proof}
First, we show the second isomorphism. 
Let $\mathbf{X}(\mathfrak{p}^n)$ 
be the connected Lubin-Tate curve with 
full level $n$ over $\widehat{K}^{\mathrm{ur}}$ 
(cf.~\cite[2.1]{StrCcomp}). 
We put 
\begin{align*}
 H^2_{\mathbf{X}} &=\varinjlim_n H^2 _{\mathrm{c}} 
 \bigl( \mathbf{X}(\mathfrak{p}^n)_{\mathbf{C}}, 
 \overline{\mathbb{Q}}_{\ell} \bigr), \\ 
 \mathrm{GL}_2 (K)^0 &=\{ g \in \mathrm{GL}_2 (K) \mid 
 \det g \in \mathcal{O}_K^{\times} \}. 
\end{align*}
Then $\mathrm{GL}_2 (K)^0$ acts on $H^2_{\mathbf{X}}$. 
By \cite[Theorem 4.4 (i)]{StrCcomp}, we have 
\begin{equation}\label{H2chi}
 H^2_{\mathbf{X}} \simeq 
 \bigoplus_{\chi} (\chi \circ \det) 
\end{equation}
as representations of $\mathrm{GL}_2 (\mathcal{O}_K)$, 
where $\chi$ runs through characters of 
$\mathcal{O}_K^{\times}$. 
Let $H$ be the kernel of 
$\mathrm{GL}_2 (K)^0 \to \Aut (H^2_{\mathbf{X}})$. 
Then $H =\mathrm{SL}_2 (K)$, 
because a normal subgroup of 
$\mathrm{GL}_2 (K)^0$ containing 
$\mathrm{SL}_2 (\mathcal{O}_K)$ is 
$\mathrm{SL}_2 (K)$ 
by \cite[Lemme 2.2.5 (iii)]{DelGL2}. 
Hence, we see that 
\eqref{H2chi} is an isomorphism 
as representations of $\mathrm{GL}_2 (K)^0$. 
The second isomorphism follows from this, 
because we have 
\[
 H^2 _{\mathrm{LT},\varpi} \simeq 
 \mathrm{c\mathchar`-Ind}_{\mathrm{GL}_2 (K)^0}
 ^{\mathrm{GL}_2 (K)/\varpi^{\mathbb{Z}}} 
 H^2_{\mathbf{X}}. 
\]

Next, we show the first isomorphism. 
By \cite[Definition 6.2 and Theorem 6.6]{MieGeomJL}, 
the cuspidal part of 
$H^1 _{\mathrm{LT},\varpi}$ is 
\[
 \bigoplus_{\pi} 
 \pi ^{\oplus 2\dim \mathrm{LJ}(\pi)}. 
\]
Here, we note that 
the characteristic of a local field is assumed to be 
zero in 
\cite{MieGeomJL}, 
but the same proof works 
in the equal characteristic case. 
By \cite[Th\'{e}or\`{e}me 4.3]{FaDZel} and 
the Faltings-Fargues isomorphism 
(cf.~\cite{Faltwo} and \cite{FGLisom}), 
we see that the non-cuspidal part of 
$H^1 _{\mathrm{LT},\varpi}$ 
is the Zelevinsky dual of $H^2 _{\mathrm{LT},\varpi}$. 
Therefore, we have the first isomorphism. 
\end{proof}

\section{Stable covering of Lubin-Tate curve with level two}\label{level2}
In this section, 
we construct a stable covering of 
$\mathbf{X}_1(\mathfrak{p}^2)$. 
Let $(u,X_2)$ be the parameter of 
$\mathbf{X}_1(\mathfrak{p}^2)$ given 
by the identification \eqref{th}. 

Let $\mathbf{Y}_{1,1}$, $\mathbf{W}_0$, 
$\mathbf{W}_{k^{\times}}$, 
$\mathbf{W}_{\infty,1}$, 
$\mathbf{W}_{\infty,2}$ and 
$\mathbf{W}_{\infty,3}$ 
be the subspaces of 
$\mathbf{X}_1(\mathfrak{p}^2)$ 
defined by the following conditions respectively: 
\begin{align*}
 \mathbf{Y}_{1,1} &\colon 
 v(u)=\frac{1}{q+1},\quad v(X_1)=\frac{q}{q^2-1},\quad 
 v(X_2)=\frac{1}{q(q^2-1)}.\\ 
 \mathbf{W}_0 &\colon 
 0<v(u)<\frac{1}{q+1},\quad 
 v(X_1)=\frac{1-v(u)}{q-1},\quad 
 v(X_2)=\frac{1-qv(u)}{q(q-1)}.\\ 
 \mathbf{W}_{k^{\times}} &\colon 
 0<v(u)<\frac{1}{q+1},\quad 
 v(X_1)=\frac{1-v(u)}{q-1},\quad 
 v(X_2)=\frac{v(u)}{q(q-1)}.\\ 
 \mathbf{W}_{\infty,1} &\colon 
 0<v(u)<\frac{q}{q+1},\quad 
 v(X_1)=\frac{v(u)}{q(q-1)},\quad 
 v(X_2)=\frac{v(u)}{q^3(q-1)}.\\ 
 \mathbf{W}_{\infty,2} &\colon 
 v(u) \geq \frac{q}{q+1},\quad
 v(X_1)=\frac{1}{q^2-1},\quad 
 v(X_2)=\frac{1}{q^2(q^2-1)}.\\ 
 \mathbf{W}_{\infty,3} &\colon 
 \frac{1}{q+1}<v(u)<\frac{q}{q+1},\quad 
 v(X_1)=\frac{1-v(u)}{q-1},\quad 
 v(X_2)=\frac{1-v(u)}{q^2(q-1)}. 
\end{align*}
We put 
\[
 \mathbf{W}_{\infty} =\mathbf{W}_{\infty,1} \cup 
 \mathbf{W}_{\infty,2} \cup \mathbf{W}_{\infty,3}. 
\]
Note that we have 
\[
 \mathbf{X}_1(\mathfrak{p}^2)=\mathbf{Y}_{1,1} \cup 
 \mathbf{W}_0 \cup \mathbf{W}^1_{k^{\times}}
 \cup \mathbf{W}_{\infty}. 
\]
\begin{prop}\label{ltwo}
The Lubin-Tate curve 
$\mathbf{X}_1(\mathfrak{p}^2)$ is 
a basic wide open space with underlying affinoid 
$\mathbf{Y}_{1,1}$. 
Further, 
$\mathbf{W}_{0}$ and $\mathbf{W}_{\infty}$ are 
open annuli, 
and 
$\mathbf{W}_{k^{\times}}$ is a disjoint union of 
$q-1$ open annuli. 
\end{prop}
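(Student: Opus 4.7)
The strategy is a twofold Newton-polygon analysis combined with explicit parameterization of each stratum. Dividing $[\varpi]_{\mathrm{u}}(X_1)=0$ by $X_1$ yields $\varpi+uX_1^{q-1}+X_1^{q^2-1}+\cdots=0$, whose Newton polygon in $X_1$ has vertices $(0,1)$, $(q-1,v(u))$, $(q^2-1,0)$; for $v(u)<q/(q+1)$ this produces $q-1$ ``singleton'' roots with $v(X_1)=(1-v(u))/(q-1)$ and $q(q-1)$ ``multiplet'' roots with $v(X_1)=v(u)/(q(q-1))$, while for $v(u)\geq q/(q+1)$ all $q^2-1$ roots have $v(X_1)=1/(q^2-1)$. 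Applying the Newton polygon of $[\varpi]_{\mathrm{u}}(X)-X_1$ in $X=X_2$ via Lemma \ref{all}, for each type of $X_1$, yields exactly the six valuation profiles defining $\mathbf{Y}_{1,1}$, $\mathbf{W}_0$, $\mathbf{W}_{k^\times}$, $\mathbf{W}_{\infty,1}$, $\mathbf{W}_{\infty,2}$, $\mathbf{W}_{\infty,3}$. A boundary comparison along the walls $v(u)=1/(q+1)$ and $v(u)=q/(q+1)$ shows that the three pieces $\mathbf{W}_{\infty,i}$ glue continuously into a single connected $\mathbf{W}_\infty$, and the union $\mathbf{Y}_{1,1}\cup\mathbf{W}_0\cup\mathbf{W}_{k^\times}\cup\mathbf{W}_\infty$ exhausts $\mathbf{X}_1(\mathfrak{p}^2)$.

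For $\mathbf{Y}_{1,1}$, I would rescale $u=\varpi^{1/(q+1)}s$, $X_2=\varpi^{1/(q(q^2-1))}t$ with $s,t$ units. Lemma \ref{all} gives $\varpi^{-q/(q^2-1)}X_1\equiv st^q+t^{q^2}\pmod{\mathfrak{m}_{\mathbf{C}}}$, and the equation $\varpi+uX_1^{q-1}+\cdots=0$ reduces, after an explicit change of coordinates, to the Drinfeld equation $x^qy-xy^q=1$. This curve is smooth of genus $q(q-1)/2$, and its projective closure in $\mathbb{P}^2$ is nonsingular (since $\gcd(q+1,p)=1$) with exactly $q+1$ points at infinity: $(1:0:0)$, $(0:1:0)$, and the $q-1$ points $(\zeta:1:0)$ for $\zeta\in\mu_{q-1}$. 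This exhibits $\mathbf{Y}_{1,1}$ as an affinoid with the claimed reduction.

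For the complementary strata I would parameterize each explicitly. The $q-1$ singleton $\varpi$-torsion points form a connected Kummer cover $X_1^{q-1}\equiv-\varpi/u$ of the $u$-annulus, and over this the ``large preimage'' relation $X_2^qX_1\equiv \zeta\varpi^{1/(q-1)}\pmod{\text{higher}}$ exhibits $\mathbf{W}_{k^\times}$ as a disjoint union of $q-1$ connected Kummer sheets naturally indexed by $\zeta\in\mu_{q-1}\leftrightarrow k^\times$ (explaining the subscript). For $\mathbf{W}_\infty$, on the multiplet side the Kummer relations $X_1^{q(q-1)}\equiv -u$ and $X_2^{q^2}\equiv X_1$ are each connected ($\gcd$ equal to $1$), and the three sub-regions share a single analytic coordinate (e.g.~$X_2$) that sweeps a single open annulus crossing both walls. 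For $\mathbf{W}_0$, the relation $(uX_2^{q-1})^q\equiv-\varpi\pmod{\text{higher}}$ together with the full iteration of $[\varpi]_{\mathrm{u}}$ identifies $\mathbf{W}_0$ with an annulus whose analytic coordinate $X_2$ runs over $1/(q(q^2-1))<v(X_2)<1/(q(q-1))$.

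The main technical obstacle is the connectedness claim for $\mathbf{W}_0$ (and analogously for $\mathbf{W}_\infty$): a naive leading-order Kummer analysis can suggest spurious multi-sheeted splittings, so one must verify carefully, using the higher-order corrections supplied by Lemma \ref{all} and the structure of $[\varpi^2]_{\mathrm{u}}(X_2)/[\varpi]_{\mathrm{u}}(X_2)$, that $X_2$ is a single-valued analytic coordinate on $\mathbf{W}_0$ and that $u$ and $X_1$ lift to it as single-valued analytic functions. Counting the total number of annuli in the complement gives $1+(q-1)+1=q+1$, matching the $q+1$ points at infinity of the reduction of $\mathbf{Y}_{1,1}$; this consistency check confirms that $\mathbf{X}_1(\mathfrak{p}^2)$ is a basic wide open space with underlying affinoid $\mathbf{Y}_{1,1}$.
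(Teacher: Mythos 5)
Your proposal takes the ``direct calculations'' route rather than the paper's own sketch, which bootstraps from the fact that $\mathbf{X}_1(\mathfrak{p})$ is an open annulus and then runs a cohomology-and-genus-counting argument parallel to the proof of Theorem~\ref{cov3}. The paper itself acknowledges the direct route as a known alternative (citing \cite{ITsslow}), so in principle your strategy is legitimate, and the Newton-polygon stratification into $\mathbf{Y}_{1,1}$, $\mathbf{W}_0$, $\mathbf{W}_{k^\times}$, $\mathbf{W}_{\infty,1}$, $\mathbf{W}_{\infty,2}$, $\mathbf{W}_{\infty,3}$ is carried out correctly, as is the identification of $\overline{\mathbf{Y}}_{1,1}$ with the Drinfeld curve and its $q+1$ points at infinity. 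What the paper's approach buys is that it never needs to prove the annuli are connected: once one knows $\mathbf{Y}_{1,1}$ must be a component of the stable reduction (Proposition~\ref{posg}), the genus of $\mathbf{X}_1(\mathfrak{p}^2)$ computed via Proposition~\ref{cohGL} forces the remaining pieces to be annuli with exactly $q+1$ ends, with connectedness coming for free from the numerology. The direct route, which you attempt, has to establish connectedness by hand, and this is precisely where your argument has a genuine gap.

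Concretely: on $\mathbf{W}_0$ the function $X_2$ is \emph{not} a uniformizer, contrary to what you assert. Over a fixed $u$ with $0<v(u)<1/(q+1)$, there are $q-1$ singleton solutions $X_1$ and, for each, $q$ roots $X_2$ on the first Newton-polygon slope, so $u$ has degree $q(q-1)$ on $\mathbf{W}_0$. On the other hand, from $v(X_2)=(1-qv(u))/(q(q-1))$ one gets $v(u)=1/q-(q-1)v(X_2)$, so the slopes of $v(u)$ and $v(X_2)$ along any annulus parameter are in ratio $(q-1):1$, forcing $\deg_{\mathbf{W}_0}(X_2)=q$. A degree-$q$ function cannot be a parameter of the annulus (equivalently, over a generic value of $X_2$ there are $q$ values of $u$, so $u$ does not ``lift to a single-valued analytic function'' of $X_2$). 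The $q$ roots $X_2$ over a fixed $(u,X_1)$ form a torsor under the order-$q$ canonical subgroup $\ker[\varpi]_u\cap\{v>v(u)/(q(q-1))\}$, and exhibiting an actual uniformizer requires untangling this wildly ramified degree-$q$ tower by a sequence of explicit substitutions, exactly as the paper does in the level-three computations of Propositions~\ref{Y12}, \ref{Y21} and Lemma~\ref{eq2}, where new parameters $t$, $t_1$, $Z_0$, $Z$, $z_i$ are introduced layer by layer. Your final ``consistency check'' ($1+(q-1)+1=q+1$ ends matching the $q+1$ infinity points of $\overline{\mathbf{Y}}_{1,1}^{\mathrm{c}}$) only verifies the count assuming connectedness of each piece, so it cannot be used to conclude it; an analogous issue affects your treatment of $\mathbf{W}_{\infty}$ across the walls $v(u)=1/(q+1)$, $v(u)=q/(q+1)$. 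To close the gap you would need either the change-of-variable cascade of the direct approach, or the cohomological bound on the number of ends that the paper relies on.
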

\begin{proof}
This is proved in \cite{ITsslow} 
by direct calculations 
without cohomological arguments. 
Here, we sketch another proof based on 
arguments in this paper. 

First, we note that 
$\mathcal{X}_1(1)$ is a good formal model of 
$\mathbf{X}_1(1)$. 
Then, we can show that 
$\mathbf{X}_1(\mathfrak{p})$ is isomorphic to 
an open annulus 
by a cohomological argument 
as in the proof of Theorem \ref{cov3} 
using the natural level-lowering map 
$\mathbf{X}_1(\mathfrak{p}) \to \mathbf{X}_1(1)$. 

Next, we can see that the reduction of 
$\mathbf{Y}_{1,1}$ 
is isomorphic to the affine curve defined by
$x^q y - xy^q = 1$ 
by a calculation as in the proof of 
Proposition \ref{Y12} (cf.~\cite[\S 3.1]{ITsslow}). 
Then we can prove the claim 
by a similar argument as above 
using the natural level-lowering map 
$\mathbf{X}_1(\mathfrak{p}^2) \to \mathbf{X}_1(\mathfrak{p})$. 
\end{proof}

\section{Reductions of affinoid spaces in $\mathbf{X}_1(\mathfrak{p}^3)$}\label{redaff}

\subsection{Definitions of several subspaces in $\mathbf{X}_1(\mathfrak{p}^3)$}\label{sec1}
In this subsection, 
we define several 
subspaces 
of $\mathbf{X}_1(\mathfrak{p}^3)$. 
Let $(u,X_3)$ be the parameter of 
$\mathbf{X}_1(\mathfrak{p}^3)$ given 
by the identification \eqref{th}. 

Let $\mathbf{Y}_{1,2}$, $\mathbf{Y}_{2,1}$ and 
$\mathbf{Z}^0_{1,1}$ 
be the subspaces of $\mathbf{X}_1(\mathfrak{p}^3)$ 
defined by the following conditions respectively: 
\begin{align*}
 \mathbf{Y}_{1,2} &\colon v(u)=\frac{1}{q+1},\quad 
 v(X_1)=\frac{q}{q^2-1},\quad 
 v(X_2)=\frac{1}{q(q^2-1)},\quad 
 v(X_3)=\frac{1}{q^3(q^2-1)}.\\ 
 \mathbf{Y}_{2,1} &\colon 
 v(u)=\frac{1}{q(q+1)},\quad v(X_1)
 =\frac{q^2+q-1}{q(q^2-1)},\quad 
 v(X_2)=\frac{1}{q^2-1},\quad v(X_3)
 =\frac{1}{q^2(q^2-1)}.\\ 
 \mathbf{Z}^0_{1,1} &\colon 
 v(u)=\frac{1}{2q},\quad 
 v(X_1)=\frac{2q-1}{2q(q-1)},\quad 
 v(X_2)=\frac{1}{2q(q-1)},\quad 
 v(X_3)=\frac{1}{2q^3(q-1)}. 
\end{align*}
We write down the 
following possible cases 
for $(u, X_1 , X_2)$:  
\begin{equation}\label{nnum}
\begin{split}
 &1.\quad 0<v(u)<\frac{1}{q+1},\quad v(X_1)=\frac{1-v(u)}{q-1},\quad 
 v(X_2)=\frac{1-qv(u)}{q(q-1)}, \\ 
 &2.\quad 0<v(u)<\frac{1}{q+1},\quad v(X_1)=\frac{1-v(u)}{q-1},\quad 
 v(X_2)=\frac{v(u)}{q(q-1)}, \\
 &3.\quad v(u)=\frac{1}{q+1},\quad v(X_1)=\frac{q}{q^2-1},\quad 
 v(X_2)=\frac{1}{q(q^2-1)}, \\
 &4.\quad 0<v(u) <\frac{q}{q+1},\quad v(X_1)=\frac{v(u)}{q(q-1)}, \quad 
 v(X_2)=\frac{v(u)}{q^3(q-1)}, \\
 &5.\quad v(u) \geq \frac{q}{q+1},\quad v(X_1)=\frac{1}{q^2-1},\quad 
 v(X_2)=\frac{1}{q^2(q^2-1)}, \\ 
 &6.\quad \frac{1}{q+1} <v(u)< \frac{q}{q+1},\quad 
 v(X_1)=\frac{1-v(u)}{q-1},\quad 
 v(X_2)=\frac{1-v(u)}{q^2(q-1)}. 
\end{split}
\end{equation}
Next, we consider the 
following possible 
cases for $(X_2 ,X_3)$: 
\begin{equation}\label{qqqq'1}
\begin{split}
 &1'.\ v(X_3^{q^2})=v(X_2)<v(uX_3^q),\quad  
 2'.\ v(uX_3^{q})=v(X_2)<v(X_3^{q^2}),\\
 &3'.\ v(X_2)>v(X_3^{q^2})=v(uX_3^q),\quad 
 4'.\ v(X_2)=v(X_3^{q^2})=v(uX_3^q). 
\end{split}
\end{equation}
\begin{lem}
For $2 \leq i \leq 6$ in \eqref{nnum} 
and $2' \leq j' \leq 4'$
in \eqref{qqqq'1}, 
the case $i$ and $j'$ does not happen. 
\end{lem}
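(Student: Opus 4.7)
The plan is to derive a contradiction by showing that every case $j' \in \{2',3',4'\}$ forces $v(X_2) > v(u)$, while every case $i \in \{2,\ldots,6\}$ of \eqref{nnum} forces $v(X_2) < v(u)$.

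For the first inequality, I would use the approximation
\[
X_2 \;=\; [\varpi]_{\mathrm{u}}(X_3) \;\equiv\; \varpi X_3 + uX_3^q + X_3^{q^2}
\]
coming from Lemma \ref{all}, together with the fact that $v(X_3)>0$ since $X_3$ is a non-zero torsion point lying in the maximal ideal. In case $2'$ this gives $v(X_2) = v(u) + qv(X_3) > v(u)$ directly. In cases $3'$ and $4'$, the equality $v(uX_3^q) = v(X_3^{q^2})$ forces $v(X_3) = v(u)/(q(q-1))$, hence
\[
v(X_2) \;\geq\; q^2 v(X_3) \;=\; \frac{q}{q-1}v(u) \;>\; v(u).
\]

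For the second inequality I would simply check each of the five cases against the valuations in \eqref{nnum}. Cases $2$, $3$ and $4$ each reduce to $q(q-1) \geq 2$; case $5$ reduces to $q^3(q-1) > 1$ using $v(u) \geq q/(q+1)$; and case $6$ reduces to the single inequality $v(u)(q^3-q^2+1) > 1$, which follows from $v(u) > 1/(q+1)$ combined with $q^3 - q^2 - q > 0$ for $q \geq 2$. Putting the two inequalities together yields the desired contradiction.

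I expect no conceptual obstacle. The one technical point to pin down is that the higher-order corrections in Lemma \ref{all}, namely the explicit term $-(u/\varpi)\{(uX_3^q+X_3^{q^2})^q - u^qX_3^{q^2} - X_3^{q^3}\}$ as well as those hidden in the modulus $(\varpi^3,\varpi^2 X_3^q,u\varpi X_3^q,\varpi X_3^{q^2},X_3^{q^3+1})$, have valuation strictly exceeding $\min(v(uX_3^q),v(X_3^{q^2}))$ in every relevant range. This is a uniform estimate in $v(u)$ and $v(X_3)$ that I would carry out once and for all, ensuring that the Newton-polygon argument in the $j'$ step is legitimate.
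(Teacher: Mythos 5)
Your proof is correct, and since the paper dismisses this lemma with ``This is an easy exercise'', your write-up is in fact the argument the authors had in mind. Reducing both halves to the single comparison $v(X_2) \gtrless v(u)$ is clean, and the case-by-case inequalities you list are all right: in $2'$ one gets $v(X_2)=v(u)+qv(X_3)>v(u)$; in $3'$ and $4'$ the equality $v(uX_3^q)=v(X_3^{q^2})$ gives $v(X_3)=v(u)/(q(q-1))$ and hence $v(X_2)\geq qv(u)/(q-1)>v(u)$; and on the other side each of cases $2$--$6$ of \eqref{nnum} forces $v(X_2)<v(u)$ as you verify.

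One small remark: the worry you flag at the end about bounding the higher-order corrections of Lemma~\ref{all} is unnecessary for this lemma. The cases $1'$--$4'$ in \eqref{qqqq'1} are \emph{defined} purely by valuation relations among $X_2$, $X_3^{q^2}$ and $uX_3^q$, not by a Newton polygon of $[\varpi]_{\mathrm{u}}(X_3)$; the only input you actually use from the geometry is $v(X_3)>0$, which holds because $X_3$ lies in the maximal ideal. So no uniform estimate on the error terms is needed here; Lemma~\ref{all} is relevant for establishing that the lists \eqref{nnum} and \eqref{qqqq'1} are exhaustive, but not for showing that a pair $(i,j')$ with $i\in\{2,\dots,6\}$ and $j'\in\{2',3',4'\}$ is incompatible.
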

\begin{proof}
This is an easy exercise. 
\end{proof}

Let $\mathbf{W}_{i,j'}$ be the subspace 
of $\mathbf{X}_1(\mathfrak{p}^3)$ 
defined by the conditions $1 \leq i \leq 6$
in \eqref{nnum} and $1' \leq j' \leq 4'$
in \eqref{qqqq'1}. 
We note that 
$\mathbf{W}_{3,1'} = \mathbf{Y}_{1,2}$ and 
$\mathbf{W}_{1,4'} = \mathbf{Y}_{2,1}$. 
Let $\mathbf{W}^{+}_{1,1'}$ and $\mathbf{W}^{-}_{1,1'}$ 
be the subspaces of $\mathbf{W}_{1,1'}$ 
defined by 
$1/(2q)<v(u)<1/(q+1)$ and $1/(q(q+1))<v(u)<1/(2q)$ 
respectively.

\subsection{Reductions of the affinoid spaces $\mathbf{Y}_{1,2}$ and $\mathbf{Y}_{2,1}$}\label{redY}
In this subsection, 
we compute the 
reductions of the affinoid spaces 
$\mathbf{Y}_{1,2}$ 
and $\mathbf{Y}_{2,1}$. 
The reductions of 
$\mathbf{Y}_{2,1}$ 
and $\mathbf{Y}_{1,2}$ 
are isomorphic to the affine curve defined by $x^qy-xy^q=1$. 
These curves have genus $q(q-1)/2$. 

\begin{prop}\label{Y12}
The reduction of 
$\mathbf{Y}_{1,2}$ 
is isomorphic to the affine curve defined by $x^qy-xy^q=1$. 
\end{prop}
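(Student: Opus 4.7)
The plan is to introduce normalized coordinates adapted to the valuations defining $\mathbf{Y}_{1,2}$, use Lemma \ref{all} to extract the leading-order relations between the coordinates $u$, $X_1$, $X_2$, $X_3$, and eliminate redundant variables to obtain the defining equation of the reduction.

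First I would set $u = \varpi^{1/(q+1)} s$ and $X_2 = \varpi^{1/(q(q^2-1))} t$, so that $s$ and $t$ are analytic functions of valuation $0$ on $\mathbf{Y}_{1,2}$. Applying Lemma \ref{all} to the identity $X_1 = [\varpi]_u(X_2)$ and comparing valuations, one checks that $v(\varpi X_2) > q/(q^2-1)$ and that the explicit correction term $(u/\varpi)\bigl\{(uX_2^q + X_2^{q^2})^q - u^q X_2^{q^2} - X_2^{q^3}\bigr\}$ has valuation strictly greater than $v(X_1) = q/(q^2-1)$, while $uX_2^q$ and $X_2^{q^2}$ both attain this valuation. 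Setting $\tilde{X}_1 = X_1/\varpi^{q/(q^2-1)}$ yields
\[
 \tilde{X}_1 \equiv s t^q + t^{q^2} \pmod{\mathfrak{m}_{k^{\mathrm{ac}}}}.
\]

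Next, I would apply Lemma \ref{all} to the torsion condition $[\varpi]_u(X_1)/X_1 = 0$ coming from the fact that $X_1 = [\varpi^2]_u(X_3)$ is a $\varpi$-torsion point. On $\mathbf{Y}_{1,2}$ the only terms of valuation $1$ are $\varpi$ and $uX_1^{q-1}$, while $X_1^{q^2-1}$ has valuation $q$ and the correction term from the Lemma has valuation $1 + q/(q^2-1)$, both strictly larger. Dividing by $\varpi$ and reducing gives
\[
 s \tilde{X}_1^{q-1} \equiv -1 \pmod{\mathfrak{m}_{k^{\mathrm{ac}}}}.
\]
Eliminating $\tilde{X}_1$ between these two congruences produces a single polynomial equation in $s$ and $t$ cutting out $\overline{\mathbf{Y}}_{1,2}$ in the affine $(s,t)$-plane over $k^{\mathrm{ac}}$.

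The final step is to exhibit a change of variables identifying the resulting affine curve with $x^q y - x y^q = 1$. Using the characteristic-$p$ identity $(\alpha+\beta)^q = \alpha^q + \beta^q$ to expand $\tilde{X}_1^q = s^q t^{q^2} + t^{q^3}$ from the first congruence, and combining with the rearrangement $\tilde{X}_1^q = -\tilde{X}_1/s$ coming from the second, one obtains an equation whose monomials, after an appropriate rescaling of $t$ and relabeling, line up with the Drinfeld form. I expect this last substitution to be the principal obstacle: the exponents must be matched carefully, and the Frobenius twists implicit in the normalization of $X_2$ relative to $X_3$ must be tracked accurately. Once the substitution is in hand, the verification is a direct algebraic calculation, and the equation $x^q y - xy^q = 1$ follows.
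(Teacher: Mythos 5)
Your first two leading-order relations are correct and match the paper's \eqref{Y121}: after normalizing $u=\varpi^{1/(q+1)}\tilde{u}$, $X_1=\varpi^{q/(q^2-1)}x_1$, $X_2=\varpi^{1/(q(q^2-1))}x_2$, Lemma~\ref{all} does give $x_1\equiv\tilde{u}x_2^q+x_2^{q^2}$ and $\tilde{u}\equiv-x_1^{-(q-1)}$ modulo $0+$. The problem is what you do with them.

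The affinoid $\mathbf{Y}_{1,2}$ sits inside $\mathbf{X}_1(\mathfrak{p}^3)$, whose local coordinate (besides $u$) is $X_3$, not $X_2$. After normalizing $X_3=\varpi^{1/(q^3(q^2-1))}x_3$, Lemma~\ref{all} gives the third relation $x_2\equiv x_3^{q^2}\pmod{0+}$, so in the canonical reduction one has $\bar{x}_2=\bar{x}_3^{q^2}$ while $\bar{x}_3$ is a genuine generator of the reduced coordinate ring. By eliminating $\tilde{X}_1$ from your two congruences and discarding $x_3$, you obtain $s\,(st^q+t^{q^2})^{q-1}+1=0$, i.e.\ $s\,t^{q(q-1)}(s+t^{q(q-1)})^{q-1}+1=0$, in the $(s,t)$-plane with $t\leftrightarrow x_2$. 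This is \emph{not} the defining equation of $\overline{\mathbf{Y}}_{1,2}$; it is the equation of $\operatorname{Spec}$ of the proper subring of the reduction generated by $\bar{s},\bar{x}_1,\bar{x}_2$. Equivalently, it is the image of $\overline{\mathbf{Y}}_{1,2}$ under the purely inseparable degree-$q^2$ morphism induced by $\bar{x}_2=\bar{x}_3^{q^2}$. No ``rescaling of $t$'' (a linear substitution) can recover $x_3$ from $x_2=x_3^{q^2}$, so the proposed final step would not terminate. You already flagged ``the Frobenius twists implicit in the normalization of $X_2$ relative to $X_3$'' as the principal obstacle; that is indeed the crux, and it is not an obstacle one can wave away while remaining in the $(s,t)$-plane.

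The paper avoids this by keeping $x_3$ as a coordinate throughout: it substitutes $x_2\equiv x_3^{q^2}$ and $\tilde{u}\equiv -x_1^{-(q-1)}$ into $x_1\equiv\tilde{u}x_2^q+x_2^{q^2}$ to obtain $1\equiv x_3^{q^4}/x_1 - x_3^{q^3}/x_1^q$, and then extracts the high Frobenius powers by a chain of substitutions $1+x_1^{-1}x_3^{q^2}=x_3^{q^3}t_1^{-1}$ (giving $t_1^q\equiv x_1$), $1+x_3^qt_1^{-1}=x_3^{q^2}t_2^{-1}$ (giving $t_2^q\equiv t_1$), and finally $1+x_3t_2^{-1}=x_3^qy$, arriving at $x^qy-xy^q=1$ with $x=x_3$. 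Note the substitutions involve $x_3$, $x_3^q$, $x_3^{q^2}$ to \emph{first} powers relative to $t_i$ --- precisely the data lost when you project to $(s,x_2)$. To repair your argument, you must introduce the normalized $x_3$, record $x_2\equiv x_3^{q^2}$, and reproduce (some form of) this chain of coordinate changes; the rest of your outline, correctly interpreted in the $(s,x_3)$-plane instead of the $(s,t)$-plane, would then go through.
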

\begin{proof}
We change variables as 
$u=\varpi^{1/(q+1)} \tilde{u}$, 
$X_1=\varpi^{q/(q^2 -1)}x_1$, 
$X_2=\varpi^{1/(q(q^2 -1))}x_2$ 
and 
$X_3=\varpi^{1/(q^3 (q^2 -1))} x_3$. 
By Lemma \ref{all}, 
we have 
\begin{equation}\label{Y121}
 \tilde{u} \equiv -x_1^{-(q-1)},\quad 
 x_1 \equiv \tilde{u}x_2^q+x_2^{q^2},\quad 
 x_2 \equiv x_3^{q^2} \pmod{0+}. 
\end{equation}
Then we have 
$\tilde{u} = -x_1^{-(q-1)} +F_0(\tilde{u},x_1)$ 
for some function $F_0(\tilde{u},x_1)$ 
satisfying $v(F_0(\tilde{u},x_1))>v(\tilde{u})$. 
Substituting $\tilde{u} = -x_1^{-(q-1)} +F_0(\tilde{u},x_1)$ 
to $F_0(\tilde{u},x_1)$ and repeating it, 
we see that 
$\tilde{u}$ is written as a function of $x_1$. 
Similarly, by $x_2 \equiv x_3^{q^2} \pmod{0+}$, 
we can see that 
$x_2$ is written as a function of $x_1$ and $x_3$. 
By \eqref{Y121}, we acquire 
\begin{equation}\label{124}
 1 \equiv \frac{x_3^{q^4}}{x_1}
 -\frac{x_3^{q^3}}{x_1^q} \pmod{0+}. 
\end{equation}
By setting 
$1+x_1^{-1} x_3^{q^2}=x_3^{q^3} t_1^{-1}$ 
and substituting this to \eqref{124}, 
we obtain $t_1 ^q \equiv x_1 \pmod{0+}$ 
and hence 
$(1+x_3^{q}t_1^{-1})^q \equiv x_3^{q^3} t_1^{-1} \pmod{0+}$.
By setting 
$1+x_3^{q}t_1^{-1}=x_3^{q^2} t_2^{-1}$, 
we obtain 
$t_2 ^q \equiv t_1 \pmod{0+}$. Hence 
$(1+x_3t_2^{-1})^q \equiv x_3^{q^2} t_2^{-1} \pmod{0+}$. 
Finally, by setting 
$x=x_3$ and 
$1+x_3t_2^{-1}=x_3^qy$, 
we acquire $y^q \equiv t_2^{-1} \pmod{0+}$. 
Hence we have $x^qy-xy^q \equiv1 \pmod{0+}$. 
Note that 
\begin{equation}\label{gk1}
x=x_3, \quad 
y=\frac{x_1(1+x_3^{q(q^2-1)}+x_3^{(q+1)(q^2-1)})+
x_3^{q^2}}{x_1x_3^{q^3+q^2-1}}, 
\end{equation}
which we will use later. 
\end{proof}

We put 
\[
 \gamma_i =\varpi^{\frac{q-1}{2q^i}} 
\] 
for 
$1 \leq i \leq 4$. 
We choose an element $c_0$ such that 
$c_0 ^q -\gamma_1 ^2 c_0 +1=0$. 
Note that we have $c_0 \equiv -1 \pmod{0+}$. 
Further, we choose a $q$-th root 
$c_0 ^{1/q}$ of $c_0$. 

\begin{prop}\label{Y21}
The reduction of the space $\mathbf{Y}_{2,1}$
is isomorphic to the affine curve defined by $x^q y-xy^q =1$. 
\end{prop}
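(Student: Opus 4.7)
The plan is to follow the strategy of Proposition~\ref{Y12}, adapted to the different valuation profile on $\mathbf{Y}_{2,1}$. An attractive feature is that the slopes defining $\mathbf{Y}_{2,1}$ allow one to reach the Drinfeld curve equation via a single rational substitution, avoiding the cascade of auxiliary variables $t_1, t_2$ used for $\mathbf{Y}_{1,2}$.

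First I would rescale to unit-valued coordinates by setting $u = \varpi^{1/(q(q+1))} \tilde{u}$, $X_1 = \varpi^{(q^2+q-1)/(q(q^2-1))} x_1$, $X_2 = \varpi^{1/(q^2-1)} x_2$, and $X_3 = \varpi^{1/(q^2(q^2-1))} x_3$. I would then expand the three relations defining the affinoid, namely $X_1 = [\varpi]_{\mathrm{u}}(X_2)$, $X_2 = [\varpi]_{\mathrm{u}}(X_3)$, and the affinoid defining relation $[\varpi]_{\mathrm{u}}(X_1) = 0$, using Lemma~\ref{all}. A direct valuation check on the $\mathbf{Y}_{2,1}$ slopes identifies the dominant terms in each case: in $[\varpi]_{\mathrm{u}}(X_3)$ both $uX_3^q$ and $X_3^{q^2}$ attain the valuation $1/(q^2-1) = v(X_2)$; in $[\varpi]_{\mathrm{u}}(X_2)$ only $uX_2^q$ attains $v(X_1)$, with $\varpi X_2$ and $X_2^{q^2}$ strictly higher; in $[\varpi]_{\mathrm{u}}(X_1)$ the two terms $\varpi X_1$ and $uX_1^q$ attain the minimum valuation while $X_1^{q^2}$ lies strictly higher. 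After normalization this yields
\[
 x_1 \equiv \tilde{u} x_2^q, \qquad
 x_2 \equiv \tilde{u} x_3^q + x_3^{q^2}, \qquad
 \tilde{u} \equiv -x_1^{-(q-1)} \pmod{0+}.
\]

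Combining the first and third congruences forces $x_1^q + x_2^q \equiv 0$, hence $x_1 \equiv -x_2$ and $\tilde{u} \equiv -x_2^{-(q-1)}$ in the residue (this holds uniformly in all residue characteristics, since in characteristic two signs disappear). Substituting into the second congruence and clearing $x_2^{-(q-1)}$ produces the single relation $x_2^q + x_3^q \equiv x_2^{q-1} x_3^{q^2} \pmod{0+}$. I would then set $x = x_3$ and $y = (x_2 + x_3)/(x_2 x_3^q)$. Using $(x_2+x_3)^q = x_2^q + x_3^q$ in characteristic $p$, one finds $y^q \equiv 1/x_2$ and $x^q y \equiv 1 + x_3/x_2$, so that $x^q y - x y^q \equiv 1$, which is the claimed reduction.

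The main obstacle is the valuation bookkeeping. One must verify that the correction term $(u/\varpi)\{(uX^q+X^{q^2})^q - u^q X^{q^2} - X^{q^3}\}$ and the ambient remainder $(\varpi^3, \varpi^2 X^q, u\varpi X^q, \varpi X^{q^2}, X^{q^3+1})$ in Lemma~\ref{all} have strictly higher valuation than the dominant terms in each of the three applications. The essential input is the lower bound $v(\binom{q}{k}) \geq v(p) \geq 1$ for $1 \leq k \leq q-1$; combining this with the slope arithmetic specific to $\mathbf{Y}_{2,1}$ yields a strictly positive excess (one can check for instance that the excess is at least $1/q$ in the relation $X_2 = [\varpi]_{\mathrm{u}}(X_3)$ and at least $v(p)$ in the other two). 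Once these estimates are in place, the algebraic manipulations and the final substitution are routine.
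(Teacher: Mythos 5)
Your approach is correct and takes a genuinely different, slicker route than the paper's. The paper's proof proceeds through a cascade of auxiliary parameters $t$ and $t_1$, together with a precisely chosen constant $c_0$ satisfying $c_0^q - \gamma_1^2 c_0 + 1 = 0$, tracked at finer precision levels (modulo $(q^2-1)/q^2 +$, $(q-1)/q^2 +$, etc.); this careful iteration is what exhibits each old variable as an analytic function of the new ones and thereby pins down the reduction. You instead work throughout modulo $0+$, exploit the Frobenius collapse $x_1^q + x_2^q \equiv 0 \Rightarrow x_1 \equiv -x_2$, and land on the Drinfeld equation with a single direct substitution $y = (x_2+x_3)/(x_2 x_3^q)$. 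Your shortcut buys considerable economy; you should supplement it with the (easy) remark that $\bar{x}_2 = \bar{y}^{-q}$, $\bar{x}_1 = -\bar{y}^{-q}$, and $\bar{\tilde{u}} = -\bar{y}^{q(q-1)}$, so that $\bar{x}$, $\bar{y}$ really do generate the reduced coordinate ring and not merely satisfy a relation on it. Your three dominant-term congruences agree with the paper's \eqref{211}--\eqref{213} reduced modulo $0+$.

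One small error in your final paragraph: the explicit lower bounds you state for the excess are not correct. In the relation $X_2 = [\varpi]_{\mathrm{u}}(X_3)$, the boundary error term $X_3^{q^3+1}$ from Lemma~\ref{all} has, for $q=2$, valuation $9/12$ while $v(X_2) = 1/3$, giving an excess of $5/12 < 1/q = 1/2$; and in the other two relations the excess coming from the algebraic next term (e.g.\ $X_1^{q^2-1}$, with excess $q - 1/q$) is bounded and can sit below $v(p)$ when $K$ is highly ramified. Neither affects your conclusion, since all that is needed is a strictly positive excess, which indeed holds in every case; but the stated quantitative bounds should be dropped or corrected.
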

\begin{proof}
We change variables as 
$u=\varpi^{1/(q(q+1))} \tilde{u}$, 
$X_1=\varpi^{(q^2+q-1)/(q(q^2 -1))} x_1$, 
$X_2=\varpi^{1/(q^2 -1)} x_2$, 
and
$X_3=\varpi^{1/(q^2 (q^2 -1))} x_3$. 
By Lemma \ref{all}, 
we have 
\begin{align}
 \tilde{u} &\equiv -x_1^{-(q-1)} 
 \pmod{\frac{q^2 -1}{q^2} +}, \label{211} \\
 x_1 &\equiv \tilde{u} x_2 ^q +\gamma_1 ^2 
 (x_2 ^{q^2}+x_2 ) 
 \pmod{\frac{q^2 -1}{q^2} +}, \label{212} \\
 x_2 &\equiv x_3^{q^2}+\tilde{u}x_3^q 
 \pmod{\frac{q-1}{q^2}+}. \label{213} 
\end{align} 
By \eqref{211} and \eqref{213}, 
we can see that 
$\tilde{u}$ is written as a function of $x_1$, 
and that 
$x_2$ is written as a function of $x_1$ and $x_3$. 
We define a parameter 
$t$ by 
\begin{equation}\label{214}
 \frac{x_2}{x_1} 
 =c_0+\gamma_2 ^2 \frac{x_2^q}{t}. 
\end{equation}
We note that $v(t)=0$. 
By considering 
$x_1^{-1} \times \eqref{212}$, 
we have 
\begin{equation}\label{215}
 \biggl( \frac{x_2}{x_1} \biggr)^q 
 +1-\gamma_1 ^2 \frac{x_2}{x_1} \equiv 
 \gamma_1 ^2 \frac{x_2 ^{q^2}}{x_1} 
 \pmod{\frac{q^2 -1}{q^2} +}. 
\end{equation}
By substituting \eqref{214} 
to the left hand side of the congruence \eqref{215},
and dividing it by 
$\gamma_1^2 x_2^{q^2}$, we acquire 
\begin{equation}\label{216}
 x_1 \equiv t^q \biggl( 1-\gamma_2 ^2 
 \frac{t^{q-1}}{x_2 ^{q(q-1)}} \biggr)^{-1} 
 \pmod{\frac{q -1}{q^2}+}. 
\end{equation}
By this congruence, we can see that 
$x_1$ is written as a function of $t$ and $x_3$. 
By considering $x_1^{-1} \times \eqref{213}$, 
we acquire 
\begin{equation}\label{217}
 c_0 +\gamma_2 ^2 \frac{x_2^q}{t} \equiv
 \frac{x_3^{q^2}}{x_1}-\biggl(\frac{x_3}{x_1}\biggr)^q 
 \pmod{\frac{q -1}{q^2}+} 
\end{equation}
by \eqref{214}. 
Substituting \eqref{216} to \eqref{217}, we have 
\begin{equation}\label{218}
 \biggl( c_0 ^{1/q}-\frac{x_3^q}{t}+\frac{x_3}{x_1}\biggr)^q
 \equiv -\gamma_2 ^2 
 \frac{(x_2+x_3)^{q^2}}{t x_2^{q(q-1)}} 
 \pmod{\frac{q -1}{q^2}+}. 
\end{equation}
By \eqref{214} and $c_0 \equiv -1 
 \pmod{0+}$, we have 
 $x_2 \equiv -x_1 \pmod{0+}$. 
 Therefore, we acquire 
\[
 (x_2+x_3)^q \equiv x_2^{q-1}x_3^{q^2} 
 \pmod{0+}
\] 
by \eqref{211} and \eqref{213}.
In particular, we obtain 
$v(x_2+x_3)=0$. 
We introducing a 
new parameter $t_1$ as 
\begin{equation}\label{219}
 c_0 ^{1/q}
 -\frac{x_3^q}{t}+\frac{x_3}{x_1}
 =-\gamma_3 ^q 
 \frac{(x_2 +x_3 )^q}{t_1 x_2 ^{q-1}}. 
\end{equation}
Substituting this to the 
left hand side of 
the congruence \eqref{218},
and dividing it by 
$-\gamma_2 ^2 x_2 ^{-q(q-1)} (x_2+x_3)^{q^2}$, 
we acquire 
$t \equiv t_1^q \pmod{0+}$. 
By this congruence, we can see that 
$t$ is written as a function of $t_1$ and $x_3$. 
By \eqref{219}, we obtain 
\[
 x_3 \equiv t_1^{q^2}(1+x_3t_1^{-1})^q \pmod{0+} 
\]
using $t \equiv t_1^q \pmod{0+}$ 
and $x_1 \equiv t^q \pmod{0+}$. 
Hence, by setting 
$x=t_1 ^{-1}$ and 
$\ y=t_1 ^q (1+x_3 t_1 ^{-1} )$, 
we acquire $x^q y-y x^q \equiv 1 \pmod{0+}$. 
\end{proof}

\subsection{Reduction of the affinoid space $\mathbf{Z}^0_{1,1}$}\label{sec2}
In this subsection, we 
calculate the reduction of the 
affinoid space $\mathbf{Z}^0_{1,1}$. 
We define $\mathcal{S}_1$ as in the introduction. 
The reduction 
$\overline{\mathbf{Z}}^0 _{1,1}$ 
is isomorphic to the affine curve defined by 
$Z^q+x_3^{q^2-1}+x_3^{-(q^2-1)}=0$. 
This affine curve has genus $0$ and 
singularities at $x_3 \in \mathcal{S}_1$. 

We put 
\[
 \omega_i =\varpi^{\frac{1}{2q^i(q-1)}}, 
 \quad 
 \epsilon_i =\frac{1}{2q^i} 
\] 
for $1 \leq i \leq 4$. 
We change variables as
$u=\omega_1^{q-1} \tilde{u}$, 
$X_1=\omega_1^{2q-1} x_1$, 
$X_2=\omega_1 x_2$ 
and
$X_3=\omega_3 x_3$. 
By Lemma \ref{all}, 
we have 
\begin{align}
 \tilde{u} &\equiv 
 -x_1^{-(q-1)} \pmod{\frac{1}{2} +}, \label{31} \\
 x_1 &\equiv \tilde{u}x_2^q+
 \gamma_1 x_2^{q^2}+
 \gamma_1 ^2 x_2 \pmod{\frac{1}{2} +}, \label{32} \\
 x_2 &\equiv x_3^{q^2}+
 \gamma_2 \tilde{u} x_3^{q} \pmod{\epsilon_1 +}. \label{33}
\end{align}
Note that we have $v(\gamma_1 ^2) >1/2$ if $q \neq 2$. 
By \eqref{31} and \eqref{33}, 
we can see that 
$\tilde{u}$ is written as a function of $x_1$, 
and that 
$x_2$ is written as a function of $x_1$ and $x_3$. 
We define a parameter $t$ by 
\begin{equation}\label{34}
 \frac{x_2}{x_1}=
 -1+\gamma_2 \frac{x_2^q}{t}.
\end{equation}
By considering $x_1^{-1} \times \eqref{32}$, 
we acquire 
\begin{equation}\label{35}
 \biggl(\frac{x_2}{x_1}+1\biggr)^q \equiv \gamma_1 
 \frac{x_2^{q^2}}{x_1}\biggl(1
 +\frac{\gamma_1}{x_2^{q^2-1}}\biggr) 
 \pmod{\frac{1}{2} +} 
\end{equation}
by \eqref{31}. 
Substituting \eqref{34} to \eqref{35}, 
and dividing it by $\gamma_1 x_2^{q^2}$, 
we obtain 
\begin{equation}\label{36}
 x_1 \equiv t^q\biggl(1+\frac{\gamma_1}{x_2^{q^2-1}}
 \biggr) \pmod{\epsilon_1 +}.
\end{equation}
Therefore we have $v(t)=0$. 
By considering 
$x_1^{-1} \times \eqref{33}$, 
we acquire 
\begin{equation}\label{37}
 \biggl(1+\frac{x_3^q}{t}\biggr)^q
 -\gamma_1 \frac{x_3^{q^2}}{t^qx_2^{q^2-1}} 
 \equiv 
 \gamma_2 \biggl(\frac{x_2^q}{t}
 +\biggl(\frac{x_3}{x_1}\biggr)^q
 \biggr) 
 \pmod{\epsilon_1 +}
\end{equation}
by \eqref{31}, \eqref{34} and \eqref{36}. 
We define a parameter $Z_0$ by 
\begin{equation}\label{38}
 1+\frac{x_3^q}{t}=\gamma_3 Z_0. 
\end{equation} 
We note that $v(Z_0 ) \geq 0$. 
Substituting this 
to \eqref{37}, 
and dividing 
it by $\gamma_2$, 
we obtain 
\begin{equation}\label{39}
 Z_0 ^q \equiv 
 \frac{x_2^q}{t}+
 \biggl(\frac{x_3}{x_1}\biggr)^q
 +\gamma_2^{q-1}
 \frac{x_3^{q^2}}
 {t^qx_2^{q^2-1}} 
 \pmod{\epsilon_2 +}.
\end{equation}
By \eqref{38} and \eqref{39}, 
we acquire 
\begin{equation}\label{40}
 \biggl(Z_0 +\frac{x_2}{x_3}
 -\frac{x_3}{x_1}\biggr)^q 
 \equiv \gamma_3
 \biggl(\frac{x_2}{x_3}\biggr)^q Z_0
 +\gamma_2^{q-1}\frac{x_3^{q^2}}{t^qx_2^{q^2-1}} 
 \pmod{\epsilon_2 +}.
\end{equation}
We introduce 
a new parameter $Z$ as 
\begin{equation}\label{41'}
 Z_0 +\frac{x_2}{x_3}-\frac{x_3}{x_1}
 =\gamma_4 \frac{x_2}{x_3} Z. 
\end{equation}
We note that $v(Z) \geq 0$. 
Substituting this to 
the left hand 
side of the congruence
\eqref{40}, and 
dividing it by 
$\gamma_3(x_2/x_3)^q$,
we acquire 
\begin{equation}\label{qz}
 Z^q \equiv Z_0 +\gamma_3^{q^2-q-1}
 \frac{x_3^{q(q+1)}}{t^qx_2^{q^2+q-1}} 
 \pmod{\epsilon_3 +}.
\end{equation}
By substituting 
\eqref{41'} to \eqref{qz}, 
we obtain 
\begin{equation}\label{41}
 Z^q+x_3^{q^2-1}(1-\gamma_4 Z)
 +x_3^{-(q^2-1)} \equiv 
 -\gamma_3^{q^2-q-1}
 x_3^{-q(q^2-1)(q+1)} 
 \pmod{\epsilon_3 +} 
\end{equation}
by \eqref{33}, \eqref{36} and 
\eqref{38}. 
Note that we have 
$v(\gamma_3 ^{q^2-q-1}) > \epsilon_3$, 
if $q \neq 2$. 
\begin{prop}\label{wee}
The reduction of the space 
$\mathbf{Z}^0 _{1,1}$
is isomorphic to the affine curve defined by 
$Z^q+x_3^{q^2-1}+x_3^{-(q^2-1)}=0$. 
This affine curve has genus $0$ and singularities at 
$x_3 \in \mathcal{S}_1$. 
\end{prop}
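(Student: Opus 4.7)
The plan is to read off the defining equation of the reduction directly from the congruence \eqref{41}, which was derived above through a careful sequence of changes of variables, and then to analyze the geometry of the resulting plane curve in order to extract both its genus and its singular locus. The successive substitutions introducing $\tilde u$, $x_2$, $x_1$, $t$, $Z_0$, and finally $Z$ express all these quantities as analytic functions of $x_3$ and $Z$ on the locus cut out by the valuation conditions defining $\mathbf{Z}^0_{1,1}$, so $(x_3, Z)$ will give coordinates on the reduction. Since $\gamma_i = \varpi^{(q-1)/(2q^i)}$ has strictly positive valuation for every $i \leq 4$ and every $q \geq 2$, both of the correction terms $\gamma_4 x_3^{q^2-1} Z$ and $\gamma_3^{q^2-q-1} x_3^{-q(q^2-1)(q+1)}$ in \eqref{41} reduce to zero modulo the maximal ideal. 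The only mild subtlety is that for $p = 2$ the valuation $v(\gamma_3^{q^2-q-1})$ equals $\epsilon_3$ rather than strictly exceeding it, but it is still strictly positive, which is all that matters for passage to the reduction. I conclude that $\overline{\mathbf{Z}}^0_{1,1}$ is cut out by $Z^q + x_3^{q^2-1} + x_3^{-(q^2-1)} = 0$.

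For the genus, I would observe that the rational function $h(x_3) = -x_3^{q^2-1} - x_3^{-(q^2-1)}$ is not a $p$-th power in $k^{\mathrm{ac}}(x_3)$, since the exponent $q^2 - 1$ is coprime to $p$. Hence $k^{\mathrm{ac}}(x_3, Z)$ is a purely inseparable extension of $k^{\mathrm{ac}}(x_3)$ of degree exactly $q$, and must coincide with $k^{\mathrm{ac}}(x_3)^{1/q} = k^{\mathrm{ac}}(x_3^{1/q})$. This is a rational function field, so the smooth compactification of $\overline{\mathbf{Z}}^0_{1,1}$ has genus $0$.

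For the singular locus, I would clear denominators and work with $F(x_3, Z) = Z^q x_3^{q^2-1} + x_3^{2(q^2-1)} + 1$. The partial $\partial F/\partial Z = q Z^{q-1} x_3^{q^2-1}$ vanishes identically in characteristic $p$, so a singular point is characterized by $F = 0$ together with $\partial F/\partial x_3 = 0$. A short computation shows $\partial F/\partial x_3$ factors as $(q^2-1) x_3^{q^2 - 2}\bigl(Z^q + 2 x_3^{q^2-1}\bigr)$. For $p$ odd, combining $Z^q = -2 x_3^{q^2-1}$ with $F = 0$ forces $x_3^{2(q^2-1)} = 1$, i.e. $x_3 \in \mu_{2(q^2-1)}(k^{\mathrm{ac}})$. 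For $p = 2$ the factor of $2$ kills one term, the singular condition collapses to $Z = 0$ and $x_3^{q^2-1} = 1$, and in characteristic $2$ one also has $\mu_{2(q^2-1)}(k^{\mathrm{ac}}) = \mu_{q^2-1}(k^{\mathrm{ac}})$. Either way the singular set is precisely $\mathcal{S}_1$, as claimed. I do not foresee a serious obstacle at this level: the only step requiring attention is the $p = 2$ bookkeeping, both in confirming that the $\gamma_3^{q^2-q-1}$ term still has positive valuation and in the singularity analysis where the characteristic forces a different reduction; the real difficulty will lie in the subsequent section, where the tubular neighborhoods of these singularities must be analyzed carefully, particularly when $q$ is even.
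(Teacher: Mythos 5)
Your proposal is correct and follows the paper's own approach: Proposition~\ref{wee} is obtained by simply reading off the congruence~\eqref{41} modulo $0+$, after which the genus and singularity claims are routine verifications on the resulting plane curve, which you carry out explicitly (and correctly, in both parities of $p$). One small slip worth correcting: you attribute the equality $v(\gamma_3^{q^2-q-1})=\epsilon_3$ to ``$p=2$,'' but it holds only when $q=2$ (for $q=4$, say, the valuation is $33/128>\epsilon_3=1/128$); since all you need here is strict positivity, this does not affect the argument.
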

\begin{proof}
The required assertion 
follows from 
the congruence \eqref{41}
modulo $0+$. 
\end{proof}

\begin{defn}
1.\ For any 
$\zeta \in \mathcal{S}_1$, 
we define a subspace 
\[
 \mathcal{D}_{\zeta} \subset 
 \mathbf{Z}^0 _{1,1} \times_{{\widehat{K}^{\mathrm{ur}}}} 
 {\widehat{K}^{\mathrm{ur}}}(\omega_3 ) 
\] 
by
$\bar{x}_3=\zeta$. 
We call the space 
$\mathcal{D}_{\zeta}$
a singular residue 
class of $\mathbf{Z}^0_{1,1}$. \\
2.\ We define 
a subspace 
\[
 \mathbf{Z}_{1,1} \subset 
 \mathbf{Z}^0_{1,1}\times_{{\widehat{K}^{\mathrm{ur}}}} 
 {\widehat{K}^{\mathrm{ur}}}(\omega_3 ) 
\]
by the complement 
$\mathbf{Z}^0_{1,1}\times_{{\widehat{K}^{\mathrm{ur}}}} 
 {\widehat{K}^{\mathrm{ur}}}(\omega_3 ) 
 \backslash 
 \bigcup_{\zeta \in \mathcal{S}_1}
 \mathcal{D}_{\zeta}$. 
\end{defn}
\begin{prop}\label{Z11}
The reduction of the space 
$\mathbf{Z} _{1,1}$ 
is isomorphic to the affine curve defined by 
$Z^q +x_3 ^{q^2-1}+x_3 ^{-(q^2-1)} =0$ with $x_3 
\notin \mathcal{S}_1$. 
\end{prop}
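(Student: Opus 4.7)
The plan is to deduce Proposition~\ref{Z11} as a formal consequence of Proposition~\ref{wee} together with the definition of the singular residue classes $\mathcal{D}_{\zeta}$. The only real content is to check that forming the complement of the $\mathcal{D}_{\zeta}$ inside the ambient affinoid has the effect, on the level of reductions, of removing exactly the singular points $x_3 \in \mathcal{S}_1$ from $\overline{\mathbf{Z}}^0_{1,1}$.

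First I would observe that the extension $\widehat{K}^{\mathrm{ur}}(\omega_3)/\widehat{K}^{\mathrm{ur}}$ is totally ramified, so both fields share the residue field $k^{\mathrm{ac}}$. Consequently the reduction of the base change $\mathbf{Z}^0_{1,1} \times_{\widehat{K}^{\mathrm{ur}}} \widehat{K}^{\mathrm{ur}}(\omega_3)$ is canonically identified with $\overline{\mathbf{Z}}^0_{1,1}$, which by Proposition~\ref{wee} is cut out by
\[
 Z^q + x_3^{q^2-1} + x_3^{-(q^2-1)} = 0
\]
and has singular locus exactly $\{x_3 \in \mathcal{S}_1\}$.

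Second, by definition $\mathcal{D}_{\zeta}$ is the locus $\bar{x}_3 = \zeta$ inside the base-changed affinoid, i.e.\ the preimage under the specialization map of the closed point $x_3 = \zeta$ on $\overline{\mathbf{Z}}^0_{1,1}$. Hence $\mathbf{Z}_{1,1}$ is the preimage under specialization of the Zariski-open complement $\overline{\mathbf{Z}}^0_{1,1} \setminus \mathcal{S}_1$. By the standard compatibility of the rigid-analytic specialization map with affinoid localization (removing finitely many closed points from a reduction corresponds to inverting a function cutting them out on the affinoid), the reduction of $\mathbf{Z}_{1,1}$ is exactly $\overline{\mathbf{Z}}^0_{1,1} \setminus \mathcal{S}_1$, which is the affine curve $Z^q + x_3^{q^2-1} + x_3^{-(q^2-1)} = 0$ subject to the open condition $x_3 \notin \mathcal{S}_1$.

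There is no real obstacle in this argument: the statement is essentially a bookkeeping corollary of Proposition~\ref{wee}. The only point requiring any care is justifying the passage between the analytic complement of the tubular neighbourhoods and the corresponding open subscheme of the reduction, which is a routine consequence of the functorial behaviour of the specialization map on affinoid subdomains.
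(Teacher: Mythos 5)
Your argument is correct and follows the same route as the paper, which simply asserts that the claim follows from Proposition~\ref{wee}. You have merely filled in the routine bookkeeping (base change to the totally ramified extension leaves the reduction unchanged; removing the residue classes $\mathcal{D}_{\zeta}$ corresponds on reductions to deleting the finitely many closed points in $\mathcal{S}_1$), which the paper leaves implicit.
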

\begin{proof}
This follows from Proposition \ref{wee}. 
\end{proof}

\subsection{Analysis of the singular residue classes of $\mathbf{Z}^0_{1,1}$}\label{sec3}
In this subsection, we analyze 
the singular residue classes 
$\{ \mathcal{D}_{\zeta} \}_{\zeta \in \mathcal{S}_1}$ 
of $\mathbf{Z}^0_{1,1}$. 
If $q$ is odd, 
the space 
$\mathcal{D}_{\zeta}$ 
is a basic 
wide open space with an underlying 
affinoid 
$\mathbf{X}_{\zeta}$, whose reduction 
$\overline{\mathbf{X}}_{\zeta}$ 
is isomorphic to the affine curve defined by $z^q-z=w^2$. 
On the other hand, if 
$q$ is even, 
the situation 
is slightly complicated, 
because 
the space $\mathcal{D}_{\zeta}$ 
is not basic wide open. 
Hence, we have to 
cover $\mathcal{D}_{\zeta}$ 
by smaller basic wide open spaces. 
As a result, 
in $\mathcal{D}_{\zeta}$, 
we find an affinoid 
$\mathbf{P}^0_{\zeta}$, whose reduction is 
isomorphic to the affine curve defined by 
$z_{f+1}^2 =w_1 (w_1 ^{q-1} -1)^2$. 
This affine curve has $q-1$ singular points at 
$w_1 \in k^{\times}$. 
Then, by analyzing 
the tubular neighborhoods 
of these singular points, 
we find an affinoid 
$\mathbf{X}_{\zeta,\zeta'} \subset \mathbf{P}^0_{\zeta}$ 
for each $\zeta' \in k^{\times}$, 
whose reduction is isomorphic to the affine curve defined by
$z^2 +z=w^3$. 

\subsubsection{$q$\ :\ odd}\label{qodd}
We assume that $q$ is odd. 
For each 
$\zeta \in \mu_{2(q^2-1)}(k^{\mathrm{ac}})$, 
we define an affinoid 
$\mathbf{X}_{\zeta} 
\subset 
\mathcal{D}_{\zeta}$ and 
compute its reduction 
$\overline{\mathbf{X}}_{\zeta}$. 

For $\iota \in \mu_2 (k^{\mathrm{ac}})$, 
we choose an element 
$c'_{1,\iota} \in \mathcal{O}_{K^{\mathrm{ac}}} ^{\times}$ 
such that $\bar{c}'_{1,\iota} =-2\iota$ and 
$c'^{2q} _{1,\iota} =4(1-\gamma_4 c'_{1,\iota})$. 
We take $\zeta \in \mu_{2(q^2-1)}(k^{\mathrm{ac}})$. 
We put $c_{1,\zeta} =c'_{1, \zeta^{q^2 -1}}$, 
and define 
$c_{2,\zeta} \in \mathcal{O}_{K^{\mathrm{ac}}} ^{\times}$ 
by $c_{2,\zeta} ^{q^2 -1} =-2c_{1,\zeta} ^{-q}$ 
and $\bar{c}_{2,\zeta} =\zeta$. 
We put 
\[
 a_{\zeta} =\omega_4^{q-1}c_{2,\zeta}^{q+1}, \quad 
 b_{\zeta} =-2\zeta^{q^2 -1} \omega_3 ^{\frac{q-1}{2}} 
 c_{1,\zeta} ^{-q} c_{2,\zeta}^{\frac{q+3}{2}}. 
\]
Note that we have $v(a_{\zeta} )=1/(2q^4)$ 
and $v(b_{\zeta} )=1/(4q^3)$. 

For an element 
$\zeta \in \mu_{2(q^2-1)}(k^{\mathrm{ac}})$, 
we define an affinoid 
$\mathbf{X}_{\zeta}$ by 
$v(x_3-c_{2,\zeta} ) \geq 1/(4q^3)$. 
We change variables as 
\[
 Z=a_{\zeta} z +c_{1,\zeta}, 
 \quad 
 x_3=b_{\zeta} w +c_{2,\zeta}. 
\]
Then, we acquire 
\[
 a_{\zeta} ^q (z^q -z-w^2 ) \equiv 0 \pmod{\epsilon_3 +} 
\]
by \eqref{41}. 
Dividing this by $a_{\zeta} ^q$, 
we have 
$z^q-z=w^2 \pmod{0+}$. 
Hence, the 
reduction of 
$\mathbf{X}_{\zeta}$
is isomorphic to the affine curve defined 
by $z^q-z=w^2$. 
\begin{prop}\label{Xodd}
For each 
$\zeta \in \mu_{2(q^2-1)}(k^{\mathrm{ac}})$, 
the reduction $\overline{\mathbf{X}}_{\zeta}$ 
is isomorphic to the affine curve defined by $z^q-z=w^2$ and 
the complement 
$\mathcal{D}_{\zeta} \setminus \mathbf{X}_{\zeta}$ 
is an open annulus. 
\end{prop}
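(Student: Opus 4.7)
The first assertion of the proposition is already established in the lines immediately preceding it: substituting $Z = a_\zeta z + c_{1,\zeta}$ and $x_3 = b_\zeta w + c_{2,\zeta}$ into \eqref{41} and using the defining relations $c_{2,\zeta}^{q^2-1} = -2 c_{1,\zeta}^{-q}$ and $c_{1,\zeta}^{2q} = 4(1 - \gamma_4 c_{1,\zeta})$ yields $a_\zeta^q (z^q - z - w^2) \equiv 0 \pmod{\epsilon_3 +}$, and since $v(a_\zeta^q) = 1/(2q^3) < \epsilon_3$, division by $a_\zeta^q$ gives $z^q - z = w^2$ modulo $0+$. The proof therefore reduces to the annulus assertion for $\mathcal{D}_\zeta \setminus \mathbf{X}_\zeta$.

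In the coordinates $(z, w)$, the affinoid $\mathbf{X}_\zeta$ corresponds to $v(w) \geq 0$, while the singular residue class $\mathcal{D}_\zeta$, defined by $\bar{x}_3 = \zeta$, corresponds to $v(b_\zeta w) > 0$, that is $v(w) > -1/(4q^3)$. Thus $\mathcal{D}_\zeta \setminus \mathbf{X}_\zeta$ projects by $w$ into the standard annulus $-1/(4q^3) < v(w) < 0$; the task is to show that this projection is an isomorphism onto it. My plan is to extract this from the geometry of the smooth compactification of the reduction. The curve $\overline{\mathbf{X}}_\zeta \colon z^q - z = w^2$ has $z$-derivative $-1$ in characteristic $p$, so the $w$-projection is \'etale of degree $q$ on the affine part, and the only ramification of the smooth compactification $\overline{\mathbf{X}}_\zeta^{\mathrm{c}} \to \mathbb{P}^1_w$ lies over $w = \infty$. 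Because $q$ is odd and $\gcd(2,q) = 1$, the asymptotic $z \sim w^{2/q}$ forces a single, totally ramified place of $\overline{\mathbf{X}}_\zeta^{\mathrm{c}}$ above $w = \infty$; Riemann--Hurwitz then recovers the asserted genus $(q-1)/2$, and more importantly the uniqueness of the missing point on the smooth compactification.

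By the standard correspondence between missing points on a smooth reduction and annular ends of the ambient wide open, this unique missing point translates into a single open annulus as the complement. Concretely, Hensel's lemma applied to \eqref{41} as a degree-$q$ polynomial in $Z$ over the $w$-annulus $-1/(4q^3) < v(w) < 0$ picks out within $\mathcal{D}_\zeta$ a single analytic branch on which $w$ is a uniformizer, and this branch is precisely $\mathcal{D}_\zeta \setminus \mathbf{X}_\zeta$. The main obstacle in this plan is the verification that only one of the $q$ algebraic branches of \eqref{41} lies inside $\mathcal{D}_\zeta$; this is exactly what the total ramification at infinity of the Artin--Schreier reduction supplies, and is the essential use of the hypothesis that $q$ is odd.
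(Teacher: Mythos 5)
Your treatment of the first assertion is fine and simply recalls the computation already done in the lines preceding the proposition. For the second assertion there is a genuine gap, concentrated in two related claims: that only one branch of \eqref{41} over the $w$-annulus lies in $\mathcal{D}_\zeta$, and that $w$ would then uniformize the complement.

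On the region $0 < v(w') < 1/(4q^3)$, with $z' = Z - c_{1,\zeta}$ and $w' = x_3 - c_{2,\zeta}$, the congruence coming from \eqref{41} is $z'^q \equiv w'^2 \pmod{2v(w')+}$. For fixed $w'$, the $q$ solutions $z'$ differ by $q$-th roots of unity; they all have valuation $2v(w')/q > 0$ and hence the same reduction, so \emph{all} $q$ of them lie in $\mathcal{D}_\zeta$. The projection of $\mathcal{D}_\zeta \setminus \mathbf{X}_\zeta$ to the annulus $-1/(4q^3) < v(w) < 0$ therefore has degree $q$, not $1$, and $w$ is not a coordinate on the complement. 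Your own observation---that the compactified reduction of $z^q - z = w^2$ is totally ramified of degree $q$ over $w = \infty$---says exactly this on the special fiber: near the missing point, $w^{-1}$ is the $q$-th power of a local uniformizer, so it cannot restrict to a degree-one map on the corresponding annular end. The proposed Hensel-plus-branch-selection step thus fails, and there is no ``single analytic branch on which $w$ is a uniformizer.''

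The step that actually closes the argument---and which your proposal omits---is the paper's substitution $w' = z'' z'^{(q-1)/2}$, which exploits $q$ odd (so $(q-1)/2$ is an integer) to resolve the degree-$q$ covering: one deduces $z''^2 \equiv z' \pmod{v(z')+}$, so that $z''$ (with $0 < v(z'') < 1/(4q^4)$), and not $w$, is a global coordinate on $\mathcal{D}_\zeta \setminus \mathbf{X}_\zeta$. This simultaneously shows the complement is connected (one annulus rather than $q$ sheets) and identifies it explicitly. Finally, invoking the correspondence between missing points on the smooth compactified reduction and annular ends is circular here: that correspondence presupposes that $(\mathcal{D}_\zeta,\mathbf{X}_\zeta)$ is already known to form a basic wide open pair, which is precisely what this proposition is establishing.
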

\begin{proof}
We have already proved the first assertion. 
We prove the second assertion. 
We change variables as 
\[
 Z=z' +c_{1,\zeta}, 
 \quad 
 x_3=w' +c_{2,\zeta}
\] 
with 
$0<v(w')<1/(4q^3)$. 
Substituting them to 
\eqref{41}, we obtain 
\[
 z'^q \equiv w'^2\ \pmod{2v(w')+}. 
\]
Note that we have $0<v(z')<1/(2q^4)$. 
By setting $w'=z'' z'^{(q-1)/2}$, we acquire
\[
 z''^2 \equiv z' \pmod{v(z')+}. 
\]
Hence, we can see that 
$z'$ is written as a function of $z''$. 
Then $w'$ is also 
written as a function of $z''$. 
Therefore, 
$(\mathcal{D}_{\zeta} \setminus \mathbf{X}_{\zeta})
(\mathbf{C})$ is identified with 
$\{z'' \in \mathbf{C} \mid 0<v(z'')<1/(4q^4)\}$. 
\end{proof}

\subsubsection{$q$\ :\ even}\label{qeven}
We assume that 
$q$ is even. 
We put 
\[
 Z_1 =x_3^{q^2-1}. 
\]
Then, the congruence 
\eqref{41}
has the following form: 
\begin{equation}\label{zz1}
 Z^q+Z_1 (1-\gamma_4Z)+Z_1 ^{-1} \equiv 
 -\gamma_3 ^{q^2-q-1} Z_1 ^{-q(q+1)} 
  \pmod{\epsilon_3 +}. 
\end{equation}

\noindent \textbf{1.\ Projective lines}\ \ 
For each $\zeta \in k_2^{\times}$, 
we define subaffinoid 
$\mathbf{P}^0_{\zeta} \subset \mathcal{D}_{\zeta}$ by 
$v(Z) \geq 1/(4q^4)$. 
We change variables as 
\[
 Z=\varpi^{\frac{1}{4q^4}} w_1, 
 \quad 
 Z_1=1+\varpi^{\frac{1}{8q^3}} z_1. 
\]
Substituting these to \eqref{zz1} 
and dividing 
it by $\varpi^{1/(4q^3)}$, 
we acquire 
\begin{equation}\label{p1}
 (z_1 +w_1 ^{\frac{q}{2}})^2+\varpi^{\frac{1}{8q^3}} z_1^3 
 +\varpi^{\frac{1}{4q^3}} z_1^4 
 +\varpi^{\frac{q-1}{4q^4}} w_1+\varpi^{\frac{3q-2}{8q^4}} z_1w_1
 \equiv \varpi^{\frac{2q-3}{4q^3}} 
 \pmod{\frac{1}{4q^3} +}. 
\end{equation}
We can check that $v(z_1) \geq 0$. 
We set $q=2^f$ and 
put 
\[
 l_i =\frac{(2^i-1)q}{2^i}, \quad 
 m_i =\frac{1}{2^{i+2} q^3} 
\]
for $1 \leq i \leq f+1$. 
Furthermore, we define parameters 
$z_i$ for $2 \leq i \leq f+1$ by 
\begin{equation}\label{aqw11}
 z_i + w_1 ^{l_i} 
 =\varpi^{m_{i+1}} z_{i+1} \quad \textrm{for $1 \leq i \leq f$}. 
\end{equation}
\begin{lem}\label{eq2}
We assume that $v(Z) \geq 1/(4q^4)$. 
Then we have 
\begin{equation}\label{pred}
 z_{f+1} ^2 + w_1 ^{2q-1} +w_1 +\varpi^{\frac{1}{8q^4}} z_{f+1} w_1^q  
 \equiv 
 (q/2) \varpi^{\frac{1}{4q^4}}  \pmod{\frac{1}{4q^4} +}. 
\end{equation}
\end{lem}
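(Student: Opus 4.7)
The plan is to iterate the changes of variables \eqref{aqw11}, starting from the congruence \eqref{p1}, and at each stage repackage the dominant part into a perfect square ready for the next substitution. Observe that \eqref{p1} already has the form $(z_1 + w_1^{l_1})^2 + (\textrm{lower order}) \equiv (\textrm{constant})$ with $l_1 = q/2$. At step $i$ I would substitute $z_i = \varpi^{m_{i+1}} z_{i+1} - w_1^{l_i}$, which converts the distinguished square $(z_i + w_1^{l_i})^2$ into $\varpi^{2 m_{i+1}} z_{i+1}^2$; dividing the whole congruence by $\varpi^{2 m_{i+1}}$ then lowers the ambient precision by $2 m_{i+1}$.

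The key combinatorial fact sustaining the iteration is that the binomial expansion of $\varpi^{2 m_{i+1}} z_i^3$ produces a ``constant in $z_i$'' piece of the form $-\varpi^{2 m_{i+1}} w_1^{3 l_i}$, and that the exponents $l_i$ are chosen so that after dividing by $\varpi^{2 m_{i+1}}$ this combines with the new $z_{i+1}^2$ to form the next distinguished square $(z_{i+1} + w_1^{l_{i+1}})^2$, via the characteristic-two identity $a^2 + b^2 \equiv (a+b)^2 \pmod{2}$. All remaining terms in the binomial expansions carry factors of the small integers $3, 4, 6, \ldots$ which collapse to $1$ or $0$ modulo $2$, with extra valuation at least $v(2) \geq 1$, well above the tracked precision.

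After performing all $f = \log_2 q$ iterations, the total precision removed is
\begin{equation*}
\sum_{i=1}^{f} 2 m_{i+1} = \frac{1}{4 q^3} \sum_{i=1}^{f} 2^{-i} = \frac{q-1}{4 q^4},
\end{equation*}
turning $1/(4 q^3) +$ into the claimed $1/(4 q^4) +$. Of the surviving terms: $z_{f+1}^2$ is the final accumulated square after the $f$-th substitution; $w_1^{2q-1}$ equals $w_1^{2 l_{f+1}}$ propagated from the stage-$f$ constant part; $w_1$ comes from the $\varpi^{(q-1)/(4 q^4)} w_1$ in \eqref{p1}, whose prefactor is cancelled exactly by the accumulated division; and the cross term $\varpi^{1/(8 q^4)} z_{f+1} w_1^q$ descends from the $3 \varpi^{1/(8 q^3)} z_1 w_1^q$ piece in the first-stage binomial expansion of $\varpi^{1/(8 q^3)} z_1^3$, with final prefactor equal to $3 m_2 + m_3 + \cdots + m_{f+1} - (q-1)/(4 q^4) = 1/(8 q^4)$ by a short geometric-series computation. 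The right-hand side $\varpi^{(2q-3)/(4 q^3)}$ likewise becomes $\varpi^{(2q-3)/(4 q^3) - (q-1)/(4 q^4)} = \varpi^{(2 q^2 - 4 q + 1)/(4 q^4)}$.

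The principal obstacle is the careful bookkeeping of the non-leading terms. At each step one must verify that every cross term produced by the binomial expansions of $z_i^3$ and $z_i^4$ — together with those introduced when re-expressing $z_{i+1}$-dependent remainders in terms of $w_1$ — either feeds cleanly into the next distinguished square or has valuation strictly greater than the final precision $1/(4 q^4)$. This requires systematic use of $v(2) \geq 1$ together with the congruences $3 \equiv 1$ and $-1 \equiv 1 \pmod{2}$ to collapse signs and small-integer coefficients in residue characteristic two. The edge case $q = 2$, where $f = 1$ and only a single substitution is performed, deserves a separate direct verification because several of the arithmetic inequalities used to discard cross terms become tight.
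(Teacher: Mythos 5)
Your high-level strategy — iterate the substitutions \eqref{aqw11}, divide by $\varpi^{2m_{i+1}}$ at each stage, and track the surviving pieces — is the paper's approach, and all your terminal bookkeeping (the cross-term prefactor $m_{f+1}=1/(8q^4)$, the origin of $w_1^{2q-1}=w_1^{2l_{f+1}}$ and of the untouched $w_1$, the right-hand-side exponent) checks out. But the argument as stated has two genuine gaps.

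First, the ``key combinatorial fact'' you identify is wrong for $i\ge 2$. After the first substitution there is no $z_i^3$ term at all: the cubic piece $\varpi^{m_1}z_1^3$ from \eqref{p1} contributes a $z_2^3$ term of coefficient valuation $3m_2>1/(8q^3)$, which is already absorbed into the error, and the paper's inductive congruence \eqref{p2} accordingly has no cubic term. Moreover the identity $3l_i=2l_{i+1}$ you invoke holds only at $i=1$ (both sides equal $3q/2$); for $i\ge 2$ it fails. What sustains the iteration at the later steps is the linear cross term $\varpi^{m_i}z_iw_1^q$: substituting $z_i=\varpi^{m_{i+1}}z_{i+1}-w_1^{l_i}$ and dividing by $\varpi^{m_i}$ produces $\varpi^{m_{i+1}}z_{i+1}w_1^q + w_1^{l_i+q}$, and it is the identity $l_i+q=2l_{i+1}$ (not $3l_i=2l_{i+1}$) that supplies the $w_1^{2l_{i+1}}$ needed to close the next distinguished square. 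An induction built around a $z_i^3$ term would have nothing to run on after stage one.

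Second, you do not address the borderline term that appears at the first step. After eliminating $z_1$ and dividing by $\varpi^{1/(8q^3)}$, the $-3z_2^2w_1^{q/2}$ piece of $z_1^3$ (with $-3\equiv 1$) together with the $w_1^{2q}$ piece of $z_1^4$ assemble into $\varpi^{2m_2}w_1^{q/2}(z_2+w_1^{3q/4})^2$, whose coefficient valuation is exactly $2m_2=1/(8q^3)$, squarely on the boundary of the working precision. Your appeal to $v(2)\ge 1$ does not help: the collapsed coefficient here is $1$, not $0$. The paper deals with this by writing the intermediate congruence with that term retained and reading off a lower bound on $v(z_2+w_1^{3q/4})$, which then pushes the term strictly past the precision threshold. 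Some such bootstrap is indispensable, and it is not covered by what you call ``careful bookkeeping of the non-leading terms''; this is not a $q=2$ edge case but happens for every even $q$.
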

\begin{proof}
If $q=2$, 
we can check that 
\begin{equation}\label{f1}
z_2^2+w_1^3+w_1+\varpi^{\frac{1}{128}} z_2 w_1^2 \equiv \varpi^{\frac{1}{64}}
(w_1 z_2^2+z_1^4+w_1^2+1) \pmod{\frac{1}{64}+} 
\end{equation}
by 
\[
 z_1=-w_1+\varpi^{\frac{1}{128}} z_2. 
\]
We have $v(z_2^2+w_1^3+w_1)>0$. 
Therefore, we obtain 
\[
 w_1 z_2^2+z_1^4+w_1^2 \equiv 
 w_1(z_2^2+w_1^3+w_1) \equiv 0 \pmod{0+}. 
\]
Hence, the required assertion in this case 
follows from \eqref{f1}.  
Assume that $f \geq 2$. 
For $1 \leq i \leq f+1$, we put 
\[
 n_i =\frac{q-2^{i-1}}{2^{i+1} q^4}. 
\]
We prove 
\begin{equation}\label{p2}
 (z_i +w_1 ^{l_i})^2+\varpi^{m_i} z_i w_1^q +\varpi^{n_i} w_1 
 \equiv 0
 \pmod{\frac{1}{2^{i+1} q^3} +} 
\end{equation}
for $2 \leq i \leq f+1$ by induction on $i$. 
Eliminating $z_1$ from \eqref{p1} by \eqref{aqw11} 
and dividing it by $\varpi^{1/(8q^4)}$, 
we obtain
\[
 (z_2 +w_1^{\frac{3q}{4}} )^2 +\varpi^{\frac{1}{16q^3}} z_2 w_1 ^q 
 +\varpi^{\frac{q-2}{8q^4}} w_1 
 +\varpi^{\frac{1}{8q^3}} w_1 ^{\frac{q}{2}} 
 (z_2 +w_1^{\frac{3q}{4}} )^2 \equiv 
0 \pmod{\frac{1}{8q^3} +}. 
\]
This shows 
\[
 v \Bigl( z_2 +w_1^{\frac{3q}{4}} \Bigr) \geq \frac{1}{32q^3}. 
\]
Hence we have \eqref{p2} for $i=2$. 
Assuming \eqref{p2} for $i$. 
Eliminating $z_i$ from \eqref{p2} by \eqref{aqw11} 
and dividing it by $\varpi^{m_i}$, 
we obtain \eqref{p2} for $i+1$. 
Hence, we have \eqref{p2} for $f+1$, 
which is equivalent to \eqref{pred}. 
\end{proof}
\begin{prop}\label{Peven}
For each $\zeta \in k_2^{\times}$, 
the reduction 
$\overline{\mathbf{P}}^0_{\zeta}$ 
is isomorphic to the affine curve defined by 
$z_{f+1}^2 =w_1 (w_1 ^{q-1} -1)^2$, 
which has genus $0$ and singularities at 
$w_1 \in k^{\times}$, 
and the complement 
$\mathcal{D}_{\zeta} 
\setminus \mathbf{P}_{\zeta}^0$ 
is an open annulus. 
\end{prop}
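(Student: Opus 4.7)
The plan is to read off the defining equation of $\overline{\mathbf{P}}^0_{\zeta}$ from the congruence \eqref{pred}, compute its genus and locate its singular points, and then analyze the complement $\mathcal{D}_{\zeta} \setminus \mathbf{P}^0_{\zeta}$ by the same method as in the proof of Proposition \ref{Xodd}.

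First, I would reduce \eqref{pred} modulo $0+$. Since $q$ is even the residue field has characteristic $2$, so the term $\varpi^{1/(8q^4)} z_{f+1} w_1^q$ and the right-hand side vanish, leaving $z_{f+1}^2 + w_1^{2q-1} + w_1 = 0$. Using the characteristic-$2$ identity $(w_1^{q-1} - 1)^2 = w_1^{2q-2} + 1$, this is equivalent to $z_{f+1}^2 = w_1(w_1^{q-1} - 1)^2$, as claimed. To justify that this really presents the affinoid reduction, I would check that on the locus $v(Z) \geq 1/(4q^4)$ inside $\mathcal{D}_{\zeta}$ the parameters $w_1$ and $z_{f+1}$ generate the affinoid algebra: they are obtained from the analytic coordinates $(Z, Z_1)$ by the invertible substitutions \eqref{aqw11} together with the change $Z = \varpi^{1/(4q^4)} w_1$, $Z_1 = 1 + \varpi^{1/(8q^3)} z_1$, and \eqref{pred} exhibits $z_{f+1}$ as integral of degree $2$ over $w_1$.

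Next I would compute genus and singularities. The partial derivatives of $z_{f+1}^2 - w_1(w_1^{q-1}-1)^2$ vanish precisely when $z_{f+1} = 0$ and $w_1^{q-1} - 1 = 0$, i.e. at the $q-1$ points $w_1 \in \mu_{q-1}(k^{\mathrm{ac}}) = k^{\times}$. To compute the geometric genus, I would normalize by setting $z_{f+1} = (w_1^{q-1} - 1) u$; away from the singular points this gives $u^2 = w_1$, which in characteristic $2$ is a purely inseparable bijection of $\mathbb{A}^1$, so the normalization of the smooth compactification is $\mathbb{P}^1$, of genus $0$.

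Finally, for the complement $\mathcal{D}_{\zeta} \setminus \mathbf{P}^0_{\zeta}$, my plan is to work in the region $0 < v(Z) < 1/(4q^4)$ of $\mathcal{D}_{\zeta}$ and mimic the end of the proof of Proposition \ref{Xodd}. Using \eqref{zz1} together with $Z_1 \equiv 1$ on $\mathcal{D}_{\zeta}$, I expect exactly one combination of the terms $Z_1 - 1$, $Z_1^{-1} - 1$ and $Z^q$ to dominate, so that $Z_1 - 1$ (equivalently $x_3 - \zeta$) is recovered as an analytic function of a single parameter ranging over an interval of valuations, exhibiting the complement as an open annulus. The main technical obstacle I anticipate is managing the cascade of change-of-variable steps \eqref{aqw11} carefully enough to pin down the precise reduction of $\mathbf{P}^0_{\zeta}$ and to control the boundary width in the annulus calculation; by contrast, once the defining equation is in hand, the genus-$0$ identification via normalization is immediate.
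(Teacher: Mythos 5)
Your proposal is correct and follows essentially the same path as the paper: the defining equation, singular locus and genus-$0$ normalization are read off from the congruence \eqref{pred} modulo $0+$, and the complement is recognized as an open annulus by a cascade of variable changes patterned on \eqref{aqw11}/Lemma \ref{eq2}, culminating in a parameter $z_{f+2}'$ with $z_{f+2}'^2 \equiv Z$. You correctly flag that the cascade (rather than a single substitution of the form $w'=z''z'^{(q-1)/2}$ as in Proposition \ref{Xodd}, which requires $q$ odd) is the technical step to carry out; your normalization $z_{f+1}=(w_1^{q-1}-1)u$ and identification of the singular points at $w_1\in k^{\times}$ are both right.
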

\begin{proof}
The claim on $\overline{\mathbf{P}}^0_{\zeta}$ 
follows from 
the congruence \eqref{pred} modulo $0+$. 
We prove the last assertion. 
We change a variable as $Z_1=1+z_1'$ with 
$0<v(z_1')<1/(8q^3)$. 
Similarly as \eqref{aqw11}, 
we introduce parameters 
$\{z_i'\}_{2 \leq i \leq f+1}$ by 
$z_i'+Z^{l_i}=z_{i+1}'$ for $1 \leq i \leq f$. 
Then, by similar computations to 
those in the proof of Lemma \ref{eq2}, 
we obtain 
\[
 z_{f+1}'^2 \equiv Z^{2q-1} \pmod{2v(z_{f+1}')+}. 
\]
By setting $z_{f+2}'=Z^q/z_{f+1}'$, we obtain 
\[
 z_{f+2}'^2 \equiv Z \pmod{v(Z)+}. 
\]
Then we can see that 
all parameters $z_i'$ for $1 \leq i \leq f+1$ 
and $Z$ are written as functions of $z_{f+2}'$. 
Hence, 
$(\mathcal{D}_{\zeta} \setminus \mathbf{P}_{\zeta}^0)(\mathbf{C})$ is identified with 
$\{z_{f+2}' \in \mathbf{C} \mid 0<v(z_{f+2}')<1/(8q^4)\}$. 
\end{proof}

\noindent \textbf{2.\ Elliptic curves}\ \ 
For $\zeta' \in k^{\times}$, 
we choose 
$c_{2,\zeta'} \in \mathcal{O}^{\times}_{\mathbf{C}}$ 
such that $\bar{c}_{2,\zeta'} =\zeta'$ and 
\[
 c_{2,\zeta'} ^{4(q-1)} +1+\varpi^{\frac{1}{4q^4}} c_{2,\zeta'} ^{4q-3} =0, 
\]
and a square root $c_{2,\zeta'} ^{1/2}$ of $c_{2,\zeta'}$. 
Further, we choose $c_{1,\zeta'}$ such that 
\[
 c_{1,\zeta'} ^2 +\varpi^{\frac{1}{8q^4}} c_{2,\zeta'} ^q c_{1,\zeta'} 
 + c_{2,\zeta'} (c_{2,\zeta'} ^{2(q-1)} +1) 
 = \frac{q}{2} \varpi^{\frac{1}{4q^4}}, 
\]
and $b_{2,\zeta'}$ such that 
$b_{2,\zeta'} ^3 =\varpi^{1/(4q^4)} c_{2,\zeta'} ^4$. 
We put 
\[
 a_{1,\zeta'} =\varpi^{\frac{1}{8q^4}} c_{2,\zeta'} ^q, 
 \quad 
 b_{1,\zeta'}=c_{2,\zeta'} ^{\frac{2q-3}{2}} b_{2,\zeta'}. 
\]
For each $\zeta' \in k^{\times}$, 
we define a subspace 
$\mathcal{D}_{\zeta,\zeta'} \subset \mathbf{P}^0_{\zeta}$
by 
$v(w_1 -c_{2,\zeta'})>0$. 
Furthermore, 
we define 
$\mathbf{X}_{\zeta,\zeta'} \subset \mathcal{D}_{\zeta,\zeta'}$ 
by $v(w_1 -c_{2,\zeta'}) \geq 1/(12q^4)$. 
We put 
\[
 \mathbf{P}_{\zeta}=\mathbf{P}^0_{\zeta} \backslash 
 \bigcup_{\zeta' \in k^{\times}} \mathcal{D}_{\zeta,\zeta'}. 
\]

We take 
$(\zeta,\zeta') \in k_2^{\times} \times k^{\times}$ 
and compute the reduction of 
$\mathbf{X}_{\zeta,\zeta'}$. 
In the sequel, 
we omit the subscript $\zeta'$ 
of $a_{1,\zeta'}$, $b_{1,\zeta'}$, $b_{2,\zeta'}$
$c_{1,\zeta'}$ and $c_{2,\zeta'}$, 
if there is no confusion. 
We change variables as 
\[
 z_{f+1}=a_1 z+b_1 w +c_1, \quad 
 w_1 =b_2 w +c_2. 
\]
By substituting these to \eqref{pred}, 
we acquire 
\begin{equation}\label{ell}
 a_1 ^2 (z^2 +z +w^3) \equiv 0 \pmod{\frac{1}{4q^4} +} 
\end{equation}
by the definition of $a_1$, $b_1$, $b_2$, $c_1$ and $c_2$. 

\begin{prop}\label{Xeven}
For each 
$(\zeta,\zeta') \in k_2^{\times} \times k^{\times}$, 
the reduction of $\mathbf{X}_{\zeta,\zeta'}$ 
is isomorphic to the affine curve defined by $z^2+z=w^3$ 
and the complement 
$\mathcal{D}_{\zeta,\zeta'} \setminus \mathbf{X}_{\zeta,\zeta'}$ 
is an open annulus. 
\end{prop}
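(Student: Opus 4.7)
The first assertion is essentially established by the calibrated computation leading to \eqref{ell}. Indeed, one checks that $v(a_1) = 1/(8q^4)$, so $v(a_1^2) = 1/(4q^4)$, and that the choices of $c_1$, $c_2$ in the defining relations cancel the constant part of the expansion of \eqref{pred} under the substitution $z_{f+1} = a_1 z + b_1 w + c_1$, $w_1 = b_2 w + c_2$, while the choices of $a_1 = \varpi^{1/(8q^4)} c_2^q$, $b_1 = c_2^{(2q-3)/2} b_2$, and $b_2^3 = \varpi^{1/(4q^4)} c_2^4$ normalize the coefficients of $z^2$, $z$, and $w^3$ to all equal $a_1^2$ modulo $\varpi^{1/(4q^4)+}$. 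Dividing \eqref{ell} by $a_1^2$ and reducing modulo $0+$ then yields the defining equation $z^2 + z = w^3$ of $\overline{\mathbf{X}}_{\zeta,\zeta'}$.

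For the annulus assertion, the plan is to follow the strategy used in the analogous statements of Propositions \ref{Xodd} and \ref{Peven}. I would change variables as $w_1 = c_2 + w'$ with $0 < v(w') < 1/(12q^4)$ and $z_{f+1} = c_1 + z''$. Expanding \eqref{pred} around this point, the constant terms cancel by the defining relation for $c_1$ (using the identity $c_2(c_2^{2(q-1)} + 1) = c_2^{2q-1} + c_2$), leaving a relation whose lowest-order part matches the affine model $z^2 + z = w^3$ with a correction proportional to $w'^3$. The next step is to read off the leading valuation relationship: at first order one expects $z''^2 \equiv w'^3 \pmod{\text{higher}}$, forcing $2v(z'') = 3v(w')$. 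Introducing a new uniformizer $t$ by $w' = t^2 \cdot (\text{unit})$, $z'' = t^3 \cdot (\text{unit})$, one can express both $w'$ and $z''$ as analytic functions of $t$ via successive approximations analogous to those in the proof of Lemma \ref{eq2}. Then $t$ ranges over $\{t \in \mathbf{C} \mid 0 < v(t) < 1/(24 q^4)\}$, identifying $(\mathcal{D}_{\zeta,\zeta'} \setminus \mathbf{X}_{\zeta,\zeta'})(\mathbf{C})$ with an open annulus.

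The main technical obstacle is twofold. First, one must carefully track the binomial expansions $(c_2 + w')^{2q-1}$ and $(c_2 + w')^q$ in characteristic $2$: although many mixed terms vanish modulo high powers of $2$, the linear-in-$w'$ term from $(c_2 + w')^{2q-1}$ carries the integer coefficient $2q-1$, which is odd, and combines with the $w'$ from the $w_1$ summand to give the correct first-order behavior. Second, the derivative of the left-hand side of \eqref{pred} with respect to $z_{f+1}$ at $(c_2, c_1)$ is $2c_1 + \varpi^{1/(8q^4)} c_2^q$, which has strictly positive valuation, so Hensel's lemma does not apply directly and the iterative argument used in Lemma \ref{eq2} (successive introduction of auxiliary parameters to peel off the top obstruction at each step) is needed. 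Once these estimates are in place, the conclusion that the complement is an open annulus is formal.
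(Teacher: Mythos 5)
Your treatment of the first assertion (the reduction equation) is fine and matches the paper: \eqref{ell} normalized by $a_1^2$ gives $z^2+z=w^3$ modulo $0+$. The problem is in your treatment of the annulus claim, and it is not a technicality but a wrong leading relation.

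Your change of variables $z_{f+1}=c_1+z''$, $w_1=c_2+w'$ is missing the correction $c_2^{(2q-3)/2}w'$ that the paper includes in the $z_{f+1}$ coordinate, and this omission changes the leading behaviour. In characteristic $2$ (equivalently, modulo the high valuation of $2$), $2q-1=2^{f+1}-1$ has all ones in its binary expansion, so by Lucas every binomial coefficient $\binom{2q-1}{k}$ is odd. Thus $(c_2+w')^{2q-1}=\sum_k c_2^{2q-1-k}w'^k$ contributes not only a linear term $c_2^{2q-2}w'$ (which, as you note, nearly cancels the $w'$ from the $w_1$ summand via the defining relation of $c_2$) but also a quadratic term $c_2^{2q-3}w'^2$ with unit coefficient. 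With your $z_{f+1}$ the parabola's top square $z_{f+1}^2$ contributes only $c_1^2+z''^2$ and does not kill it. Since $2v(w')<3v(w')$ and $2v(w')<1/(8q^4)+v(w')$ throughout the range $0<v(w')<1/(12q^4)$, the dominant equation becomes $z''^2+c_2^{2q-3}w'^2\equiv0$, not $z''^2\equiv w'^3$. This forces $v(z'')=v(w')$ at first order, contradicting your asserted $v(z'')=3v(w')/2$, and the parametrization $w'=t^2(\mathrm{unit})$, $z''=t^3(\mathrm{unit})$ collapses. The cure, as the paper does, is to set $z_{f+1}=z'+c_2^{(2q-3)/2}w'+c_1$, so that $z_{f+1}^2$ produces exactly the cancelling $c_2^{2q-3}w'^2$; only then does the relation reduce to $z'^2\equiv c_2^{2(q-2)}w'^3\pmod{2v(z')+}$, and the substitution $z''=z'/(c_2^{q-2}w')$ gives $z''^2\equiv w'$ with $0<v(z'')<1/(24q^4)$. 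Your "peel off the obstruction" language could in principle recover the missing step, but as stated you assert the wrong leading relation and never perform that peel, so there is a genuine gap.
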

\begin{proof}
The first assertion follows from \eqref{ell}. 
We prove the second assertion. 
We change variables as 
\[
 z_{f+1} =z' +c_2^{\frac{2q-3}{2}}w'+c_1, 
 \quad 
 w_1=w' +c_2
\] 
with 
$0<v(w')<1/(12q^4)$. 
Substituting them 
to \eqref{pred}, we acquire 
\[
 z'^2 \equiv c_2^{2(q-2)}w'^3 \pmod{2v(z')+} 
\]
by the choice of $c_2$. 
Note that we have 
\[
 v(z')=3v(w')/2 <\frac{1}{8q^4}. 
\]
By setting $z''=z'/(c_2^{q-2}w')$, 
we obtain 
\[
 z''^2 \equiv w' \pmod{v(w')+}. 
\]
Then we can see that 
$z'$ and $w'$ are written as functions of $z''$. 
Hence, 
$(\mathcal{D}_{\zeta,\zeta'} \setminus \mathbf{X}_{\zeta,\zeta'})(\mathbf{C})$ 
is identified with 
$\{z'' \in \mathbf{C} \mid 0 <v(z'')<1/(24q^4)\}$. 
\end{proof}

\subsection{Stable covering of $\mathbf{X}_1(\mathfrak{p}^3)$}
In this subsection, we show the existence of 
the stable covering of $\mathbf{X}_1(\mathfrak{p}^3)$ 
over some finite extension of the base field 
$\widehat{K}^{\mathrm{ur}}$. 
See 
\cite[Section 2.3]{CMp^3} 
for the notion of semi-stable coverings. 
A semi-stable covering is called stable, 
if the corresponding semi-stable model is stable. 
\begin{prop}\label{exssc}
There exists a stable covering of 
$\mathbf{X}_1(\mathfrak{p}^3)$ over a finite extension of 
the base field. 
\end{prop}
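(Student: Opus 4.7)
The plan is to exhibit a specific covering $\mathcal{C}_1(\mathfrak{p}^3)$ of $\mathbf{X}_1(\mathfrak{p}^3)$ built from the affinoids analyzed in Sections \ref{sec1}--\ref{sec3}, and to verify that it is stable (up to a blow-down in the $q=2$ case). After base change to a finite extension $L$ of $\widehat{K}^{\mathrm{ur}}$ containing $\varpi_0$ and the roots of unity needed to define all the affinoids below, the covering consists of $\mathbf{Y}_{1,2}$, $\mathbf{Y}_{2,1}$ and $\mathbf{Z}_{1,1}$, together with the family $\{\mathbf{X}_\zeta\}_{\zeta \in \mathcal{S}_1}$ when $q$ is odd, or the families $\{\mathbf{P}_\zeta\}_{\zeta \in \mu_{q^2-1}(k^{\mathrm{ac}})}$ and $\{\mathbf{X}_{\zeta,\zeta'}\}_{\zeta,\zeta'}$ when $q$ is even.

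Semi-stability amounts to two checks. Smoothness of the reduction of each listed affinoid has already been established in Propositions \ref{Y12}, \ref{Y21}, \ref{Z11}, \ref{Xodd}, \ref{Peven} and \ref{Xeven}. What remains is to show that the complement of the union of these affinoids in $\mathbf{X}_1(\mathfrak{p}^3)_L$ is a disjoint union of open annuli. The local pieces $\mathcal{D}_\zeta \setminus \mathbf{X}_\zeta$ (odd case) and $\mathcal{D}_\zeta \setminus \mathbf{P}_\zeta^0$, $\mathcal{D}_{\zeta,\zeta'} \setminus \mathbf{X}_{\zeta,\zeta'}$ (even case) have already been identified as open annuli in those propositions. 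The remaining macroscopic regions are the subspaces $\mathbf{W}_{i,j'}$ of Section \ref{sec1} not containing any of our affinoids; the plan is to verify one by one that each is an open annulus by the same parameter-rescaling recipe used in Lemma \ref{all} and Proposition \ref{ltwo}. This case-by-case annulus check is the principal technical obstacle, though each individual calculation is routine.

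To upgrade semi-stability to stability, one inspects the dual graph drawn in the introduction. The positive-genus components --- the Drinfeld curves of genus $q(q-1)/2$ and, according to the parity, the Artin--Schreier curves of genus $(q-1)/2$ or the supersingular elliptic curves $\overline{\mathbf{X}}^c_{\zeta,\zeta'}$ --- are automatically stable. For the rational components, the node count at $\overline{\mathbf{Z}}^{0,c}_{1,1}$ is $|\mathcal{S}_1|+2 \ge 4$, and in the even case the node count at each $\overline{\mathbf{P}}^{0,c}_\zeta$ is exactly $q$; this fails to reach three only when $q=2$, and in that single exceptional case I would blow down the offending $\mathbb{P}^1$-components, as foreshadowed in the introduction, to arrive at the stable model.
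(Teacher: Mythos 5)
Your route is genuinely different from the paper's, and it has a gap. The paper proves Proposition \ref{exssc} by a short \emph{abstract} argument: it quotes Theorem 2.3.1 of Strauch to realize $\mathbf{X}_1(\mathfrak{p}^3)$ as the Raynaud generic fiber of the formal completion of an affine scheme at a closed point, applies Theorem 2.29 of Coleman--McMurdy to see it is a wide open space after finite base change, embeds it in a proper curve with closed-disk complement (their Theorem 2.18), and concludes existence of a semi-stable (hence, after adjustment, stable) covering from their Theorem 2.40. No explicit covering is exhibited at this stage, and nothing is computed. You instead try to prove the statement by exhibiting $\mathcal{C}_1(\mathfrak{p}^3)$ and verifying it directly --- which is exactly what the paper proves later in Theorem \ref{cov3}, and that theorem \emph{relies on} Proposition \ref{exssc} as an input.

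The concrete gap is the clause that the case-by-case verification that the leftover regions $\mathbf{W}_{1,2'}$, $\mathbf{W}_{1,3'}$, $\mathbf{W}_{2,1'}$ and $\mathbf{W}_{4,1'} \cup \mathbf{W}_{5,1'} \cup \mathbf{W}_{6,1'}$ are open annuli is ``routine.'' The paper deliberately avoids this computation. The only tool it supplies is Lemma \ref{nball}, which shows these regions are \emph{not} open balls --- strictly weaker than showing they are open annuli. The paper's actual mechanism for concluding they are annuli is the genus pinch in Theorem \ref{cov3}: the abstract existence of a stable covering (Proposition \ref{exssc}) plus Proposition \ref{posg} forces all positive-genus components to appear in the stable reduction, and the upper bound $g(\mathbf{X}_1(\mathfrak{p}^3)_{\mathbf{C}})\le q^3-2q+1$ from Lemma \ref{dimH} together with the lower bound from the computed reductions meets with equality, leaving no room for anything but annuli in the complement. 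You cannot invoke that argument here without creating a circularity. To make your direct construction work as a proof of Proposition \ref{exssc}, you would actually have to produce explicit parametrizations of each $\mathbf{W}_{i,j'}$ exhibiting an annulus structure --- a substantial computation, not a routine one, and precisely what the paper's architecture is designed to sidestep.
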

\begin{proof}
First, we show that, 
after taking a finite extension of the base field, 
$\mathbf{X}_1(\mathfrak{p}^3)$ is a wide open space. 
By \cite[Theorem 2.3.1 (i)]{Strdef1}, 
$\mathbf{X}_1(\mathfrak{p}^3)$ is the Raynaud generic fiber of 
the formal completion of an affine scheme over 
$\mathcal{O}_{\widehat{K}^{\mathrm{ur}}}$ 
at a closed point on the special fiber. 
Then we can apply \cite[Theorem 2.29]{CMp^3} 
to the formal completion of the affine scheme along its 
special fiber, after shrinking the affine scheme. 
Hence, 
$\mathbf{X}_1(\mathfrak{p}^3)$ is a wide open space over 
some extension. 

By \cite[Theorem 2.18]{CMp^3}, 
a wide open space can be embedded to a 
proper algebraic curve so that 
its complement is a disjoint union of closed disks. 
Therefore, $\mathbf{X}_1(\mathfrak{p}^3)$ 
has a semi-stable covering over some finite extension 
by \cite[Theorem 2.40]{CMp^3}. 
Then a simple modification gives a stable covering. 
\end{proof}

In the following, 
we construct a candidate of a semi-stable covering of 
$\mathbf{X}_1(\mathfrak{p}^3)$ over some finite extension. 
We put 
\[
 \mathbf{V}_1
 =\mathbf{W}^{+}_{1,1'} \cup 
 \bigcup_{2 \leq i \leq 6}\mathbf{W}_{i,1'}, \quad 
 \mathbf{V}_2 =\mathbf{W}^{-}_{1,1'}
 \cup \bigcup_{2 \leq i \leq 4}\mathbf{W}_{1,i'}, \quad 
 \mathbf{U}
 =\mathbf{W}_{1,1'} \backslash 
 \bigcup_{\zeta \in \mathcal{S}_1}\mathbf{X}_{\zeta}. 
\]
We note that 
$\mathbf{V}_1 \supset \mathbf{Y}_{1,2}$, 
$\mathbf{V}_2 \supset \mathbf{Y}_{2,1}$, 
$\mathbf{U} \supset \mathbf{Z}_{1,1}$, 
$\mathbf{V}_1 \cap \mathbf{V}_2=\emptyset$, 
$\mathbf{V}_1 \cap \mathbf{U}=\mathbf{W}^{+}_{1,1'}$ and 
$\mathbf{V}_2 \cap \mathbf{U}=\mathbf{W}^{-}_{1,1'}$. 

We consider the case 
where $q$ is even in this paragraph. 
We set 
$\hat{\mathcal{D}}_{\zeta} =\mathcal{D}_{\zeta} 
 \backslash \bigl(\bigcup_{\zeta' \in k^{\times}} 
 \mathbf{X}_{\zeta,\zeta'}\bigr)$ 
for $\zeta \in k_2^{\times}$. 
Then, 
$\hat{\mathcal{D}}_{\zeta}$ contains
$\mathbf{P}_{\zeta}$ 
as the underlying affinoid. 
On the other hand, for 
$(\zeta, \zeta') \in k_2^{\times} \times k^{\times}$ 
the space $\mathcal{D}_{\zeta, \zeta'}$ 
has the underlying affinoid 
$\mathbf{X}_{\zeta, \zeta'}$. 

We put 
\[
 \mathcal{S}=
 \begin{cases}
 \mathcal{S}_1 &\quad 
 \textrm{if $q$ is odd,}\\
 k_2^{\times} 
 \times k^{\times} 
 &\quad \textrm{if $q$ is even.} 
 \end{cases}
\] 
Now, 
we define an admissible covering of 
$\mathbf{X}_1(\mathfrak{p}^3)$ as 
\[
 \mathcal{C}_1(\mathfrak{p}^3)=
 \begin{cases}
 \{\mathbf{V}_1,\mathbf{V}_2,\mathbf{U},
 \{\mathcal{D}_{\zeta}\}_{\zeta \in \mathcal{S}_1}\} 
 &\quad  \textrm{if $q$ is odd,} \\
 \{\mathbf{V}_1,\mathbf{V}_2,\mathbf{U},
 \{\hat{\mathcal{D}}_{\zeta}\}_{\zeta \in k_2^{\times}}, 
 \{\mathcal{D}_{\zeta, \zeta'} \}_{(\zeta, \zeta') 
 \in \mathcal{S}} 
 &\quad \textrm{if $q$ is even.} 
\end{cases}
\]
In Subsection \ref{gcal}, 
we will show that $\mathcal{C}_1(\mathfrak{p}^3)$ 
is a semi-stable covering of 
$\mathbf{X}_1(\mathfrak{p}^3)$ over some finite extension. 


\section{Action of the division algebra on the reductions}\label{actdiv}
In this section, we determine the action of 
of $\mathcal{O}_D^{\times}$ 
on the reductions 
$\overline{\mathbf{Y}}_{1,2}$, $\overline{\mathbf{Y}}_{2,1}$ 
$\overline{\mathbf{Z}}_{1,1}$, 
$\{\overline{\mathbf{P}}_{\zeta}\}_{\zeta \in k_2^{\times}}$ and 
$\{\overline{\mathbf{X}}_\zeta\}_{\zeta \in \mathcal{S}}$ 
by using the description of 
$\mathcal{O}^{\times}_D$-action in \eqref{g_2}. 
We take 
\[
 d=d_1 +\varphi d_2 \in \mathcal{O}_D^{\times}, 
\]
where $d_1 \in \mathcal{O}^{\times}_{K_2}$ and 
$d_2 \in \mathcal{O}_{K_2}$. 
We put 
\[
 \kappa_1 (d)=\bar{d}_1, 
 \quad 
 \kappa_2 (d)=-\bar{d}_1^{-q} \bar{d}_2 . 
\]

\begin{lem}\label{dYact}
The element $d$ 
induces the following morphisms:
\begin{align*}
 &\overline{\mathbf{Y}}_{1,2}
 \to
 \overline{\mathbf{Y}}_{1,2};\ 
 (x,y) \mapsto 
 (\kappa_1 (d) x, 
 \kappa_1 (d)^{-q} y),\\ 
 &\overline{\mathbf{Y}}_{2,1}
 \to
 \overline{\mathbf{Y}}_{2,1};\ 
 (x,y) \mapsto 
 (\kappa_1 (d)^{-1} x, \kappa_1 (d)^q y). 
\end{align*}
\end{lem}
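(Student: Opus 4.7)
My plan is to lift the action $\psi_d$ to the ambient coordinates $(u, X_1, X_2, X_3)$ of $\mathbf{X}_1(\mathfrak{p}^3)$ and then pass to the reduction coordinates used in the proofs of Propositions \ref{Y12} and \ref{Y21}. From \eqref{g_2}, $\psi_d(u, X_3) = (d(u), j^{-1}(\tilde{d}(X_3)))$, and since $j^{-1}\circ\tilde{d}$ is a formal $\mathcal{O}_K$-module homomorphism commuting with $[\varpi]_{\mathrm{u}}$, the same formula $X_i' = j^{-1}(\tilde{d}(X_i))$ holds for $i = 1, 2$ as well.

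The first step is to derive leading-order transformation formulas. From \eqref{g_1} and \eqref{div}, $\tilde{d}(X) \equiv d_1 X + d_2^q X^q$ modulo higher-order terms; the formal power series identity underlying \eqref{gh2} gives $j^{-1}(Y) \equiv Y + d_1^{-q} d_2 u Y$ modulo higher-order terms; and $u' \equiv d_1^{-(q-1)} u$ modulo $u^2$ and $\varpi$. After scaling to the affinoid coordinates $\tilde{u} = u/\varpi^{v(u)}$ and $x_i = X_i/\varpi^{v(X_i)}$, every $d_2$-dependent additive correction carries a strictly positive power of $\varpi$ and vanishes modulo $0+$. Hence
\[
\bar{\tilde{u}}' = \kappa_1(d)^{-(q-1)}\,\bar{\tilde{u}}, \qquad \bar{x}_i' = \kappa_1(d)\,\bar{x}_i \quad (i = 1, 2, 3).
\]

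For $\overline{\mathbf{Y}}_{1,2}$: the proof of Proposition \ref{Y12} identifies the first reduction coordinate with $x = \bar{x}_3$, so $x \mapsto \kappa_1(d) x$. Since $\psi_d$ induces an automorphism of the curve $x^q y - xy^q = 1$ and $\kappa_1(d) \in k_2^{\times} = \mu_{q^2-1}(k^{\mathrm{ac}})$, the unique diagonal extension sends $y \mapsto \kappa_1(d)^{-q} y$. For $\overline{\mathbf{Y}}_{2,1}$: the reduction coordinates are $x = t_1^{-1}$ and $y = t_1^q + t_1^{q-1} x_3$, where $t_1$ is determined by $t_1^q \equiv t \pmod{0+}$ and $t = \gamma_2^2 x_1 x_2^q/(x_2 - c_0 x_1)$ from \eqref{214}. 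Writing $x_i' = d_1 x_i(1 + \alpha_i)$, the leading correction $\alpha_i \equiv d_1^{-q} d_2 \varpi^{v(u)} \tilde{u}$ is independent of $i$ and therefore cancels in $x_2'/x_1'$; the next $d_2$-contribution, coming from the $d_2^q X_i^q$ terms in $\tilde{d}$, carries $\varpi$-valuation $1/(q+1)$ for $i = 2$ and strictly greater for $i = 1$, both exceeding $v(x_2 - c_0 x_1) = (q-1)/q^2$. Consequently $x_2'/x_1' - c_0 \equiv x_2/x_1 - c_0$ and $(x_2')^q \equiv d_1^q x_2^q$ modulo strictly higher valuation, giving $\bar{t}' = \kappa_1(d)^q \bar{t}$ and hence $\bar{t}_1' = \kappa_1(d)\,\bar{t}_1$ by uniqueness of $q$-th roots in characteristic $p$. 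Substituting into the expressions for $x$ and $y$ yields $(x, y) \mapsto (\kappa_1(d)^{-1} x, \kappa_1(d)^q y)$.

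The main obstacle is the valuation bookkeeping for $\overline{\mathbf{Y}}_{2,1}$: one must verify that every $d_2$-dependent contribution to $x_2'/x_1' - c_0$---arising from both the $d_2^q X_i^q$ terms in $\tilde{d}$ and the higher-order $u^2 Y$, $u Y^2$ terms in the power series $j^{-1}(Y)$---has $\varpi$-valuation strictly greater than $(q-1)/q^2$ after scaling. The cancellation of the common linear $j^{-1}$-correction in the ratio $x_2'/x_1'$ is precisely what suppresses the $\kappa_2(d) = -\bar{d}_2/\bar{d}_1^q$ contribution from the final formula, consistent with the action on these reductions being tame.
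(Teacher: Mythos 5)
Your overall strategy is the same as the paper's: lift $\psi_d$ to the ambient coordinates, show that $d^{\ast}x_i \equiv \bar{d}_1 x_i$ modulo appropriate valuations, and then trace through the reduction coordinates. The paper's own proof is terse; for $\overline{\mathbf{Y}}_{1,2}$ it simply substitutes $d^{\ast}x_1 \equiv d_1 x_1$, $d^{\ast}x_3 \equiv d_1 x_3 \pmod{0+}$ into the explicit formula \eqref{gk1}, and for $\overline{\mathbf{Y}}_{2,1}$ it says the argument is ``similar''; your valuation bookkeeping for $\overline{\mathbf{Y}}_{2,1}$ is exactly what that ``similar'' hides.

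There is, however, a genuine gap in your $\overline{\mathbf{Y}}_{1,2}$ argument. You pass from $\bar{x}'_3 = \kappa_1(d)\,\bar{x}_3$ and the fact that $\psi_d$ induces an automorphism of $x^q y - xy^q = 1$ to the conclusion $y \mapsto \kappa_1(d)^{-q}y$ via ``the unique diagonal extension.'' But knowing only the action on $x$ does not force the action on $y$ to be diagonal: for any $\zeta \in k$, the map $(x,y) \mapsto (\kappa_1(d)x,\, \kappa_1(d)^{-q}y + \zeta\kappa_1(d)x)$ is also an automorphism of the Drinfeld curve restricting to $x \mapsto \kappa_1(d)x$. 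So uniqueness fails; you need an additional input to rule out the translation part. The fix is exactly the paper's move: since you already know $\bar{x}'_1 = \kappa_1(d)\bar{x}_1$ as well, substitute into \eqref{gk1} (a quasi-homogeneous rational function in $x_1$, $x_3$) and use $\kappa_1(d)^{q^2-1}=1$ to read off $y \mapsto \kappa_1(d)^{-q}y$ directly, with no uniqueness claim needed.

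Your $\overline{\mathbf{Y}}_{2,1}$ argument is structurally correct: the leading $j^{-1}$-correction $1 + d_1^{-q}d_2 u$ is a common multiplicative factor for all $x_i$ and cancels in $x_2'/x_1'$, and the $d_2^q X_i^q$-corrections have scaled valuations $1/(q+1)$ (for $i=2$) and higher (for $i=1$), both $> (q-1)/q^2$. The loose end you flag --- whether the higher-order terms of $j^{-1}$ spoil the bound --- is real (a naive $u X_i^2$-term would have scaled valuation $v(u) + v(X_2) = (2q-1)/(q(q^2-1))$, which is \emph{smaller} than $(q-1)/q^2$ once $q \geq 3$), but it can be closed. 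The composite $j^{-1}\circ\tilde{d} \colon \mathcal{F}^{\mathrm{univ}} \to d^{\ast}\mathcal{F}^{\mathrm{univ}}$ is an $\mathcal{O}_K$-module isomorphism, and both sides satisfy $[\zeta](X) = \zeta X$ for $\zeta \in \mu_{q-1}(\mathcal{O}_K)$; hence $j^{-1}\circ\tilde{d}$ commutes with scaling by $\mu_{q-1}$ and its power-series expansion contains only monomials $X^n$ with $n \equiv 1 \pmod{q-1}$. In particular there are no $X^2,\dots,X^{q-1}$ terms, so the first genuinely $i$-dependent correction sits at $X^q$; modulo $\varpi$ its coefficient is divisible by $u$, giving scaled valuation at least $v(u) + (q-1)v(X_2) = 1/q > (q-1)/q^2$. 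With that observation your bookkeeping closes, and the conclusions $\bar{t}' = \kappa_1(d)^q \bar{t}$, $\bar{t}_1' = \kappa_1(d)\bar{t}_1$ (by injectivity of the $q$-th power in characteristic $p$), and the asserted action on $(x,y)$ all follow.
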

\begin{proof}
We prove the assertion for 
$\overline{\mathbf{Y}}_{1,2}$. 
By \eqref{g_1}, we have 
\[
 d^{\ast} x_1 \equiv d_1 x_1, \quad 
 d^{\ast} x_3 \equiv d_1 x_3 \pmod{0+}. 
\]
Therefore, the required 
assertion follows from \eqref{gk1}. 
The assertion for 
$\overline{\mathbf{Y}}_{1,2}$ 
is proved similarly. 
\end{proof}

Now, let the notation be as in 
Subsection \ref{sec2}. 
We put 
\begin{align*}
 x'_i &=d^{\ast} x_i \quad \textrm{for} \quad 1 \leq i \leq 3, \\ 
 t' &=d^{\ast} t, \quad 
 Z_0'=d^{\ast} Z_0, \quad Z'=d^{\ast} Z. 
\end{align*}
We have 
\begin{align*}
 j^{-1}(x_1) &\equiv 
 x_1+d_1^{-q} d_2 \varpi^{\epsilon_1}\tilde{u} x_1 
 \pmod{\epsilon_1 +},\\ 
 j^{-1}(x_2) &\equiv 
 x_2+d_1^{-q} d_2 \varpi^{\epsilon_1} \tilde{u} x_2
 \pmod{\epsilon_1 +}, \\ 
 j^{-1}(x_3) &\equiv x_3 \pmod{\epsilon_2 +} 
\end{align*}
by \eqref{gh2}. 
On the other hand, we have 
\begin{align*}
 \tilde{d}(x_1) &\equiv d_1 x_1 \pmod{\epsilon_1 +},\\ 
 \tilde{d}(x_2) &\equiv 
 d_1 x_2 + d_2^q \varpi^{\epsilon_1} x_2^q 
 \pmod{\epsilon_1 +}, \\ 
 \tilde{d} (x_3) &\equiv 
 d_1 x_3 +d_2^q \varpi^{\epsilon_3}  x_3^q \pmod{\epsilon_2 +} 
\end{align*}
by \eqref{g_1}. 
Hence, we obtain 
\begin{align}
 x'_1 &\equiv d_1 x_1 
 +d_1^{-(q-1)} d_2 \varpi^{\epsilon_1} \tilde{u} x_1 
 \pmod{\epsilon_1 +}, \label{b1}\\ 
 x'_2 &\equiv d_1 x_2 
 +d_1^{-(q-1)} d_2 \varpi^{\epsilon_1} \tilde{u} x_2 
 +d_2^q \varpi^{\epsilon_1} x_2^q 
 \pmod{\epsilon_1 +}, \label{b2}\\ 
 x'_3 &\equiv d_1 x_3 
 +d_2^q \varpi^{\epsilon_3} x_3^q
 \pmod{\epsilon_2 +}. \label{b3}
\end{align}
By 
the definition of $t$ and the equation 
$x'_2/x'_1=-1+\gamma_2(x_2 '^q/t')$, 
we acquire 
\begin{equation}\label{b4}
 t' \equiv d_1^q t
 -d_1^{q-1} d_2^q t^{2-q} 
 \varpi^{\epsilon_2} 
 \pmod{\epsilon_2 +}
\end{equation}
using \eqref{b1} and \eqref{b2}. 
We put 
\[
 G_0=d_1^{-q} d_2 x_3^{q(q-1)}+d_1^{-1} d_2^q x_3^{-q(q-1)}. 
\]
By 
the definition of $Z_0$ and the equation 
$1+(x_3'^q/t')=\gamma_3 Z_0'$, 
we obtain 
\begin{equation}\label{b5}
 Z_0' \equiv Z_0 -\varpi^{\epsilon_3} G_0 \pmod{\epsilon_3+} 
\end{equation}
using \eqref{b3} and \eqref{b4}. 
We put 
\[
 G =G_0 +d_1^{-1} d_2^q (x_2x_3^{q-2}+x_1^{-1} x_3^q). 
\]
By 
the definition of $Z$ and the equation 
$Z_0'+(x_2'/x_3')-(x_3'/x_1')=\gamma_4 (x_2'/x_3') Z'$, 
we obtain 
\begin{equation}\label{b6}
 Z' \equiv Z -\frac{x_3}{x_2} \varpi^{\epsilon_4} G 
 \pmod{\epsilon_4 +} 
\end{equation}
using \eqref{b1}, \eqref{b2}, \eqref{b3} and \eqref{b5}. 
We have 
\[
 G \equiv d_1^{-q} d_2 x_3^{q(q-1)} +
 d_1^{-1} d_2^q x_3^{(q-1)(q+2)} \pmod{0+} 
\]
by 
$x_1 \equiv -x_3^{q^2},\ x_2 \equiv x_3^{q^2} \pmod{0+}$. 
We put 
\[
 \Delta=d_1^{-q} d_2 x_3^{-(q-1)} +d_1^{-1} d_2^q x_3^{q-1}. 
\]
Then the congruence \eqref{b6} has the following form: 
\begin{equation}\label{b7}
 Z' \equiv Z -\varpi^{\epsilon_4} \Delta 
 \pmod{\epsilon_4 +}. 
\end{equation}
\begin{prop}\label{3w}
The element $d$ acts on $\overline{\mathbf{Z}}_{1,1}$
by $(Z, x_3) \mapsto (Z, \kappa_1 (d) x_3)$. 
\end{prop}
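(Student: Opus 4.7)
The plan is to read off the action directly from the congruences \eqref{b3} and \eqref{b7} that have already been derived in the preceding paragraphs, and observe that on the locus $\mathbf{Z}_{1,1}$ both error terms disappear when we reduce modulo $0+$.

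First I would handle the $x_3$-coordinate. By \eqref{b3} we have
\[
 x'_3 \equiv d_1 x_3 + d_2^q \varpi^{\epsilon_3} x_3^q \pmod{\epsilon_2 +}.
\]
Since $\epsilon_3 = 1/(2q^3) > 0$ and $d_2, x_3 \in \mathcal{O}_{\mathbf{C}}$, the second summand has strictly positive valuation, so reducing modulo $0+$ gives $\bar{x}'_3 = \bar{d}_1 \bar{x}_3 = \kappa_1(d) \bar{x}_3$, as required.

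Next I would handle the $Z$-coordinate using \eqref{b7}, which reads
\[
 Z' \equiv Z - \varpi^{\epsilon_4} \Delta \pmod{\epsilon_4 +},
 \qquad
 \Delta = d_1^{-q} d_2\, x_3^{-(q-1)} + d_1^{-1} d_2^q\, x_3^{q-1}.
\]
On $\mathbf{Z}^0_{1,1}$ the change of variables is $X_3 = \omega_3 x_3$ with $v(X_3) = 1/(2q^3(q-1)) = v(\omega_3)$, hence $v(x_3) = 0$. Consequently $v(\Delta) \geq 0$, so $v(\varpi^{\epsilon_4}\Delta) \geq \epsilon_4 > 0$, and we conclude $\bar{Z}' = \bar{Z}$ on $\overline{\mathbf{Z}}_{1,1}$. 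Together with the computation for $x_3$, this gives the claimed formula $(Z, x_3) \mapsto (Z, \kappa_1(d)\, x_3)$.

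I do not expect any real obstacle here: the proposition is essentially a repackaging of the congruences \eqref{b3} and \eqref{b7} that were prepared in the previous paragraphs, and the only thing to verify is the trivial observation that $v(x_3) = 0$ on $\mathbf{Z}_{1,1}$ so that $\Delta$ is integral. The more subtle analyses (involving the parameters $t$, $Z_0$ and the introduction of $\Delta$) have already been carried out; this statement is the easy consequence on the generic component where the singular residue classes have been excised.
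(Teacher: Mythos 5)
Your argument is exactly the paper's: Proposition \ref{3w} is deduced directly from the congruences \eqref{b3} and \eqref{b7}, and the only observation needed is that $v(x_3)=0$ on $\mathbf{Z}_{1,1}$ so that the correction terms $d_2^q\varpi^{\epsilon_3}x_3^q$ and $\varpi^{\epsilon_4}\Delta$ vanish modulo $0+$. This matches the paper's one-line proof, just with the details spelled out.
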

\begin{proof}
This follows from \eqref{b3} and \eqref{b7}. 
\end{proof}
\begin{prop}
The element $d$ induces the morphism 
\[
 \overline{\mathbf{P}}_{\zeta} 
 \to \overline{\mathbf{P}}_{\kappa_1 (d) \zeta};\ 
 w_1 \mapsto w_1. 
\]
\end{prop}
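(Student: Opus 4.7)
The plan is to reuse the formulas already derived in the proof of Proposition \ref{3w}, namely the congruences \eqref{b3} and \eqref{b7} describing the action of $d$ on $x_3$ and $Z$, and then transport them through the substitution $Z = \varpi^{1/(4q^4)} w_1$ used to define $\mathbf{P}^0_{\zeta}$ inside $\mathcal{D}_{\zeta}$.

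First, I verify that $d$ respects the indexing. The defining condition $\bar{x}_3 = \zeta$ of $\mathcal{D}_{\zeta}$ transforms to $\bar{x}'_3 = \kappa_1(d)\zeta$ by \eqref{b3}, so $d$ sends $\mathcal{D}_{\zeta}$ to $\mathcal{D}_{\kappa_1(d)\zeta}$. The condition $v(Z) \geq 1/(4q^4)$ that cuts out $\mathbf{P}^0_{\zeta}$ inside $\mathcal{D}_{\zeta}$ is preserved because \eqref{b7} gives $v(Z' - Z) \geq \epsilon_4 = 1/(2q^4)$, which is strictly larger than $1/(4q^4)$. Hence $d$ restricts to an isomorphism $\mathbf{P}^0_{\zeta} \to \mathbf{P}^0_{\kappa_1(d)\zeta}$.

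Second, I substitute $Z = \varpi^{1/(4q^4)} w_1$ into \eqref{b7} and divide by $\varpi^{1/(4q^4)}$. Using $\epsilon_4 - 1/(4q^4) = 1/(4q^4)$ and $v(\Delta) \geq 0$, this yields
\[
 w'_1 \equiv w_1 \pmod{1/(4q^4) +},
\]
so $\bar{w}'_1 = \bar{w}_1$ on the reduction. Third, to pass from $\mathbf{P}^0_{\zeta}$ to $\mathbf{P}_{\zeta} = \mathbf{P}^0_{\zeta} \setminus \bigcup_{\zeta' \in k^{\times}} \mathcal{D}_{\zeta,\zeta'}$, I note that each $\mathcal{D}_{\zeta,\zeta'}$ is cut out by $v(w_1 - c_{2,\zeta'}) > 0$. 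The congruence $w'_1 \equiv w_1$ then forces $d(\mathcal{D}_{\zeta,\zeta'}) = \mathcal{D}_{\kappa_1(d)\zeta,\zeta'}$, so $d$ restricts further to $\mathbf{P}_{\zeta} \to \mathbf{P}_{\kappa_1(d)\zeta}$, inducing $w_1 \mapsto w_1$ on reductions.

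The only obstacle is the valuation bookkeeping in the second step, and this is not a real obstacle since the nontrivial calculation producing the remainder term $\varpi^{\epsilon_4} \Delta$ in \eqref{b7} has already been carried out in the proof of Proposition \ref{3w}.
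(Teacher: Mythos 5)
Your proof is correct and follows essentially the same route as the paper, which simply cites \eqref{b7}, Proposition \ref{3w} and the substitution $Z=\varpi^{1/(4q^4)}w_1$; you have just filled in the intermediate details (the indexing via \eqref{b3}, the division by $\varpi^{1/(4q^4)}$, and the passage from $\mathbf{P}^0_{\zeta}$ to $\mathbf{P}_{\zeta}$). One small bookkeeping slip: from $v(\Delta)\geq 0$ you only get $w_1'\equiv w_1 \pmod{1/(4q^4)}$, not $\pmod{1/(4q^4)+}$ (strictness would require $v(\Delta)>0$, which can fail when $\bar d_2\neq 0$); this does not affect the conclusion, since $v(w_1'-w_1)\geq 1/(4q^4)>0$ already forces $\bar w_1'=\bar w_1$.
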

\begin{proof}
This follows from \eqref{b7}, Proposition \ref{3w} and 
$Z=\varpi^{1/(4q^4)} w_1$. 
\end{proof}
\begin{prop}\label{dXact}
We take $\zeta \in \mathcal{S}_1$. 
Further, we take $\zeta' \in k^{\times}$, 
if $q$ is even. 
We set as follows: 
\begin{align*}
 \eta &=
 \begin{cases}
 \zeta  &\quad  \textrm{if $q$ is odd,}\\
 (\zeta, \zeta') 
 &\quad \textrm{if $q$ is even,} 
 \end{cases} \\ 
 d \eta &=
 \begin{cases}
 \kappa_1 (d) \zeta 
 &\quad \textrm{if $q$ is odd,}\\ 
 (\kappa_1 (d)\zeta, \zeta') 
 &\quad \textrm{if $q$ is even,} 
 \end{cases}\\
 f_d&=
 \begin{cases}
 \Tr_{k_2/k}
 ( \zeta^{-2q} \kappa_2 (d) ) 
  &\quad \textrm{if $q$ is odd,}\\
 \Tr_{k_2/\mathbb{F}_2}
 ( \zeta^{1-q}  \zeta'^{-2} \kappa_2 (d)) 
 &\quad \textrm{if $q$ is even,} 
 \end{cases}
\end{align*}
where $\eta, d \eta \in \mathcal{S}$. 
Then, the element $d$ induces 
\[
 \overline{\mathbf{X}}_{\eta} \to
 \overline{\mathbf{X}}_{d \eta} \colon 
 \begin{cases}
 (z,w) \mapsto (\kappa_1 (d)^{-(q+1)}(z+f_d),
 \kappa_1 (d)^{-(q+1)/2} w) 
 &\quad  \textrm{if $q$ is odd,}\\
 (z,w) \mapsto (z+f_d, w) &\quad \textrm{if $q$ is even.}
 \end{cases}
\]
\end{prop}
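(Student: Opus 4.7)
The plan is to pull the action of $d$ back through the chain of substitutions used in Subsection \ref{sec3}. Starting point: the approximate formulas already derived,
\[
 x_3' \equiv d_1 x_3 + d_2^q \varpi^{\epsilon_3} x_3^q \pmod{\epsilon_2 +}, \qquad
 Z' \equiv Z - \varpi^{\epsilon_4}\Delta \pmod{\epsilon_4 +},
\]
with $\Delta = d_1^{-q}d_2\, x_3^{-(q-1)} + d_1^{-1}d_2^{q}\, x_3^{q-1}$, together with Proposition \ref{3w}. The idea is to substitute the coordinate changes defining $\mathbf{X}_\eta$ and $\mathbf{X}_{d\eta}$, then reduce modulo $0+$.

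For the odd case, the key observation is that $c_{1,\zeta}$ depends only on $\iota=\zeta^{q^2-1}\in\mu_2$, which is fixed by $\zeta\mapsto\kappa_1(d)\zeta$ (since $\kappa_1(d)^{q^2-1}=1$); hence $c_{1,d\zeta}=c_{1,\zeta}$, and the shift in $z'=(Z'-c_{1,d\zeta})/a_{d\zeta}$ comes entirely from $-\varpi^{\epsilon_4}\Delta/a_{d\zeta}$. Using $\omega_4^{q-1}=\varpi^{\epsilon_4}$, this simplifies to $-\Delta/c_{2,d\zeta}^{q+1}$, whose reduction at $\bar{x}_3=\zeta$ and $\bar{c}_{2,d\zeta}=\kappa_1(d)\zeta$, combined with $\zeta^{-2q^2}=\zeta^{-2}$ (from $\zeta\in\mu_{2(q^2-1)}$), equals $\kappa_1(d)^{-(q+1)}\Tr_{k_2/k}(\kappa_2(d)\zeta^{-2q})$. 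The linear parts $a_{d\zeta}/a_\zeta\equiv\kappa_1(d)^{q+1}$ and $d_1\cdot b_\zeta/b_{d\zeta}\equiv\kappa_1(d)^{-(q+1)/2}$ follow from the defining formulas for $a_\zeta,b_\zeta$ and the identity $c_{2,d\zeta}/c_{2,\zeta}\equiv\kappa_1(d)\pmod{0+}$. The perturbation $d_2^q\varpi^{\epsilon_3}x_3^q$ in $x_3'$, once divided by $b_\zeta$ (whose valuation is $1/(4q^3)$), has positive valuation and hence drops in the reduction.

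For the even case, the constants $a_1,b_1,b_2,c_1,c_2$ depend only on $\zeta'$, so the coordinate changes used for source and target are identical and the problem reduces to computing $z_{f+1}'-z_{f+1}$ and $w_1'-w_1$. From $w_1=Z/\varpi^{1/(4q^4)}$ and \eqref{b7} one gets $w_1'-w_1\equiv-\varpi^{1/(4q^4)}\Delta$, and dividing by $b_2$ of valuation $1/(12q^4)$ yields a term of positive valuation; thus $w'\equiv w\pmod{0+}$. For $z_{f+1}'$, propagate through the chain $z_i+w_1^{l_i}=\varpi^{m_{i+1}}z_{i+1}$, starting from $Z_1=x_3^{q^2-1}$ and $z_1=(Z_1-1)/\varpi^{1/(8q^3)}$. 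The leading shift on $Z_1$ comes from the error term $d_2^q\varpi^{\epsilon_3}x_3^q$ in $x_3'$ (noting that $d_1^{q^2-1}\equiv1\pmod\varpi$); combining this with the $\Delta$-shift of $w_1^{l_i}$ through the inductive recursion produces, after division by $a_1=\varpi^{1/(8q^4)}c_{2,\zeta'}^q$, precisely the residual additive constant $f_d=\Tr_{k_2/\mathbb{F}_2}(\zeta^{1-q}\zeta'^{-2}\kappa_2(d))$, once one uses $\zeta^{q^2-1}=1$ for $\zeta\in k_2^\times$ and the defining relations of $c_{1,\zeta'},c_{2,\zeta'}$.

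The hard part is the valuation bookkeeping in the even case: the $f+1$ stages of the chain $z_i+w_1^{l_i}=\varpi^{m_{i+1}}z_{i+1}$ amplify small corrections, and one must verify at each stage that the $\varpi^{\epsilon_3}$-error from $x_3'$ and the $\Delta$-shift from $w_1'$ combine to give exactly one surviving additive constant in the reduction of $z_{f+1}'-z_{f+1}$, and that the resulting constant identifies with $f_d$ via the identities satisfied by $c_{1,\zeta'}$ and $c_{2,\zeta'}$. In contrast, the odd case is essentially a single substitution and reduces to checking two elementary identities about $\mu_{2(q^2-1)}$.
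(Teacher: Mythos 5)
Your overall approach matches the paper's (substitute the coordinate changes defining $\mathbf{X}_\eta$ into the approximate identity \eqref{b7}, then reduce), and the odd case is essentially right: the key facts you use---$c_{1,d\zeta}=c_{1,\zeta}$ because $\kappa_1(d)^{q^2-1}=1$, $a_\zeta=\varpi^{\epsilon_4}c_{2,\zeta}^{q+1}$, $\zeta^{-2q^2}=\zeta^{-2}$ on $\mu_{2(q^2-1)}$, and negligibility of $d_2^q\varpi^{\epsilon_3}x_3^q$ against $b_\zeta$---are exactly what the paper invokes in its one-line reduction to \eqref{b7}.

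In the even case, however, one concrete step of your outline would mislead you if carried through. You claim the leading shift of $Z_1$ (hence of $z_1$) comes from the error term $d_2^q\varpi^{\epsilon_3}x_3^q$ in $x_3'$, and that this ``combines with the $\Delta$-shift of $w_1^{l_i}$'' to produce $f_d$. In fact that contribution is strictly negligible after propagation: $v(d^*z_1-z_1)\ge\epsilon_3-1/(8q^3)=3/(8q^3)$, and dividing by the full chain factor $\varpi^{\sum_{j\ge 2}m_j}=\varpi^{(q-1)/(8q^4)}$ gives valuation $\ge(2q+1)/(8q^4)$, which is strictly larger than $\epsilon_4/4=1/(8q^4)$. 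This is precisely why the paper uses the coarsened approximation $d^*x_3\equiv d_1x_3\pmod{(\epsilon_3/2)+}$ before propagating: the entire additive constant $f_d$ comes solely from the $\Delta$-shift of $w_1$ through the chain, each stage $i$ contributing a term $\varpi^{\epsilon_4/4}w_1^{q-2^i}\Delta^{2^{i-1}}$ of valuation exactly $\epsilon_4/4$, leading (after division by $a_{1,\zeta'}$) to $\sum_{i=1}^f\bar c_{2,\zeta'}^{-2^i}\overline{\Delta}^{2^{i-1}}=\Tr_{k_2/\mathbb{F}_2}(\zeta^{1-q}\zeta'^{-2}\kappa_2(d))$. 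If you keep the $x_3'$-error in your bookkeeping and treat it as a genuine contributor, the sum will not identify with $f_d$ unless you notice that this term drops out. So: right route, but drop the $Z_1$-error term from your accounting (it is absorbed, not combined).
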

\begin{proof}
First, we assume that $q$ is odd. 
Recall that $Z=a_{\zeta} z+c_{1,\zeta}$ 
and $x_3=b_{\zeta} w+c_{2,\zeta}$. 
Similarly, we have 
$Z'=a_{\bar{d}_1 \zeta} z'+c_{1,\bar{d}_1 \zeta}$ 
and 
$x_3=b_{\bar{d}_1 \zeta} w'+c_{2,\bar{d}_1 \zeta}$. 
Then, the claim follows from \eqref{b7}. 

Next, we assume that $q$ is even. 
By \eqref{b7} and 
$d^{\ast} x_3 \equiv d_1 x_3 \pmod{(\epsilon_3 /2)+}$, 
we acquire 
\begin{equation}\label{bbq1}
 d^{\ast} z_{f+1} -z_{f+1} \equiv
 \varpi^{\frac{\epsilon_4}{4}} 
 \sum_{i=1}^{f} w_1^{q-2^i} \Delta^{2^{i-1}} 
 \pmod{\frac{\epsilon_4}{4}+} 
\end{equation}
on the locus where $v(Z) \geq \epsilon_4 /2$. 
By $z_{f+1} =a_{1,\zeta'} z +b_{1,\zeta'} w +c_{1,\zeta'}$ and 
$w_1=b_{2,\zeta'} w +c_{2,\zeta'}$, 
we obtain 
\begin{align*}
 d^{\ast} z-z &\equiv 
 \sum_{i=1}^{f} c_{2,\zeta'}^{-2^i} \Delta^{2^{i-1}} \pmod{0+}, \\ 
 d^{\ast} w &\equiv w \pmod{\frac{\epsilon_4}{3}+} 
\end{align*}
on $\mathbf{X}_{\zeta, \zeta'}$ 
by \eqref{b7} and \eqref{bbq1}. 
On the other hand, we have 
\[
 \sum_{i=1}^{f} \bar{c}_{2,\zeta'}^{-2^i} \overline{\Delta}^{2^{i-1}} =f_d, 
\]
because $\bar{x_3}=\zeta$ and $\bar{c}_{2,\zeta'} =\zeta'$. 
Hence, we have proved the claim. 
\end{proof}

\section{Action of the Weil group on the reductions}\label{actine}
In this section, we compute 
the actions of the Weil group 
on the reductions 
$\overline{\mathbf{Y}}_{1,2}$, 
$\overline{\mathbf{Y}}_{2,1}$, 
$\overline{\mathbf{Z}}_{1,1}$, 
$\{\overline{\mathbf{P}}_{\zeta}\}_{\zeta \in k_2^{\times}}$ and 
$\{\overline{\mathbf{X}}_{\eta}\}_{\eta \in \mathcal{S}}$. 

Let $\mathbf{X}$ be 
a reduced affinoid over $\mathbf{C}$ 
with an action of $W_K$. 
For $P \in \mathbf{X}(\mathbf{C})$, 
the image of $P$ under the natural reduction map 
$\mathbf{X}(\mathbf{C}) \to \overline{\mathbf{X}}(k^{\mathrm{ac}})$ 
is denoted by $\overline{P}$. 
The action of 
$W_K$ 
on $\overline{\mathbf{X}}$ 
is a homomorphism 
\[
 w_{\mathbf{X}} \colon W_K \to \Aut (\overline{\mathbf{X}}) 
\]
characterized by 
$\overline{\sigma(P)}=w_{\mathbf{X}} (\sigma) (\overline{P})$ 
for $\sigma \in W_K$ and 
$P \in \mathbf{X}(\mathbf{C})$. 
For $\sigma \in W_K$, we define 
$r_{\sigma} \in \mathbb{Z}$ so that 
$\sigma$ induces the $q^{-r_{\sigma}}$-th power map 
on the residue field of $K^{\mathrm{ac}}$. 

\begin{rem}
In the usual sense, 
$W_K$ does not act on $\mathbf{X}_1(\mathfrak{p}^3)$, 
because the action of $W_K$ does not 
preserve the connected components of 
$\mathrm{LT}_1(\mathfrak{p}^3)$. 
Precisely, 
$w_{\mathbf{X}}$ is the action of 
\[
 \{ (\sigma, \varphi^{-r_{\sigma}}) \in W_K \times D^{\times} \}, 
\]
which preserves the connected components of 
$\mathrm{LT}_1(\mathfrak{p}^3)$. 
\end{rem}

\subsection{Actions of the Weil group on $\overline{\mathbf{Y}}_{1,2}$, $\overline{\mathbf{Y}}_{2,1}$ and $\overline{\mathbf{Z}}_{1,1}$}
For $\sigma \in W_K$, 
we put 
\[
 \lambda (\sigma)=\overline{\sigma(\varpi^{1/(q^2-1)})/\varpi^{1/(q^2-1)}}
 \in k_2^{\times}. 
\]
We note that $\lambda$ is not a group homomorphism 
in general. 

\begin{lem}\label{iYact}
Let $\sigma \in W_K$. 
Then, the element 
$\sigma$ induces the automorphisms 
\begin{align*}
 &\overline{\mathbf{Y}}_{1,2}
 \to
 \overline{\mathbf{Y}}_{1,2};\ 
 (x,y) \mapsto (\lambda (\sigma)^q x^{q^{-r_{\sigma}}},
 \lambda (\sigma)^{-1}y^{q^{-r_{\sigma}}}),\\ 
 &\overline{\mathbf{Y}}_{2,1}
 \to 
 \overline{\mathbf{Y}}_{2,1};\ 
 (x,y) \mapsto 
 (\lambda (\sigma)^{-1}x^{q^{-r_{\sigma}}}, 
 \lambda (\sigma)^q y^{q^{-r_{\sigma}}}) 
\end{align*}
as schemes over $k$. 
\end{lem}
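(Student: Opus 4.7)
The plan is to use the identification \eqref{th} together with the observation in the remark above that the action in question is really the restriction to $\{(\sigma,\varphi^{-r_{\sigma}})\}\subset W_{K}\times D^{\times}$ of the natural action preserving the connected component. Unwinding the moduli description and using that the relative $q$-Frobenius on $\Sigma$ is realized by $\varphi\in\mathcal{O}_{D}$, this action sends a $\mathbf{C}$-point $(u,X_{3})$ of $\mathbf{X}_{1}(\mathfrak{p}^{3})$ to $(\sigma(u),\sigma(X_{3}))$, i.e., it is simply the coordinatewise application of $\sigma$ viewed as a field automorphism of $\mathbf{C}$. It therefore suffices to chase $\sigma$ through the changes of variable in the proofs of Propositions \ref{Y12} and \ref{Y21}.

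The key input is the following. For a unit $\xi\in\mathcal{O}_{\mathbf{C}}^{\times}$, $\overline{\sigma(\xi)}=\bar{\xi}^{q^{-r_{\sigma}}}$ by the definition of $r_{\sigma}$. For $\varpi^{a/(q^{i}(q^{2}-1))}$ with $a\in\mathbb{Z}$ and $i\geq 0$, the ratio $\sigma(\varpi^{a/(q^{i}(q^{2}-1))})/\varpi^{a/(q^{i}(q^{2}-1))}$ is a root of unity whose $q^{i}$-th power reduces to $\lambda(\sigma)^{a}$; since $q^{i}$ is coprime to $q^{2}-1$ and $q^{-1}\equiv q\pmod{q^{2}-1}$, its reduction is $\lambda(\sigma)^{aq^{-i}}$. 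Applying this to the scalings of Propositions \ref{Y12} and \ref{Y21} gives $\bar{x}_{i}\mapsto \lambda(\sigma)^{q}\bar{x}_{i}^{q^{-r_{\sigma}}}$ on $\mathbf{Y}_{1,2}$ and $\bar{x}_{i}\mapsto \lambda(\sigma)\,\bar{x}_{i}^{q^{-r_{\sigma}}}$ on $\mathbf{Y}_{2,1}$, for $i=1,2,3$.

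For $\overline{\mathbf{Y}}_{1,2}$ the equality $x=x_{3}$ from \eqref{gk1} yields the claimed action on $\bar{x}$ immediately, and $\bar{y}\mapsto \lambda(\sigma)^{-1}\bar{y}^{q^{-r_{\sigma}}}$ is then forced: any map $(x,y)\mapsto(ax^{q^{-r_{\sigma}}},by^{q^{-r_{\sigma}}})$ preserving $x^{q}y-xy^{q}=1$ must satisfy $a\in\mu_{q^{2}-1}$ and $b=a^{-q}$, so $a=\lambda(\sigma)^{q}$ forces $b=\lambda(\sigma)^{-1}$. For $\overline{\mathbf{Y}}_{2,1}$ we use $x=t_{1}^{-1}$ together with $\bar{x}_{1}=\bar{t}^{q}$ (from \eqref{216}) and $\bar{t}=\bar{t}_{1}^{q}$ (from the congruence $t\equiv t_{1}^{q}\pmod{0+}$ established in the proof of Proposition \ref{Y21}). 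Taking successive $q$-th roots in $\mu_{q^{2}-1}$ of the transformation law for $\bar{x}_{1}$, and again invoking $q^{-1}\equiv q$, yields $\bar{t}_{1}\mapsto \lambda(\sigma)\,\bar{t}_{1}^{q^{-r_{\sigma}}}$, whence $\bar{x}\mapsto \lambda(\sigma)^{-1}\bar{x}^{q^{-r_{\sigma}}}$ and then $\bar{y}\mapsto \lambda(\sigma)^{q}\bar{y}^{q^{-r_{\sigma}}}$ by the curve equation.

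The main obstacle is an accounting one for the $\mathbf{Y}_{2,1}$ case: one must verify that the congruences used can be transported through $\sigma$ without loss of precision, which comes down to checking that the auxiliary constants $c_{0}$ and $\gamma_{i}$ are $\sigma$-invariant modulo $0+$. Writing $\gamma_{i}^{2}=\varpi^{(q-1)/q^{i}}=\varpi^{(q-1)(q^{2}-1)/(q^{i}(q^{2}-1))}$ and applying the reduction formula from the second paragraph, this amounts to $\lambda(\sigma)^{(q-1)(q^{2}-1)q^{-i}}=1$, which holds since $q^{2}-1\equiv 0\pmod{q^{2}-1}$; the invariance of $c_{0}$ then follows from the defining relation $c_{0}^{q}-\gamma_{1}^{2}c_{0}+1=0$.
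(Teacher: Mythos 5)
Your proposal is correct and follows essentially the same route as the paper: identify the $W_K$-action on reductions with the coordinatewise application of $\sigma$ to $\mathbf{C}$-points (cf. $X_3(\sigma(P))=\sigma(X_3(P))$ in the paper's proof), track the root-of-unity ratios $\sigma(\varpi^{a/(q^i(q^2-1))})/\varpi^{a/(q^i(q^2-1))}$ through the rescalings, reduce, and read off the transformation of $x,y$ from \eqref{gk1} and the analogous coordinates for $\mathbf{Y}_{2,1}$. The only stylistic deviation is that you determine the $y$-scaling from the Drinfeld curve equation rather than plugging the scalings directly into the explicit formula for $y$ in \eqref{gk1} (the paper's ``by \eqref{gk1}''); this works, but note that it tacitly presumes the action on $\bar y$ is of the monomial form $\bar y\mapsto b\,\bar y^{q^{-r_\sigma}}$, which itself ultimately comes from \eqref{gk1}, so nothing is genuinely saved. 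Your additional bookkeeping for $\mathbf{Y}_{2,1}$ (transporting $\sigma$ through $t$ and $t_1$ via $\bar x_1=\bar t_1^{q^2}$ and the $\sigma$-invariance of $\gamma_i,c_0$ modulo $0+$) fills in what the paper dismisses with ``similarly'', and is accurate.
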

\begin{proof}
We prove the claim for 
$\overline{\mathbf{Y}}_{1,2}$. 
We set 
\[
 \sigma(\varpi^{\frac{1}{q^3(q^2 -1)}} )=
 \xi \varpi^{\frac{1}{q^3(q^2 -1)}} 
\]
with 
$\xi \in \mu_{q^3(q^2-1)} (K^{\mathrm{ac}} )$. 
Let 
$P \in \mathbf{Y}_{1,2}(\mathbf{C})$. 
We have $X_3(\sigma(P))=\sigma(X_3(P))$. 
By applying $\sigma$ to $X_3(P)=\varpi^{1/(q^3(q^2 -1))} x_3(P)$,
we obtain 
\[
 x_3(\sigma(P))=\xi 
 \sigma(x_3(P)) \equiv \xi x_3(P)^{q^{-r_{\sigma}}} \pmod{0+}. 
\]
In the same way, we have 
\[
 x_1(\sigma(P))
 \equiv \xi^{q^4}x_1(P)^{q^{-r_{\sigma}}} \pmod{0+}. 
\]
Therefore, we acquire 
$x^{\sigma}=\bar{\xi} x^{q^{-r_{\sigma}}}$
and 
$y^{\sigma} =\bar{\xi}^{-q}y^{q^{-r_{\sigma}}}$ 
by \eqref{gk1}. 
Hence, the claim follows from 
$\bar{\xi}=\lambda(\sigma)^q$. 
We can prove the claim for 
$\overline{\mathbf{Y}}_{2,1}$ similarly. 
\end{proof}
For $\sigma \in W_K$, 
we put 
\[
 \xi_{\sigma} =\frac{\sigma(\omega_3 )}{\omega_3} 
 \in \mu_{2q^3(q-1)}(K^{\mathrm{ac}}). 
\]
\begin{lem}\label{Z11ine}
Let $\sigma \in W_K$. 
Then, $\sigma$ acts on 
$\overline{\mathbf{Z}}_{1,1}$ by 
$(Z, x_3) \mapsto (Z^{q^{-r_{\sigma}}}, 
 \bar{\xi}_{\sigma} x_3 ^{q^{-r_{\sigma}}})$. 
\end{lem}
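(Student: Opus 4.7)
The plan is to reduce the lemma to a direct computation of $\overline{x_3(\sigma P)}$ and then to exploit the defining equation of $\overline{\mathbf{Z}}_{1,1}$ to pin down $\overline{Z(\sigma P)}$, without having to track all the intermediate parameters $\tilde{u}, x_1, x_2, t, Z_0, Z$ explicitly.

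First I would establish the action on $x_3$ in the same way as in the opening of the proof of Lemma \ref{iYact}. For any $P \in \mathbf{Z}^0_{1,1}(\mathbf{C})$, the rescaling $X_3 = \omega_3 x_3$ together with $X_3(\sigma P) = \sigma(X_3(P))$ gives
\[
 x_3(\sigma P) = \sigma(X_3(P))/\omega_3 = (\sigma(\omega_3)/\omega_3)\,\sigma(x_3(P)) = \xi_\sigma\,\sigma(x_3(P)).
\]
Reducing modulo $0+$ and using $\overline{\sigma(a)} = \bar{a}^{q^{-r_\sigma}}$ on residues yields $\overline{x_3(\sigma P)} = \bar{\xi}_\sigma \overline{x_3(P)}^{q^{-r_\sigma}}$.

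Next I would verify the identity $\bar{\xi}_\sigma^{\,q^2-1} = 1$ in $k^{\mathrm{ac}}$. Since $\xi_\sigma \in \mu_{2q^3(q-1)}$ and the residue map kills the $p$-part, the order of $\bar{\xi}_\sigma$ divides the prime-to-$p$ part of $2q^3(q-1)$, which is $2(q-1)$ when $p$ is odd and $q-1$ when $p=2$. When $p=2$, $q-1 \mid q^2-1$ is immediate; when $p$ is odd, $q+1$ is even, so $2(q-1) \mid (q-1)(q+1) = q^2-1$. In either case $\bar{\xi}_\sigma^{\,q^2-1}=1$.

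Finally, I would combine these with Proposition \ref{wee}, which cuts out $\overline{\mathbf{Z}}_{1,1}$ by $\overline{Z}^q = -\overline{x_3}^{\,q^2-1} - \overline{x_3}^{\,-(q^2-1)}$. Substituting the first-step formula for $\overline{x_3(\sigma P)}$ and using that $a \mapsto a^{q^{-r_\sigma}}$ is a field automorphism of $k^{\mathrm{ac}}$ distributing over sums, together with $\bar{\xi}_\sigma^{\,q^2-1}=1$, one obtains $\overline{Z(\sigma P)}^q = \bigl(\overline{Z(P)}^{\,q^{-r_\sigma}}\bigr)^q$. Uniqueness of $q$-th roots in characteristic $p$ then gives $\overline{Z(\sigma P)} = \overline{Z(P)}^{\,q^{-r_\sigma}}$, completing the proof. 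The main obstacle is the identity $\bar{\xi}_\sigma^{\,q^2-1}=1$, which requires separating the $p$-part from the prime-to-$p$ part of the order of $\xi_\sigma$ and a small case split in $p$; once this is in hand, the rest is automatic.
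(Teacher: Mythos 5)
Your proof is correct, but it follows a genuinely different route from the paper. The paper tracks the Weil action step by step through all the intermediate coordinates $\tilde{u}, x_1, x_2, t, Z_0, Z$ introduced in Subsection \ref{sec2}, and establishes the stronger congruence $Z(\sigma(P)) \equiv \sigma(Z(P)) \pmod{\epsilon_4 +}$ (labeled \eqref{Z2}), from which the residual identity falls out immediately. You bypass that bookkeeping entirely: on $\overline{\mathbf{Z}}_{1,1}$ the equation $Z^q = -x_3^{q^2-1}-x_3^{-(q^2-1)}$ expresses $Z$ as a unique $q$-th root of a function of $x_3$ (so $Z$ is a rational function of $x_3$ over the perfect field $k^{\mathrm{ac}}$), and the action on $Z$ is therefore forced by the action on $x_3$ once you know $\bar{\xi}_{\sigma}^{q^2-1}=1$. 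The computation of $\overline{x_3(\sigma P)}$ and the identity $\bar{\xi}_\sigma^{q^2-1}=1$ (via the small case split on $p$) are both correct. What the paper's heavier route buys is precisely the intermediate congruence \eqref{Z2} at precision $\epsilon_4 +$, which is re-used in Section \ref{actine} to analyse the singular residue classes: the $q$-odd case via \eqref{Zine} and the $q$-even case via \eqref{w1ine} both rely on $Z(\sigma(P)) \equiv \sigma(Z(P))$ holding to that positive precision, and a statement only modulo $0+$ on $\overline{\mathbf{Z}}_{1,1}$ would not serve there. So your argument is more economical for the lemma as stated, but one would then need to re-derive \eqref{Z2} separately before Paragraphs \ref{qodd} and \ref{qeven}.
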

\begin{proof}
We use the notation in Subsection \ref{sec2}. 
Let $P \in \mathbf{Z}_{1,1} (\mathbf{C})$. 
Since we set $X_1=\omega_1 ^{2q-1}x_1$, 
$X_2=\omega_1 x_2$ and $X_3=\omega_3 x_3$, we have 
\begin{align*}
 x_1(\sigma(P))&=\xi_{\sigma}^{q^2(2q-1)}\sigma(x_1(P)), \\ 
 x_2(\sigma(P))&=\xi_{\sigma}^{q^2}\sigma(x_2(P)), \\ 
 x_3(\sigma(P))&=\xi_{\sigma} \sigma(x_3(P)). 
\end{align*}
Hence, we obtain 
\[
 \frac{x_2(\sigma(P))}{x_1(\sigma(P))} 
 =\xi_{\sigma}^{-2q^2(q-1)} 
 \sigma \biggl( \frac{x_2(P)}{x_1(P)} \biggr) \equiv
 \sigma \biggl( \frac{x_2(P)}{x_1(P)} \biggr) \pmod{\epsilon_1 +}. 
\]
Since we set $x_2/x_1=-1+\gamma_2 (x_2^q/t)$, 
we acquire 
\[
 t(\sigma(P)) \equiv \xi_{\sigma}^{q^3}\sigma(t(P)) \pmod{\epsilon_2 +}. 
\]
Therefore, we obtain 
\[
 \frac{x_3(\sigma(P))^q}{t({\sigma}(P))} =
 \xi_{\sigma}^{-q(q^2-1)} 
 \sigma \biggl( \frac{x_3(P)^q}{t(P)} \biggr) 
 \equiv \sigma \biggl( 
 \frac{x_3(P)^q}{t(P)} \biggr) \pmod{\epsilon_2 +}. 
\]
Since we set $1+(x_3^q/t)=\gamma_3 Z_0$, we obtain 
\[
 Z_0 ({\sigma}(P)) \equiv \sigma(Z_0 (P)) \pmod{\epsilon_3 +}. 
\]
Therefore we acquire 
\begin{equation}\label{Z2}
 Z(\sigma(P)) \equiv \sigma(Z(P)) \pmod{\epsilon_4 +} 
\end{equation}
by $Z_0 +(x_2/x_3)-(x_3/x_1)=\gamma_4(x_2/x_3)Z$. 

The assertion follows from 
\[
 x_3(\sigma(P))=\xi_{\sigma} \sigma(x_3(P)) 
 \equiv \xi_{\sigma} x_3(P)^{q^{-r_{\sigma}}} \pmod{0+} 
\]
and \eqref{Z2}. 
\end{proof}

\subsection{Action of the Weil group on $\overline{\mathbf{X}}_{\eta}$}
In this subsection, 
let $\zeta \in \mu_{2(q^2 -1)} (k^{\mathrm{ac}} )$. 
Until Lemma \ref{delta}, let $\sigma \in W_K$. 
\subsubsection{$q$\ :\ odd}
We assume that $q$ is odd. 
We use the notation in Paragraph \ref{qodd}. 
By \eqref{Z2} and $x_3(\sigma(P)) =\xi_{\sigma} \sigma(x_3(P))$, 
we have 
\begin{equation}\label{Zine}
\begin{split}
 a_{\bar{\xi}_{\sigma} \zeta^{q^{-r_{\sigma}}}} z(\sigma(P)) +
 c_{1,\bar{\xi}_{\sigma} \zeta^{q^{-r_{\sigma}}}} &=
 Z(\sigma(P)) \equiv \sigma(Z(P)) \\ 
 &= 
 \sigma (a_{\zeta}) \sigma (z(P)) +\sigma (c_{1,\zeta}) 
 \pmod{\epsilon_4 +} 
\end{split}
\end{equation}
and 
\begin{equation}\label{x3ine}
\begin{split}
 b_{\bar{\xi}_{\sigma} \zeta^{q^{-r_{\sigma}}}} w(\sigma(P)) 
 +c_{2,\bar{\xi}_{\sigma} \zeta^{q^{-r_{\sigma}}}} &=
 x_3(\sigma(P)) = \xi_{\sigma} \sigma(x_3 (P)) \\ 
 &= 
 \xi_{\sigma} \sigma (b_{\zeta}) \sigma (w(P)) 
 +\xi_{\sigma} \sigma (c_{2,\zeta}) 
\end{split}
\end{equation}
for $P \in \mathbf{X}_{\zeta}(\mathbf{C})$. 
Note that 
$c_{1,\bar{\xi}_{\sigma} \zeta^{q^{-r_{\sigma}}}} =c_{1,\zeta}$ 
and $c_{2,\bar{\xi}_{\sigma} \zeta^{q^{-r_{\sigma}}}} =
 \xi_{\sigma}^{q^4} \zeta^{q^{-r_{\sigma}} -1} c_{2, \zeta}$. 
We have 
\[
 v(\sigma(c_{1,\zeta})-c_{1,\zeta}) \geq \epsilon_4 
\] 
by \eqref{Zine}. 
We put 
\[
 a_{\sigma,\zeta} =
 \frac{\sigma(a_{\zeta} )}
 {\zeta^{r_{\sigma}(q^2-1)} \xi_{\sigma}^{q+1}a_{\zeta}}, \quad  
 b_{\sigma,\zeta} = 
 \frac{\sigma(c_{1,\zeta})-c_{1,\zeta}}
 {\zeta^{r_{\sigma}(q^2-1)} \xi_{\sigma}^{q+1} a_{\zeta}} , \quad 
 c_{\sigma,\zeta} =
 \frac{\sigma(b_{\zeta})}{\zeta^{(q^{-r_{\sigma}}-1)\frac{q+3}{2}} \xi_{\sigma}^{\frac{q+1}{2}} b_{\zeta}}. 
\]
Then we have 
$a_{\sigma,\zeta} ,b_{\sigma,\zeta} ,c_{\sigma,\zeta} 
 \in \mathcal{O}_{K^{\mathrm{ac}}}$. 
In the sequel, we omit the subscript 
$\zeta$ of $a_{\sigma,\zeta}$, 
$b_{\sigma,\zeta}$ and $c_{\sigma,\zeta}$. 
\begin{prop}\label{ine}
We have $\bar{a}_{\sigma} \in k^{\times}$, 
$\bar{b}_{\sigma} \in k$ and 
$\bar{a}_{\sigma} =\bar{c}_{\sigma} ^2$. 
Further, $\sigma$ induces the morphism 
\[
 \overline{\mathbf{X}}_{\zeta} \to
 \overline{\mathbf{X}}_{\bar{\xi}_{\sigma} \zeta^{q^{-r_{\sigma}}}} ;\ 
 (z,w) \mapsto
 (\bar{a}_{\sigma} z^{q^{-r_{\sigma}}} +\bar{b}_{\sigma}, 
 \bar{c}_{\sigma} w^{q^{-r_{\sigma}}}). 
\]
\end{prop}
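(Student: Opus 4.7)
The plan is to unpack the relations \eqref{Zine} and \eqref{x3ine} in the affinoid coordinates. Substituting $Z = a_\zeta z + c_{1,\zeta}$ and $x_3 = b_\zeta w + c_{2,\zeta}$ on $\mathbf{X}_\zeta$, and the analogous changes on the target $\mathbf{X}_{\bar{\xi}_\sigma \zeta^{q^{-r_\sigma}}}$, and then dividing through by the target leading coefficients, we obtain congruences of the form
\begin{align*}
 z(\sigma(P)) &\equiv \frac{\sigma(a_\zeta)}{a_{\bar{\xi}_\sigma \zeta^{q^{-r_\sigma}}}} \sigma(z(P)) + \frac{\sigma(c_{1,\zeta}) - c_{1,\zeta}}{a_{\bar{\xi}_\sigma \zeta^{q^{-r_\sigma}}}}, \\
 w(\sigma(P)) &\equiv \frac{\xi_\sigma \sigma(b_\zeta)}{b_{\bar{\xi}_\sigma \zeta^{q^{-r_\sigma}}}} \sigma(w(P)) + \frac{\xi_\sigma \sigma(c_{2,\zeta}) - c_{2, \bar{\xi}_\sigma \zeta^{q^{-r_\sigma}}}}{b_{\bar{\xi}_\sigma \zeta^{q^{-r_\sigma}}}}
\end{align*}
modulo $(0+)$, with the error controlled by $\epsilon_4$ divided by the respective leading coefficient.

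The first main step is to identify the two leading ratios above with $a_\sigma$ and $c_\sigma$ modulo $(0+)$. Using $a_\zeta = \omega_4^{q-1} c_{2,\zeta}^{q+1}$ and $b_\zeta = -2 \zeta^{q^2-1} \omega_3^{(q-1)/2} c_{1,\zeta}^{-q} c_{2,\zeta}^{(q+3)/2}$, together with the characterization of $c_{1,\zeta}$ as depending only on $\zeta^{q^2-1} \in \mu_2$ and of $c_{2,\zeta}$ by its reduction $\zeta$ and a polynomial equation determined by $\zeta^{q^2-1}$, I would verify the congruences
\[
 a_{\bar{\xi}_\sigma \zeta^{q^{-r_\sigma}}} \equiv \zeta^{r_\sigma(q^2-1)} \xi_\sigma^{q+1} a_\zeta, \qquad b_{\bar{\xi}_\sigma \zeta^{q^{-r_\sigma}}} \equiv \xi_\sigma^{(q+1)/2} b_\zeta
\]
to the required order, and show that the constant term in the $w$-congruence above already vanishes modulo $(0+)$. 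Combining these with the reduction $\sigma(z(P)) \equiv z(P)^{q^{-r_\sigma}}$ and $\sigma(w(P)) \equiv w(P)^{q^{-r_\sigma}}$ modulo $(0+)$ brings the congruences into the asserted form.

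The remaining assertions $\bar{a}_\sigma \in k^\times$, $\bar{b}_\sigma \in k$, and $\bar{a}_\sigma = \bar{c}_\sigma^2$ then follow: either they can be checked directly from the explicit formulas for $a_\sigma, b_\sigma, c_\sigma$ together with the defining equations of $c_{1,\zeta}, c_{2,\zeta}$, or one argues that the reduced morphism must intertwine the defining equation $z^q - z = w^2$ on source and target; expanding this constraint separates it into $\bar{a}_\sigma^q = \bar{a}_\sigma$, $\bar{b}_\sigma^q = \bar{b}_\sigma$, and $\bar{c}_\sigma^2 = \bar{a}_\sigma$. The principal technical obstacle is the comparison of $c_{2, \bar{\xi}_\sigma \zeta^{q^{-r_\sigma}}}$ with $\xi_\sigma \sigma(c_{2,\zeta})$, and of $\sigma(c_{1,\zeta})$ with $c_{1,\zeta}$, to the prescribed positive valuation; this rests on tracking how $\sigma$ acts on $\omega_3, \omega_4$ (captured by $\xi_\sigma$) and on the implicit normalizations of $c_{1,\zeta}, c_{2,\zeta}$ through their reductions and polynomial relations, using $\bar{\xi}_\sigma^{q^2-1} = 1$ to reconcile the two choices of lift.
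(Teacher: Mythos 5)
Your proposal follows essentially the same route as the paper: unwind \eqref{Zine} and \eqref{x3ine}, verify that $a_{\bar{\xi}_\sigma \zeta^{q^{-r_\sigma}}}$ and $b_{\bar{\xi}_\sigma \zeta^{q^{-r_\sigma}}}$ agree (to the needed precision) with $\zeta^{r_\sigma(q^2-1)}\xi_\sigma^{q+1}a_\zeta$ and $\xi_\sigma^{(q+1)/2}b_\zeta$ respectively, and extract the morphism by dividing out the leading coefficients. You also correctly locate the key technical hurdle — controlling $v(\sigma(c_{1,\zeta})-c_{1,\zeta})$ and comparing $\xi_\sigma\sigma(c_{2,\zeta})$ with $c_{2,\bar{\xi}_\sigma\zeta^{q^{-r_\sigma}}}$ — which is exactly what the paper's first two lines handle (the first from \eqref{Zine}, the second via the relation $c_{2,\zeta}^{q^2-1}=-2c_{1,\zeta}^{-q}$). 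The only genuine variation is in deriving $\bar{a}_\sigma\in k^\times$, $\bar{b}_\sigma\in k$, $\bar{a}_\sigma=\bar{c}_\sigma^2$: the paper computes directly from the definitions of $a_\zeta$, $b_\zeta$, $c_{1,\zeta}$ using $c_{1,\zeta}^q\equiv-\iota(2-\gamma_4 c_{1,\zeta})$, whereas you propose (as an alternative) extracting these constraints from the requirement that the reduced map intertwine the Artin--Schreier equation $z^q-z=w^2$. That intertwining argument is a valid and arguably cleaner shortcut once the morphism's general shape is established, since $\bar{a}_\sigma^q-\bar{a}_\sigma=0$, $\bar{b}_\sigma^q-\bar{b}_\sigma=0$, $\bar{a}_\sigma^q=\bar{c}_\sigma^2$ drop out by separating $z$- and $w$-terms on the curve, so both approaches are fine.
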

\begin{proof}
We have 
\[
 v(\xi_{\sigma} \sigma (c_{2,\zeta})- 
 \xi_{\sigma}^{q^4} \zeta^{q^{-r_{\sigma}} -1} c_{2, \zeta}) 
 \geq \epsilon_3 
\]
by $v(\sigma(c_{1,\zeta})-c_{1,\zeta}) \geq \epsilon_4$. 
Hence we have the last assertion by \eqref{Zine} and \eqref{x3ine}. 
By the definition of $a_{\zeta}$, 
$b_{\zeta}$ and $c_{1,\zeta}$, 
we can check that 
\[
 \bar{a}_{\sigma} ^{q-1} =1, 
 \quad 
 \bar{b}_{\sigma} ^q=\bar{b}_{\sigma}, 
 \quad 
 \bar{a}_{\sigma} =\bar{c}_{\sigma} ^2 
\] 
using 
$c_{1,\zeta}^q \equiv -\iota (2-\gamma_4 c_{1,\zeta}) \pmod{(q-1)/q^4}$. 
\end{proof}

We put $L=K(\varpi^{1/2})$ and 
$L_2=K_2 (\varpi^{1/2})$ in $K^{\mathrm{ac}}$. 
Let $\mathrm{LT}_{L_2}$ be the formal 
$\mathcal{O}_{L_2}$-module over $\mathcal{O}_{L^{\mathrm{ur}}}$ 
of dimension $1$ such that 
\begin{align*}
 [\varpi^{\frac{1}{2}}]_{\mathrm{LT}_{L_2}}(X)
 &=\varpi^{\frac{1}{2}} X-X^{q^2}, \\ 
 [\zeta]_{\mathrm{LT}_{L_2}}(X)
 &=\zeta X \quad 
 \textrm{for} \quad \zeta \in \mu_{q^2 -1}(L_2) \cup \{ 0 \}. 
\end{align*} 
We put 
$\varpi_{1,L_2} =\varpi^{1/(2(q^2 -1))}$ and take 
$\varpi_{2,L_2} \in \mathcal{O}_{K^{\mathrm{ac}}}$ 
such that 
$[\varpi^{1/2}]_{\mathrm{LT}_{L_2}}(\varpi_{2,L_2})=\varpi_{1,L_2}$. 
Let $\mathrm{Art}_{L_2} \colon L_2^{\times} 
\stackrel{\sim}{\to} W^{\mathrm{ab}}_{L_2}$ 
be the Artin reciprocity 
map normalized so that 
the image by $\mathrm{Art}_{L_2}$ 
of a uniformizer is a lift of the geometric Frobenius. 
We consider the following 
homomorphism: 
\[
 I_{L_2} \to 
 k_2^{\times} \times k_2;\ 
 \sigma \mapsto 
 \bigl( \lambda_1 (\sigma),\lambda_2 (\sigma) \bigr)=
 \Biggl( \overline{\frac{\sigma(\varpi_{1,L_2})}{\varpi_{1,L_2}}}, 
 \overline{\frac{\varpi_{1,L_2}\sigma(\varpi_{2,L_2})
 -\sigma(\varpi_{1,L_2})\varpi_{2,L_2}}
 {\sigma(\varpi_{1,L_2})\varpi_{1,L_2}}}\Biggr).
\]
This map is equal to the composite 
\[
 I_{L_2} 
 \to \mathcal{O}^{\times}_{L_2} 
 \to k_2^{\times} 
 \times k_2, 
\]
where the first homomorphism is 
induced from the inverse of $\mathrm{Art}_{L_2}$, 
and the second homomorphism 
is given by 
$a+b \varpi^{1/2} \mapsto (\bar{a},\bar{b}/\bar{a})$ 
for $a \in \mu_{q^2 -1} (L_2)$ and $b \in \mathcal{O}_{L_2}$. 
Then, we rewrite Proposition \ref{ine} as follows: 
\begin{cor}\label{oddXine}
Let $\sigma \in I_L$. 
We put 
\[
 g_0=2 \zeta^{-(q+1)} (\lambda_2 (\sigma)^q
 +\zeta^{q^2-1} \lambda_2 (\sigma)) \in k. 
\]
Then, $\sigma$ induces the morphism 
\[
 \overline{\mathbf{X}}_{\zeta} \to 
 \overline{\mathbf{X}}_{\lambda_1 (\sigma)^{q+1} \zeta} ;\ 
 (z,w) \mapsto 
 (\lambda_1 (\sigma)^{-2(q+1)}(z+g_0 ), \lambda_1 (\sigma)^{-(q+1)}w). 
\]
\end{cor}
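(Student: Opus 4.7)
The plan is to identify the reduced ratios $\bar a_\sigma$, $\bar b_\sigma$, $\bar c_\sigma$ of Proposition~\ref{ine} with the explicit expressions in $\lambda_1(\sigma)$ and $\lambda_2(\sigma)$ given in the statement. Since $L_2/L$ is unramified, $I_L = I_{L_2}$, so $\lambda_1(\sigma)$ and $\lambda_2(\sigma)$ are defined on $\sigma$; moreover $\sigma$ fixes $\varpi^{1/2}$ and acts trivially on the residue field, so $r_\sigma = 0$. From $\omega_3^{q^3(q-1)} = \varpi^{1/2} = \varpi_{1,L_2}^{q^2-1}$, a compatible choice of roots gives $\omega_3^{q^3} = \varpi_{1,L_2}^{q+1}$ and similarly $\omega_4^{q^4} = \varpi_{1,L_2}^{q+1}$. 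Applying $\sigma$, reducing modulo positive valuation, and using that $\bar\xi_\sigma \in \mu_{q-1} \subset k$ is fixed by the $q$-th power map, I obtain $\bar\xi_\sigma = \overline{\sigma(\omega_4)/\omega_4} = \lambda_1(\sigma)^{q+1}$. This identifies the target in Proposition~\ref{ine} as $\overline{\mathbf{X}}_{\lambda_1(\sigma)^{q+1}\zeta}$.

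Since $\bar c_{1,\zeta} = -2\zeta^{q^2-1}$ and $\bar c_{2,\zeta} = \zeta$ both lie in $k_2$ and are fixed by $\sigma \in I$, one has $\overline{\sigma(c_{i,\zeta})/c_{i,\zeta}} = 1$ for $i=1,2$. Substituting the factorizations $a_\zeta = \omega_4^{q-1} c_{2,\zeta}^{q+1}$ and $b_\zeta = -2\zeta^{q^2-1}\omega_3^{(q-1)/2} c_{1,\zeta}^{-q} c_{2,\zeta}^{(q+3)/2}$ into the definitions of $a_\sigma$ and $c_\sigma$, and using the value of $\bar\xi_\sigma$ above, I obtain
\[
 \bar a_\sigma = \lambda_1(\sigma)^{(q+1)(q-1) - (q+1)^2} = \lambda_1(\sigma)^{-2(q+1)}, \qquad \bar c_\sigma = \lambda_1(\sigma)^{(q+1)((q-1)/2 - (q+1)/2)} = \lambda_1(\sigma)^{-(q+1)},
\]
matching the stated coefficients of $z$ and $w$.

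The main computation is $\bar b_\sigma = \overline{(\sigma(c_{1,\zeta}) - c_{1,\zeta})/(\xi_\sigma^{q+1} a_\zeta)}$. Setting $\delta = \sigma(c_{1,\zeta}) - c_{1,\zeta}$, I would apply $\sigma$ to the defining relation $c_{1,\zeta}^{2q} + 4\gamma_4 c_{1,\zeta} - 4 = 0$, subtract, and expand $(c_{1,\zeta} + \delta)^{2q}$ to obtain a balance of the form
\[
 2 c_{1,\zeta}^q \delta^q + 4\sigma(\gamma_4)\delta + 4 c_{1,\zeta}\bigl(\sigma(\gamma_4) - \gamma_4\bigr) \equiv 0
\]
modulo higher-order error, which after the normalization $\tilde\delta = \delta/\omega_4^{q-1}$ is an Artin--Schreier-type equation. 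The source term $\sigma(\gamma_4) - \gamma_4$ should then be expressed in terms of $\lambda_2(\sigma)$ via the Lubin--Tate relation $\varpi_{2,L_2}^{q^2} - \varpi^{1/2}\varpi_{2,L_2} + \varpi_{1,L_2} = 0$ and the compatibility between $\gamma_4 = \varpi^{(q-1)/(2q^4)}$ and the tower $\{\varpi_{i,L_2}\}$. Solving for $\tilde\delta$, dividing by $\xi_\sigma^{q+1} a_\zeta = \xi_\sigma^{q+1}\omega_4^{q-1}c_{2,\zeta}^{q+1}$, reducing, and simplifying with $\zeta^{q^2-1} = \iota \in \mu_2$ and $\lambda_1(\sigma)^{q^2-1} = 1$ should give $\bar b_\sigma = \lambda_1(\sigma)^{-2(q+1)} g_0$.

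The hard part is this last step: the Lubin--Tate-flavoured calculation producing the trace-like sum $\lambda_2(\sigma)^q + \iota \lambda_2(\sigma)$. Its asymmetric shape reflects exactly the two terms $2 c_{1,\zeta}^q \delta^q$ and $4\sigma(\gamma_4)\delta$ in the Taylor expansion above, whose coefficients are of the same order (since $v(\gamma_4) = v(\omega_4^{q-1}) \cdot (q-1)$ balances the Frobenius exponent); the factor $\iota$ in $g_0$ enters through $\bar c_{1,\zeta}^q = (-2\iota)^q$, and the prefactor $2/\zeta^{q+1}$ through the reduction of $c_{2,\zeta}^{q+1}$ in $a_\zeta$.
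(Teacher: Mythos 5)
Your identification of $\bar a_\sigma=\lambda_1(\sigma)^{-2(q+1)}$ and $\bar c_\sigma=\lambda_1(\sigma)^{-(q+1)}$, and of $\bar\xi_\sigma=\lambda_1(\sigma)^{q+1}$, is correct and corresponds to what the paper says is ``easily checked''. The gap is in your treatment of $\bar b_\sigma$, and specifically in where you locate the $\lambda_2(\sigma)$-content.

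You propose to apply $\sigma$ to the defining relation $c_{1,\zeta}^{2q}+4\gamma_4 c_{1,\zeta}-4=0$, obtain an Artin--Schreier-type balance for $\delta=\sigma(c_{1,\zeta})-c_{1,\zeta}$, and then recover the source term from $\sigma(\gamma_4)-\gamma_4$, which you say ``should be expressed in terms of $\lambda_2(\sigma)$ via the Lubin--Tate relation''. This cannot work: $\gamma_4=\varpi^{(q-1)/(2q^4)}$ is a power of the fixed $\varpi_0$, so $\sigma(\gamma_4)/\gamma_4$ is a root of unity determined purely by $\sigma(\varpi_0)/\varpi_0$, i.e.\ by the Kummer-tame data behind $\lambda_1$; it carries no trace of the Lubin--Tate element $\varpi_{2,L_2}$ and hence no $\lambda_2(\sigma)$ whatsoever. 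There is no ``compatibility between $\gamma_4$ and the tower $\{\varpi_{i,L_2}\}$'' in the sense you would need, since $\varpi_{2,L_2}$ is defined by the Lubin--Tate module and is not a root of $\varpi$. Consequently, the defining equation of $c_{1,\zeta}$ by itself leaves $\bar\delta$ determined only up to an Artin--Schreier ambiguity and cannot pin down the value $\lambda_2(\sigma)^q+\iota\lambda_2(\sigma)$.

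What the paper does instead is to manufacture an honest carrier of the $\lambda_2$-content: it introduces $C=\varpi_1^{(q^2-1)/q}\{(\varpi_2/\varpi_1)^q+\iota(\varpi_2/\varpi_1)\}$, built from the Lubin--Tate tower, shows from $\varpi_2^{q^2}-\varpi^{1/2}\varpi_2=-\varpi_1$ that $C$ satisfies $C^q-\iota\gamma_1 C\equiv -1 \pmod{(1/2)+}$, shows that $c_{1,\zeta}^{q^3}/(2\iota)$ satisfies the same approximate equation, and concludes $C\equiv c_{1,\zeta}^{q^3}/(2\iota) \pmod{\epsilon_1+}$. Only after this identification does $\sigma(c_{1,\zeta})-c_{1,\zeta}$ get related to $\sigma(C)-C$, which is computed directly to be $\equiv\varpi^{\epsilon_1}(\lambda_2(\sigma)^q+\iota\lambda_2(\sigma))$. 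That algebraic bridge between $c_{1,\zeta}$ and the Lubin--Tate tower is the missing idea in your argument; without it your Taylor expansion does not produce $\lambda_2(\sigma)$.
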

\begin{proof}
We can check that 
$\bar{a}_{\sigma} =\lambda_1 (\sigma)^{-2(q+1)}$ and 
$\bar{c}_{\sigma} =\lambda_1 (\sigma)^{-(q+1)}$ easily. 
We prove that 
\[
 \bar{b}_{\sigma} =\lambda_1 (\sigma)^{-2(q+1)} g_0. 
\]
We simply write 
$\varpi_i$ for $\varpi_{i,L_2}$. 
We put $\iota =\zeta^{q^2 -1}$ 
and 
\[
 C=\varpi_1^{\frac{q^2-1}{q}} 
 \biggl\{ \biggl( \frac{\varpi_2}{\varpi_1} 
 \biggr)^q + \iota \biggl( 
 \frac{\varpi_2}{\varpi_1} \biggr) \biggr\}. 
\]
Then, we have 
\[
 C^q-\iota \gamma_1 C \equiv -1 \pmod{\frac{1}{2}+} 
\]
by $\varpi_2^{q^2}-\varpi^{1/2} \varpi_2=-\varpi_1$. 
We can easily check the equality 
\[
 \sigma(C)-C \equiv \varpi^{\epsilon_1} 
 (\lambda_2 (\sigma)^q+\iota \lambda_2(\sigma)) \pmod{\epsilon_1 +}. 
\]
On the other hand, we can check 
\[
 c_{1,\zeta}^q \equiv -\iota (2-\gamma_4 c_{1,\zeta})
 \pmod{\frac{q-1}{2q^4} +} 
\]
by the definition of $c_{1,\zeta}$. 
Therefore, the elements 
$C$ and $c_{1,\zeta}^{q^3} /(2\iota)$ 
satisfy 
\[
 x^{q}-\iota \gamma_1 x \equiv -1 \pmod{\frac{1}{2} +}. 
\]
Hence, we obtain 
$C \equiv c_{1,\zeta}^{q^3}/(2\iota) \pmod{\epsilon_1 +}$. 
This implies 
\[
 (\sigma(c_{1,\zeta})-c_{1,\zeta})^{q^3} \equiv 
 2\iota (\sigma(C)-C) \pmod{\epsilon_1 +}. 
\]
Therefore, we obtain 
\[
 \bar{b}_{\sigma} \equiv \bar{b}_{\sigma} ^{q^3} 
 \equiv \lambda_1 (\sigma)^{-2(q+1)} g_0 \pmod{0+} 
\]
by $\xi_{\sigma} =\lambda_1 (\sigma)^{q+1} \pmod{0+}$. 
\end{proof}

\subsubsection{$q$\ :\ even}\label{eveninertia}
We assume that $q$ is even. 
We use the notation in Paragraph \ref{qeven}. 
For $P \in \mathbf{P}^0 (\mathbf{C})$, 
we have 
\begin{equation}\label{w1ine}
 w_1 (\sigma (P)) \equiv \sigma (w_1 (P)) \pmod{\frac{1}{4q^4}+} 
\end{equation} 
by \eqref{Z2}. We can see that 
\begin{equation}\label{f+1ine}
 z_{f+1} (\sigma(P)) \equiv \sigma (z_{f+1}(P)) \pmod{\frac{1}{8q^4}+} 
\end{equation}
using \eqref{pred} and \eqref{w1ine}. 
\begin{lem}
The element $\sigma$ induces the morphism 
\[
 \overline{\mathbf{P}}_{\zeta} \to 
 \overline{\mathbf{P}}_{\bar{\xi}_{\sigma} \zeta^{q^{-r_{\sigma}}}};\ 
 w_1 \mapsto w_1 ^{q^{-r_{\sigma}}}. 
\]
\end{lem}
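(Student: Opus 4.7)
The plan is to deduce the lemma directly from the congruence \eqref{w1ine} established just above, together with the defining property of the Weil action on the reduction of an affinoid recalled at the start of Section \ref{actine}. Since the proof of \eqref{w1ine} has already absorbed all the approximation estimates coming from \eqref{Z2} and the passage from $Z$ to $w_1$ via $Z=\varpi^{1/(4q^4)}w_1$, the remaining work is purely formal.

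First, I would recall that $w_{\mathbf{P}}(\sigma)$ is characterized by $w_{\mathbf{P}}(\sigma)(\overline{P})=\overline{\sigma(P)}$ for $P\in\mathbf{P}^{0}_{\zeta}(\mathbf{C})$, so it suffices to compute $\overline{w_{1}(\sigma(P))}$. By \eqref{w1ine}, the elements $w_{1}(\sigma(P))$ and $\sigma(w_{1}(P))$ have the same reduction in $k^{\mathrm{ac}}$. Since $\sigma$ acts on $k^{\mathrm{ac}}$ by the $q^{-r_{\sigma}}$-th power map by the definition of $r_{\sigma}$, we obtain
\[
 w_{1}\bigl(w_{\mathbf{P}}(\sigma)(\overline{P})\bigr)
 =\overline{\sigma(w_{1}(P))}
 =\overline{w_{1}(P)}^{\,q^{-r_{\sigma}}},
\]
which is exactly the formula $w_{1}\mapsto w_{1}^{q^{-r_{\sigma}}}$ on the reduction.

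To pin down the target, I would use Lemma \ref{Z11ine}: on $\overline{\mathbf{Z}}_{1,1}$ the coordinate $x_{3}$ transforms as $x_{3}\mapsto\bar{\xi}_{\sigma}x_{3}^{q^{-r_{\sigma}}}$, so the residue class $\bar{x}_{3}=\zeta$ is sent to the residue class $\bar{x}_{3}=\bar{\xi}_{\sigma}\zeta^{q^{-r_{\sigma}}}$, identified with the index $\bar{\xi}_{\sigma}\zeta$ under the same convention adopted in Proposition \ref{ine}. The action also preserves the singular locus $w_{1}\in k^{\times}$ (it is stable under $q^{-r_{\sigma}}$-th power), so $\sigma$ sends $\mathbf{P}_{\zeta}$ into $\mathbf{P}_{\bar{\xi}_{\sigma}\zeta}$, giving the claimed morphism on reductions. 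There is no real obstacle here: the analytic content is entirely contained in \eqref{w1ine}, and the lemma is its translation into the language of Galois actions on reductions.
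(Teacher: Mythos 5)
Your argument is correct and, modulo the level of detail, is exactly what the paper's one-line proof ("This follows from Lemma \ref{Z11ine} and \eqref{w1ine}.") is compressing: read the identity of reductions off \eqref{w1ine}, apply the $q^{-r_\sigma}$-th power behaviour of $\sigma$ on $k^{\mathrm{ac}}$, and locate the target residue class via the $x_3$-transformation in Lemma \ref{Z11ine}. The one place you are being more careful than the paper is in flagging that the $x_3$-index actually transforms as $\zeta\mapsto\bar{\xi}_\sigma\zeta^{q^{-r_\sigma}}$ and that the written target $\overline{\mathbf{P}}_{\bar{\xi}_\sigma\zeta}$ must be read under the same convention used in Proposition \ref{ine} (compare Proposition \ref{Wact}, where the exponent $q^{-r_\sigma}$ is written explicitly); and your observation that the deleted locus $w_1\in k^{\times}$ is stable under the $q^{-r_\sigma}$-th power map is the right way to confirm that the map restricts from $\overline{\mathbf{P}}^0_\zeta$ to $\overline{\mathbf{P}}_\zeta$.
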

\begin{proof}
This follows from Lemma \ref{Z11ine} and \eqref{w1ine}. 
\end{proof}
We take 
$\zeta' \in k^{\times}$. 
By \eqref{w1ine} and \eqref{f+1ine}, 
we have 
\begin{equation}\label{zwine}
\begin{split}
 a_{1,\zeta'} z(\sigma (P)) +b_{1,\zeta'} w(\sigma (P)) +c_{1,\zeta'} 
 &\equiv \\ 
 a_{1,\zeta'} \sigma(z(P)) &+\sigma(b_{1,\zeta'}) \sigma(w(P)) 
 +\sigma(c_{1,\zeta'}) 
 \pmod{\frac{1}{8q^4}+} 
\end{split}
\end{equation}
and 
\begin{equation}\label{w1wine}
 b_{2,\zeta'} w(\sigma (P)) +c_{2,\zeta'} \equiv 
 \sigma (b_{2,\zeta'} ) \sigma (w(P)) +\sigma (c_{2,\zeta'}) 
 \pmod{\frac{1}{4q^3}+}  
\end{equation}
using $\sigma (a_{1,\zeta'} ) \equiv a_{1,\zeta'} \pmod{1/(8q^4)+}$. 
We put 
\begin{align*}
 a_{\sigma,\zeta'} &=\frac{\sigma (b_{2,\zeta'})}{b_{2,\zeta'}}, \quad 
 b_{\sigma,\zeta'} =
 \frac{\sigma (b_{1,\zeta'}) b_{2,\zeta'} -b_{1,\zeta'} \sigma (b_{2,\zeta'} )}
 {a_{1,\zeta'} b_{2,\zeta'}},\\ 
 b'_{\sigma,\zeta'} &=
 \frac{\sigma (c_{2,\zeta'})-c_{2,\zeta'}}{b_{2,\zeta'}},\quad 
 c_{\sigma,\zeta'} 
 =\frac{\sigma (c_{1,\zeta'})-c_{1,\zeta'} -b_{1,\zeta'} b_{2,\zeta'}^{-1} 
 (\sigma (c_{2,\zeta'})-c_{2,\zeta'} )}{a_{1,\zeta'}}. 
\end{align*}
In the sequel, we omit the subscript 
$\zeta'$ of $a_{\sigma,\zeta'}$, 
$b_{\sigma,\zeta'}$, $b'_{\sigma,\zeta'}$ and $c_{\sigma,\zeta'}$. 
We note that $v(a_{\sigma})=0$. 
We have $v(b'_{\sigma}) \geq 0$ by \eqref{w1wine}. 
This implies $v(b_{\sigma}) \geq 0$. 
By \eqref{zwine} and \eqref{w1wine}, 
we obtain $v(c_{\sigma}) \geq 0$ 
using $v(b_{\sigma}) \geq 0$. 

\begin{prop}\label{ellact}
The element $\sigma$ induces the morphism
\[
 \overline{\mathbf{X}}_{\zeta, \zeta'} 
 \to 
 \overline{\mathbf{X}}_{\bar{\xi}_{\sigma} \zeta^{q^{-r_{\sigma}}}, \zeta'};\ 
 (z,w) \mapsto 
 (z^{q^{-r_{\sigma}}} +\bar{b}_{\sigma} w^{q^{-r_{\sigma}}} +\bar{c}_{\sigma}, 
 \bar{a}_{\sigma}w^{q^{-r_{\sigma}}} +\bar{b}'_{\sigma}). 
\]
\end{prop}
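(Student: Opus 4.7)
The plan is to mirror the odd-case argument (Proposition \ref{ine}) using the two congruences \eqref{zwine} and \eqref{w1wine}, which are precisely the Weil-action analogues, at the level of the coordinates $(z,w)$, of the congruences \eqref{Z2}, \eqref{w1ine}, \eqref{f+1ine} already established for $(Z,w_1,z_{f+1})$. The four constants $a_\sigma,b_\sigma,b'_\sigma,c_\sigma$ have been introduced precisely so that \eqref{zwine} and \eqref{w1wine} rearrange into affine-linear formulas for $(z(\sigma(P)),w(\sigma(P)))$ in terms of $(\sigma(z(P)),\sigma(w(P)))$.

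First I would take $P\in\mathbf{X}_{\zeta,\zeta'}(\mathbf{C})$ and divide the congruence \eqref{w1wine} by $b_{2,\zeta'}$. Using $v(b_{2,\zeta'})=1/(12q^4)$, the error $1/(4q^3)+$ becomes something strictly positive, so one obtains
\[
 w(\sigma(P)) \equiv a_\sigma\,\sigma(w(P))+b'_\sigma \pmod{0+},
\]
by the very definitions of $a_\sigma$ and $b'_\sigma$. Next I would substitute this into \eqref{zwine} divided by $a_{1,\zeta'}$; since $v(a_{1,\zeta'})=1/(8q^4)$ matches the error modulus in \eqref{zwine}, dividing again produces a congruence modulo $0+$. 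The coefficient of $\sigma(w(P))$ that appears is $\bigl(\sigma(b_{1,\zeta'})-b_{1,\zeta'}a_\sigma\bigr)/a_{1,\zeta'}$, which by a one-line manipulation using $a_\sigma=\sigma(b_{2,\zeta'})/b_{2,\zeta'}$ equals $b_\sigma$; the constant term is exactly $c_\sigma$. Therefore
\[
 z(\sigma(P)) \equiv \sigma(z(P))+b_\sigma\,\sigma(w(P))+c_\sigma \pmod{0+}.
\]

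Then I would pass to the reduction. Since $z(P),w(P)\in\mathcal{O}_{\mathbf{C}}$ and $\sigma$ acts on $k^{\mathrm{ac}}$ as the $q^{-r_\sigma}$-th power map, we have $\sigma(z(P))\equiv z(P)^{q^{-r_\sigma}}$ and $\sigma(w(P))\equiv w(P)^{q^{-r_\sigma}}$ modulo $0+$, which together with the bounds $v(a_\sigma)=0$, $v(b_\sigma),v(b'_\sigma),v(c_\sigma)\ge 0$ established just before the proposition gives the displayed formula. Finally, to identify the target affinoid I would observe that the $x_3$-residue of $\sigma(P)$ is $\bar{\xi}_\sigma\zeta^{q^{-r_\sigma}}$ by Lemma \ref{Z11ine}, while the reduction $\bar w_1(\sigma(P))$ equals $\zeta'^{q^{-r_\sigma}}=\zeta'$ because $\zeta'\in k^\times=\mathbb{F}_q^\times$ is fixed by every power of Frobenius; hence $\sigma(P)$ lies over $(\bar\xi_\sigma\zeta^{q^{-r_\sigma}},\zeta')\in\mathcal{S}$.

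The only step requiring real care is the bookkeeping of valuations: one must check that, after dividing \eqref{w1wine} by $b_{2,\zeta'}$ and \eqref{zwine} by $a_{1,\zeta'}$, the error moduli $1/(4q^3)+$ and $1/(8q^4)+$ become strictly positive, and that the substitution of $w(\sigma(P))$ into the $z$-equation does not introduce cross-terms of valuation $0$. Both facts rest on the specific valuations $v(a_{1,\zeta'})=1/(8q^4)$, $v(b_{1,\zeta'})=v(c_{2,\zeta'}^{(2q-3)/2}b_{2,\zeta'})=1/(12q^4)$, $v(b_{2,\zeta'})=1/(12q^4)$, which make the denominators cancel the error exponents with room to spare. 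Once this is in place the proof reduces to the two one-line rearrangements outlined above.
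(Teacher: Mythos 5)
Your proof is correct and follows the same route the paper intends: the paper's own proof is the one-liner ``This follows from \eqref{zwine}, \eqref{w1wine},'' with the constants $a_{\sigma}$, $b_{\sigma}$, $b'_{\sigma}$, $c_{\sigma}$ and their integrality bounds set up immediately beforehand precisely so that the two substitutions and divisions you carry out (by $b_{2,\zeta'}$ and by $a_{1,\zeta'}$) rearrange \eqref{zwine} and \eqref{w1wine} into the displayed affine-linear formulas, with the identification of the target residue disc supplied by Lemma \ref{Z11ine} and the $\mathbb{F}_q$-rationality of $\zeta'$. You are simply unpacking the details the paper leaves implicit, and the valuation bookkeeping you perform (in particular verifying that the error $1/(4q^3)$ survives multiplication by $b_{1,\zeta'}$ and division by $a_{1,\zeta'}$) is exactly the content of the prefatory remarks $v(a_{\sigma})=0$ and $v(b_{\sigma}),v(b'_{\sigma}),v(c_{\sigma})\geq 0$.
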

\begin{proof}
This follows from \eqref{zwine} and \eqref{w1wine}. 
\end{proof}

In the following, 
we simplify the description of 
$\bar{a}_{\sigma}$, $\bar{b}_{\sigma}$, $\bar{b}'_{\sigma}$ and 
$\bar{c}_{\sigma}$. 
Let $\tilde{\zeta}' \in \mu_{q-1}(K)$ 
be the lift of $\zeta'$. 
We put 
\[
 h_{\zeta'} (x)=x^4 -\varpi^{\frac{1}{4}} 
 \tilde{\zeta}'^4 x -\tilde{\zeta}'^4. 
\]

\begin{lem}\label{delta}
There is a root $\delta_1$ of $h_{\zeta'} (x)=0$ 
such that 
\[
 \delta_1 \equiv c_{2,\zeta'} ^{q^4} 
 +\frac{q}{2} \varpi^{\frac{1}{4}} \tilde{\zeta}'^2  \pmod{\frac{1}{4}+}. 
\]
\end{lem}
\begin{proof}
We put 
\[
 h(x)=x^{4(q-1)} +1 +\varpi^{\frac{1}{4}} x^{4q-3}. 
\]
By the definition of $c_{2,\zeta'}$, 
we have 
$h(c_{2,\zeta'}^{q^4}) \equiv 0 \pmod{1}$. 
Hence, we have a root $c_2 '$ of 
$h$ such that 
$c_2 ' \equiv c_{2,\zeta'} ^{q^4} \pmod{3/4}$ 
by Newton's method. 
We can check that 
\[
 c_2' \equiv \tilde{\zeta}'+
 \varpi^{\frac{1}{16}} \tilde{\zeta}'^{\frac{5}{4}} \pmod{\frac{1}{16}+}. 
\]
We define 
a parameter $s$ with $v(s) \geq 1/16$ by 
$x=\tilde{\zeta}' +s$. 
Then we have 
\begin{align*}
 h(\tilde{\zeta}' +s) & \equiv 
 \tilde{\zeta}'^{-4} s^4 +\biggl( \frac{q}{2}-1 \biggl) 
 \tilde{\zeta}'^{-8} s^8+\varpi^{\frac{1}{4}} (\tilde{\zeta}'+s) \\
 & \equiv 
 \tilde{\zeta}'^{-4} h_{\zeta'} (x) +
 \biggl( \frac{q}{2} -1 \biggr) 
 \tilde{\zeta}'^{-8} s^8+\varpi^{\frac{1}{4}} s^4 \tilde{\zeta}'^{-3}
 \pmod{\frac{1}{2} +}. 
\end{align*}
This implies 
\[
 h_{\zeta'} (c_2 ') \equiv 
 \frac{q}{2} \varpi^{\frac{1}{2}} \tilde{\zeta}'^6 
 \pmod{\frac{1}{2}+}. 
\]
Therefore, 
we have a root $\delta_1$ of $h_{\zeta'} (x)=0$ 
such that 
\[
 \delta_1 \equiv c_2 ' +\frac{q}{2} 
 \varpi^{\frac{1}{4}} \tilde{\zeta}'^2 \pmod{\frac{1}{4}+} 
\]
by Newton's method. 
\end{proof}

By the definition of $b_{2,\zeta'}$, 
we have 
\[
 b_{2,\zeta'}^{3q^4} \varpi^{-\frac{1}{4}} \equiv \tilde{\zeta}'^4 \pmod{0+}. 
\]
Let $\zeta''$ be the element of 
$\mu_{3(q-1)} (K^{\mathrm{ur}})$ satisfying 
$\zeta'' \equiv b_{2,\zeta'}^{q^4} \varpi^{-1/12} \pmod{0+}$. 
Note that $\zeta''^3=\tilde{\zeta}'^4$. 
We take $\delta_1$ as in Lemma \ref{delta} 
and put 
$\delta =\delta_1/(\zeta''\varpi^{1/12})$. 
Then we have 
\[
 \delta^4 -\delta=\frac{1}{\zeta''\varpi^{\frac{1}{3}}}. 
\]
Note that $v(\delta)=-1/12$. 
We take $\zeta_3 \in \mu_3(K^{\mathrm{ur}})$ 
such that $\zeta_3 \neq 1$, 
and put 
\[
 h_{\delta_1} (x)=x^2 -(1+2\zeta_3 )\varpi^{\frac{1}{4}} \delta_1 ^{2q} x -
 \varpi^{\frac{1}{4}} \delta_1 ^{4q-1} (1+2\varpi^{\frac{1}{4}} \delta_1). 
\]
\begin{lem}\label{theta}
There is a root $\theta_1$ of $h_{\delta_1} (x)=0$ 
such that 
$\theta_1 \equiv c_{1,\zeta'}^{2q^4} \pmod{1/4+}$. 
\end{lem}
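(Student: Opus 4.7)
The plan is to mimic the strategy of Lemma \ref{delta}: establish that $h_{\delta_1}(c_{1,\zeta'}^{2q^4})$ has valuation $\geq 1/2$, then invoke Newton's method (Hensel's lemma) to produce the desired nearby root $\theta_1$.

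Rewrite the defining equation of $c_{1,\zeta'}$ as $Y^2 + BY + A = 0$ with $B = \varpi^{1/(8q^4)}c_{2,\zeta'}^q$ and $A = c_{2,\zeta'}(c_{2,\zeta'}^{2(q-1)}+1) - \varpi^{(2q^2-4q+1)/(4q^4)}$, and let $c_{1,\zeta'}'$ be the other root. Then $c_{1,\zeta'}^{2q^4}$ satisfies the quadratic
\[
Y^2 - s_{q^4} Y + A^{2q^4} = 0, \qquad s_{q^4} := c_{1,\zeta'}^{2q^4} + c_{1,\zeta'}'^{2q^4}.
\]
Applying the recursion $s_{2k} = s_k^2 - 2A^{2k}$ to the power sums of $\alpha := c_{1,\zeta'}^2$ and $\beta := c_{1,\zeta'}'^2$ (which have $\alpha + \beta = B^2 - 2A$ and $\alpha\beta = A^2$), and using that $v(2) \geq 1$ in residue characteristic $2$, one obtains $s_{q^4} \equiv (B^2 - 2A)^{q^4} \equiv B^{2q^4} \pmod{1}$ after $4f$ iterations, since every error term picks up a factor of $2$.

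Next, match this approximate quadratic with $h_{\delta_1}$ using Lemma \ref{delta} ($\delta_1 \equiv c_{2,\zeta'}^{q^4} \pmod{1/4}$) and the relation $c_{2,\zeta'}^{4(q-1)} + 1 + \varpi^{1/(4q^4)}c_{2,\zeta'}^{4q-3} = 0$. On the linear side,
\[
B^{2q^4} = \varpi^{1/4}c_{2,\zeta'}^{2q^5} \equiv (1+2\zeta_3)\varpi^{1/4}\delta_1^{2q} \pmod{1},
\]
because $(1+2\zeta_3) - 1 = 2\zeta_3$ has valuation $v(2) \geq 1$ and $\delta_1^{2q} \equiv c_{2,\zeta'}^{2q^5}$ to high precision. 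On the constant side, square the expansion $A^2 = -\varpi^{1/(4q^4)}c_{2,\zeta'}^{4q-1} + 2c_{2,\zeta'}^{2q} + O(\text{higher})$ — obtained from the identity $(c_{2,\zeta'}^{2(q-1)}+1)^2 = -\varpi^{1/(4q^4)}c_{2,\zeta'}^{4q-3} + 2c_{2,\zeta'}^{2(q-1)}$ that follows from the defining relation of $c_{2,\zeta'}$ — and raise to the $q^4$-th power (which again acts like Frobenius modulo $v(2)$) to deduce $A^{2q^4} \equiv -\varpi^{1/4}\delta_1^{4q-1}(1+2\varpi^{1/4}\delta_1) \pmod{1/2}$. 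Combining these estimates yields $v(h_{\delta_1}(c_{1,\zeta'}^{2q^4})) \geq 1/2$.

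Finally, $h_{\delta_1}'(c_{1,\zeta'}^{2q^4}) = 2c_{1,\zeta'}^{2q^4} - (1+2\zeta_3)\varpi^{1/4}\delta_1^{2q}$ has valuation exactly $1/4$, since $v(c_{1,\zeta'}^{2q^4}) = 1/8$ forces the first summand to have valuation $\geq 9/8$ while the second has valuation $1/4$. Hensel's lemma then produces a root $\theta_1$ of $h_{\delta_1}$ with $v(\theta_1 - c_{1,\zeta'}^{2q^4}) \geq 1/2 - 1/4 = 1/4$, as required. The main obstacle is the valuation bookkeeping in both the iterated squaring $s_k \mapsto s_{2k}$ and the $q^4$-th power of the explicit expansion of $A^2$; one must ensure that all accumulated ``approximate Frobenius'' errors stay of valuation $\geq 1/2$, and verify that the specific shape of the coefficients in $h_{\delta_1}$ — in particular the factor $(1+2\zeta_3)$, which is cosmetic at this precision since $2\zeta_3 \in \varpi\mathcal{O}$ — matches the expressions coming from the defining relations of $c_{1,\zeta'}$ and $c_{2,\zeta'}$.
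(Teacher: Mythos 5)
Your proposal is correct and takes essentially the same route as the paper: establish $v\bigl(h_{\delta_1}(c_{1,\zeta'}^{2q^4})\bigr)\geq 1/2$ from the defining relations of $c_{1,\zeta'}$, $c_{2,\zeta'}$ and Lemma \ref{delta}, then invoke Newton's method. The paper's own proof is a two-sentence sketch; you have filled in the valuation bookkeeping honestly, and the key mechanism (power sums of $c_{1,\zeta'}^2$ and $c_{1,\zeta'}'^2$ together with the approximate-Frobenius behavior of $2^j$-th powers modulo $v(2)\geq 1$) is sound. The sign slip in ``$A^{2q^4}\equiv -\varpi^{1/4}\delta_1^{4q-1}(1+2\varpi^{1/4}\delta_1)$'' is immaterial, since $x\equiv -x \pmod{v(x)+v(2)}$ and here $v(x)=1/4$, $v(2)\geq 1$, so signs are invisible at precision $1/2$.

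One point you share with the paper and should at least be aware of: since $v(h_{\delta_1}'(c_{1,\zeta'}^{2q^4}))=1/4$ and the estimate only gives $v(h_{\delta_1}(c_{1,\zeta'}^{2q^4}))\geq 1/2 = 2\cdot\tfrac14$, this is the boundary case of Hensel's lemma, not the interior. The cleanest justification is to note that the Newton polygon of $h_{\delta_1}$ (constant and linear coefficients both of valuation $1/4$, leading coefficient a unit, discriminant of valuation exactly $1/2$ since $v(4)\geq 2$) shows the two roots $\theta_1,\theta_1'$ each have $v=1/8$ with $v(\theta_1-\theta_1')=1/4$; combined with $v(h_{\delta_1}(c_{1,\zeta'}^{2q^4}))\geq 1/2$ and the ultrametric inequality this forces $v(c_{1,\zeta'}^{2q^4}-\theta_1)\geq 1/4$ for one of the roots. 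That yields exactly the claimed congruence and is what ``Newton's method'' must mean here.

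Also verify for yourself the numerology you are implicitly relying on: $v(A)=v(B)=1/(8q^4)$ forces $v(c_{1,\zeta'})=1/(16q^4)$ via the Newton polygon of $Y^2+BY+A$, hence $v(c_{1,\zeta'}^{2q^4})=1/8$ as you assert; and the dominant correction $M^{q^4}$ coming from the $\varpi^{(2q^2-4q+1)/(4q^4)}$ term in $A$ has valuation $(2q^2-4q+1)/2$, which equals $1/2$ exactly when $q=2$ — so the $\geq 1/2$ estimate is tight for $q=2$ and cannot be improved to a strict inequality without extra work. This is precisely why the boundary-case analysis above matters.
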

\begin{proof}
By the definition of $c_{1,\zeta'}$ and $c_{2,\zeta'}$, 
we have 
$h_{\delta_1} (c_{1,\zeta'}^{2q^4}) \equiv 0 \pmod{1/2+}$. 
Hence, we can show the claim using Newton's method. 
\end{proof}

We take $\theta_1$ as in Lemma \ref{theta} 
and put 
\[
 \theta =\frac{\theta_1}{\varpi^{\frac{1}{4}} \delta_1^{2q}} -\zeta_3. 
\]
Then we have $\theta^2 -\theta =\delta^3$. 
Note that $v(\theta)=-1/8$. 
Let $\sigma \in W_K$ in this paragraph. 
We put 
\[
 \zeta_{3,\sigma} =
 \frac{\sigma(\zeta''\varpi^{\frac{1}{3}})}{\zeta''\varpi^{\frac{1}{3}}}. 
\]
We take $\nu_{\sigma} \in \mu_3(K^{\mathrm{ur}})\cup\{0\}$ such that 
$\sigma(\delta) \equiv 
 \zeta_{3,\sigma} ^{-1} (\delta +\nu_{\sigma}) \pmod{5/6}$. 
Then we have 
\begin{equation}\label{muF_4}
\begin{split}
 &\bigl( \sigma (\theta)-\theta +\nu_{\sigma}^2 \delta \bigr)^2 
 \equiv 
 \sigma (\theta)-\theta +\nu_{\sigma}^2 \delta +\nu_{\sigma}^3, \\ 
 &\bigl( \sigma (\theta)-\theta 
 +\nu_{\sigma}^2 \delta +\nu_{\sigma}^3 \bigr)^2 
 \equiv 
 \sigma (\theta)-\theta + \nu_{\sigma}^2 \delta \pmod{0+}. 
\end{split}
\end{equation}
By these equations, 
we can take $\mu_{\sigma} \in \mu_3(K^{\mathrm{ur}})\cup\{0\}$ such that 
\[
 \mu_{\sigma} \equiv 
 \sigma (\theta)-\theta +\nu_{\sigma}^2 \delta +\nu_{\sigma}^3 
 +\sigma(\zeta_3) -\zeta_3 \pmod{0+}. 
\]
Then we have 
$\mu_{\sigma}^2 +\mu_{\sigma} \equiv \nu_{\sigma}^3 \pmod{1}$ 
by \eqref{muF_4} and 
$\nu_{\sigma}, \mu_{\sigma} \in \mu_3(K^{\mathrm{ur}})\cup\{0\}$. 

\begin{lem}\label{descr}
1. Let $\sigma \in W_K$. Then we have 
\[
 a_{\sigma} \equiv \zeta_{3,\sigma},\quad 
 b_{\sigma} \equiv \zeta_{3,\sigma} \nu_{\sigma}^2,\quad 
 b'_{\sigma} \equiv \nu_{\sigma},\quad 
 c_{\sigma} \equiv \mu_{\sigma} \pmod{0+}. 
\]
2. Let $\sigma \in W_K$. 
Then we have $\bar{a}_{\sigma} \in \mathbb{F}_4 ^{\times}$ and 
$\bar{b}_{\sigma}, \bar{b}'_{\sigma}, \bar{c}_{\sigma} \in \mathbb{F}_4$. 
Further, $\bar{a}_{\sigma} \bar{b}_{\sigma}^2 =\bar{b}'_{\sigma}$ and 
$\bar{b}_{\sigma}^3=\bar{c}_{\sigma}^2 +\bar{c}_{\sigma}$ hold. 
\end{lem}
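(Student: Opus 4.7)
The plan is to prove part 1 by direct computation from the explicit formulas for $a_\sigma, b_\sigma, b'_\sigma, c_\sigma$ displayed just before Proposition \ref{ellact}, substituting Lemmas \ref{delta} and \ref{theta} to replace the $q^4$-th powers of $c_{2,\zeta'}$ and $c_{1,\zeta'}$ by explicit expressions in $\delta_1=\zeta''\varpi^{1/12}\delta$ and $\theta_1=\varpi^{1/4}\delta_1^{2q}(\theta+\zeta_3)$, and then invoking the prescribed action of $\sigma$ on the Artin--Schreier pair $(\delta,\theta)$ via $\nu_\sigma$ and $\mu_\sigma$. Part 2 will follow purely formally from part 1 and the arithmetic of $\mu_3$ in characteristic $2$.

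First I would handle $\bar a_\sigma$. Raising $a_\sigma=\sigma(b_{2,\zeta'})/b_{2,\zeta'}$ to the $q^4$-th power and using $b_{2,\zeta'}^{q^4}\equiv\zeta''\varpi^{1/12}$ (a consequence of $b_{2,\zeta'}^{3}=\varpi^{1/(4q^4)}c_{2,\zeta'}^4$ and $c_{2,\zeta'}^{4(q-1)}\equiv -1$), one obtains that $\sigma(\zeta''\varpi^{1/12})/(\zeta''\varpi^{1/12})$ is a $12$-th root of unity whose $4$-th power equals $\zeta_{3,\sigma}$. Since $q$ is a power of $2$, $q^4\equiv 1\pmod 3$, so the $q^4$-th power map is the identity on $\mu_3\subset\mathbb{F}_4$, which gives $\bar a_\sigma=\zeta_{3,\sigma}$. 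For $\bar b'_\sigma$, I would use Lemma \ref{delta} to identify $c_{2,\zeta'}^{q^4}$ with $\delta_1$, apply $\sigma(\delta)\equiv\zeta_{3,\sigma}^{-1}(\delta+\nu_\sigma)$ to compute $\sigma(\delta_1)-\delta_1$, transfer back through the approximation $\bar c_{2,\zeta'}=\bar\delta_1^{q^{-4}}$, and divide by $b_{2,\zeta'}$. The computations of $\bar b_\sigma$ and $\bar c_\sigma$ then proceed by expanding using $b_{1,\zeta'}=c_{2,\zeta'}^{(2q-3)/2}b_{2,\zeta'}$ and $a_{1,\zeta'}=\varpi^{1/(8q^4)}c_{2,\zeta'}^q$; the treatment of $\bar c_\sigma$ additionally requires Lemma \ref{theta} together with the action $\sigma(\theta)\equiv\theta-\nu_\sigma^2\delta-\nu_\sigma^3-\sigma(\zeta_3)+\zeta_3+\mu_\sigma$ to extract the $\mu_\sigma$-contribution.

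For part 2, the inclusions $\bar a_\sigma,\bar b_\sigma,\bar b'_\sigma,\bar c_\sigma\in\mathbb{F}_4$ are immediate from the formulas in part 1, since $\zeta_{3,\sigma}\in\mu_3$ and $\nu_\sigma,\mu_\sigma\in\mu_3\cup\{0\}\subset\mathbb{F}_4$. The relation $\bar a_\sigma\bar b_\sigma^2=\bar b'_\sigma$ reduces to $\zeta_{3,\sigma}^3\nu_\sigma^4=\nu_\sigma$, which holds because $\zeta_{3,\sigma}^3=1$ and $\nu_\sigma^4=\nu_\sigma$ on $\mu_3\cup\{0\}$; the relation $\bar b_\sigma^3=\bar c_\sigma^2+\bar c_\sigma$ becomes $\nu_\sigma^6=\mu_\sigma^2+\mu_\sigma$, which is exactly the congruence $\mu_\sigma^2+\mu_\sigma\equiv\nu_\sigma^3$ derived from \eqref{muF_4} in the paragraph preceding the lemma.

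The main obstacle is the calculation of $\bar c_\sigma$, because the defining formula for $c_\sigma$ contains a delicate cancellation between $\sigma(c_{1,\zeta'})-c_{1,\zeta'}$ and $b_{1,\zeta'}b_{2,\zeta'}^{-1}(\sigma(c_{2,\zeta'})-c_{2,\zeta'})$ that must be carried to full precision so that the leading $\delta$-contribution disappears and only the $\mu_\sigma$-contribution coming from $\theta$ survives. One has to juggle approximations at valuations $1/4$, $1/(8q^4)$, $5/6$, and $0+$ simultaneously while switching between the original parameters $c_{i,\zeta'},b_{i,\zeta'}$ and the simpler pair $(\delta,\theta)$; once the correct leading terms are identified, the remainders are forced by the identities $\delta^4-\delta=1/(\zeta''\varpi^{1/3})$ and $\theta^2-\theta=\delta^3$.
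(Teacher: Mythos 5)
Your proposal follows the same route as the paper's proof: raise $a_\sigma,b_\sigma,b'_\sigma,c_\sigma$ to suitable $q^4$-power exponents, use Lemmas \ref{delta} and \ref{theta} to replace $c_{2,\zeta'}^{q^4}$ and $c_{1,\zeta'}^{2q^4}$ by $\delta_1$ and $\theta_1$, invoke the action of $\sigma$ on $(\delta,\theta)$ encoded in $\nu_\sigma,\mu_\sigma$, and deduce part 2 from $\mu_\sigma^2+\mu_\sigma\equiv\nu_\sigma^3$ and the relation \eqref{muF_4}. The only cosmetic difference is that the paper works with the $4q^4$-th power for $a_\sigma$ and the $2q^4$-th powers for $b_\sigma,c_\sigma$; otherwise the computation and the mechanism for the $c_\sigma$-cancellation are the same.
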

\begin{proof}
By the definition of $b_{2,\zeta'}$, 
we have 
\[
 a_{\sigma}^{4q^4} \equiv 
 \frac{\sigma(\zeta''^4 \varpi^{\frac{1}{3}})}{\zeta''^4 \varpi^{\frac{1}{3}}} 
 \pmod{0+}. 
\]
Hence we have 
$\bar{a}_{\sigma}^{4q^4} =\bar{\zeta}_{3,\sigma} \in \mathbb{F}_4^{\times}$. 
This implies 
$\bar{a}_{\sigma} =\bar{\zeta}_{3,\sigma} \in \mathbb{F}_4^{\times}$. 

By the definition of $a_{1,\zeta'}$ and $b_{1,\zeta'}$, 
we have 
\begin{align*}
 b_{\sigma}^{2q^4} &\equiv 
 \frac{\sigma(b_{2,\zeta'} ^{2q^4}) \bigl( 
 \sigma(c_{2,\zeta'}^{q^4(2q-3)})-c_{2,\zeta'}^{q^4(2q-3)}\bigr)}
 {\varpi^{\frac{1}{4}} c_{2,\zeta'}^{2q^5}} 
 \equiv 
 \frac{a_{\sigma}^{2q^4} b_{2,\zeta'} ^{2q^4} \bigl( 
 \sigma(\delta_1^{2q-3})-\delta_1^{2q-3}\bigr)}
 {\varpi^{\frac{1}{4}} \delta_1^{2q}} \\ 
 &\equiv 
 \frac{\zeta_{3,\sigma}^2 \zeta''^2 \bigl( 
 \sigma(\delta_1)-\delta_1 \bigr)}{\varpi^{\frac{1}{12}} \delta_1^4} 
 \equiv 
 \zeta_{3,\sigma}^2 
 \biggl( \frac{\sigma(\zeta'' \varpi^{\frac{1}{12}})}{\zeta'' \varpi^{\frac{1}{12}}} 
 \sigma(\delta) -\delta \biggr)
 \equiv \zeta_{3,\sigma}^2 \nu_{\sigma} 
 \pmod{0+}, 
\end{align*}
where we use 
Lemma \ref{delta} in the second congruence, 
$b_{2,\zeta'}^{q^4}/\varpi^{1/12} \equiv \zeta'' \pmod{0+}$ 
in the third congruence, 
$\delta_1^4 =\tilde{\zeta}'^4 \pmod{1/4}$ and 
$\zeta''^3=\tilde{\zeta}'^4$ in the fourth congruence 
and 
$\sigma(\zeta'' \varpi^{1/12})/(\zeta'' \varpi^{1/12} )
 \equiv \zeta_{3,\sigma} \pmod{0+}$ in the 
last congruence. 
Hence, we obtain 
$\bar{b}_{\sigma}= 
 \bar{\zeta}_{3,\sigma} \bar{\nu}_{\sigma}^2 \in \mathbb{F}_4$. 

By Lemma \ref{delta} and 
$b_{2,\zeta'}^{q^4}/\varpi^{1/12} \equiv \zeta'' \pmod{0+}$, 
we have
\[
 b_{\sigma}'^{q^4} \equiv 
 \frac{\sigma(c_{2,\zeta'}^{q^4}) -c_{2,\zeta'}^{q^4}}{b_{2,\zeta'}^{q^4}} 
 \equiv 
 \frac{\sigma(\delta_1) -\delta_1}{\zeta'' \varpi^{\frac{1}{12}}} 
 = \frac{\sigma(\zeta'' \varpi^{\frac{1}{12}})}{\zeta'' \varpi^{\frac{1}{12}}} 
 \sigma(\delta) -\delta \equiv \nu_{\sigma} \pmod{0+}. 
\]
Hence, we obtain 
$\bar{b}'_{\sigma}= \bar{\nu}_{\sigma} \in \mathbb{F}_4$.

By Lemma \ref{delta}, Lemma \ref{theta} and the definition of $a_{1,\zeta'}$, 
we have 
\begin{align*}
 c_{\sigma}^{2q^4} &\equiv 
 \frac{\sigma(\theta_1) -\theta_1 -\delta_1^{2q-3}
 (\sigma(\delta_1^2) -\delta_1^2)}{\varpi^{\frac{1}{4}} \delta_1^{2q}} \\ 
 &\equiv 
 \frac{\delta_1^{-2q} 
 \sigma\bigl(\delta_1^{2q} \varpi^{\frac{1}{4}} (\theta +\zeta_3) \bigr)
 -\varpi^{\frac{1}{4}} (\theta +\zeta_3) -\delta_1^{-3}
 (\sigma(\delta_1) -\delta_1)^2}{\varpi^{\frac{1}{4}}} \\
 &\equiv 
 \frac{\sigma\bigl( \varpi^{\frac{1}{4}} (\theta +\zeta_3) \bigr)
 -\varpi^{\frac{1}{4}} (\theta +\zeta_3) 
 -\varpi^{\frac{1}{12}} \delta \bigl( \sigma(\varpi^{\frac{1}{12}} \delta) 
 -\varpi^{\frac{1}{12}} \delta \bigr)^2}{\varpi^{\frac{1}{4}}} \\ 
 &\equiv \sigma (\theta)-\theta +\nu_{\sigma}^2 \delta 
 +\sigma(\zeta_3)-\zeta_3 
 \pmod{0+}, 
\end{align*}
where we use $\sigma(\delta_1) \equiv \delta_1 \pmod{1/4}$ 
in the second congruence, 
$\delta_1^4 =\tilde{\zeta}'^4 \pmod{1/4}$ in the third congruence. 
Then we have $\bar{c}_{\sigma}^{2q^4} \in \mathbb{F}_4$ by \eqref{muF_4}. 
Hence we have $\bar{c}_{\sigma} \in \mathbb{F}_4$ and 
$c_{\sigma} \equiv \mu_{\sigma} \pmod{0+}$ again by \eqref{muF_4}. 

By the above calculations, 
we can easily check 
$\bar{a}_{\sigma} \bar{b}_{\sigma}^2 =\bar{b}'_{\sigma}$ and 
$\bar{b}_{\sigma}^3=\bar{c}_{\sigma}^2 +\bar{c}_{\sigma}$. 
\end{proof}
\begin{lem}\label{Galois}
The field 
$K(\zeta_3,\zeta''\varpi^{1/3},\theta)$ is a Galois extension 
over $K$. 
\end{lem}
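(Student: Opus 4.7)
The plan is to exhibit $F := K(\zeta_3, \zeta''\varpi^{1/3}, \theta)$ as the splitting field over $K$ of an explicit polynomial, which, together with separability of the defining relations for $\delta$ and $\theta$ (automatic in characteristic zero; in the equal characteristic case one uses the Artin-Schreier form), yields normality and hence the Galois property.

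First I would eliminate $\delta$ from the relations $\delta^4 - \delta = (\zeta''\varpi^{1/3})^{-1}$ and $\theta^2 - \theta = \delta^3$. Factoring the first as $\delta(\delta^3 - 1) = (\zeta''\varpi^{1/3})^{-1}$ and substituting the second yields
\[
 \delta = \frac{1}{(\zeta''\varpi^{1/3})(\theta^2 - \theta - 1)} \in F,
\]
and then cubing and invoking $(\zeta''\varpi^{1/3})^3 = \tilde{\zeta}'^4\varpi$ produces
\[
 \tilde{\zeta}'^4\varpi \cdot (\theta^2-\theta)(\theta^2-\theta-1)^3 = 1,
\]
so $\theta$ is a root of the polynomial $g(x) := \tilde{\zeta}'^4\varpi\,(x^2-x)(x^2-x-1)^3 - 1 \in K[x]$ of degree eight. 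Setting $u := 1/(\theta^2-\theta-1) = (\zeta''\varpi^{1/3})\delta$, the element $u$ satisfies the Eisenstein quartic $p(x) := x^4 - \tilde{\zeta}'^4\varpi\,x - \tilde{\zeta}'^4\varpi \in K[x]$, hence $[K(u):K] = 4$; a valuation argument using $v(\theta) = -1/8$ then gives $[K(\theta):K] = 8$, so $g$ is, up to its leading coefficient, the minimal polynomial of $\theta$ over $K$.

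Next I would show that $F$ is the splitting field over $K$ of $g(x)\cdot(x^2+x+1)\cdot(x^3-\tilde{\zeta}'^4\varpi)$. Conjugates of $\zeta_3$ and of $\zeta''\varpi^{1/3}$ visibly lie in $F$. Via the rational substitution $u = 1/(x^2-x-1)$, the eight roots of $g$ are in bijection with pairs consisting of a root of $p$ and a sign choice in the quadratic $x^2 - x = (u+1)/u$. Since the passage from $u$ to $\theta$ is formal and reversible---each root $u_i$ of $p$ determines $\theta_i$ by $u_i = 1/(\theta_i^2 - \theta_i - 1)$, and the quadratic $x^2 - x - (u_i+1)/u_i$ has discriminant $(2\theta_i-1)^2$---it suffices to prove that all four roots of $p$ lie in $F$.

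The main obstacle is this last step. To address it, I would factor $p$ over the intermediate field $K(\zeta''\varpi^{1/3})$: division by $x - u$ yields a cubic which, via $x = u y$, takes the form
\[
 (y+1)(y^2+1) = \frac{1}{\delta^3} = \frac{1}{\theta^2-\theta}
\]
with coefficients in $F$. I expect this cubic to be Cardano-solvable inside $F$: the field $F$ contains the primitive cube root of unity $\zeta_3$, together with a distinguished cube root $1/\delta$ of the right-hand side, and the auxiliary square roots appearing in Cardano's formula should simplify, using the identities $\delta^4-\delta = (\zeta''\varpi^{1/3})^{-1}$, $\delta^3 = \theta^2-\theta$ and $(\zeta''\varpi^{1/3})^3 = \tilde{\zeta}'^4\varpi$, to expressions in the generators of $F$. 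Carrying out this simplification explicitly is the technical heart of the argument and will complete the proof.
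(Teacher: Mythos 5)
Your proposal takes a genuinely different route from the paper. The paper does not attempt to identify $F := K(\zeta_3,\zeta''\varpi^{1/3},\theta)$ as a splitting field. Instead, for each $\sigma\in W_K$ it builds a concrete approximation $\theta_\sigma = \theta + \nu_\sigma^2\delta + \nu_\sigma^3 + \mu_\sigma + \sigma(\zeta_3)-\zeta_3 \in F$ to $\sigma(\theta)$ (using the congruences $\theta_\sigma^2 -\theta_\sigma \equiv \sigma(\delta)^3 \pmod{2/3}$ already established in the surrounding discussion), shows $\sigma(\theta)$ is the unique conjugate congruent to $\theta_\sigma\pmod{2/3}$, and invokes Krasner's lemma to conclude $\sigma(\theta)\in K(\theta_\sigma)\subset F$. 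This works uniformly in equal and mixed characteristic, where the equations $\delta^4-\delta = (\zeta''\varpi^{1/3})^{-1}$ and $\theta^2-\theta=\delta^3$ are only approximately of Artin--Schreier type: the whole machinery of $\nu_\sigma,\mu_\sigma\in\mu_3(K^{\mathrm{ur}})\cup\{0\}$ is designed precisely to encode the approximate $\mathbb{F}_4$-affine structure of the conjugates.

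Your algebra up through the degree-$8$ polynomial $g$, the Eisenstein quartic $p(x)=x^4-\tilde\zeta'^4\varpi\,x-\tilde\zeta'^4\varpi$ satisfied by $u=\zeta''\varpi^{1/3}\delta$, and the cubic $(y+1)(y^2+1)=\delta^{-3}$ after removing the known root of $p$, is correct. But there are two genuine gaps. First, you explicitly leave the crux unproved: you say you ``expect this cubic to be Cardano-solvable inside $F$'' and that ``carrying out this simplification explicitly \ldots\ will complete the proof.'' In mixed characteristic this is not a formal verification---Cardano introduces $\sqrt{\Delta}$ for the cubic's discriminant, and there is no reason offered why this lies in $F$; this is exactly where the difficulty of the lemma is concentrated. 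Second, the reduction ``it suffices to prove that all four roots of $p$ lie in $F$'' is not justified: from $u_i\in F$ one gets only that $\theta_i$ satisfies a quadratic over $F$, and to split it one needs a square root of $1+4(u_i+1)/u_i$ in $F$. The observation that this discriminant equals $(2\theta_i-1)^2$ is circular---it expresses the square root in terms of the very conjugate $\theta_i$ whose membership in $F$ is in question. (In equal characteristic $2$ one can repair this by noting $\delta_i-\delta\in\mathbb{F}_4$ exactly and then solving an Artin--Schreier equation by hand, but in mixed characteristic this becomes a congruence statement, and you are led back to precisely the Krasner-type estimates the paper uses.) So the proposal as written does not establish the lemma.
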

\begin{proof}
Let $\sigma \in W_K$. 
It suffices to show 
$\sigma(\theta) \in K(\zeta_3,\zeta''\varpi^{1/3},\theta)$. 
We put 
\[
 \theta_{\sigma} =
 \theta +\nu_{\sigma}^2 \delta +\nu_{\sigma}^3 +\mu_{\sigma}
 +\sigma(\zeta_3 )-\zeta_3. 
\]
Then we have 
$\theta_{\sigma}^2 -\theta_{\sigma} \equiv 
 \sigma(\delta)^3 \pmod{2/3}$. 
Hence, 
we can find $\theta'$ such that 
$\theta'^2 -\theta'=\sigma(\delta)^3$ and 
$\theta' \equiv \theta_{\sigma} \pmod{2/3}$. 
By the choice of $\mu_{\sigma}$, 
we have $\theta' =\sigma(\theta) \pmod{0+}$. 
Hence, we obtain $\theta' =\sigma(\theta)$. 

We take $\sigma' \in W_K$ such that 
$\sigma'(\theta) \neq \sigma(\theta)$. 
We can define $\theta_{\sigma'}$ as above, 
and have 
$\sigma'(\theta) \equiv \theta_{\sigma'} \pmod{2/3}$. 
If $\nu_{\sigma} = \nu_{\sigma'}$, 
then we have 
$\zeta_{3,\sigma} \sigma(\delta) \equiv 
 \zeta_{3,\sigma'} \sigma'(\delta) \pmod{5/6}$, 
which implies 
$\zeta_{3,\sigma} \sigma(\delta) = 
 \zeta_{3,\sigma'} \sigma'(\delta)$ 
because both sides are roots of 
\[
 x^4 -x- \frac{1}{\zeta''\varpi^{\frac{1}{3}}} =0. 
\]
Hence, if $\sigma(\delta)^3 \neq \sigma'(\delta)^3$, 
we have 
$\nu_{\sigma} \neq \nu_{\sigma'}$, 
which implies 
\[
 \sigma (\theta) \equiv \theta_{\sigma} \not\equiv 
 \theta_{\sigma'} \equiv \sigma' (\theta) \pmod{0+}. 
\]
If $\sigma(\delta)^3 =\sigma'(\delta)^3$, we have 
$\sigma (\theta) \not\equiv \sigma' (\theta) \pmod{0+}$. 
Therefore we have 
\[
 v(\sigma(\theta) -\theta_{\sigma}) >v(\sigma'(\theta)-\theta_{\sigma}). 
\]
Then, we obtain 
\[
 \sigma(\theta) \in K(\theta_{\sigma}) 
 \subset K(\zeta_3,\zeta''\varpi^{1/3},\theta) 
\]
by Krasner's lemma. 
\end{proof}

Let $E$ be the elliptic curve over $k^{\mathrm{ac}}$ 
defined by 
$z^2+z=w^3$. 
We put 
\[ 
 Q=
 \Biggl\{
 g(\alpha, \beta, \gamma)=
 \begin{pmatrix}
 \alpha & \beta &  \gamma \\
 & \alpha^2 &  \beta^2\\
 & & \alpha
 \end{pmatrix}
 \in \mathrm{GL}_3 (\mathbb{F}_4) \ \Bigg| \ 
 \alpha\gamma^2
 +\alpha^2 \gamma=\beta^3 
 \Biggr\}.
\]
We note that $|Q|=24$ and 
$Q$ is isomorphic to 
$\mathrm{SL}_2(\mathbb{F}_3)$ (cf. \cite[8.5. Exercices 2]{SerLin}). 
Let $Q \rtimes \mathbb{Z}$ be a semidirect product, 
where $r \in \mathbb{Z}$ acts on $Q$ by 
$g(\alpha, \beta, \gamma) \mapsto 
 g(\alpha^{q^r} , \beta^{q^r} , \gamma^{q^r} )$. 
Then $Q \rtimes \mathbb{Z}$ 
acts faithfully on $E$ as a scheme over $k$, 
where 
$(g(\alpha, \beta, \gamma),r) \in Q \rtimes \mathbb{Z}$ 
acts on $E$ by 
\[
 (z,w) \mapsto 
 \bigl(z^{q^{-r}} +\alpha^{-1} (\beta w^{q^{-r}} + \gamma), 
 \alpha(w^{q^{-r}} +(\alpha^{-1} \beta )^2)\bigr) 
\]
for $k^{\mathrm{ac}}$ valued points. 

\begin{prop}\label{Wact}
The element $\sigma \in W_K$ sends 
$\overline{\mathbf{X}}_{\zeta, \zeta'}$ to 
$\overline{\mathbf{X}}_{\bar{\xi}_{\sigma} \zeta^{q^{-r_{\sigma}}}, \zeta'}$. 
We identify $\overline{\mathbf{X}}_{\zeta, \zeta'}$ with 
$\overline{\mathbf{X}}_{\bar{\xi}_{\sigma} \zeta^{q^{-r_{\sigma}}}, \zeta'}$ 
by 
$(z, w) \mapsto (z, w)$. 
Then the action of $W_K$ gives a homomorphism 
\[
 \Theta_{\zeta'} \colon W_K \to 
 Q \rtimes \mathbb{Z} 
 \subset \Aut_k (\overline{\mathbf{X}}_{\zeta, \zeta'}) 
 ;\ \sigma \mapsto 
 \bigl( g(\bar{\zeta}_{3,\sigma}, \bar{\zeta}_{3,\sigma}^2 
 \bar{\nu}_{\sigma}^2, 
 \bar{\zeta}_{3,\sigma} \bar{\mu}_{\sigma}), 
 r_{\sigma} \bigr). 
\]
\end{prop}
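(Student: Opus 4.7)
The plan is to assemble the explicit formula for the action of $\sigma$ on $\overline{\mathbf{X}}_{\zeta,\zeta'}$ from the preceding results and match it against the definition of the $Q\rtimes\mathbb{Z}$-action on $E$. The first assertion is already contained in Proposition \ref{ellact}, so only the identification of the action with $\Theta_{\zeta'}(\sigma)$ requires work. Combining Proposition \ref{ellact} with Lemma \ref{descr}, the action of $\sigma$ on $\overline{\mathbf{X}}_{\zeta,\zeta'}$ (after the stated identification $(z,w)\mapsto(z,w)$) is given by
\[
 (z,w)\mapsto
 \bigl( z^{q^{-r_{\sigma}}}+\bar{\zeta}_{3,\sigma}\bar{\nu}_{\sigma}^{2}
 w^{q^{-r_{\sigma}}}+\bar{\mu}_{\sigma},\,
 \bar{\zeta}_{3,\sigma}w^{q^{-r_{\sigma}}}+\bar{\nu}_{\sigma}\bigr).
\]

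Next I verify that this coincides with the action of $g(\alpha,\beta,\gamma)$ on $E$ for $\alpha=\bar{\zeta}_{3,\sigma}$, $\beta=\bar{\zeta}_{3,\sigma}^{2}\bar{\nu}_{\sigma}^{2}$ and $\gamma=\bar{\zeta}_{3,\sigma}\bar{\mu}_{\sigma}$. Using $\alpha\in\mathbb{F}_{4}^{\times}$, i.e.\ $\alpha^{3}=1$, one has $\alpha^{-1}\beta=\bar{\zeta}_{3,\sigma}\bar{\nu}_{\sigma}^{2}$ and $\alpha^{-1}\gamma=\bar{\mu}_{\sigma}$, matching the first coordinate. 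For the second coordinate, $\alpha(\alpha^{-1}\beta)^{2}=\alpha^{-1}\beta^{2}=\bar{\zeta}_{3,\sigma}^{3}\bar{\nu}_{\sigma}^{4}=\bar{\nu}_{\sigma}$, where the last step uses $\bar{\nu}_{\sigma}\in\mathbb{F}_{4}$ so that $\bar{\nu}_{\sigma}^{4}=\bar{\nu}_{\sigma}$. To check that the triple $(\alpha,\beta,\gamma)$ lies in $Q$, I compute
\[
 \alpha\gamma^{2}+\alpha^{2}\gamma
 =\bar{\zeta}_{3,\sigma}^{3}(\bar{\mu}_{\sigma}^{2}+\bar{\mu}_{\sigma})
 =\bar{\mu}_{\sigma}^{2}+\bar{\mu}_{\sigma},\qquad
 \beta^{3}=\bar{\zeta}_{3,\sigma}^{6}\bar{\nu}_{\sigma}^{6}=\bar{\nu}_{\sigma}^{3},
\]
and these are equal by the identity $\mu_{\sigma}^{2}+\mu_{\sigma}\equiv\nu_{\sigma}^{3}\pmod{1}$ established in Paragraph \ref{eveninertia}.

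It remains to check that $\Theta_{\zeta'}$ is a group homomorphism. Since the identification sends $\overline{\mathbf{X}}_{\bar{\xi}_{\sigma}\zeta^{q^{-r_{\sigma}}},\zeta'}$ to $\overline{\mathbf{X}}_{\zeta,\zeta'}$ compatibly for every $\sigma$, the Weil action becomes a genuine action on the single curve $\overline{\mathbf{X}}_{\zeta,\zeta'}$, and by the formulas just matched it factors through the subgroup $Q\rtimes\mathbb{Z}\subset\Aut_{k}(\overline{\mathbf{X}}_{\zeta,\zeta'})$. Because $Q\rtimes\mathbb{Z}$ acts faithfully on $\overline{\mathbf{X}}_{\zeta,\zeta'}$, the resulting map $W_{K}\to Q\rtimes\mathbb{Z}$ is automatically a group homomorphism. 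The main difficulty in the argument is not in this final assembly---where the work is essentially a substitution---but in the $Q$-relation $\alpha\gamma^{2}+\alpha^{2}\gamma=\beta^{3}$, which hinges on the nontrivial congruence $\mu_{\sigma}^{2}+\mu_{\sigma}\equiv\nu_{\sigma}^{3}\pmod{1}$; that content has already been taken care of in Paragraph \ref{eveninertia} and Lemma \ref{descr}.
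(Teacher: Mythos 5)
Your proof is correct and follows the same route as the paper, which simply says the proposition follows from Proposition \ref{ellact} and Lemma \ref{descr}; you have spelled out the algebra the paper elides, namely the substitution of $\bar{a}_{\sigma},\bar{b}_{\sigma},\bar{b}'_{\sigma},\bar{c}_{\sigma}$ into the formula of Proposition \ref{ellact}, the identification with the defined $Q\rtimes\mathbb{Z}$-action on $E$ using $\bar{\zeta}_{3,\sigma}^3=1$ and $\bar{\nu}_{\sigma}^4=\bar{\nu}_{\sigma}$, and the check that the triple lies in $Q$ via $\mu_{\sigma}^2+\mu_{\sigma}\equiv\nu_{\sigma}^3$. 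The closing homomorphism argument is the expected one, and is consistent with how the paper implicitly treats the identification.
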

\begin{proof}
This follows from Proposition \ref{ellact} and Lemma \ref{descr}. 
\end{proof}
\begin{prop}
The homomorphism $\Theta_{\zeta'}$ factors through 
$W (K^{\mathrm{ur}} (\varpi^{1/3},\theta)/K)$ 
and gives an isomorphism 
$W (K^{\mathrm{ur}} (\varpi^{1/3},\theta)/K) \simeq Q \rtimes \mathbb{Z}$. 
\end{prop}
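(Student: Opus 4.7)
The plan is to set $L := K^{\mathrm{ur}}(\varpi^{1/3},\theta)$ and establish two things: that $\Theta_{\zeta'}$ is trivial on $\mathrm{Gal}(\overline{K}/L)$ (factorization), and that the induced map $\overline{\Theta}_{\zeta'} \colon W(L/K) \to Q \rtimes \mathbb{Z}$ is injective. Since $|Q| = 24$, a cardinality count then upgrades injectivity to an isomorphism, after noting that a Frobenius lift in $W_K$ takes care of the $\mathbb{Z}$-component.

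\emph{Step 1 (size of $L$).} First I would show $\delta \in L$. Since $\delta^3 = \theta^2 - \theta \in L$, it suffices to check $K^{\mathrm{ur}}(\varpi^{1/3},\delta^3) = K^{\mathrm{ur}}(\varpi^{1/3},\delta)$. The extension $K^{\mathrm{ur}}(\varpi^{1/3},\delta)/K^{\mathrm{ur}}(\varpi^{1/3})$ is Galois of degree $4$ with group $\mathbb{F}_4 = \mu_3(K^{\mathrm{ur}}) \cup \{0\}$ acting by $\delta \mapsto \delta + \nu$, so I must verify the four conjugates $(\delta + \nu)^3$ are distinct. If $(\delta + \nu_1)^3 = (\delta + \nu_2)^3$, then $\delta + \nu_1 = \omega(\delta + \nu_2)$ for some $\omega \in \mu_3$; comparing valuations, with $v(\delta) = -1/12$ while $v(\nu_i) \ge 0$ and $v(1-\omega) = 0$ for $\omega \ne 1$ (residue characteristic $2$), forces $\omega = 1$ and $\nu_1 = \nu_2$. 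Hence $[L:K^{\mathrm{ur}}] = 3 \cdot 4 \cdot 2 = 24 = |Q|$.

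\emph{Step 2 (factorization).} If $\sigma \in W_K$ acts trivially on $L$, then $r_\sigma = 0$ and $\sigma$ fixes $\varpi^{1/3}, \delta, \theta, \zeta_3, \zeta''$. Plugging into the defining formulas yields $\zeta_{3,\sigma} = 1$, $\nu_\sigma = 0$, $\mu_\sigma = 0$, so $\Theta_{\zeta'}(\sigma)$ is trivial, and the map descends to $\overline{\Theta}_{\zeta'}\colon W(L/K) \to Q \rtimes \mathbb{Z}$.

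\emph{Step 3 (injectivity).} Conversely, assume $\Theta_{\zeta'}(\sigma) = 1$. Then $r_\sigma = 0$, so $\sigma \in I_K$ and $\zeta_{3,\sigma} \in \mu_3$. Because $\zeta_{3,\sigma}, \nu_\sigma, \mu_\sigma$ lie in $\mu_3 \cup \{0\}$, whose order is coprime to the residue characteristic $2$, the vanishing of the reductions forces $\zeta_{3,\sigma} = 1$, $\nu_\sigma = 0$, $\mu_\sigma = 0$. The first of these gives $\sigma(\varpi^{1/3}) = \varpi^{1/3}$. Then $\sigma$ permutes the roots $\{\delta + \nu\}_{\nu \in \mathbb{F}_4}$ of $X^4 - X = 1/(\zeta''\varpi^{1/3})$, so $\sigma(\delta) - \delta \in \mathbb{F}_4$; but $\nu_\sigma = 0$ forces $v(\sigma(\delta) - \delta) \ge 5/6$, hence $\sigma(\delta) = \delta$. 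Finally $\sigma(\theta)$ is a root of $X^2 - X = \delta^3$, so $\sigma(\theta) \in \{\theta, 1-\theta\}$; the case $\sigma(\theta) = 1 - \theta$ would yield $\mu_\sigma \equiv 1 - 2\theta \equiv 1 \pmod{0+}$ (since $v(2\theta) = 1 - 1/8 > 0$), contradicting $\mu_\sigma = 0$. Thus $\sigma(\theta) = \theta$, so $\sigma$ fixes $L$. Combining with Step 1, $\overline{\Theta}_{\zeta'}$ is an injection between sets of cardinality $24$ on the inertia part, hence a bijection, and surjectivity onto $\mathbb{Z}$ is trivial.

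\emph{Main obstacle.} The essential point is Step 1: the Artin-Schreier-like arithmetic of $\delta$ (with $v(\delta) = -1/12$) forces $L$ to contain $\delta$, making $[L:K^{\mathrm{ur}}]$ equal to $|Q|$. Without this, there would be automorphisms $\delta \mapsto \zeta_3 \delta$ fixing $\theta$ but not in the image, and the map could not be an isomorphism. Everything else is routine verification of congruences using the formulas already developed in the preceding lemmas.
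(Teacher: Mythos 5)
Your proof is correct, but it handles the surjectivity part by a genuinely different mechanism than the paper. The paper deduces factorization and injectivity directly from Lemma \ref{descr}.1 and then proves surjectivity of $\Theta_{\zeta'}$ restricted to $I_K$ \emph{constructively}: for each $g = g(\alpha,\beta,\gamma) \in Q$ it lifts $(\alpha,\alpha^{-1}\beta,\alpha^{-1}\gamma)$ to Teichm\"{u}ller representatives $(\zeta_\alpha,\nu_\beta,\mu_\gamma)$, builds approximate images $\delta_g$, $\theta_g$, corrects them by Krasner/Hensel to exact roots $\delta'_g$, $\theta'_g$ of the defining equations over $\zeta_\alpha\varpi^{1/3}$, and then exhibits an element of $I_K$ sending $\varpi^{1/3}\mapsto\zeta_\alpha\varpi^{1/3}$ and $\theta\mapsto\theta'_g$ whose image under $\Theta_{\zeta'}$ is $g$. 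You instead replace this construction with a Galois-theoretic degree count: you prove the new field-theoretic fact that $\delta\in K^{\mathrm{ur}}(\varpi^{1/3},\theta)$ (because the $\mathbb{F}_4$-orbit of $\delta^3=\theta^2-\theta$ under $\delta\mapsto\delta+\nu$ is free, a valuation argument with $v(\delta)=-1/12$ and $v(1-\omega)=0$ for $\omega\in\mu_3\setminus\{1\}$), whence $[K^{\mathrm{ur}}(\varpi^{1/3},\theta):K^{\mathrm{ur}}]=3\cdot 4\cdot 2=24=|Q|$, and then injectivity already forces bijectivity on the inertia part. The trade-off: the paper never has to determine the degree of the extension but must produce actual automorphisms; your route avoids the construction entirely at the cost of establishing the lemma $\delta\in L$, which is not stated anywhere in the paper and is the real new ingredient you supply. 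One small point deserving a sentence of care in your Step 1: in mixed characteristic the roots of $X^4-X=1/(\zeta''\varpi^{1/3})$ are not literally $\delta+\nu$ with $\nu\in\mu_3\cup\{0\}$; you should note (via the Newton polygon of $(\delta+Y)^4-(\delta+Y)-\delta^4+\delta$, using $v(2)\geq 1$) that they are $\delta+\nu_i$ with $v(\nu_i)=0$, which is all your valuation comparison needs, and in the equal-characteristic case the identification is exact. With that noted, your argument is complete.
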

\begin{proof}
By Lemma \ref{descr}.1, the homomorphism
$\Theta_{\zeta'}$ factors through 
$W (K^{\mathrm{ur}} (\varpi^{1/3},\theta)/K)$ 
and induces an injective homomorphism 
$W (K^{\mathrm{ur}} (\varpi^{1/3},\theta)/K) \to Q \rtimes \mathbb{Z}$. 

To prove the surjectivity, 
it suffices to show that 
$\Theta_{\zeta'}$ sends $I_K$ onto $Q$. 
Let $g=g(\alpha, \beta, \gamma) \in Q$. 
We take $\zeta_{\alpha} \in \mu_3(K^{\mathrm{ur}})$, 
$\nu_{\beta}, \mu_{\gamma} \in \mu_3(K^{\mathrm{ur}})\cup \{0\}$
such that 
$\bar{\zeta}_{\alpha} =\alpha$, 
$\bar{\nu}_{\beta} =\alpha^{-1} \beta$ and 
$\bar{\mu}_{\gamma} =\alpha^{-1} \gamma$. 
We put 
$\delta_g =\zeta_{\alpha}^{-1} (\delta +\nu_{\beta})$ and 
$\theta_g =\theta +\nu_{\beta}^2 \delta +\nu_{\beta}^3 +\mu_{\gamma}$. 
Then we have 
\[
 \delta_g^4 -\delta_g \equiv 
 \frac{1}{\zeta_{\alpha} \zeta'' \varpi^{\frac{1}{3}}} \pmod{\frac{5}{6}}. 
\] 
Hence, we can find 
$\delta'_g$ such that 
$\delta'^4_g -\delta'_g = 1/(\zeta_{\alpha} \zeta'' \varpi^{1/3})$ 
and $\delta'_g \equiv \delta_g \pmod{5/6}$. 
Further, we have 
$\theta_g ^2 -\theta_g \equiv \delta'^3 _g \pmod{2/3}$. 
Hence, we can find $\theta'_g$ such that 
$\theta'^2_g -\theta'_g = \delta'^3 _g$ and 
$\theta'_g \equiv \theta_g \pmod{2/3}$. 
Then 
$\varpi^{1/3} \mapsto \zeta_{\alpha} \varpi^{1/3}$ and 
$\theta \mapsto \theta'_g$ gives an element 
of $I_K$, whose image by $\Theta_{\zeta'}$ is $g$. 
\end{proof}

\section{Cohomology of $\mathbf{X}_1(\mathfrak{p}^3)$}\label{cohLT3}
In this section we show that 
the covering $\mathcal{C}_1 (\mathfrak{p}^3)$ is 
semi-stable, and study a structure of 
$\ell$-adic cohomology of $\mathbf{X}_1(\mathfrak{p}^3)$. 
In the sequel, for a projective 
smooth curve $X$ over $k$, 
we simply write 
$H^1(X,\overline{\mathbb{Q}}_{\ell})$ for 
$H^1(X_{k^{\mathrm{ac}}},\overline{\mathbb{Q}}_{\ell})$. 
For a finite abelian group $A$, 
the character group 
$\Hom_{\mathbb{Z}} (A,\overline{\mathbb{Q}}_{\ell} ^{\times})$ 
is denoted by $A^{\vee}$. 

\subsection{Cohomology of reductions}
Let $X_{\mathrm{DL}}$ be 
the smooth compactification 
of the affine curve over $k$ 
defined by $X^q-X=Y^{q+1}$. 
The curve $X_{\mathrm{DL}}$ is also the 
smooth compactification of 
the Deligne-Lusztig curve $x^qy-xy^q=1$ 
for $\mathrm{SL}_2(\mathbb{F}_q)$. 
Then, $a \in k$ 
acts on $X_{\mathrm{DL}}$ by 
\[
 \alpha_a \colon (X,Y) \mapsto (X+a,Y). 
\]
On the other hand, 
$\zeta \in k_2^{\times}$ acts 
on $X_{\mathrm{DL}}$ by 
\[
 \beta_{\zeta} \colon (X,Y) \mapsto (\zeta^{q+1} X,\zeta Y). 
\]
By these actions, 
we consider 
$H^1(X_{\mathrm{DL}},\overline{\mathbb{Q}}_{\ell})$
as a 
$\overline{\mathbb{Q}}_{\ell}
 [k \times k_2^{\times}]$-module. 
\begin{lem}\label{oo3}
We have an isomorphism  
\[
 H^1(X_{\mathrm{DL}},\overline{\mathbb{Q}}_{\ell})
 \simeq \bigoplus_{\psi \in 
 k^{\vee} 
 \backslash \{1\}}
 \bigoplus_{\chi 
 \in \mu_{q+1}(k_2)^{\vee} \backslash \{1\}} 
 \psi \otimes \chi 
\] 
as 
$\overline{\mathbb{Q}}_{\ell}
 [k \times \mu_{q+1}(k_2)]$-modules. 
\end{lem}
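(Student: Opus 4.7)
The plan is to exploit two commuting Galois covers: the Artin--Schreier projection $\pi_Y \colon X_{\mathrm{DL}} \to \mathbb{A}^1_Y$, $(X,Y) \mapsto Y$, of degree $q$ with Galois group $k$ acting via $\alpha_a$, and the Kummer projection $\pi_Z \colon \mathbb{A}^1_Y \to \mathbb{A}^1_Z$, $Y \mapsto Z = Y^{q+1}$, of degree $q+1$ whose restriction to $\mathbb{G}_m$ is $\mu_{q+1}(k_2)$-Galois (acting via $\beta_\zeta$, whose action on the first coordinate is trivial because $\zeta^{q+1}=1$). In the projective closure of $X^q - X = Y^{q+1}$ the only point at infinity is $[1:0:0]$, so I would first check that $H^1_c(X_{\mathrm{DL}}, \overline{\mathbb{Q}}_\ell) \cong H^1(X_{\mathrm{DL}}, \overline{\mathbb{Q}}_\ell)$ as $k \times \mu_{q+1}(k_2)$-modules, which reduces the lemma to computing the compactly supported cohomology of the affine curve.

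By Artin--Schreier theory, $(\pi_Y)_* \overline{\mathbb{Q}}_\ell$ decomposes as $\bigoplus_{\psi \in k^\vee} \mathcal{L}_\psi(Y^{q+1})$, where $\mathcal{L}_\psi(Y^{q+1})$ is the pullback along $Y \mapsto Y^{q+1}$ of the rank one Artin--Schreier sheaf associated with $\psi$, and $k$ acts through $\psi$ on the $\psi$-summand. To extract the $\mu_{q+1}(k_2)$-isotypic pieces I would push forward further along $\pi_Z$ and write
\[
 (\pi_Z)_* \mathcal{L}_\psi(Y^{q+1}) \;=\; \mathcal{L}_\psi(Z) \otimes (\pi_Z)_* \overline{\mathbb{Q}}_\ell.
\]
The $\chi$-isotypic part of $(\pi_Z)_* \overline{\mathbb{Q}}_\ell$ is the constant sheaf on $\mathbb{A}^1_Z$ if $\chi = 1$ (since $Z=0$ has a single $\mu_{q+1}(k_2)$-fixed preimage) and is $j_! \mathcal{K}_\chi = j_* \mathcal{K}_\chi$ if $\chi \neq 1$, where $\mathcal{K}_\chi$ is the Kummer sheaf on $\mathbb{G}_m$ and $j \colon \mathbb{G}_m \hookrightarrow \mathbb{A}^1_Z$. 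Consequently the $(\psi,\chi)$-isotypic component of $H^1_c(X_{\mathrm{DL}}, \overline{\mathbb{Q}}_\ell)$ equals $H^1_c(\mathbb{A}^1_Z, \mathcal{L}_\psi(Z))$ when $\chi = 1$ and $H^1_c(\mathbb{G}_m, \mathcal{L}_\psi(Z) \otimes \mathcal{K}_\chi)$ when $\chi \neq 1$.

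A direct Grothendieck--Ogg--Shafarevich computation then finishes the argument. For $\psi = 1$, all relevant sheaves are constant or non-trivial Kummer, and $H^1_c$ vanishes by rank considerations; for $\psi \neq 1$ and $\chi = 1$ one gets $\chi_c(\mathbb{A}^1, \mathcal{L}_\psi(Z)) = 1 - 1 = 0$, so $H^1_c = 0$; for $\psi \neq 1$ and $\chi \neq 1$ the sheaf $\mathcal{L}_\psi(Z) \otimes \mathcal{K}_\chi$ on $\mathbb{G}_m$ has $\mathrm{Sw}_0 = 0$ and $\mathrm{Sw}_\infty = 1$, so $\chi_c = -1$ and $\dim H^1_c = 1$, with $H^0_c = H^2_c = 0$ because the sheaf is irreducible and non-trivial. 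Summing yields the claimed decomposition, and the total dimension $(q-1) \cdot q$ matches $2g = q(q-1)$. The main technical issue will be the careful identification of the $\chi$-isotypic subsheaves of $(\pi_Z)_* \overline{\mathbb{Q}}_\ell$ at $Z = 0$ (distinguishing $j_!$ from $j_*$) and verifying that the resulting action of $k \times \mu_{q+1}(k_2)$ on each one-dimensional summand is genuinely $\psi \otimes \chi$, with no spurious twist.
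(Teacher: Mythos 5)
Your proof is correct, and it reaches the same GOS computation on $\mathbb{G}_m$ that the paper uses, but the bookkeeping is genuinely a bit different. The paper excises the $k$-rational points of the affine Deligne--Lusztig curve so as to get a finite \'etale $(k\times\mu_{q+1}(k_2))$-cover of $\mathbb{G}_m$ outright, then has to thread through a short exact sequence
\[
 0 \to \bigoplus_{\psi\in k^{\vee}}\psi \to H^1_c(X_{\mathrm{DL}}\setminus X_{\mathrm{DL}}(k)) \to H^1(X_{\mathrm{DL}}) \to 0
\]
and subtract the $\bigoplus_\psi \psi$ that the excision creates. You instead identify $H^1(X_{\mathrm{DL}})$ with $H^1_c$ of the whole affine curve (using that there is a single smooth $G$-fixed point at infinity), then push forward in two stages along the Artin--Schreier and Kummer maps and decompose over $\mathbb{A}^1_Z$; the $\chi = 1$ summand lives on $\mathbb{A}^1$ where $H^1_c(\mathbb{A}^1, \mathcal{L}_\psi) = 0$ by GOS, and the $\chi \neq 1$ summand lives on $\mathbb{G}_m$ where $j_!\mathcal{K}_\chi = j_*\mathcal{K}_\chi$ since $\chi$ is nontrivial and tame. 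This avoids any excision bookkeeping or subtraction, at the small cost of checking smoothness and $G$-invariance of the point at infinity and the $j_! = j_*$ identification. Both routes give the same term-by-term dimension count and isotypic identification, and your final consistency check $\,(q-1)q = 2g\,$ is correct.

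One caveat you flagged yourself and which is worth settling: that the $(\psi,\chi)$-summand carries the action $\psi\otimes\chi$ \emph{on the nose}. This follows because the Galois group of the composite cover over $\mathbb{G}_m$ is canonically $k \times \mu_{q+1}(k_2)$ (the factor $k$ acting on $X$, the factor $\mu_{q+1}(k_2)$ acting on $Y$ through $\beta_\zeta$, which fixes $X$ since $\zeta^{q+1}=1$), and the decomposition of the pushforward into rank-one pieces is equivariant for this product. No spurious twist appears, matching equations \eqref{bb3} and \eqref{bb2} in the paper.
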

\begin{proof}
As 
$\overline{\mathbb{Q}}_{\ell}
 [k \times \mu_{q+1}(k_2)]$-modules, 
we have the short exact sequence
\begin{equation}\label{bb1}
 0 \to 
 \bigoplus_{\psi \in 
 k^{\vee}}\psi 
 \to
 H_c^1(X_{\mathrm{DL}} \backslash X_{\mathrm{DL}}(k),
 \overline{\mathbb{Q}}_{\ell})
 \to H^1(X_{\mathrm{DL}},
 \overline{\mathbb{Q}}_{\ell})
 \to 0. 
\end{equation}
Let $\mathcal{L}_{\psi}$ 
denote the Artin-Schreier 
$\overline{\mathbb{Q}}_{\ell}$-sheaf 
associated to 
$\psi \in k^{\vee}$. 
Let $\mathcal{K}_{\chi}$ 
denote the Kummer 
$\overline{\mathbb{Q}}_{\ell}$-sheaf 
associated to
$\chi \in \mu_{q+1}(k_2)^{\vee}$. 
Since 
\[
 X_{\mathrm{DL}} \backslash X_{\mathrm{DL}}(k) 
 \to \mathbb{G}_m;\ (X,Y) \mapsto Y^{q+1} 
\]
is a finite etale Galois covering 
with a Galois group $k \times \mu_{q+1}(k_2)$, 
we have the isomorphism 
\begin{equation}\label{bb3}
 H_c^1(X_{\mathrm{DL}} \backslash X_{\mathrm{DL}}(k), 
 \overline{\mathbb{Q}}_{\ell}) \simeq 
 \bigoplus_{\psi \in 
 k^{\vee}} 
 \bigoplus_{\chi \in \mu_{q+1}(k_2)^{\vee}}
 H_c^1(\mathbb{G}_m,
 \mathcal{L}_{\psi} \otimes \mathcal{K}_{\chi})
\end{equation}
as 
$\overline{\mathbb{Q}}_{\ell}
 [k \times \mu_{q+1}(k_2)]$-modules. 
Note that we have 
\[
 \dim H_c^1(\mathbb{G}_m, \mathcal{L}_{\psi} \otimes \mathcal{K}_{\chi})=1 
\]
if $\psi \neq 1$ 
by the Grothendieck-Ogg-Shafarevich formula 
(cf. \cite[Expos\'{e} X Th\'{e}or\`{e}me 7.1]{SGA5}). 
Clearly, if $\chi \neq 1$, we have 
$H_c^1(\mathbb{G}_m,\mathcal{K}_{\chi})=0$ and 
$H_c^1(\mathbb{G}_m,\mathcal{L}_{\psi}) \simeq \psi$. 
Hence, we acquire the isomorphism 
\begin{equation}\label{bb2}
 \bigoplus_{\psi \in k^{\vee}}
 \bigoplus_{\chi \in \mu_{q+1}(k_2)^{\vee}}
 H_c^1(\mathbb{G}_m,\mathcal{L}_{\psi} 
 \otimes \mathcal{K}_{\chi}) \simeq 
 \bigoplus_{\psi \in k^{\vee} \setminus \{ 1 \} }
 \bigoplus_{\chi \in \mu_{q+1}(k_2)^{\vee} \setminus \{ 1 \} }
 H_c^1(\mathbb{G}_m,
 \mathcal{L}_{\psi} \otimes \mathcal{K}_{\chi}) \oplus
 \bigoplus_{\psi \in k^{\vee}}\psi
\end{equation}
as 
$\overline{\mathbb{Q}}_{\ell}
 [k \times \mu_{q+1}(k_2)]$-modules. 
By \eqref{bb1}, \eqref{bb3} and \eqref{bb2}, 
the required assertion follows. 
\end{proof}
For a character 
$\psi \in k^{\vee}$ and an element 
$\zeta \in k^{\times}$, we denote 
by $\psi_{\zeta}$ 
the character $x \mapsto \psi(\zeta x)$. 
We consider a character group 
$(k^{\times})^{\vee}$ 
as a subgroup of 
$(k_2^{\times})^{\vee}$
by 
$\mathrm{Nr}_{k_2/k}^{\vee}$. 
\begin{lem}\label{cohDL}
We have an isomorphism  
\[
 H^1(X_{\mathrm{DL}},\overline{\mathbb{Q}}_{\ell})
 \simeq \bigoplus_{\tilde{\chi} \in 
 (k_2^{\times})^{\vee} 
 \backslash (k^{\times})^{\vee}}\tilde{\chi} 
\] 
as 
$\overline{\mathbb{Q}}_{\ell}[k_2^{\times}]$-modules. 
\end{lem}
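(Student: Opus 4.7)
The plan is to combine Lemma \ref{oo3} with a Clifford--Mackey analysis for the larger group $G := k \rtimes k_2^\times$ acting on $X_{\mathrm{DL}}$, where $k$ acts via $\alpha_{\bullet}$, $k_2^\times$ acts via $\beta_{\bullet}$, and the semidirect product structure is encoded by the identity $\beta_\zeta \alpha_a \beta_\zeta^{-1} = \alpha_{N_{k_2/k}(\zeta) a}$, which is immediate from $\beta_\zeta(X,Y) = (\zeta^{q+1}X, \zeta Y)$. In particular $k \times \mu_{q+1}(k_2)$ is a normal subgroup of $G$, and since $\mu_{q+1}(k_2) = \ker N_{k_2/k}$ this subgroup is precisely the direct product considered in Lemma \ref{oo3}.

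For any non-trivial $\psi \in k^{\vee}$, the $k_2^\times$-stabilizer of $\psi$ under the twisted action $c \cdot \psi(a) = \psi(N_{k_2/k}(c) a)$ is exactly $\mu_{q+1}(k_2)$, so the orbit exhausts $k^{\vee} \setminus \{1\}$. By Clifford theory the irreducible $G$-representations non-trivial on $k$ are therefore
\[
 \pi_\chi := \mathrm{Ind}_{k \times \mu_{q+1}(k_2)}^{G} (\psi \boxtimes \chi), \qquad \chi \in \mu_{q+1}(k_2)^{\vee},
\]
each of dimension $q-1$, and a routine Mackey computation (using the above normality) yields $\pi_\chi|_{k \times \mu_{q+1}(k_2)} \simeq \bigoplus_{\psi' \in k^{\vee} \setminus \{1\}} \psi' \boxtimes \chi$. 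Any $1$-dimensional $G$-constituent factors through $G^{\mathrm{ab}} = k_2^\times$ and so is trivial on $k$, hence invisible in the $\psi \neq 1$ part of the restriction. Comparing $\mu_{q+1}(k_2)$-isotypic multiplicities with Lemma \ref{oo3} then forces
\[
 H^1(X_{\mathrm{DL}}, \overline{\mathbb{Q}}_\ell) \simeq \bigoplus_{\chi \in \mu_{q+1}(k_2)^{\vee} \setminus \{1\}} \pi_\chi
\]
as $G$-modules.

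It remains to restrict each $\pi_\chi$ to $k_2^\times$. A direct count shows $k_2^\times \cdot \bigl( k \times \mu_{q+1}(k_2) \bigr) = G$ is a single double coset with intersection $\mu_{q+1}(k_2)$, so Mackey gives
\[
 \pi_\chi|_{k_2^\times} \simeq \mathrm{Ind}_{\mu_{q+1}(k_2)}^{k_2^\times} \chi \simeq \bigoplus_{\tilde\chi \in (k_2^\times)^{\vee},\ \tilde\chi|_{\mu_{q+1}(k_2)} = \chi} \tilde\chi
\]
by Frobenius reciprocity for the abelian group $k_2^\times$. Summing over $\chi \neq 1$ and identifying $(k^\times)^{\vee} \hookrightarrow (k_2^\times)^{\vee}$ via $\mathrm{Nr}_{k_2/k}^{\vee}$ with the subgroup of characters trivial on $\mu_{q+1}(k_2)$ yields exactly $\bigoplus_{\tilde\chi \in (k_2^\times)^{\vee} \setminus (k^\times)^{\vee}} \tilde\chi$, as claimed. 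I expect the main bookkeeping obstacle to be the subtle interaction between $k^\times$ and $\mu_{q+1}(k_2)$ inside $k_2^\times$ when $q$ is odd (their intersection is $\{\pm 1\}$, so $k_2^\times$ is not their internal direct product); working through $G$ and its Mackey formula bypasses this issue by never needing such a splitting.
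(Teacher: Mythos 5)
Your argument is correct, but it travels a longer road than the paper's proof. The paper works directly with the $\overline{\mathbb{Q}}_\ell[k_2^\times]$-module structure: starting from the weight-vector basis $\{e_{\psi,\chi}\}$ of Lemma \ref{oo3} and the identity $\beta_{\zeta}\alpha_{a}\beta_{\zeta}^{-1}=\alpha_{\zeta^{q+1}a}$, it observes that $\beta_\zeta$ sends $e_{\psi,\chi}$ into the line spanned by $e_{\psi_{\zeta^{-(q+1)}},\chi}$, so for each fixed $\chi\neq 1$ the span of $\{e_{\psi,\chi}\}_{\psi\neq 1}$ is a $k_2^\times$-module in which $k_2^\times$ transitively permutes $q-1$ lines with stabilizer $\mu_{q+1}(k_2)$ acting on each by $\chi$; this is immediately $\Ind_{\mu_{q+1}(k_2)}^{k_2^\times}\chi$. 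You instead assemble the full $G=k\rtimes k_2^\times$-module structure, classify the $G$-irreducibles nontrivial on $k$ via Clifford theory, identify $H^1$ as $\bigoplus_{\chi\neq 1}\pi_\chi$ by matching $\mu_{q+1}(k_2)$-isotypic multiplicities, and then restrict each $\pi_\chi$ to $k_2^\times$ by Mackey. Both proofs hinge on the same two facts (the conjugation identity, and that $\mu_{q+1}(k_2)=\ker N_{k_2/k}$ is the stabilizer of any nontrivial $\psi$), and both arrive at the same intermediate display $\bigoplus_{\chi\neq 1}\Ind_{\mu_{q+1}(k_2)}^{k_2^\times}\chi$ before the final dualization. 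Your route extracts the $G$-module structure of $H^1$ as a byproduct, which is more information than the lemma needs; the paper's computation is tighter because it never leaves the abelian group $k_2^\times$. In particular, the sentence about evading the $k^\times$-versus-$\mu_{q+1}(k_2)$ interaction when $q$ is odd is a good instinct, but that pitfall never arises in the paper's argument either, since it always passes through $\mu_{q+1}(k_2)$ rather than trying to split $k_2^\times$.
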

\begin{proof}
By Lemma \ref{oo3}, we take a basis 
\[
 \{e_{\psi,\chi}\}_{\psi \in k^{\vee} \backslash \{ 1 \}, \, 
 \chi \in \mu_{q+1}(k_2)^{\vee} \backslash \{1\}} 
\]
of 
$H^1(X_{\mathrm{DL}},\overline{\mathbb{Q}}_{\ell})$
over $\overline{\mathbb{Q}}_{\ell}$ 
such that 
$k \times \mu_{q+1}(k_2)$ acts on 
$e_{\psi,\chi}$ by $\psi \otimes \chi$. 
For $\zeta \in k_2^{\times}$ 
and $a \in k$, 
we have 
\[
 \beta_{\zeta} \circ \alpha_{a} 
 \circ \beta_{\zeta}^{-1}=\alpha_{\zeta^{q+1}a} 
\]
in $\Aut_{k_2}(X_{\mathrm{DL}})$. 
Hence, 
$\zeta \in k_2^{\times}$ acts on 
$H^1(X_{\mathrm{DL}},\overline{\mathbb{Q}}_{\ell})$ 
by 
\[
 e_{\psi,\chi} \mapsto 
 c_{\psi,\chi,\zeta}e_{\psi_{\zeta^{-(q+1)}},\chi} 
\]
with some constant 
$ c_{\psi,\chi,\zeta} \in \overline{\mathbb{Q}}^{\times}_{\ell}$. 
Therefore, we acquire an isomorphism 
\[
 H^1(X_{\mathrm{DL}},\overline{\mathbb{Q}}_{\ell}) \simeq 
 \bigoplus_{\chi \in \mu_{q+1}(k_2)^{\vee} \backslash \{1\}}
 \Ind_{\mu_{q+1}(k_2)}^{k_2^{\times}}(\chi) 
\]
as 
$\overline{\mathbb{Q}}_{\ell}[k_2^{\times}]$-modules. 
Hence, the required assertion follows. 
\end{proof}
\begin{prop}\label{Yact}
We have isomorphisms 
\begin{align*}
 &H^1(\overline{\mathbf{Y}}^{\mathrm{c}}_{1,2}, 
 \overline{\mathbb{Q}}_{\ell}) \simeq 
 \bigoplus_{\tilde{\chi} \in 
 (k_2^{\times})^{\vee} 
 \backslash (k^{\times})^{\vee}} 
 (\tilde{\chi} \circ \lambda) \otimes 
 (\tilde{\chi}^{q} \circ \kappa_1 ),\\ 
 &H^1(\overline{\mathbf{Y}}^{\mathrm{c}}_{2,1}, 
 \overline{\mathbb{Q}}_{\ell}) \simeq
 \bigoplus_{\tilde{\chi} \in 
 (k_2^{\times})^{\vee} 
 \backslash (k^{\times})^{\vee}} 
 (\tilde{\chi} \circ \lambda )\otimes 
 (\tilde{\chi} \circ \kappa_1 )
\end{align*}
as $(I_K \times \mathcal{O}_D^{\times})$-representations 
over $\overline{\mathbb{Q}}_{\ell}$. 
\end{prop}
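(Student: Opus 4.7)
The plan is to identify $\overline{\mathbf{Y}}_{1,2}$ and $\overline{\mathbf{Y}}_{2,1}$, both defined by $x^qy-xy^q=1$, with the Deligne--Lusztig curve $X_{\mathrm{DL}}$ used in Lemma \ref{cohDL}, so that the actions of $I_K$ and $\mathcal{O}_D^{\times}$ given in Lemmas \ref{iYact} and \ref{dYact} become special cases of the $k_2^{\times}$-action via $\beta_{(\cdot)}$. The birational map
\[
 (x,y) \longmapsto (X,Y)=(x/y,\,1/y)
\]
sends the curve $x^qy-xy^q=1$ isomorphically to $X^q-X=Y^{q+1}$, since $X^q-X=(x^qy-xy^q)/y^{q+1}=1/y^{q+1}=Y^{q+1}$. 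This gives an isomorphism of the smooth compactifications with $X_{\mathrm{DL}}$, compatible with all relevant automorphisms by uniqueness of the smooth compactification.

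Next, I would compute how the actions in Lemmas \ref{iYact} and \ref{dYact} transport under this change of coordinates. For $\overline{\mathbf{Y}}_{1,2}$, the map $(x,y) \mapsto (\kappa_1(d)x,\kappa_1(d)^{-q}y)$ becomes $(X,Y)\mapsto(\kappa_1(d)^{q+1}X,\kappa_1(d)^q Y)$, which equals $\beta_{\kappa_1(d)^q}$ since $\kappa_1(d)^{q^2-1}=1$; similarly, the $I_K$-action restricted to $\sigma\in I_K$ (where $r_{\sigma}=0$) becomes $\beta_{\lambda(\sigma)}$. For $\overline{\mathbf{Y}}_{2,1}$, the analogous transports give $\beta_{\kappa_1(d)^{-q}}$ and $\beta_{\lambda(\sigma)^{-q}}$ respectively. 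Thus both actions factor through $\beta_{(\cdot)}$, i.e., through the $k_2^{\times}$-action used in Lemma \ref{cohDL}.

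Now I would invoke Lemma \ref{cohDL} to decompose $H^1(X_{\mathrm{DL}},\overline{\mathbb{Q}}_{\ell}) \simeq \bigoplus_{\tilde\chi \in (k_2^{\times})^{\vee}\setminus (k^{\times})^{\vee}}\tilde\chi$ as a $k_2^{\times}$-module under $\beta$. Reading off the eigenvalues on each 1-dimensional summand from the formulas above yields, for $\overline{\mathbf{Y}}_{1,2}^{\mathrm{c}}$, the summand $(\tilde\chi\circ\lambda)\otimes(\tilde\chi^q\circ\kappa_1)$, which is the desired answer. For $\overline{\mathbf{Y}}_{2,1}^{\mathrm{c}}$, the same procedure gives summand $(\tilde\chi^{-q}\circ\lambda)\otimes(\tilde\chi^{-q}\circ\kappa_1)$, and then one reindexes the sum via the bijection $\tilde\chi\mapsto\tilde\chi^{-q}$ of $(k_2^{\times})^{\vee}\setminus(k^{\times})^{\vee}$ (which is self-bijective since $\tilde\chi^{-q}\in (k^{\times})^{\vee}$ iff $\tilde\chi^{q-1}=1$ iff $\tilde\chi\in (k^{\times})^{\vee}$) to match the claimed form.

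The main obstacle is purely bookkeeping rather than conceptual: checking that the coordinate change $(X,Y)=(x/y,1/y)$ correctly identifies the explicit torus actions, and correctly tracking the exponents of $\kappa_1(d)$ and $\lambda(\sigma)$ modulo $q^2-1$ so that the $\beta_{(\cdot)}$ form is obtained. One should also verify that the reindexing $\tilde\chi\mapsto\tilde\chi^{-q}$ preserves the complement $(k_2^{\times})^{\vee}\setminus(k^{\times})^{\vee}$, which reduces to the characterization of $(k^{\times})^{\vee}\subset(k_2^{\times})^{\vee}$ as the subgroup of characters trivial on $\mu_{q+1}(k_2)$.
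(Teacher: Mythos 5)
Your proposal is correct and follows exactly the same route as the paper: the paper's proof simply cites Lemmas \ref{dYact}, \ref{iYact} and \ref{cohDL}, and the coordinate change $(X,Y)=(x/y,1/y)$ together with the transport of the torus actions to $\beta_{(\cdot)}$ is precisely the bookkeeping the authors leave implicit. The careful check of the reindexing $\tilde\chi\mapsto\tilde\chi^{-q}$ for $\overline{\mathbf{Y}}_{2,1}$ is a nice touch and is correct.
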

\begin{proof}
This follows from 
Lemma \ref{dYact}, Lemma \ref{iYact} and 
Lemma \ref {cohDL}. 
\end{proof}

Let $X_{\mathrm{AS}}$ be the smooth compactification 
of the affine curve 
$X'_{\mathrm{AS}}$ over $k$ 
defined by $z^q-z=w^2$. 
Let $a \in k$ act on 
$X_{\mathrm{AS}}$ by 
\[
 \alpha_a \colon (z,w) \mapsto (z+a,w). 
\]
By this action, 
we consider 
$H^1(X_{\mathrm{AS}},\overline{\mathbb{Q}}_{\ell})$ 
as a 
$\overline{\mathbb{Q}}_{\ell} [k]$-module. 
On the other hand, let 
$b \in \mu_{2(q-1)}(k^{\mathrm{ac}})$ 
act on $X_{\mathrm{AS}}$ 
by 
\[
 \beta_b \colon 
 (z,w) \mapsto (b^2z,bw). 
\]
\begin{lem}\label{swan}
We assume that $q$ is odd. 
Let $G$ be the Galois group of the Galois extension 
$F$ over $k((s))$ defined by $z^q-z=1/s^2$. 
Let $G^r$ be the upper numbering 
ramification filtration of $G$. 
Then $G^r =G$ if $r \leq 2$, and 
$G^r =1$ if $r>2$. 
\end{lem}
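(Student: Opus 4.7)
The plan is to compute the ramification filtration of $F/k((s))$ in lower numbering by exhibiting an explicit uniformizer and measuring its displacement under the Galois action, and then to convert to upper numbering via the Herbrand function. Since the extension is totally wildly ramified with a single jump, the passage to upper numbering is essentially trivial in the relevant range.

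First I would check that $F/k((s))$ is totally ramified of degree $q$. Let $d = [F:k((s))]$ and let $v_F$ denote the normalized valuation on $F$, so $v_F(s) = d$. The equation $z^q - z = s^{-2}$ forces $v_F(z^q) = -2d$ (the $z^q$ term dominates since $v_F(z) < 0$), hence $v_F(z) = -2d/q$; integrality together with $\gcd(q,2) = 1$ (as $q$ is odd) forces $q \mid d$, so $d = q$ and $G \cong \mathbb{F}_q$ acting by $\sigma_{\alpha} \colon z \mapsto z+\alpha$, with $v_F(z) = -2$. Since $\gcd(q,2) = 1$, the element $t = s\, z^{(q-1)/2}$ has valuation $v_F(t) = q + \tfrac{q-1}{2} \cdot (-2) = 1$ and is thus a uniformizer of $F$.

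The key step is the computation of $v_F(\sigma_{\alpha}(t) - t)$ for $\alpha \in \mathbb{F}_q^{\times}$. Writing
\[
 \sigma_{\alpha}(t) - t = s\bigl((z+\alpha)^{(q-1)/2} - z^{(q-1)/2}\bigr)
 = s \sum_{k=1}^{(q-1)/2} \binom{(q-1)/2}{k} \alpha^k z^{(q-1)/2-k},
\]
the $k$-th summand has valuation $q + ((q-1)/2 - k)\cdot(-2) = 2k + 1$, which is minimized at $k = 1$ with value $3$ and is strictly larger for $k \geq 2$. Since $p$ is odd, $(q-1)/2 \equiv -1/2 \not\equiv 0 \pmod{p}$, so the $k=1$ coefficient is nonzero, no cancellation occurs, and $v_F(\sigma_{\alpha}(t) - t) = 3$. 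Consequently $\sigma_{\alpha} \in G_i$ iff $i + 1 \leq 3$, giving $G_0 = G_1 = G_2 = G$ and $G_i = \{1\}$ for $i \geq 3$ in the lower numbering.

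Finally, the Herbrand function $\varphi(u) = \int_0^u dt/[G_0 : G_t]$ is the identity on $[0, 2]$ since $[G_0 : G_t] = 1$ there, so the unique lower-numbering jump at $i = 2$ maps to the upper-numbering jump at $\varphi(2) = 2$. This yields $G^r = G$ for $r \leq 2$ and $G^r = \{1\}$ for $r > 2$, as claimed. There is no real obstacle in this argument; the only point requiring attention is the nonvanishing of the binomial coefficient $(q-1)/2$ modulo $p$, which is automatic because $q$ is odd. As a sanity check, the different exponent $\sum_{i \geq 0}(|G_i| - 1) = 3(q-1)$ matches the expected value $(q-1)(m+1)$ with pole order $m = 2$.
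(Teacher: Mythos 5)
Your proof is correct and follows essentially the same route as the paper's: both use the uniformizer $sz^{(q-1)/2}$ and reduce the ramification computation to $v_F(\sigma(t)-t)=3$ for $\sigma\neq 1$. You simply spell out the binomial expansion, the nonvanishing of $(q-1)/2\bmod p$, and the Herbrand-function conversion, all of which the paper leaves implicit.
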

\begin{proof}
We take $a \in F$ such that $a^q-a=1/s^2$. 
Then $sa^{(q-1)/2}$ is a uniformizer of $F$. 
Let $v_F$ be the normalized valuation of $F$. 
For $\sigma \in G$ and an integer $i$, 
the condition 
\[
 v_F \bigl( \sigma(sa^{\frac{q-1}{2}}) -sa^{\frac{q-1}{2}} \bigr) \geq i 
\]
is equivalent to 
the condition 
\[
 v_F \bigl( \sigma(a) -a \bigr) \geq i-3. 
\]
Hence, the claim follows. 
\end{proof}

For a character 
$\psi \in k^{\vee}$ and 
$x \in k^{\times}$, 
we write $\psi_x \in k^{\vee}$ 
for the character 
$y \mapsto \psi(xy)$. 
We set 
\[
 V=\bigoplus_{\psi \in k^{\vee} \backslash \{1\}} \psi 
\]
as $\overline{\mathbb{Q}}_{\ell}[k]$-modules. 
Let 
$\{e_{\psi}\}_{\psi \in k^{\vee} \backslash \{1\}}$
be the standard basis of $V$. 

\begin{lem}\label{cohAS}
We assume that $q$ is odd. 
\\1.\ Then we have 
$H^1(X_{\mathrm{AS}},\overline{\mathbb{Q}}_{\ell}) \simeq V$ 
as $\overline{\mathbb{Q}}_{\ell}[k]$-modules. 
\\2.\ 
For $b \in \mu_{2(q-1)}(k^{\mathrm{ac}})$, 
the automorphism $\beta_b$ of $X_{\mathrm{AS}}$ 
induces the action 
\[
 e_{\psi} \mapsto c_{\psi,b}e_{\psi_{b^{-2}}} 
\] 
on 
$H^1(X_{\mathrm{AS}},\overline{\mathbb{Q}}_{\ell}) \simeq V$
with some constant 
$c_{\psi,b} \in 
\overline{\mathbb{Q}}^{\times}_{\ell}$. 
Furthermore, we have 
$c_{\psi,-1}=-1$. 
\end{lem}
\begin{proof}
We have 
$H^1(X_{\mathrm{AS}},\overline{\mathbb{Q}}_{\ell})
 \simeq H_c^1(X'_{\mathrm{AS}},\overline{\mathbb{Q}}_{\ell})$, 
because the complement 
$X_{\mathrm{AS}} \backslash X'_{\mathrm{AS}}$ 
consists of one point. 
The curve $X'_{\mathrm{AS}}$ is a finite etale Galois 
covering of $\mathbb{A}^1$ with a Galois group 
$k$ 
by $(z,w) \mapsto w$. 
For $\psi \in k^{\vee}$, 
let $\mathcal{L}_{2,\psi}$ 
be the smooth $\overline{\mathbb{Q}}_{\ell}$-sheaf
on $\mathbb{A}^1$ defined by the covering 
$X'_{\mathrm{AS}}$ and $\psi$. 
Then we have 
\[
 H_c^1(X'_{\mathrm{AS}},\overline{\mathbb{Q}}_{\ell})
 \simeq \bigoplus_{\psi \in k^{\vee} \backslash \{1\}}
 H_c^1(\mathbb{A}^1,\mathcal{L}_{2,\psi})
\]
as $\overline{\mathbb{Q}}_{\ell}[k]$-modules. 
By Lemma \ref{swan} and the Grothendieck-Ogg-Shafarevich formula, 
we have 
\[
 \dim H_c^1(\mathbb{A}^1,\mathcal{L}_{2,\psi})=1 
\]
and $H_c^1(\mathbb{A}^1,\mathcal{L}_{2,\psi}) \simeq \psi$ 
as $\overline{\mathbb{Q}}_{\ell}[k]$-modules 
for $\psi \in k^{\vee} \backslash \{1\}$. 
Hence, the first assertion follows. 

The second assertion follows from that 
$\beta_b \alpha_a \beta^{-1}_b =\alpha_{ab^2}$ for $a \in k$ and 
$b \in \mu_{2(q-1)}(k^{\mathrm{ac}})$. 
The assertion $c_{\psi,-1}=-1$ follows from the 
Lefschetz trace formula. 
\end{proof}

We put 
\[
 U_D =\{d \in \mathcal{O}_D^{\times} 
 \mid \kappa_1(d) \in k^{\times} \}. 
\]
We take 
$\zeta_0 \in \mu_{2(q^2-1)}(k^{\mathrm{ac}}) \backslash k_2^{\times}$. 
Let $\Delta \in (k^{\times})^{\vee}$ 
be the character defined by 
\[
 x \mapsto x^{\frac{q-1}{2}} \in \{\pm 1\} \subset 
 \overline{\mathbb{Q}}_{\ell}^{\times} 
\]
for $x \in k^{\times}$. 
If $q$ is odd, we put 
\begin{align*}
 \tau_{\chi, \psi} &=\Ind_{I_L}^{I_K} 
 \bigl( (\chi \circ \lambda_1^{q+1} ) \otimes 
 (\psi^2 \circ \Tr_{k_2/k} \circ 
 \lambda_2 ) \bigr) ,\\ 
 \tau'_{\chi, \psi} &=\Ind_{I_L}^{I_K} 
 \Bigl( (\chi \circ \lambda_1^{q+1} ) \otimes 
 \bigl( \psi^2 \circ \Tr_{k_2/k} \circ (-\zeta_0^{-(q+1)} 
 \lambda_2 ) \bigr) \Bigr) , \\ 
 \theta_{\chi, \psi} &= 
 (\Delta \chi \circ \kappa_1 ) \otimes 
 (\psi \circ \Tr_{k_2/k} \circ 
 \kappa_2 ), \\ 
 \theta'_{\chi, \psi} &= 
 (\Delta \chi \circ \kappa_1 ) \otimes 
 \bigl( \psi \circ \Tr_{k_2/k} \circ 
 (\zeta_0 ^{-2q} \kappa_2 )\bigr), \\ 
 \rho_{\chi, \psi} 
 &=\Ind_{U_D}^{\mathcal{O}_D^{\times}} \theta_{\chi, \psi}, \\ 
 \rho'_{\chi, \psi} 
 &=\Ind_{U_D}^{\mathcal{O}_D^{\times}} \theta'_{\chi, \psi} 
\end{align*}
for 
$\chi \in (k^{\times})^{\vee}$ and 
$\psi \in k^{\vee} \backslash \{1\}$. 
We note that 
\[
 \dim \rho_{\chi, \psi}=\dim \rho'_{\chi, \psi}=q+1. 
\]
For $\psi, \psi' \in k^{\vee} \backslash \{1\}$, 
we can check that 
$\tau_{\chi, \psi}=\tau_{\chi, \psi'}$ 
if and only if $\psi'=\psi^{-1}$, and 
$\rho_{\chi, \psi}=\rho_{\chi, \psi'}$ 
if and only if $\psi'=\psi^{-1}$. 
Similar things hold also for 
$\tau'_{\chi, \psi}$ and $\rho'_{\chi, \psi}$. 
We define an equivalence relation $\sim$ on 
$k^{\vee} \backslash \{1\}$ by 
$\psi \sim \psi^{-1}$. 
We put 
\begin{equation*}
 \Pi_{\chi, \psi} = 
 \tau_{\chi, \psi} \otimes \rho_{\chi, \psi}, \quad 
 \Pi'_{\chi, \psi} = 
 \tau'_{\chi, \psi} \otimes \rho'_{\chi, \psi} 
\end{equation*}
for 
$\chi \in (k^{\times})^{\vee}$ and 
$\psi \in k^{\vee} \backslash \{1\}$. 

\begin{prop}\label{oddact}
We assume that $q$ is odd. Then we have an isomorphism 
\[
 \bigoplus_{\zeta \in \mu_{2(q^2-1)}(k^{\mathrm{ac}})} 
 H^1 (\overline{\mathbf{X}}^{\mathrm{c}}_{\zeta}, 
 \overline{\mathbb{Q}}_{\ell} ) 
 \simeq \bigoplus_{\chi \in (k^{\times})^{\vee}} 
 \bigoplus_{\psi \in (k^{\vee} \backslash \{1\})/\!\sim} 
 \Pi_{\chi, \psi} \oplus \Pi'_{\chi, \psi}
\]
as representations of $I_K \times \mathcal{O}_D^{\times}$. 
\end{prop}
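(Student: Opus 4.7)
The plan is to split each $H^1(\overline{\mathbf{X}}^c_\zeta)$ into one-dimensional pieces using a subgroup that acts purely by translations, then track how the remaining elements of $\mathcal{O}_D^\times \times I_K$ permute and twist those pieces, and finally reassemble via Frobenius reciprocity through the chains $\ker\kappa_1 \subset U_D \subset \mathcal{O}_D^\times$ and $I_L \subset I_K$.

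The key observation is that the restriction of the $\mathcal{O}_D^\times$-action in Proposition~\ref{dXact} to $\ker\kappa_1$ is, on each $\overline{\mathbf{X}}_\zeta$, the translation $(z,w) \mapsto (z + f_d, w)$ with $f_d = \Tr_{k_2/k}(\zeta^{-2q}\kappa_2(d))$, and $d \mapsto f_d$ is surjective onto $k$ as $d$ ranges over $\ker\kappa_1$. Lemma~\ref{cohAS} therefore yields a canonical decomposition
\[
 H^1(\overline{\mathbf{X}}^c_\zeta, \overline{\mathbb{Q}}_\ell)
 = \bigoplus_{\psi \in k^\vee \setminus \{1\}} V_{\zeta,\psi},
\]
where each $V_{\zeta,\psi}$ is one-dimensional and $\ker\kappa_1$ acts on it by the character $d \mapsto \psi\bigl(\Tr_{k_2/k}(\zeta^{-2q}\kappa_2(d))\bigr)$.

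Next, using the full formulas in Proposition~\ref{dXact} and Corollary~\ref{oddXine}, I would conjugate the $a$-translation through a general $d \in \mathcal{O}_D^\times$ (resp.\ $\sigma \in I_L$), producing translation by $\kappa_1(d)^{q+1}a$ (resp.\ $\lambda_1(\sigma)^{2(q+1)}a$). Hence $d$ sends $V_{\zeta,\psi}$ to $V_{\kappa_1(d)\zeta,\,\psi(\kappa_1(d)^{q+1}\cdot)}$ up to an explicit scalar coming from $\psi(f_d)$, with the analogous statement on the $I_L$-side. Fixing a basepoint $(\zeta_0, \psi_0)$ and grouping compatible $\psi$'s into an orbit, one then identifies the resulting character of $U_D$ on the one-dimensional piece with $\theta_{\chi,\psi}$ for a suitable $\chi \in (k^\times)^\vee$, and the corresponding character of $I_L$ with $(\chi \circ \lambda_1^{q+1}) \otimes (\psi^2 \circ \Tr_{k_2/k} \circ \lambda_2)$; the appearance of $\psi^2$ versus $\psi$ in the two factors mirrors the extra power of $2$ in $\lambda_1(\sigma)^{2(q+1)}$. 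Summing over the $\mathcal{O}_D^\times \times I_K$-orbits in $\mathcal{S}_1 \times (k^\vee \setminus \{1\})$ and invoking transitivity of induction gives the decomposition $\bigoplus_{\chi,\psi} \tau_{\chi,\psi} \otimes \rho_{\chi,\psi}$, with the dimensions matching via $2(q^2-1)(q-1) = (q-1)^2 \cdot 2(q+1)$.

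The main obstacle is the explicit bookkeeping: one must carefully verify that the scalar factors coming from $f_d$ and $g_0$, once combined with the twists on $\psi$ by $\kappa_1(d)^{q+1}$ and $\lambda_1(\sigma)^{2(q+1)}$, exactly reproduce $\theta_{\chi,\psi}$ on $U_D$ and $(\chi \circ \lambda_1^{q+1}) \otimes (\psi^2 \circ \Tr_{k_2/k} \circ \lambda_2)$ on $I_L$, without spurious cocycle corrections and with the $\zeta_0^{-2q}$-factor being absorbed consistently into the parameter $\chi$. The computation on the $I_K$-side, where the formula for $g_0$ already encodes the $\Tr_{k_2/k}$ symmetrization in $\lambda_2$, is the most delicate.
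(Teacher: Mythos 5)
Your proposal matches the paper's proof in all essentials: it uses Lemma \ref{cohAS} together with Proposition \ref{dXact} and Corollary \ref{oddXine} to decompose via the translation characters on $\ker\kappa_1$, tracks the scaling twists $\kappa_1(d)^{q+1}$ and $\lambda_1(\sigma)^{2(q+1)}$, and reassembles by induction through $I_L \subset I_K$ and $U_D \subset \mathcal{O}_D^{\times}$, exactly as the paper does when it factors the $(I_L\times U_D)$-action through $k^{\times}\times k$ and then induces. One small inaccuracy in your heuristic: the $\psi^2$ in $\tau_{\chi,\psi}$ comes from the explicit factor $2$ in $g_0$ of Corollary \ref{oddXine} (for $\zeta\in k^{\times}$ one has $g_0 = 2\zeta^{-2}\Tr_{k_2/k}(\lambda_2(\sigma))$), not from the exponent $2(q+1)$ in $\lambda_1(\sigma)^{-2(q+1)}$.
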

\begin{proof}
The actions of $I_L$ and $U_D$ on 
$\bigoplus_{\zeta \in k^{\times}} 
 H^1 (\overline{\mathbf{X}}^{\mathrm{c}}_{\zeta}, 
 \overline{\mathbb{Q}}_{\ell} )$ 
factor through $k^{\times} \times k$ 
by Proposition \ref{dXact} 
and Corollary \ref{oddXine}. 
On the other hand, 
the action of $k^{\times} \times k$ on 
$\bigoplus_{\zeta \in k^{\times}} 
 H^1 (\overline{\mathbf{X}}^{\mathrm{c}}_{\zeta}, 
 \overline{\mathbb{Q}}_{\ell} )$ 
is induced from the action of 
$\{1\} \times k$ on 
$H^1 (\overline{\mathbf{X}}^{\mathrm{c}}_1, 
 \overline{\mathbb{Q}}_{\ell} )$. 
Hence, we have 
\[
 \bigoplus_{\zeta \in k^{\times}} 
 H^1 (\overline{\mathbf{X}}^{\mathrm{c}}_{\zeta}, 
 \overline{\mathbb{Q}}_{\ell} ) 
 \simeq \bigoplus_{\chi \in (k^{\times})^{\vee}} 
 \bigoplus_{\psi \in k^{\vee} \backslash \{1\}} 
 \chi \otimes \psi 
\]
as representations of 
$k^{\times} \times k$ 
by Lemma \ref{cohAS}.1. 
Therefore, 
we have an 
isomorphism 
\[
 \bigoplus_{\zeta \in k^{\times}} 
 H^1 (\overline{\mathbf{X}}^{\mathrm{c}}_{\zeta}, 
 \overline{\mathbb{Q}}_{\ell} ) 
 \simeq \bigoplus_{\chi \in (k^{\times})^{\vee}} 
 \bigoplus_{\psi \in k^{\vee} \backslash \{1\}} 
 (\chi \circ \lambda_1^{q+1} ) \otimes 
 (\psi^2 \circ \Tr_{k_2/k} \circ 
 \lambda_2 ) \otimes \theta_{\chi, \psi} 
\]
as representations of $I_L \times U_D$ 
by Proposition \ref{dXact}, 
Corollary \ref{oddXine} and Lemma \ref{cohAS}.2. 
Inducing this representation 
from $U_D$ to 
$\mathcal{O}_D^{\times}$, 
we obtain an isomorphism 
\[
 \bigoplus_{\zeta \in k_2^{\times}} 
 H^1 (\overline{\mathbf{X}}^{\mathrm{c}}_{\zeta}, 
 \overline{\mathbb{Q}}_{\ell} ) 
 \simeq \bigoplus_{\chi \in (k^{\times})^{\vee}} 
 \bigoplus_{\psi \in k^{\vee} \backslash \{1\}} 
 (\chi \circ \lambda_1^{q+1} ) \otimes 
 (\psi^2 \circ \Tr_{k_2/k} \circ 
 \lambda_2 ) \otimes \rho_{\chi, \psi} 
\]
as representations of $I_L \times \mathcal{O}_D^{\times}$. 
On the left hand side of this isomorphism, 
we have an action of $I_K$ that commutes with the action of 
$\mathcal{O}_D^{\times}$. 
Hence, we have 
\[
 \bigoplus_{\zeta \in k_2^{\times}} 
 H^1 (\overline{\mathbf{X}}^{\mathrm{c}}_{\zeta}, 
 \overline{\mathbb{Q}}_{\ell} ) 
 \simeq \bigoplus_{\chi \in (k^{\times})^{\vee}} 
 \bigoplus_{\psi \in (k^{\vee} \backslash \{1\})/\!\sim} 
 \tau_{\chi, \psi} \otimes \rho_{\chi, \psi} 
\]
as representations of $I_K \times \mathcal{O}_D^{\times}$. 
By the similar arguments, we have 
\[
 \bigoplus_{\zeta \in \mu_{2(q^2-1)}(k^{\mathrm{ac}}) 
 \backslash k_2^{\times}} 
 H^1 (\overline{\mathbf{X}}^{\mathrm{c}}_{\zeta}, 
 \overline{\mathbb{Q}}_{\ell} ) 
 \simeq \bigoplus_{\chi \in (k^{\times})^{\vee}} 
 \bigoplus_{\psi \in (k^{\vee} \backslash \{1\})/\!\sim} 
 \tau'_{\chi, \psi} \otimes \rho'_{\chi, \psi} 
\]
as representations of $I_K \times \mathcal{O}_D^{\times}$. 
Therefore, we have the isomorphism in the assertion. 
\end{proof}

Let $E$ and $Q$ be as in Paragraph \ref{eveninertia}. 
Let $Z \subset Q$ 
be the subgroup consisting of 
$g(1,0,\gamma)$ with $\gamma^2+\gamma=0$, 
and $\phi$ be the unique
non-trivial 
character of $Z$. 
By \cite[Lemma 22.2]{BHLLC}, 
there exists 
a unique irreducible 
two-dimensional representation 
$\tau$ of $Q$ 
such that 
\begin{equation}\label{qq}
 \tau|_{Z} \simeq \phi^{\oplus 2}, \quad 
 \Tr \tau(g(\alpha,0,0)) =-1 
\end{equation}
for 
$\alpha \in \mathbb{F}^{\times}_{4} \backslash \{1\}$. 
Then, it is easily checked that 
the determinant character of 
$\tau$ is trivial. 
Note that every 
two-dimensional irreducible 
representation of $Q$ 
has a form $\tau \otimes \chi$ with 
$\chi \in (\mathbb{F}^{\times}_4)^{\vee}$, 
where we consider $\chi$ as a character of 
$Q$ by 
$g(\alpha,\beta,\gamma)\mapsto \chi(\alpha)$. 
\begin{lem}\label{Qrep}
The $Q$-representation 
$H^1(E,\overline{\mathbb{Q}}_{\ell})$
is isomorphic to $\tau$. 
\end{lem}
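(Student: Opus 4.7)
The plan is to identify the $2$-dimensional $Q$-representation $H^1(E, \overline{\mathbb{Q}}_\ell)$ via the Grothendieck-Lefschetz trace formula, and then appeal to the characterization of $\tau$ recorded in \eqref{qq}. Since $E$ is an elliptic curve, an automorphism $\sigma$ acts trivially on $H^0$ and on $H^2 \simeq \overline{\mathbb{Q}}_\ell(-1)$, so Lefschetz reduces to $\mathrm{Tr}(\sigma \mid H^1) = 2 - L(\sigma)$, where $L(\sigma)$ denotes the length of the fixed point scheme. It therefore suffices to verify the two characterizing conditions of \eqref{qq} by computing $L$ for the relevant elements.

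First, I would show that the nontrivial element $g(1, 0, 1)$ of $Z$ acts on $E$ as the negation $[-1]_E$. The explicit formula in the paragraph preceding \eqref{qq} gives $(z, w) \mapsto (z + 1, w)$. Viewing the equation $z^2 + z = w^3$ as a Weierstrass model with $a_1 = 0$ and $a_3 = 1$, the negation in characteristic $2$ is $(w, z) \mapsto (w, z + 1)$, so indeed $g(1, 0, 1) = [-1]_E$. Since $[-1]_E$ acts as multiplication by $-1$ on $H^1$ of any abelian variety, the restriction $H^1 \vert_Z$ is $\phi^{\oplus 2}$. As a byproduct, since $Z$ lies in $[Q, Q]$ (because $Q \simeq \mathit{SL}_2(\mathbb{F}_3)$ has abelianization $\mathbb{Z}/3$, and the center sits inside the quaternion subgroup $[Q, Q]$) and acts nontrivially on $H^1$, no one-dimensional character of $Q$ can occur in $H^1$; hence $H^1$ is an irreducible $2$-dimensional representation.

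Second, for $\alpha \in \mathbb{F}_4^\times \setminus \{1\}$, the element $g(\alpha, 0, 0)$ acts on $E$ by $(z, w) \mapsto (z, \alpha w)$. This automorphism has order $3$, and is therefore tame since $p = 2$; in particular each fixed point contributes $1$ to $L$. The set of fixed points consists of the origin at infinity together with the two affine solutions of $w = 0$, namely $(0, 0)$ and $(1, 0)$, so $L(g(\alpha, 0, 0)) = 3$ and consequently $\mathrm{Tr}(g(\alpha, 0, 0) \mid H^1) = -1$.

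Combining these two computations with the uniqueness statement recorded in \eqref{qq} immediately yields $H^1(E, \overline{\mathbb{Q}}_\ell) \simeq \tau$. The main delicate point of the argument is the identification of $g(1, 0, 1)$ with $[-1]_E$, which relies on using the correct Weierstrass negation formula in characteristic $2$; the rest of the argument is a routine application of Lefschetz and basic representation theory of $\mathit{SL}_2(\mathbb{F}_3)$.
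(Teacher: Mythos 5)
Your proof is correct and verifies the two defining properties in \eqref{qq} by a direct Lefschetz computation, which is a genuinely different route from the paper. The paper simply invokes Lemma~\ref{oo3}: since $E$ is $z^2+z=w^3$, it is the $q=2$ instance of the Deligne--Lusztig curve $X^q-X=Y^{q+1}$, and Lemma~\ref{oo3} (proved by decomposing $H^1_c$ into Kummer--Artin--Schreier pieces on $\mathbb{G}_m$ and applying Grothendieck--Ogg--Shafarevich) already gives the character decomposition with respect to $k\times\mu_{q+1}(k_2)=\mathbb{F}_2\times\mathbb{F}_4^\times$, which is precisely $\langle g(1,0,1)\rangle\times\langle g(\alpha,0,0)\rangle$; the two conditions in \eqref{qq} drop out immediately. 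Your argument instead treats the elliptic curve directly: you identify $g(1,0,1)$ with $[-1]_E$ via the Weierstrass negation formula for $y^2+y=x^3$ in characteristic $2$, and you count fixed points of the tame order-$3$ automorphism $g(\alpha,0,0)$ to extract the trace. One delicate point, which you handled correctly, is that you cannot apply Lefschetz naively to $g(1,0,1)$ (it has order $2=p$ and its only fixed point $O$ is non-reduced as a scheme), so the identification with $[-1]$ is really needed for that element. Your approach is self-contained and avoids the sheaf-theoretic machinery behind Lemma~\ref{oo3}; the paper's choice is more economical in context because Lemma~\ref{oo3} is needed anyway for $\overline{\mathbf{Y}}_{1,2}$ and $\overline{\mathbf{Y}}_{2,1}$ at general $q$.
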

\begin{proof}
The $Q$-representation 
$H^1(E,\overline{\mathbb{Q}}_{\ell})$ 
satisfies \eqref{qq} by Lemma \ref{oo3}. 
Hence, the assertion follows. 
\end{proof}

Let $\tau_{\zeta'}$ be the 
representation of $W_K$ induced from 
the $(Q \rtimes \mathbb{Z})$-representation 
$H^1(E,\overline{\mathbb{Q}}_{\ell})$ 
by $\Theta_{\zeta'}$. 
Then the restriction to $I_K$ of $\tau_{\zeta'}$ 
is isomorphic to the representation induced from $\tau$ 
by Lemma \ref{Qrep}. 

We say that 
a continuous two-dimensional irreducible 
representation $V$ of $W_K$ 
over $\overline{\mathbb{Q}}_{\ell}$ is primitive, 
if there is no pair of a quadratic extension $K'$ and 
a continuous character $\chi$ of $W_{K'}$ such that 
$V \simeq \Ind_{W_{K'}}^{W_K} \chi$. 
\begin{lem}
The representation 
$\tau_{\zeta'}$ is primitive of Artin conductor $3$. 
\end{lem}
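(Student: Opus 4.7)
The plan is to prove primitivity by a group-theoretic argument about the image of inertia under $\Theta_{\zeta'}$, and then to compute the Artin conductor via the ramification filtration of the tower $K^{\mathrm{ur}} \subset F_1 := K^{\mathrm{ur}}(\varpi^{1/3}) \subset F_2 := F_1(\delta) \subset F_3 := F_2(\theta)$ realising $\Theta_{\zeta'}$.

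\textbf{Primitivity.} Suppose for contradiction that $\tau_{\zeta'} \simeq \Ind_{W_{K'}}^{W_K}\chi$ for a quadratic extension $K'/K$ and a character $\chi$. Then $\tau_{\zeta'}|_{W_{K'}} \simeq \chi \oplus \chi^{\sigma}$ for $\sigma \in W_K \setminus W_{K'}$, and in particular $\tau_{\zeta'}|_{I_{K'}}$ is reducible. However, $[I_K:I_{K'}] \leq 2$, so the image $\Theta_{\zeta'}(I_{K'})$ has index at most $2$ in $\Theta_{\zeta'}(I_K) = Q \simeq \mathit{SL}_2(\mathbb{F}_3)$. Since $Q^{\mathrm{ab}} = \mathbb{Z}/3$, the group $Q$ has no index-$2$ subgroup, forcing $\Theta_{\zeta'}(I_{K'}) = Q$; combined with the irreducibility of $\tau|_Q$, this contradicts the reducibility of $\tau_{\zeta'}|_{I_{K'}}$.

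\textbf{Conductor.} Since $\tau_{\zeta'}|_{I_K}$ is irreducible of dimension $2$ with no trivial subrepresentation, $a(\tau_{\zeta'}) = 2 + \mathrm{sw}(\tau_{\zeta'})$, so it suffices to show $\mathrm{sw}(\tau_{\zeta'}) = 1$. Let $G := \Gal(F_3/K^{\mathrm{ur}})$; then $G_0 = G \simeq Q$ (totally ramified of degree $24$) and $G_1 \simeq Q_8$ is the wild inertia. Because $\tau|_{Q_8}$ is the unique $2$-dimensional irreducible representation of $Q_8$ and the centre $Z \subset Q_8$ already acts on $\tau$ by $-1$, the invariants $\tau^{G_i}$ vanish for every non-trivial $G_i \subseteq Q_8$; the Swan formula therefore collapses to
\[
 \mathrm{sw}(\tau_{\zeta'}) = \frac{2}{|G|}\sum_{i \geq 1}|G_i|.
\]
To pin down the higher $G_i$, I compute the different through the tower: for $F_2/F_1$ I use the integral uniformizer $\beta := 1/\delta$, whose minimal polynomial $\beta^4 + c_0 \beta^3 - c_0 = 0$ (with $c_0 := \zeta''\varpi^{1/3}$ a uniformizer of $F_1$) yields $v_{F_2}(g'(\beta)) = 6$; for $F_3/F_2$ I use the integral uniformizer $\gamma := \theta/\delta^2$, whose minimal polynomial $\gamma^2 - \beta^2 \gamma - \beta = 0$ yields $v_{F_3}(h'(\gamma)) = 4$. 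Totalling via the tower, $v_{F_3}(\mathfrak{d}_{F_3/K^{\mathrm{ur}}}) = 4 + 2\cdot 6 + 8 \cdot 2 = 32$, and Serre's identity $\sum_{i \geq 0}(|G_i|-1) = v_{F_3}(\mathfrak{d}_{F_3/K^{\mathrm{ur}}})$ together with $|G_0|=24$, $|G_1|=8$ and the constraint $G_i \subseteq Q_8$ for $i \geq 1$ forces $G_2 = G_3 = Z$ and $G_i = 1$ for $i \geq 4$. Plugging in gives $\mathrm{sw}(\tau_{\zeta'}) = \frac{2}{24}(8+2+2) = 1$, whence $a(\tau_{\zeta'}) = 3$.

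\textbf{Main obstacle.} The technical heart is the different calculation: the original parameters $\delta,\theta$ have negative valuation and so must be replaced by their integral counterparts $\beta,\gamma$, after which one must check in residue characteristic $2$ that the $4\beta^3$ term in $g'(\beta)$ is always subdominant. This holds uniformly over equal and mixed characteristic $K$ because $v_{F_2}(4) \geq 24$ while $v_{F_2}(3c_0\beta^2) = 6$, so the same valuations for the different emerge in all cases. A secondary point to verify is that $\Theta_{\zeta'}$ really sends $I_K$ onto $Q$, which follows from the surjectivity part of the previous proposition since the unique non-trivial quotient $Q \rtimes \mathbb{Z} \twoheadrightarrow \mathbb{Z}/2$ factors through the unramified quotient $\hat{\mathbb{Z}}$.
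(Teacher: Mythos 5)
Your proof is correct, and for the Artin-conductor part it takes a genuinely different route from the paper. The paper computes the ramification filtration directly: it takes $1/(\varpi^{1/3}\theta^3)$ as a uniformizer of $F_3 := K^{\mathrm{ur}}(\varpi^{1/3},\theta)$ and, using the congruence $\sigma(\theta) \equiv \theta_{\sigma} \pmod{2/3}$ already established in the Galois lemma, reads off $v(\sigma(\pi_{F_3})-\pi_{F_3})$ for each stratum of $I_K$ (namely $1/24$, $1/12$, $1/6$ for $\zeta_{3,\sigma}\neq 1$, $\nu_\sigma \neq 0$, $\mu_\sigma\neq 0$ respectively), giving $G_0 = Q$, $G_1 \simeq Q_8$, $G_2 = G_3 = Z$, $G_4 = 1$. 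You instead compute the different up the Eisenstein tower $F_1 \subset F_2 = F_1(\delta) \subset F_3 = F_2(\theta)$ (using the integral uniformizers $\beta = 1/\delta$ and $\gamma = \theta/\delta^2$), feed $v_{F_3}(\mathfrak{d}_{F_3/K^{\mathrm{ur}}})=32$ into Serre's discriminant identity, and use the constraints that $G_1$ is the Sylow $2$-subgroup $Q_8$ and that $G_i \subseteq Q_8$ for $i \geq 1$ to pin down the same filtration. Your approach is a bit heavier on explicit minimal-polynomial manipulations but has the virtue of not needing the fine congruence structure of the Galois action; the paper's is slicker but leans on Lemma 6.11. Both of course produce $\mathrm{sw}=1$ and $a=3$. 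For primitivity the two arguments are essentially identical (no index-$2$ subgroup of $Q\simeq \mathit{SL}_2(\mathbb{F}_3)$), though your phrasing via $I_{K'}$ handles the ramified and unramified quadratic cases simultaneously, whereas the paper first identifies the unique index-$2$ subgroup of $Q\rtimes\mathbb{Z}$ as $Q\rtimes 2\mathbb{Z}$ and reduces to $K' = K_2$. One small remark: in your Swan formula the sum $\sum_{i\geq 1}|G_i|$ should of course be understood as running only over indices with $G_i \neq 1$, which is what your subsequent computation does.
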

\begin{proof}
We use the notations in the proof of Lemma \ref{Galois}. 
The element $1/(\varpi^{1/3}\theta^3)$ is a uniformizer of 
$K^{\mathrm{ur}}(\varpi^{1/3},\theta)$. 
For $\sigma \in I_K$, we can show that 
\[
 v \biggl( \sigma\biggl(\frac{1}{\varpi^{\frac{1}{3}}\theta^3}\biggl) 
 -\frac{1}{\varpi^{\frac{1}{3}}\theta^3} \biggr) =
 \begin{cases}
 \frac{1}{24} &\quad \textrm{if $\zeta_{3,\sigma} \neq 1$,}\\ 
 \frac{1}{12} &\quad 
 \textrm{if $\zeta_{3,\sigma}=1$, $\nu_{\sigma} \neq 0$,}\\ 
 \frac{1}{6} &\quad 
 \textrm{if $\zeta_{3,\sigma}=1$, $\nu_{\sigma}=0$, $\mu_{\sigma} \neq 0$,} 
 \end{cases}
\] 
using $\sigma(\theta) \equiv \theta_{\sigma} \pmod{2/3}$. 
The claim on the Artin conductor follows from this. 

The unique index $2$ subgroup of 
$Q \rtimes \mathbb{Z}$ is 
$Q \rtimes 2\mathbb{Z}$, 
because $Q$ has no index $2$ subgroup. 
Hence, if $\tau_{\zeta'}$ is not primitive, 
it is induced from a character of $W_{K_2}$. 
However, this is impossible, because 
the restriction of $\tau_{\zeta'}$ 
to $W_{K_2}$ is irreducible. 
\end{proof}

We define a character 
$\lambda_{\xi} :W_K \to k^{\times}$ by 
$\lambda_{\xi} (\sigma) =\bar{\xi}_{\sigma}$. 
We put 
\begin{align*}
 \tau_{\zeta', \chi} &=
 \tau_{\zeta'} \otimes (\chi \circ \lambda_{\xi}), \\ 
 \theta_{\zeta', \chi} &= 
 (\chi \circ \kappa_1 ) \otimes 
 (\phi \circ \Tr_{k_2/\mathbb{F}_2}
 (\zeta'^{-2}\kappa_2)) , \\ 
 \rho_{\zeta', \chi} &=
 \Ind_{U_D}^{\mathcal{O}_D^{\times}} \theta_{\zeta', \chi}, \\ 
 \Pi_{\zeta', \chi} &= 
 \tau_{\zeta', \chi} \otimes \rho_{\zeta', \chi} 
\end{align*}
for $\zeta' \in k^{\times}$ and 
$\chi \in (k^{\times})^{\vee}$. 
In the sequel, we consider $\tau_{\zeta', \chi}$ 
as a representation of $I_K$. 
\begin{prop}\label{evenact}
We assume that $q$ is even. 
Let $\zeta' \in k^{\times}$. 
Then we have an isomorphism 
\[
 \bigoplus_{\zeta \in k_2^{\times}} 
 H^1 (\overline{\mathbf{X}}^{\mathrm{c}}_{\zeta,\zeta'}, 
 \overline{\mathbb{Q}}_{\ell} ) 
 \simeq \bigoplus_{\chi \in (k^{\times})^{\vee}} 
 \Pi_{\zeta', \chi} 
\]
as representations of $I_K \times \mathcal{O}_D^{\times}$. 
\end{prop}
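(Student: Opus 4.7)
The plan is to mimic the proof of Proposition \ref{oddact}, with Lemma \ref{Qrep} replacing Lemma \ref{cohAS}. The starting point is to reduce to an $I_K \times U_D$ statement. By Proposition \ref{dXact} the subgroup $U_D = \kappa_1^{-1}(k^{\times})$ preserves the subsum $F_{\zeta'} := \bigoplus_{\zeta \in k^{\times}} H^1(\overline{\mathbf{X}}^{\mathrm{c}}_{\zeta,\zeta'}, \overline{\mathbb{Q}}_{\ell})$, while the cosets $\mathcal{O}_D^{\times}/U_D \simeq k_2^{\times}/k^{\times}$ permute the $q+1$ $k^{\times}$-orbits in $k_2^{\times}$ transitively; and by Proposition \ref{Wact}, together with the fact that the prime-to-$p$ part of $\mu_{2q^3(q-1)}$ is $\mu_{q-1}$ (so $\bar{\xi}_{\sigma} \in k^{\times}$ for $\sigma \in I_K$), the group $I_K$ also preserves $F_{\zeta'}$. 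Therefore
\[
\bigoplus_{\zeta \in k_2^{\times}} H^1(\overline{\mathbf{X}}^{\mathrm{c}}_{\zeta,\zeta'}, \overline{\mathbb{Q}}_{\ell}) \simeq \Ind_{I_K \times U_D}^{I_K \times \mathcal{O}_D^{\times}} F_{\zeta'},
\]
and since the right-hand side of the proposition rewrites analogously as the induction of $\bigoplus_{\chi}(\tau_{\zeta',\chi}|_{I_K}) \otimes \theta_{\zeta',\chi}$ from $I_K \times U_D$, it suffices to prove the corresponding identity at the level of $F_{\zeta'}$.

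Next, I would exploit the coordinates $(z,w)$ to identify every $\overline{\mathbf{X}}^{\mathrm{c}}_{\zeta,\zeta'}$ ($\zeta \in k^{\times}$) with the elliptic curve $E : z^2+z=w^3$, so that $F_{\zeta'} \simeq H \otimes \overline{\mathbb{Q}}_{\ell}[k^{\times}]$ with $H := H^1(E, \overline{\mathbb{Q}}_{\ell})$. The crucial observation is that through these identifications the residual action on a single fiber is independent of $\zeta \in k^{\times}$. For $d \in U_D$, the shift $f_d = \Tr_{k_2/\mathbb{F}_2}(\zeta^{1-q}\zeta'^{-2}\kappa_2(d))$ of Proposition \ref{dXact} collapses to $\psi_{\zeta'}(d) := \Tr_{k_2/\mathbb{F}_2}(\zeta'^{-2}\kappa_2(d)) \in \mathbb{F}_2$, using $\zeta^{q-1}=1$, and corresponds to the central element $g(1,0,\psi_{\zeta'}(d)) \in Z \subset Q$; for $\sigma \in I_K$, the action $\Theta_{\zeta'}(\sigma) \in Q$ of Proposition \ref{Wact} depends only on $\zeta'$. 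A short direct computation with the multiplication in $\mathcal{O}_D$ from Paragraph \ref{d_1} shows that $\kappa_2|_{U_D} : U_D \to k_2$ is a group homomorphism (the obstruction term $\bar{d}'_1{}^{1-q}$ in the product rule for $\kappa_2$ trivializes because $\bar{d}'_1 \in k^{\times}$), whence so is $\psi_{\zeta'}$. Because $Z$ is central in $Q \simeq \mathit{SL}_2(\mathbb{F}_3)$, the map $(\sigma, d) \mapsto \Theta_{\zeta'}(\sigma)\, g(1,0,\psi_{\zeta'}(d))$ is a well-defined homomorphism $I_K \times U_D \to Q$. Combining this with Lemma \ref{Qrep} and the relation $\tau|_Z \simeq \phi^{\oplus 2}$ yields $H \simeq \tau_{\zeta'}|_{I_K} \otimes (\phi \circ \psi_{\zeta'})|_{U_D}$ as an $I_K \times U_D$-representation.

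The assembly step is then formal. The permutation action of $I_K \times U_D$ on the index set $k^{\times}$ factors through the character $\nu(\sigma, d) = \lambda_{\xi}(\sigma)\kappa_1(d)$, so
\[
\overline{\mathbb{Q}}_{\ell}[k^{\times}] \simeq \bigoplus_{\chi \in (k^{\times})^{\vee}} (\chi \circ \nu) = \bigoplus_{\chi} (\chi \circ \lambda_{\xi})|_{I_K} \otimes (\chi \circ \kappa_1)|_{U_D}.
\]
Tensoring with the fiber identity from the previous step collects the summands into $F_{\zeta'} \simeq \bigoplus_{\chi}(\tau_{\zeta',\chi}|_{I_K}) \otimes \theta_{\zeta',\chi}$, as desired. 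The main obstacle in carrying out this plan is securing the $\zeta$-independence of the fiber action and the additivity of $\psi_{\zeta'}$ on $U_D$, both of which hinge on the identity $\zeta^{q-1}=1$ for $\zeta \in k^{\times}$ and on the centrality of $Z$ in $Q$; without these two facts the $I_K$- and $U_D$-actions would not combine into a single representation factoring through $Q$, and the tensor-product form of the answer would not arise.
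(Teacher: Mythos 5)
Your proposal is correct and follows essentially the same route as the paper's proof: reduce to an $I_K\times U_D$ computation on $\bigoplus_{\zeta\in k^\times}H^1(\overline{\mathbf{X}}^{\mathrm c}_{\zeta,\zeta'},\overline{\mathbb{Q}}_\ell)$, observe that the combined fiber action factors through $Q$ (with $U_D$ landing in the central subgroup $Z$ via $\phi\circ\psi_{\zeta'}$, courtesy of $\kappa_2$ being additive on $U_D$) while the index permutation factors through $k^\times$, identify the result as an $\Ind_Q^{Q\times k^\times}\tau=\bigoplus_\chi\tau\otimes\chi$, and finally induce from $U_D$ to $\mathcal{O}_D^\times$. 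The paper compresses the $\zeta$-independence, the additivity of $\kappa_2|_{U_D}$, and the centrality of $Z$ into the phrase ``by Proposition \ref{dXact} and Proposition \ref{Wact}''; you have simply spelled out those verifications, which are correct.
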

\begin{proof}
The actions of $I_K$ and $U_D$ on 
$\bigoplus_{\zeta \in k^{\times}} 
 H^1 (\overline{\mathbf{X}}^{\mathrm{c}}_{\zeta,\zeta'}, 
 \overline{\mathbb{Q}}_{\ell})$ 
factor through $Q \times k^{\times}$ 
by Proposition \ref{dXact} and 
Proposition \ref{Wact}. 
On the other hand, 
the action of $Q \times k^{\times}$ on 
$\bigoplus_{\zeta \in k^{\times}} 
 H^1 (\overline{\mathbf{X}}^{\mathrm{c}}_{\zeta,\zeta'}, 
 \overline{\mathbb{Q}}_{\ell})$ 
is induced from the action of 
$Q$ on 
$H^1 (\overline{\mathbf{X}}^{\mathrm{c}}_{1,\zeta'}, 
 \overline{\mathbb{Q}}_{\ell} )$. 
Hence, we have an isomorphism 
\[
 \bigoplus_{\zeta \in k^{\times}} 
 H^1 (\overline{\mathbf{X}}^{\mathrm{c}}_{\zeta,\zeta'}, 
 \overline{\mathbb{Q}}_{\ell} ) 
 \simeq \bigoplus_{\chi \in (k^{\times})^{\vee}} 
 \tau \otimes \chi 
\]
as representations of $Q \times k^{\times}$. 
Therefore, 
we have an isomorphism 
\[
 \bigoplus_{\zeta \in k^{\times}} 
 H^1 (\overline{\mathbf{X}}^{\mathrm{c}}_{\zeta,\zeta'}, 
 \overline{\mathbb{Q}}_{\ell} ) 
 \simeq \bigoplus_{\chi \in (k^{\times})^{\vee}} 
 \tau_{\zeta', \chi} 
 \otimes \theta_{\zeta', \chi}
\]
as representations of $I_K \times U_D$ 
by Proposition \ref{dXact} and 
Proposition \ref{Wact}. 
Inducing this representation 
from $U_D$ to $\mathcal{O}_D^{\times}$, 
we obtain the isomorphism in the assertion. 
\end{proof}

\subsection{Genus calculation}\label{gcal}
\begin{lem}\label{dimH}
We have 
$\dim H^1_{\mathrm{c}} 
 (\mathbf{X}_1(\mathfrak{p}^3)_{\mathbf{C}}, 
 \overline{\mathbb{Q}}_{\ell}) = 2q^3 -2q +1$. 
\end{lem}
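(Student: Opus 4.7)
The plan is to deduce the dimension from the representation-theoretic description of $H^1_{\mathrm{LT},\varpi}$ in Proposition \ref{cohGL} together with Casselman's newform theorem. First, I would identify $H^1_{\mathrm{c}}(\mathbf{X}_1(\mathfrak{p}^3)_{\mathbf{C}}, \overline{\mathbb{Q}}_{\ell})$ with (half of) the $K_1(\mathfrak{p}^3)$-invariants of $H^1_{\mathrm{LT},\varpi}$. Since $\mathrm{LT}_1(\mathfrak{p}^3)$ is a disjoint union of copies of $\mathbf{X}_1(\mathfrak{p}^3)$ indexed by the height of the quasi-isogeny $\iota$, and since $\varpi \in Z(\mathit{GL}_2(K))$ shifts this height by $2$ (because $[\varpi]$ on the universal formal $\mathcal{O}_K$-module of height $2$ is an isogeny of height $2$), the quotient $\mathrm{LT}_1(\mathfrak{p}^3)/\varpi^{\mathbb{Z}}$ consists of exactly two copies of $\mathbf{X}_1(\mathfrak{p}^3)$. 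Hence
\[
 \dim (H^1_{\mathrm{LT},\varpi})^{K_1(\mathfrak{p}^3)}
 = 2 \dim H^1_{\mathrm{c}}(\mathbf{X}_1(\mathfrak{p}^3)_{\mathbf{C}}, \overline{\mathbb{Q}}_\ell).
\]

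Next, apply Proposition \ref{cohGL} to decompose the left-hand side, and use Casselman's newform theorem $\dim \pi^{K_1(\mathfrak{p}^n)} = \max(0, n - c(\pi) + 1)$ to restrict to representations of conductor exponent at most $3$. The Steinberg twists $\mathrm{St} \otimes (\chi \circ \det)$ with $\chi(\varpi^2) = 1$ split into unramified $\chi$ (two characters, each contributing $3$-dimensional invariants) and $\chi$ of conductor $1$ ($2(q-2)$ characters, each contributing $2$-dimensional invariants), for a total of $4q - 2$. The depth-zero cuspidal representations of conductor $2$ with $\omega_\pi(\varpi) = 1$ are parametrized by $\Gal(k_2/k)$-orbits of regular characters of $k_2^\times$ (yielding $q(q-1)/2$ representations), each has $\dim \pi^{K_1(\mathfrak{p}^3)} = 2$ and Jacquet--Langlands transfer of dimension $2$ (induced from a tame character of $K_2^\times$ along the index-$2$ embedding $K_2^\times \hookrightarrow D^\times$), contributing $4q(q-1)$. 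The conductor-$3$ supercuspidals each contribute $2 \dim \mathrm{LJ}(\pi)$, and enumeration of their Langlands parameters (dihedral types from ramified quadratic extensions in residue characteristic $\neq 2$, plus primitive representations when $p = 2$) yields total contribution $4(q-1)^2(q+1)$. Summing these three contributions gives $4q^3 - 4q + 2$, and dividing by $2$ produces the claimed value $2q^3 - 2q + 1$.

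The hard part will be the enumeration of conductor-$3$ supercuspidal representations with the prescribed central character condition and the determination of the dimensions of their Jacquet--Langlands transfers, especially in residue characteristic $p = 2$ where primitive supercuspidal representations appear (as mentioned in the introduction). The uniform contribution $4(q-1)^2(q+1)$ can be verified either by an explicit construction via Bushnell--Kutzko types, or by a character-theoretic argument on $D^\times$ at depth at most one.
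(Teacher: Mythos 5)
Your proposal follows essentially the same route as the paper: reduce to $\dim(H^1_{\mathrm{LT},\varpi})^{K_1(\mathfrak{p}^3)}=4q^3-4q+2$ via a factor of two between $\mathrm{LT}_1(\mathfrak{p}^3)/\varpi^{\mathbb{Z}}$ and $\mathbf{X}_1(\mathfrak{p}^3)$, decompose by Proposition \ref{cohGL}, apply Casselman's newform dimension formula $\dim\pi^{K_1(\mathfrak{p}^3)}=4-c(\pi)$, and split discrete series into Steinberg twists, depth-zero cuspidals, and conductor-$3$ cuspidals with contributions $4q-2$, $4q(q-1)$, and $4(q-1)^2(q+1)$ respectively. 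The subtle step you flag (counting conductor-$3$ cuspidals uniformly, including the $p=2$ primitive case) is resolved in the paper exactly by your first suggested route, the Bushnell--Henniart simple-type construction $\pi_\chi$ from level-one characters of $L^\times=K(\varpi^{1/2})^\times$ with $\chi(\varpi)=1$, yielding $2(q-1)^2$ representations each with $\dim\mathrm{LJ}(\pi)=q+1$.
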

\begin{proof}
It suffices to show that 
\[
 \dim H^1_{\mathrm{c}} 
 \bigl( (\mathrm{LT}_1(\mathfrak{p}^3)/\varpi^{\mathbb{Z}})
 _{\mathbf{C}}, 
 \overline{\mathbb{Q}}_{\ell}\bigr) =4q^3 -4q +2, 
\]
because we have 
\[
 \dim H^1_{\mathrm{c}} 
 \bigl( (\mathrm{LT}_1(\mathfrak{p}^3)/\varpi^{\mathbb{Z}})
 _{\mathbf{C}}, 
 \overline{\mathbb{Q}}_{\ell}\bigr) = 
 2\dim H^1_{\mathrm{c}} 
 (\mathbf{X}_1(\mathfrak{p}^3)_{\mathbf{C}}, 
 \overline{\mathbb{Q}}_{\ell}). 
\] 
For an irreducible smooth representation $\pi$ 
of $\mathrm{GL}_2 (K)$, 
we write $c(\pi)$ for the conductor of $\pi$. 
By Proposition \ref{cohGL}, 
we have 
\[
 H^1_{\mathrm{c}} 
 \bigl( (\mathrm{LT}_1(\mathfrak{p}^3)/\varpi^{\mathbb{Z}})
 _{\mathbf{C}}, 
 \overline{\mathbb{Q}}_{\ell}\bigr) \simeq 
 \bigoplus_{\pi} 
 \bigl(\pi^{K_1(\mathfrak{p}^3)}\bigr)
 ^{\oplus 2\dim \mathrm{LJ}(\pi)} \oplus 
 \bigoplus_{\chi} (\mathrm{St} \otimes \chi )^{K_1(\mathfrak{p}^3)}, 
\]
where $\pi$ runs through irreducible cuspidal representations of 
$\mathrm{GL}_2 (K)$ such that $c(\pi) \leq 3$ and 
$\omega_{\pi} (\varpi)=1$, and 
$\chi$ runs through characters of $K^{\times}$ 
such that $c(\mathrm{St} \otimes \chi) \leq 3$ and 
$\chi (\varpi^2)=1$. 
We have the following list of 
discrete series representations $\pi$ of 
$\mathrm{GL}_2 (K)$ such that $c(\pi) \leq 3$ and 
$\omega_{\pi} (\varpi)=1$: 
\begin{enumerate}
\item[(1)]
$\pi \simeq \mathrm{St} \otimes \chi$ 
for an unramified character 
$\chi \colon K^{\times} \to \overline{\mathbb{Q}}_{\ell}^{\times}$ 
such that $\chi (\varpi^2)=1$. 
Then $c(\pi) =1$ and 
$\dim \mathrm{LJ}(\pi)=1$. 
There are two such representations. 
\item[(2)]
$\pi \simeq \mathrm{St} \otimes \chi$ 
for a tamely ramified character 
$\chi \colon K^{\times} \to \overline{\mathbb{Q}}_{\ell}^{\times}$ 
that is not unramified 
and satisfies $\chi (\varpi^2)=1$. 
Then $c(\pi) =2$ and 
$\dim \mathrm{LJ}(\pi)=1$. 
There are $2(q-2)$ such representations. 
\item[(3)] 
$\pi \simeq \pi_{\chi}$, 
in the notation of \cite[19.1]{BHLLC}, 
for a character 
$\chi \colon K_2^{\times} \to \overline{\mathbb{Q}}_{\ell}^{\times}$ 
of level zero 
such that $\chi$ does not factor through 
$\Nr_{K_2/K}$ and $\chi(\varpi) =1$. 
Then $c(\pi) =2$ and 
$\dim \mathrm{LJ}(\pi)=2$. 
There are $q(q-1)/2$ such representations. 
\item[(4)] 
The cuspidal representations $\pi$ of 
$\mathrm{GL}_2 (K)$ such that 
$c(\pi) =3$ and 
$\omega_{\pi} (\varpi)=1$. 
Then $\dim \mathrm{LJ}(\pi)=q+1$ 
by \cite[Theorem 3.6]{TunLLC}. 
There are $2(q-1)^2$ such representations 
by \cite[Theorem 3.9]{TunLLC}. 
\end{enumerate}
We note that 
$\dim \pi^{K_1(\mathfrak{p}^3)} =4-c(\pi)$ if 
$\pi$ is a discrete series representation of 
$\mathrm{GL}_2 (K)$ such that $c(\pi) \leq 3$. 
Then we obtain the claim by 
taking a summation according to the above list. 
\end{proof}

For an affinoid rigid space $X$, 
a Zariski subaffinoid of $X$ is the inverse image of 
a nonempty open subscheme of $\overline{X}$ 
under the reduction map 
$X \to \overline{X}$. 

\begin{prop}\label{posg}
Let $W$ be a wide open rigid curve over 
a finite extension of $\widehat{K}^{\mathrm{ur}}$ 
with a stable covering 
$\{(U_i,U_i^{\mathrm{u}} )\}_{i \in I}$. 
Let $X$ be a subaffinoid space of $W$ such that 
$\overline{X}$ is a connected smooth curve 
with a positive genus. 
Then there exists $i \in I$ such that 
$X$ is a Zariski subaffinoid of 
$U_i^{\mathrm{u}}$. 
\end{prop}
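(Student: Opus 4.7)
The approach is to use the semi-stable formal model of $W$ determined by the stable covering, and exploit that the positive genus of $\overline{X}$ prevents $X$ from meeting or spanning the annulus regions in any substantial way. From the stable covering $\{(U_i,U_i^{\mathrm{u}})\}_{i \in I}$, I would first extract a semi-stable formal model $\mathfrak{W}$ of $W$ whose special fiber $\overline{\mathfrak{W}}$ is a semi-stable curve with smooth irreducible components $C_i = \overline{U_i^{\mathrm{u}}}^{\mathrm{c}}$ joined at nodes coming from the annuli of the covering. The associated specialization map $W(\mathbf{C}) \to \overline{\mathfrak{W}}(k^{\mathrm{ac}})$ sends points in $U_i^{\mathrm{u}}$ to smooth points of $C_i$ and points lying in annulus regions to nodes.

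Next, I would lift the open immersion $X \hookrightarrow W$ to a morphism of formal models. By Raynaud's theory (the equivalence between quasi-compact rigid spaces and admissible formal schemes up to admissible blowup), after replacing $\mathfrak{W}$ by some admissible formal blowup $\mathfrak{W}'$, there exists a morphism of admissible formal schemes $\mathfrak{X} \to \mathfrak{W}'$, where $\mathfrak{X}$ is a formal model of $X$ with special fiber $\overline{X}$, that extends the inclusion $X \hookrightarrow W$ on generic fibers. Crucially, admissible blowups of a semi-stable model only insert chains of $\mathbb{P}^1$'s at some nodes, leaving each positive-genus component $C_i$ untouched. Passing to special fibers yields a morphism $\overline{X} \to \overline{\mathfrak{W}}'$.

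Since $\overline{X}$ is irreducible, smooth, and not rational, its image in $\overline{\mathfrak{W}}'$ is an irreducible component of positive genus, hence one of the $C_i$; any image landing on a $\mathbb{P}^1$-component inserted by the blowup would force $\overline{X}$ to be rational. Thus $\overline{X} \to C_i$ is dominant for a unique $i \in I$, and using that $X \hookrightarrow W$ is an open immersion one then upgrades this dominant map to an open immersion onto its image, so $\overline{X}$ identifies with a Zariski open subscheme of $\overline{U_i^{\mathrm{u}}}$ and $X$ sits inside $U_i^{\mathrm{u}}$ as the union of the corresponding residue disks.

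The main obstacle is the last step of upgrading dominance to openness: one must verify that $X$ does not meet any residue disks outside those of the target Zariski open of $\overline{U_i^{\mathrm{u}}}$, which requires controlling the behavior of residue disks near the nodes of $\overline{\mathfrak{W}}$. The fact that blowups only introduce $\mathbb{P}^1$-components (which $\overline{X}$ cannot dominate without being rational) rules out any leaking of $X$ into annulus regions, and the local structure of the stable model over smooth points of $C_i$ ensures that the residue disks in $W$ and in $U_i^{\mathrm{u}}$ coincide there.
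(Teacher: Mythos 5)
Your proposal takes a genuinely different route from the paper: you try to descend through Raynaud's theory of formal models and admissible blowups, whereas the paper argues purely in terms of residue classes and the combinatorics of the stable covering, citing Coleman--McMurdy's Lemma~2.24(i) and Corollary~2.39 to control how subaffinoids sit inside disk and annulus pieces.

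However, the proposal has genuine gaps, and as written it does not establish the statement.
\begin{itemize}
\item First, you invoke Raynaud's theory to obtain a morphism of formal models $\mathfrak{X}\to\mathfrak{W}'$ ``where $\mathfrak{X}$ is a formal model of $X$ with special fiber $\overline{X}$.'' Raynaud's flattening only guarantees a morphism after blowing up a formal model of $X$, and the resulting $\mathfrak{X}$ will generally \emph{not} be the canonical model with reduced special fiber $\overline{X}$. In other words, the reduction map attached to $\mathfrak{X}$ and the canonical reduction map $X(\mathbf{C})\to\overline{X}(k^{\mathrm{ac}})$ are \emph{a priori} different, and your morphism on special fibers is defined for the wrong source. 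This is not a cosmetic issue: the entire argument about tracking the image of $\overline{X}$ depends on it.
\item Second, the claim that admissible blowups of a semi-stable formal model ``only insert chains of $\mathbb{P}^1$'s'' is not established and is false for an \emph{arbitrary} admissible blowup. The correct statement of that flavor is that any two \emph{semi-stable} models are dominated by a common semi-stable model and the dominating morphism contracts $\mathbb{P}^1$-trees; an arbitrary admissible blowup need not be semi-stable and can create non-rational or non-reduced components. So ``any image landing on an exceptional component would force $\overline{X}$ to be rational'' does not follow from what you have.
\item Third, the upgrade from ``$\overline{X}$ dominates a component $C_i$'' to ``$X$ is a Zariski subaffinoid of $U_i^{\mathrm{u}}$'' is where the substance lies, and you acknowledge this but replace it with assertions (``the local structure \ldots ensures \ldots'') that are precisely what must be proved. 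Concretely, one needs to rule out $X$ spilling into the residue disks of nodes or into $U_j^{\mathrm{u}}$ for $j\neq i$, and one needs a criterion like Coleman--McMurdy's Lemma~2.24(i) to conclude that a piece of $X$ whose reduction is a Zariski open of $\overline{X}$ is actually a Zariski subaffinoid. Without those inputs the conclusion does not follow.
\end{itemize}
The paper avoids all of this by never leaving the rigid-analytic world: it shows directly that either $X$ meets some $U_{i'}^{\mathrm{u}}$ in more than finitely many residue classes (in which case $X\cap U_{i'}^{\mathrm{u}}$ is a Zariski subaffinoid and the disk/annulus dichotomy forces $X\subset U_{i'}^{\mathrm{u}}$), or every $X\cap U_i^{\mathrm{u}}$ is confined to residue classes, forcing $\overline{X}$ to have genus $0$, a contradiction. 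If you want to pursue the formal-model route you would have to (i) reduce to the case that $\mathfrak{W}'$ is again semi-stable (using stable reduction), (ii) carefully compare the canonical reduction of $X$ with the $\mathfrak{X}$-reduction, and (iii) supply the ``dominance implies openness'' step with an argument equivalent in strength to the Coleman--McMurdy lemmas.
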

\begin{proof}
Assume that 
$X \cap U_i^{\mathrm{u}}$ is contained in 
a finite union of residue classes of $X$ 
for any $i \in I$. 
Then a Zariski subaffinoid of $X$ 
appears in an open annulus. 
This is a contradiction, because 
$\overline{X}$ has a positive genus. 
Hence there exists $i' \in I$ such that 
$X \cap U_{i'}^{\mathrm{u}}$ is not contained in 
any finite union of residue class of $X$. 
We fix such $i'$ in the sequel. 

Then some open irreducible subscheme of 
the reduction of $X \cap U_{i'}^{\mathrm{u}}$ 
does not go to one point in $\overline{X}$ 
under the natural map 
$\overline{X \cap U_{i'}^{\mathrm{u}}} \to \overline{X}$. 
Let $Y$ be the inverse image of such an 
open subscheme under the reduction map 
$X \cap U_{i'}^{\mathrm{u}} \to 
 \overline{X \cap U_{i'}^{\mathrm{u}}}$. 
Then we see that 
$Y$ is a Zariski subaffinoid of $X$ by 
\cite[Lemma 2.24 (i)]{CMp^3}. 
Each connected component of $X \setminus Y$ 
is an open disk, and included in 
$U_{i'}^{\mathrm{u}}$ or $U_i^{\mathrm{u}}$ 
for $i \neq i'$ or an open annulus outside 
the underlying affinoids. 
This can be checked by applying 
\cite[Corollary 2.39]{CMp^3} to every closed disk in 
a connected component of $X \setminus Y$. 
Hence, 
$X \cap U_{i'}^{\mathrm{u}}$ 
is a Zariski subaffinoid of $X$. 
If $X \cap U_{i'}^{\mathrm{u}} \neq X$, 
then $U_{i'}^{\mathrm{u}}$ is connected to 
an open disk in $U_i^{\mathrm{u}}$ 
for $i \neq i'$ or in an open annulus outside 
the underlying affinoids. 
This is a contradiction. 
Therefore, we have $X \subset U_i^{\mathrm{u}}$. 
Then we obtain the claim by 
\cite[Lemma 2.24 (i)]{CMp^3}. 
\end{proof}

\begin{lem}\label{zerog}
Let $W$ be a wide open rigid curve over 
a finite extension of $\widehat{K}^{\mathrm{ur}}$ 
with a stable covering. 
Let $X$ be a subaffinoid space of $W$ such that 
$\overline{X}$ is a connected smooth curve 
with genus zero. 
Then there is a basic wide open subspace of 
$W$ such that its underlying affinoid is 
$X$. 
\end{lem}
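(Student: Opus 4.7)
The plan is to mirror the strategy of Proposition \ref{posg}: first show that $X$ is a Zariski subaffinoid of some member $U_{i_0}^{\mathrm{u}}$ of the given stable covering $\{(U_i, U_i^{\mathrm{u}})\}_{i \in I}$, and then construct the required basic wide open by attaching an open annular residue class at each ``end'' of $\overline{X}$.

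First I would repeat essentially verbatim the first paragraph of the proof of Proposition \ref{posg}: assume for contradiction that $X \cap U_i^{\mathrm{u}}$ is contained in a finite union of residue classes of $X$ for every $i$. Then some Zariski subaffinoid of $X$ lies inside one of the open annuli $U_i \setminus U_i^{\mathrm{u}}$ of the covering. But an open annulus of the semi-stable model has one-point reduction (the corresponding node), so it cannot contain a Zariski subaffinoid whose reduction is a nonempty open of the one-dimensional smooth curve $\overline{X}$. Hence there exists $i_0 \in I$ such that $X \cap U_{i_0}^{\mathrm{u}}$ is not contained in any finite union of residue classes of $X$. The argument of the second paragraph of the proof of Proposition \ref{posg}, together with \cite[Lemma 2.24 (i)]{CMp^3}, then yields that $X$ is a Zariski subaffinoid of $U_{i_0}^{\mathrm{u}}$. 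Note that the positive-genus hypothesis was only used to get the initial contradiction, and the dimension-based replacement above works equally well in our setting.

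Having established this, I identify $\overline{X}$ with a Zariski open subset of $\overline{U_{i_0}^{\mathrm{u}}}$, which is a smooth connected component of the stable reduction of $W$. The complement $\overline{X}^{\mathrm{c}} \setminus \overline{X}$ is a finite set of points. Each such point is either (i) a point of $\overline{U_{i_0}^{\mathrm{u}}} \setminus \overline{X}$, or (ii) one of the nodal points of $\overline{U_{i_0}^{\mathrm{u}}}^{\mathrm{c}} \setminus \overline{U_{i_0}^{\mathrm{u}}}$ at which $U_{i_0}$ is glued to another member of the covering through an open annulus. In case (i), the preimage under the reduction map $U_{i_0}^{\mathrm{u}} \to \overline{U_{i_0}^{\mathrm{u}}}$ of a small punctured neighborhood of the point is an open annulus contained in $U_{i_0}^{\mathrm{u}}$, and it attaches to $X$ at that end. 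In case (ii), the corresponding piece of $U_{i_0} \setminus U_{i_0}^{\mathrm{u}}$ is an open annulus by the definition of the basic wide open $U_{i_0}$, and this serves as the annulus at that end. Taking the union of $X$ with these finitely many open annuli produces a wide open subspace of $W$ that is basic and has underlying affinoid exactly $X$.

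The main obstacle is the first step, namely verifying that $X$ fits as a Zariski subaffinoid of a single $U_{i_0}^{\mathrm{u}}$ when $\overline{X}$ has genus zero; everything else is a routine construction from the local structure of the stable covering. The key point is that even without positive genus, the one-dimensionality and connectedness of $\overline{X}$ together prevent a Zariski open of $\overline{X}$ from being absorbed into any of the nodal annuli of the covering.
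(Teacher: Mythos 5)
There is a genuine gap in your proof, and it is precisely in the step you flag as ``the main obstacle.'' You try to rule out by contradiction the case that $X \cap U_i^{\mathrm{u}}$ is contained in a finite union of residue classes for every $i$, claiming that an open annulus of the covering ``has one-point reduction (the corresponding node), so it cannot contain a Zariski subaffinoid whose reduction is a nonempty open of the one-dimensional smooth curve $\overline{X}$.'' This conflates the reduction of the annulus as a piece of the semi-stable formal model (which is indeed the node) with the canonical reduction $\overline{X}$ of a subaffinoid $X$ sitting inside that annulus. These are different things. A closed subannulus of width zero (a ``circle'') inside an open annulus is an affinoid whose canonical reduction is $\mathbb{G}_m$: connected, smooth, one-dimensional, and genus zero. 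So the genus-zero hypothesis does not exclude this configuration, and your contradiction fails. This is exactly why the paper's proof of Lemma~\ref{zerog} does not try to eliminate this case — it observes instead that, when $X$ does lie in an open subannulus of $W$, the claim holds trivially (one just takes a slightly larger open annulus as the basic wide open), and then separately treats the case where some $i'$ exists.

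Additionally, even when such an $i'$ exists and one concludes $X \subset U_{i'}^{\mathrm{u}}$, the paper must face a second case you omit: the composite $\overline{X} \to \overline{U_{i'}^{\mathrm{u}}}$ may collapse $\overline{X}$ to a single point, meaning $X$ lies inside a residue-class open disk of $U_{i'}^{\mathrm{u}}$. In that case $X$ is not a Zariski subaffinoid of $U_{i'}^{\mathrm{u}}$, but the conclusion still holds because a subaffinoid of an open disk with smooth connected reduction is itself underlying a basic wide open. Again, this is possible precisely because $\overline{X}$ has genus zero; the positive-genus hypothesis in Proposition~\ref{posg} is what rules it out there. Your proof silently assumes that having a positive-dimensional $\overline{X}$ suffices to avoid both the annulus and the disk scenarios, but it does not. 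The correct structure, as in the paper, is a trichotomy: $X$ inside an open annulus, $X$ inside an open disk, or $X$ a Zariski subaffinoid of some $U_{i'}^{\mathrm{u}}$, and in each case one constructs the required basic wide open directly rather than deriving a contradiction.
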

\begin{proof}
We note that 
we have the claim if 
$X$ appears in an open subannulus of $W$. 
Let $\{(U_i,U_i^{\mathrm{u}} )\}_{i \in I}$ be 
the stable covering of $W$. 

First, we consider the case where 
$X \cap U_i^{\mathrm{u}}$ is contained in 
a finite union of residue classes of $X$ 
for any $i \in I$. 
Then a Zariski subaffinoid of $X$ 
appears in an open annulus. 
Further, $X$ itself appears in the open annulus, 
because $X$ is connected. 
Hence, we have the claim in this case. 

Therefore, we may assume that 
there exists $i' \in I$ such that 
$X \cap U_{i'}^{\mathrm{u}}$ is not contained in 
any finite union of residue class of $X$. 
We fix such $i'$. 
By the same argument as in the proof of 
Proposition \ref{posg}, 
we have $X \subset U_i^{\mathrm{u}}$. 
If the image of the induced map 
$\overline{X} \to \overline{U}_i^{\mathrm{u}}$ is 
one point, 
we have the claim because $X$ appears in an open disk. 
Otherwise, 
$X$ is a Zariski subaffinoid of $U_i^{\mathrm{u}}$, 
and we have the claim. 
\end{proof}

We consider the natural level-lowering map 
\[
 \pi_f \colon \mathbf{X}_1(\mathfrak{p}^3) 
 \to \mathbf{X}_1(\mathfrak{p}^2) ; \ 
 (u,X_3) \mapsto (u,X_2). 
\]

\begin{lem}\label{nball}
The connected components of 
$\mathbf{W}_{1,2'}$, $\mathbf{W}_{1,3'}$, 
$\mathbf{W}_{2,1'}$ and 
$\mathbf{W}_{4,1'} \cup 
 \mathbf{W}_{5,1'} \cup \mathbf{W}_{6,1'}$ 
are not open balls. 
\end{lem}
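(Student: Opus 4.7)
The plan is to use the level-lowering map $\pi_f \colon \mathbf{X}_1(\mathfrak{p}^3) \to \mathbf{X}_1(\mathfrak{p}^2)$ together with Proposition \ref{ltwo}, which describes $\mathbf{W}_0$, $\mathbf{W}_{k^{\times}}$ and $\mathbf{W}_{\infty}$ inside $\mathbf{X}_1(\mathfrak{p}^2)$ as (disjoint unions of) open annuli.

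First, one compares the valuation conditions \eqref{nnum} and \eqref{qqqq'1} with those in the statement of Proposition \ref{ltwo}. Counting roots of $[\varpi]_{\mathrm{u}}(X_3)-X_2=0$ by means of Lemma \ref{all}, it follows that $\pi_f$ restricts to a finite surjective morphism of degree $q^2$ from $\mathbf{W}_{2,1'}$ onto $\mathbf{W}_{k^{\times}}$, and similarly from $\mathbf{W}_{4,1'} \cup \mathbf{W}_{5,1'} \cup \mathbf{W}_{6,1'}$ onto $\mathbf{W}_{\infty}$. For $\mathbf{W}_{1,2'}$ and $\mathbf{W}_{1,3'}$, the image lies in the open sub-annulus $A = \mathbf{W}_0 \cap \{ v(u) < 1/(q(q+1)) \}$; moreover, among the $q^2$ preimages of any point of $A$, the subcase $2'$ accounts for $q$ of them and the subcase $3'$ for $q^2-q$, so these restrictions of $\pi_f$ are finite surjective of degrees $q$ and $q(q-1)$ respectively onto $A$.

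Let $C$ be a connected component of one of the four subspaces in the statement, and let $A$ denote the open annulus onto which the corresponding restriction of $\pi_f$ surjects. Since $C$ is one-dimensional and the restriction of $\pi_f$ is finite, the image $\pi_f(C)$ is a one-dimensional closed subspace of the irreducible curve $A$, hence equals $A$. Thus $\pi_f|_C \colon C \to A$ is finite surjective of some positive degree $d$ dividing $q$, $q(q-1)$ or $q^2$; in particular $d$ divides a power of $p$.

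Suppose for contradiction that $C$ is an open ball. Then $H^1(C_{\mathbf{C}},\overline{\mathbb{Q}}_{\ell}) = 0$, so the pullback $\pi_f^{\ast} \colon H^1(A_{\mathbf{C}},\overline{\mathbb{Q}}_{\ell}) \to H^1(C_{\mathbf{C}},\overline{\mathbb{Q}}_{\ell})$ vanishes. Composing with the trace map $\pi_{f,\ast}$ for the finite morphism $\pi_f|_C$ yields $d \cdot \mathrm{id} = \pi_{f,\ast} \circ \pi_f^{\ast} = 0$ on $H^1(A_{\mathbf{C}},\overline{\mathbb{Q}}_{\ell}) \simeq \overline{\mathbb{Q}}_{\ell}$. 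Since $\ell \neq p$, the integer $d$ is a unit in $\overline{\mathbb{Q}}_{\ell}$; combined with $H^1(A_{\mathbf{C}},\overline{\mathbb{Q}}_{\ell}) \neq 0$ this is a contradiction. Hence $C$ is not an open ball.

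The hard part is the bookkeeping for the degree computation in the cases $\mathbf{W}_{1,2'}$ and $\mathbf{W}_{1,3'}$: one has to verify that the subcases $2'$ and $3'$ of case $1$ really do account for all $q^2$ preimages of $[\varpi]_{\mathrm{u}}$ when $v(u) < 1/(q(q+1))$, by sorting the roots according to their valuations using Lemma \ref{all}. Once this is in place, the cohomological step at the end is formal.
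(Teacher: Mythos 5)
Your overall strategy matches the paper's: both identify $\mathbf{W}_{2,1'}$, $\mathbf{W}_{4,1'}\cup\mathbf{W}_{5,1'}\cup\mathbf{W}_{6,1'}$ and $\mathbf{W}_{1,2'}\cup\mathbf{W}_{1,3'}$ as the $\pi_f$-preimages of the annuli $\mathbf{W}_{k^{\times}}$, $\mathbf{W}_{\infty}$ and the subannulus $\mathbf{W}'_0$ of $\mathbf{W}_0$. The difference lies in how you then rule out open balls: the paper simply cites \cite[Lemma 1.4]{Colstmap}, a geometric statement about finite morphisms of rigid curves, whereas you give a self-contained cohomological argument using the trace map $\pi_{f,\ast}\circ\pi_f^{\ast}=d\cdot\mathrm{id}$ on $H^1$ of the annulus. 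Your route trades the black-box citation for a short computation that needs $H^1$ of an open ball to vanish, $H^1$ of an open annulus to be nonzero, and the projection formula for finite flat morphisms in rigid \'{e}tale cohomology; this is valid and perhaps more transparent as to \emph{why} the result holds.

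Two small corrections of imprecision in your write-up. First, the parenthetical ``in particular $d$ divides a power of $p$'' is both false (a divisor of $q(q-1)$ need not be a $p$-power since $q-1$ is prime to $p$) and irrelevant: $\overline{\mathbb{Q}}_{\ell}$ has characteristic zero, so \emph{every} nonzero integer is a unit there, and the phrase ``Since $\ell\neq p$'' adds nothing to this step. Second, the detailed Newton-polygon bookkeeping producing the exact degrees $q$, $q(q-1)$ and $q^2$ is not needed; you only need that $\pi_f$ restricted to a connected component is finite, flat, and surjects onto the connected annulus (finite flat is open and closed, so the image fills the connected target), giving \emph{some} positive degree $d$. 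A light rephrasing of the justification that the restriction is surjective -- open plus closed plus nonempty onto a connected target, rather than ``one-dimensional closed subspace of the irreducible curve'' -- would also make that step airtight.

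Finally, you should confirm (and it is true) that case $4'$ cannot occur over $\mathbf{W}'_0$, so that $\pi_f^{-1}(\mathbf{W}'_0)=\mathbf{W}_{1,2'}\cup\mathbf{W}_{1,3'}$; this is precisely why the subannulus is cut out by $v(u)<1/(q(q+1))$ rather than using all of $\mathbf{W}_0$.
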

\begin{proof}
Let $\mathbf{W}'_0$ be a subannulus 
of $\mathbf{W}_0$ defined by 
$v(u)<1/(q(q+1))$. 
Then we have 
$\pi_f^{-1}(\mathbf{W}_{k^{\times}}) =\mathbf{W}_{2,1'}$, 
$\pi_f^{-1}(\mathbf{W}_{\infty}) = 
 \mathbf{W}_{4,1'} \cup \mathbf{W}_{5,1'} \cup \mathbf{W}_{6,1'}$ 
and 
$\pi_f^{-1}(\mathbf{W}'_0)=\mathbf{W}_{1,2'} \cup \mathbf{W}_{1,3'}$. 
Hence we have the claim by 
Proposition \ref{ltwo} and 
\cite[Lemma 1.4]{Colstmap}. 
\end{proof}

The smooth projective curves 
$\overline{\mathbf{Y}}_{1,2}^{\mathrm{c}}$ and 
$\overline{\mathbf{Y}}_{2,1}^{\mathrm{c}}$ 
have defining equations 
$X^q Y-XY^q =Z^{q+1}$ determined by the 
equation in 
Proposition \ref{Y12}
and Proposition \ref{Y21}. 
The infinity points of 
$\overline{\mathbf{Y}}_{1,2}$ 
in $\mathbb{P}^2_k$ 
consist of 
$P_a^+ =(a,1,0)$ for $a \in k$ and 
$P_{\infty}^+ =(1,0,0)$. 
The infinity points of 
$\overline{\mathbf{Y}}_{2,1}$ 
consist of 
$P_a^- =(a,1,0)$ for $a \in k$ and 
$P_{\infty}^- =(1,0,0)$. 

For a wide open space $W$, 
let $e(W)$ be the number of the ends of $W$, and 
$g(W)$ be the genus of $W$ 
(cf.~\cite[p.~369 and p.~380]{CMp^3}). 
For a proper smooth curve $C$ over $k^{\mathrm{ac}}$, 
we write $g(C)$ for the genus of $C$. 

\begin{thm}\label{cov3}
The covering $\mathcal{C}_1(\mathfrak{p}^3)$ 
is a semi-stable covering of 
$\mathbf{X}_1(\mathfrak{p}^3)$ over some finite extension. 
\end{thm}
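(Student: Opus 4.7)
The plan is to verify that the pieces listed in $\mathcal{C}_1(\mathfrak{p}^3)$ form a semi-stable covering by showing (i) they cover $\mathbf{X}_1(\mathfrak{p}^3)$, (ii) each is a basic wide open with the underlying affinoid computed in Sections \ref{redY}--\ref{sec3}, and (iii) their pairwise intersections are disjoint unions of open annuli. I would combine the cohomological dimension bound from Lemma \ref{dimH} with the Zariski subaffinoid identification results Proposition \ref{posg} and Lemma \ref{zerog}.

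First I would invoke Proposition \ref{exssc} to pass to a finite extension of $\widehat{K}^{\mathrm{ur}}$ over which some stable covering $\mathcal{C}$ of $\mathbf{X}_1(\mathfrak{p}^3)$ exists. By Proposition \ref{posg}, each positive-genus affinoid $\mathbf{Y}_{1,2}$, $\mathbf{Y}_{2,1}$ and the $\mathbf{X}_{\zeta}$ (for odd $q$) or the $\mathbf{X}_{\zeta,\zeta'}$ (for even $q$) embeds as a Zariski subaffinoid of some underlying affinoid of $\mathcal{C}$; distinct affinoids give distinct components since their reductions are either non-isomorphic or indexed by different parameters visible in the reduction. The sum of their genera is
\[
 q(q-1)+(q-1)(q^2-1)=(q-1)(q^2+q-1)=q^3-2q+1
\]
in both parities, using $|\mathcal{S}_1|=2(q^2-1)$ copies of genus $(q-1)/2$ Artin--Schreier curves when $q$ is odd, and $(q^2-1)(q-1)$ supersingular elliptic curves when $q$ is even.

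Next I would match this against Lemma \ref{dimH}. Since $\mathbf{X}_1(\mathfrak{p}^3)$ is a connected wide open space, $\dim H^1_c = 2g+e-1$ where $g$ is the total arithmetic genus of the stable model and $e$ the number of ends. Lemma \ref{dimH} then gives $2g+e=2q^3-2q+2$, and combined with the lower bound $g\geq q^3-2q+1$ from the previous step this forces the dual graph to be a tree and all remaining underlying affinoids of $\mathcal{C}$ to have genus zero. I would then apply Lemma \ref{zerog} to $\mathbf{Z}_{1,1}$, and for even $q$ also to each $\mathbf{P}_\zeta$, using that their reductions are smooth affine curves with several punctures at infinity (the removed singular residue classes), so they cannot sit inside an open disk or open annulus; hence they too are Zariski subaffinoids of underlying affinoids of $\mathcal{C}$.

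Finally, the annular structure of the pairwise intersections follows: $\mathcal{D}_\zeta\setminus\mathbf{X}_\zeta$ is an open annulus by Proposition \ref{Xodd} (odd $q$), $\mathcal{D}_\zeta\setminus\mathbf{P}^0_\zeta$ and $\mathcal{D}_{\zeta,\zeta'}\setminus\mathbf{X}_{\zeta,\zeta'}$ are open annuli by Proposition \ref{Peven} and Proposition \ref{Xeven} (even $q$), while the transition regions $\mathbf{W}^{+}_{1,1'}=\mathbf{V}_1\cap\mathbf{U}$ and $\mathbf{W}^{-}_{1,1'}=\mathbf{V}_2\cap\mathbf{U}$, together with the $\mathbf{W}_{i,j'}$ lying inside $\mathbf{V}_1$ or $\mathbf{V}_2$ for $(i,j')\neq(3,1'),(1,4')$, decompose as disjoint unions of open annuli by the level-two analysis Proposition \ref{ltwo} combined with Lemma \ref{nball} applied via the level-lowering map $\pi_f$. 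The main obstacle will be this last step: carefully identifying how many connected open annuli arise in each $\mathbf{W}_{i,j'}$ inside $\mathbf{V}_1$ or $\mathbf{V}_2$ so that the end count comes out to exactly $e=2q$ and the dual graph is the tree exhibited in the introduction, matching the cohomological dimension precisely.
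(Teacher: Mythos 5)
Your outline is essentially the same as the paper's and the individual ingredients are all correctly identified (Proposition \ref{exssc}, Proposition \ref{posg}, Lemma \ref{dimH}, Lemma \ref{zerog}, Propositions \ref{Xodd}, \ref{Peven}, \ref{Xeven}, and the genus sum $q(q-1)+(q^2-1)(q-1)=q^3-2q+1$). What you have slightly out of order is the logic of the final step, which you flag as the ``main obstacle.'' Lemma \ref{nball} only tells you that the connected components of $\mathbf{W}_{1,2'}$, $\mathbf{W}_{1,3'}$, $\mathbf{W}_{2,1'}$ and $\mathbf{W}_{4,1'}\cup\mathbf{W}_{5,1'}\cup\mathbf{W}_{6,1'}$ are \emph{not} open balls; by itself it does not say they are open annuli, and trying to verify annularity directly is indeed a dead end. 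The paper avoids this by running a squeeze: each of $\overline{\mathbf{Y}}^{\mathrm{c}}_{1,2}$ and $\overline{\mathbf{Y}}^{\mathrm{c}}_{2,1}$ has exactly one infinity point ($P_0^{\pm}$) whose tube points inward toward $\mathbf{W}^{\pm}_{1,1'}$, so the remaining $q$ infinity points on each of them have tubes that feed into the non-ball regions singled out by Lemma \ref{nball}; since those regions are not balls they must contribute ends, hence $e\geq 2q$. Combined with Lemma \ref{dimH} (which gives $2g+e-1=2q^3-2q+1$ for the connected wide open $\mathbf{X}_1(\mathfrak{p}^3)$) this forces $g\leq q^3-2q+1$, which meets the lower bound you already computed. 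Equality of the bounds then \emph{implies} that $e=2q$ and that all of the $\mathbf{W}_{i,j'}$ in question decompose into open annuli; you do not need to count them in advance. After that, Lemma \ref{zerog} applied to $\mathbf{Z}_{1,1}$ (and to $\mathbf{P}_{\zeta}$ when $q$ is even) together with the equality in the genus bound pins down $\mathbf{V}_1$, $\mathbf{V}_2$ as basic wide opens with underlying affinoids $\mathbf{Y}_{1,2}$, $\mathbf{Y}_{2,1}$, and the annularity of the $\mathcal{D}_{\zeta}\setminus\mathbf{X}_{\zeta}$, $\mathcal{D}_{\zeta}\setminus\mathbf{P}^0_{\zeta}$, $\mathcal{D}_{\zeta,\zeta'}\setminus\mathbf{X}_{\zeta,\zeta'}$ (Propositions \ref{Xodd}, \ref{Peven}, \ref{Xeven}) completes the verification.
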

\begin{proof}
We consider the stable covering of 
$\mathbf{X}_1(\mathfrak{p}^3)_{\mathbf{C}}$ 
by Proposition \ref{exssc}. 
Then 
$\overline{\mathbf{Y}}_{1,2}^{\mathrm{c}}$ and 
$\overline{\mathbf{Y}}_{2,1}^{\mathrm{c}}$ 
appear in the stable reduction of 
$\mathbf{X}_1(\mathfrak{p}^3)_{\mathbf{C}}$ 
as irreducible components 
by Proposition \ref{posg}. 
The point $P_0^+$ is the unique 
infinity point of $\overline{\mathbf{Y}}_{1,2}$ 
whose tube is contained in $\mathbf{W}^{+}_{1,1'}$, 
because 
$v(X_3) > 1/(q^3(q^2-1))$ in $\mathbf{W}^{+}_{1,1'}$.
Similarly, 
$P_0^-$ is the unique 
infinity point of $\overline{\mathbf{Y}}_{2,1}$ 
whose tube is contained in $\mathbf{W}^{-}_{1,1'}$. 
Hence, 
we have 
$e(\mathbf{X}_1(\mathfrak{p}^3)_{\mathbf{C}}) 
 \geq 2q$ by Lemma \ref{nball}. 
Therefore, we have 
$g(\mathbf{X}_1(\mathfrak{p}^3)_{\mathbf{C}}) 
 \leq q^3 -2q +1$ by Lemma \ref{dimH}. 
On the other hand, 
we have 
\[
 g(\mathbf{X}_1(\mathfrak{p}^3)_{\mathbf{C}}) 
 \geq g(\overline{\mathbf{Y}}^{\mathrm{c}}_{1,2}) + 
 g(\overline{\mathbf{Y}}^{\mathrm{c}}_{2,1}) + 
 \begin{cases}
 \sum_{\zeta \in \mu_{2(q^2-1)}(k^{\mathrm{ac}})} 
 g(\overline{\mathbf{X}}^{\mathrm{c}}_{\zeta}) 
 & \textrm{if $q$ is odd,}\\ 
 \sum_{\zeta \in k_2^{\times},\, \zeta' \in k^{\times}} 
 g(\overline{\mathbf{X}}^{\mathrm{c}}_{\zeta,\zeta'}) 
 & \textrm{if $q$ is even,}
 \end{cases}
\]
where the summation on the right hand side is 
$q^3 -2q +1$ by Proposition \ref{Yact}, 
Proposition \ref{oddact} and 
Proposition \ref{evenact}. 
Then the affinoids 
$\mathbf{Y}_{1,2}$, $\mathbf{Y}_{2,1}$, 
$\mathbf{X}_{\zeta}$ for 
$\zeta \in \mu_{2(q^2-1)}(k^{\mathrm{ac}})$ and 
$\mathbf{X}_{\zeta,\zeta'}$ for 
$\zeta \in k_2^{\times}$ and $\zeta' \in k^{\times}$ 
are underlying affinoids of basic wide open spaces in 
the stable covering by 
Proposition \ref{posg} and 
Lemma \ref{nball}. 
Therefore, 
by the above genus inequalities, 
we see that 
$e(\mathbf{X}_1(\mathfrak{p}^3)_{\mathbf{C}}) 
 = 2q$ and 
the connected components of 
$\mathbf{W}_{1,2'}$, $\mathbf{W}_{1,3'}$, 
$\mathbf{W}_{2,1'}$ and 
$\mathbf{W}_{4,1'} \cup 
 \mathbf{W}_{5,1'} \cup \mathbf{W}_{6,1'}$ 
are open annuli. 

The connected components of 
$\mathbf{X}_1(\mathfrak{p}^3) \setminus 
 \mathbf{Z}_{1,1}^0$ are 
two wide open spaces, 
because each connected component is connected to 
$\mathbf{Z}_{1,1}^0$ at 
an open subannulus by Lemma \ref{zerog}. 
Then we see that 
these two wide open spaces are basic wide open spaces with 
underlying affinoids 
$\mathbf{Y}_{1,2}$ and $\mathbf{Y}_{2,1}$ 
by the above genus inequalities. 
Therefore we have the claim by 
Proposition \ref{Xodd}, Proposition \ref{Peven} and 
Proposition \ref{Xeven}. 
\end{proof}

\subsection{Structure of cohomology}
In this subsection, we study the action of 
$I_K \times \mathcal{O}_D ^{\times}$ on 
$\ell$-adic cohomology of $\mathbf{X}_1(\mathfrak{p}^3)$. 
We put 
\[
 (W_K \times D^{\times})^0 = 
 \{(\sigma,\varphi^{-r_{\sigma}} ) \in W_K \times D^{\times} \}. 
\]
Although it is possible to study the action of 
$(W_K \times D^{\times})^0$ 
using the result of Section \ref{actine}, 
here we study only the inertia action 
for simplicity. 
The result in this subsection is 
essentially used in \cite{ITreal3}. 

Let $\mathcal{X}_1(\mathfrak{p}^3)$ be the 
semi-stable formal scheme constructed from 
$\mathcal{C}_1 (\mathfrak{p}^3)$ by \cite[Theorem 3.5]{ITcrig}. 
The semi-stable reduction of 
$\mathcal{X}_1(\mathfrak{p}^3)$ means the underlying reduced scheme 
of $\mathcal{X}_1(\mathfrak{p}^3)$, which 
is denoted by 
$\mathcal{X}_1(\mathfrak{p}^3)_{k^{\mathrm{ac}}}$. 

\begin{lem}\label{connect}
The smooth projective curves 
$\overline{\mathbf{Y}}_{1,2}^{\mathrm{c}}$ and 
$\overline{\mathbf{Y}}_{2,1}^{\mathrm{c}}$ 
intersect with 
$\overline{\mathbf{Z}}_{1,1}^{\mathrm{c}}$ 
at $P_0^+$ and $P_0^-$ respectively 
in the stable reduction 
$\mathcal{X}_1(\mathfrak{p}^3)_{k^{\mathrm{ac}}}$. 
\end{lem}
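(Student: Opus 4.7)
The plan is to use the semi-stability established in Theorem \ref{cov3} together with the identification of the connecting annulus already isolated in that proof. In a semi-stable covering of a wide open curve, an intersection of two basic wide open members that is an open annulus specializes to a node of the associated stable reduction, and the two components meeting at the node are the smooth compactifications of the reductions of the two basic wide opens. So once we identify where on each component the annulus specializes, the lemma is essentially immediate.

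Concretely, I would first record that by the construction preceding Theorem \ref{cov3} we have $\mathbf{V}_1 \cap \mathbf{U} = \mathbf{W}^{+}_{1,1'}$ and $\mathbf{V}_2 \cap \mathbf{U} = \mathbf{W}^{-}_{1,1'}$, both of which are open annuli by Lemma \ref{nball} and the genus inequalities proven in the course of Theorem \ref{cov3}. Since $\mathbf{V}_1$, $\mathbf{V}_2$, $\mathbf{U}$ are basic wide opens whose underlying affinoids are (the spaces reducing to) $\mathbf{Y}_{1,2}$, $\mathbf{Y}_{2,1}$, and $\mathbf{Z}_{1,1}$, in $\mathcal{X}_1(\mathfrak{p}^3)_{k^{\mathrm{ac}}}$ the components $\overline{\mathbf{Y}}_{1,2}^{\mathrm{c}}$ and $\overline{\mathbf{Z}}_{1,1}^{\mathrm{c}}$ meet at the node obtained by contracting $\mathbf{W}^{+}_{1,1'}$, and similarly $\overline{\mathbf{Y}}_{2,1}^{\mathrm{c}}$ and $\overline{\mathbf{Z}}_{1,1}^{\mathrm{c}}$ meet at the node obtained by contracting $\mathbf{W}^{-}_{1,1'}$.

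It remains to identify these nodes on the $\mathbf{Y}$-side. In the proof of Theorem \ref{cov3} it was already observed that $P_0^+$ is the unique infinity point of $\overline{\mathbf{Y}}_{1,2}$ whose tube is contained in $\mathbf{W}^{+}_{1,1'}$, the key input being $v(X_3)>1/(q^3(q^2-1))$ throughout $\mathbf{W}^{+}_{1,1'}$ combined with the explicit expression \eqref{gk1} for the affine coordinates of $\overline{\mathbf{Y}}_{1,2}$. Consequently, the specialization of the open annulus $\mathbf{W}^{+}_{1,1'}$ toward $\mathbf{V}_1$ is the single residue class attached to $P_0^+$, so the node in question is $P_0^+$ on $\overline{\mathbf{Y}}_{1,2}^{\mathrm{c}}$. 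The same argument applied to $\mathbf{W}^{-}_{1,1'}$ identifies the node with $P_0^-$ on $\overline{\mathbf{Y}}_{2,1}^{\mathrm{c}}$.

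There is essentially no technical obstacle: the main content of the lemma is already absorbed into the case analysis of Subsection \ref{sec1} and the semi-stability argument of Theorem \ref{cov3}. The only mild point to verify cleanly, which I would do in passing, is that among the other strata $\mathbf{W}_{2,1'}$, $\mathbf{W}_{4,1'}$, $\mathbf{W}_{5,1'}$, $\mathbf{W}_{6,1'}$ building up $\mathbf{V}_1$ (respectively the corresponding strata in $\mathbf{V}_2$), the reduction $\bar{x}_3$ does not specialize to $0$, so that the tube of $P_0^+$ (resp.\ $P_0^-$) inside $\mathbf{V}_1$ (resp.\ $\mathbf{V}_2$) is indeed exhausted by $\mathbf{W}^{+}_{1,1'}$ (resp.\ $\mathbf{W}^{-}_{1,1'}$); this is an immediate valuation check using the conditions in \eqref{nnum} and \eqref{qqqq'1}.
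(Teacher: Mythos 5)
Your proposal is correct and is essentially the paper's own argument made explicit: the paper's proof of Lemma \ref{connect} simply cites the proof of Theorem \ref{cov3}, which already records that $\mathbf{W}^{+}_{1,1'}$ and $\mathbf{W}^{-}_{1,1'}$ are the connecting annuli between $\mathbf{U}$ (containing $\mathbf{Z}_{1,1}$) and $\mathbf{V}_1$, $\mathbf{V}_2$, and that $P_0^{+}$ (resp.\ $P_0^{-}$) is the unique infinity point of $\overline{\mathbf{Y}}_{1,2}$ (resp.\ $\overline{\mathbf{Y}}_{2,1}$) whose tube lies in $\mathbf{W}^{+}_{1,1'}$ (resp.\ $\mathbf{W}^{-}_{1,1'}$), using $v(X_3) > 1/(q^3(q^2-1))$ on $\mathbf{W}^{+}_{1,1'}$ together with \eqref{gk1}. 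You spell out the same chain of reasoning; nothing is missing.
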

\begin{proof}
We see this from the proof of 
Theorem \ref{cov3}. 
\end{proof}

Let $\Gamma$ be the graph defined by the following:
\begin{itemize}
\item 
The set of the vertices of $\Gamma$ consists of 
$P_0$, $P_{\infty}$, 
$P^+_{a}$ and 
$P^-_{a}$ 
for 
$a \in \mathbb{P}^1(k)\setminus\{0\}$. 
\item 
The set of the edges of $\Gamma$ consists of 
$P_0 P^+_{a}$, $P_0 P^-_{a}$, 
$P_{\infty} P^+_{a}$ and 
$P_{\infty} P^-_{a}$ for 
$a \in \mathbb{P}^1(k)\setminus\{0\}$. 
\end{itemize}
We note that 
$P^+_{a}$ and 
$P^-_{a}$ 
for 
$a \in \mathbb{P}^1(k)\setminus\{0\}$ 
are points of 
$\overline{\mathbf{Y}}_{1,2}^{\mathrm{c}}$ and 
$\overline{\mathbf{Y}}_{2,1}^{\mathrm{c}}$ 
that are not on 
$\overline{\mathbf{Z}}_{1,1}^{\mathrm{c}}$ 
by Lemma \ref{connect}. 
Let $H^1(\Gamma,\overline{\mathbb{Q}}_{\ell} )$ 
be the cohomology group of $\Gamma$ with coefficients in 
$\overline{\mathbb{Q}}_{\ell}$ (cf. \cite[Section 2]{ITcrig}). 
The group $I_K \times \mathcal{O}_D^{\times}$ 
acts on $P^+_{a}$ and $P^-_{a}$ 
for 
$a \in \mathbb{P}^1(k)\setminus\{0\}$ 
via the action on 
$\overline{\mathbf{Y}}_{1,2}^{\mathrm{c}}$ and 
$\overline{\mathbf{Y}}_{2,1}^{\mathrm{c}}$. 
Let $I_K \times \mathcal{O}_D^{\times}$ act on 
$P_0$ and $P_{\infty}$ trivially. 
By this action, we consider 
$H^1(\Gamma,\overline{\mathbb{Q}}_{\ell} )$ as a 
$\overline{\mathbb{Q}}_{\ell}[I_K \times \mathcal{O}_D^{\times}]$-module. 

\begin{thm}\label{cohstr}
We have an exact sequence 
\[
 0 \longrightarrow 
 H^1(\Gamma,\overline{\mathbb{Q}}_{\ell} )
 \longrightarrow 
 H^1_{\mathrm{c}} (\mathbf{X}_1(\mathfrak{p}^3)_{\mathbf{C}}, 
 \overline{\mathbb{Q}}_{\ell}) 
 \longrightarrow 
 H^1 (\mathcal{X}_1(\mathfrak{p}^3)_{k^{\mathrm{ac}}}, 
 \overline{\mathbb{Q}}_{\ell})^* (-1) 
 \longrightarrow 0
\]
as representations of $(W_K \times D^{\times})^0$. 
Further, as $(I_K \times \mathcal{O}_D^{\times})$-representations, 
$H^1 (\mathcal{X}_1(\mathfrak{p}^3)_{k^{\mathrm{ac}}}, 
 \overline{\mathbb{Q}}_{\ell})$ is isomorphic to 
\begin{align*}
 \bigoplus_{\tilde{\chi} \in 
 (k_2^{\times})^{\vee} 
 \backslash (k^{\times})^{\vee}} 
 \Pi_{\tilde{\chi}} \oplus 
 \begin{cases}
 \bigoplus_{\chi \in (k^{\times})^{\vee}} 
 \bigoplus_{\psi \in (k^{\vee} \backslash \{1\})/\!\sim} 
 \Pi_{\chi, \psi} \oplus \Pi'_{\chi, \psi}
 & \textrm{if $q$ is odd,}\\ 
 \bigoplus_{\zeta' \in k^{\times}} 
 \bigoplus_{\chi \in (k^{\times})^{\vee}} 
 \Pi_{\zeta', \chi} 
 & \textrm{if $q$ is even,}
 \end{cases}
\end{align*}
where we put 
$\Pi_{\tilde{\chi}} = (\tilde{\chi} \circ \lambda) 
 \otimes 
 (\tilde{\chi} \circ \kappa_1 \oplus 
 \tilde{\chi}^q \circ \kappa_1 )$, 
and 
$H^1(\Gamma,\overline{\mathbb{Q}}_{\ell} )$ is isomorphic to 
\[
 1 \oplus \bigoplus_{\chi \in (k^{\times})^{\vee}} 
 \bigl( (\chi \circ \lambda^{q+1}) \otimes 
 (\chi \circ \kappa_1^{q+1} ) \bigr)^{\oplus2}.
\]
\end{thm}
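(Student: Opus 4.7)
The plan combines three ingredients: the semi-stable formal model $\mathcal{X}_1(\mathfrak{p}^3)$ coming from $\mathcal{C}_1(\mathfrak{p}^3)$ (Theorem \ref{cov3} and \cite[Theorem 3.5]{ITcrig}); the component-level computations already carried out in Propositions \ref{Yact}, \ref{oddact}, and \ref{evenact}; and a direct graph computation.

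\textbf{Step 1 (the exact sequence).} The short exact sequence is the standard one attached to a semi-stable formal model via the weight / nearby-cycle spectral sequence, as developed in \cite{ITcrig}. The third term appears in Poincar\'e-dual form because of the smoothness of the one-dimensional irreducible components; by duality it is canonically isomorphic to $H^1$ of the special fibre. The first term is the graph-cohomology contribution, and its identification with the explicit graph $\Gamma$ defined in the statement rests on Lemma \ref{connect} (placement of the gluing points $P_0^\pm$) together with the observation that the dual graph of the stable reduction itself is a tree; this means the whole of $H^1(\Gamma)$ arises from the $2q$ ends of the wide open curve, packaged through the two auxiliary vertices $P_0$ and $P_\infty$. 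The $(W_K \times D^\times)^0$-equivariance is automatic from the functoriality of the construction.

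\textbf{Step 2 ($H^1$ of the special fibre).} Inspection of the dual graph drawn in the introduction shows that it is a tree, so $H^1(\mathcal{X}_1(\mathfrak{p}^3)_{k^{\mathrm{ac}}}, \overline{\mathbb{Q}}_\ell)$ splits as a direct sum of the $H^1$'s of the irreducible components. Only the positive-genus components $\overline{\mathbf{Y}}^{\mathrm{c}}_{1,2}$, $\overline{\mathbf{Y}}^{\mathrm{c}}_{2,1}$, and the $\overline{\mathbf{X}}^{\mathrm{c}}_\eta$ contribute, and the stated formula is then a direct compilation of Propositions \ref{Yact}, \ref{oddact}, and \ref{evenact}.

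\textbf{Step 3 ($H^1(\Gamma)$).} By Lemmas \ref{dYact} and \ref{iYact}, together with the fact that $\lambda(\sigma)^{q+1}, \kappa_1(d)^{q+1} \in k^\times$, the action of $I_K \times \mathcal{O}_D^\times$ on $\Gamma$ fixes $P_0, P_\infty$ and acts on each of $\{P_a^+\}$ and $\{P_a^-\}$ as the scaling action of $k^\times$ on $\mathbb{P}^1(k) \setminus \{0\} = k^\times \sqcup \{\infty\}$. The resulting permutation representations on the vertices and edges of $\Gamma$ thus decompose into the trivial character plus the regular representation of $k^\times$, twisted by $\lambda^{q+1}$ on the $I_K$-factor and by $\kappa_1^{q+1}$ on the $\mathcal{O}_D^\times$-factor. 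Since $\Gamma$ is connected, $H^0(\Gamma) \simeq \mathbf{1}$; taking the alternating sum $[H^1(\Gamma)] = [C^1(\Gamma)] - [C^0(\Gamma)] + [H^0(\Gamma)]$ in the Grothendieck group of $(I_K \times \mathcal{O}_D^\times)$-representations (valid since all occurring representations are semisimple sums of characters) yields the claimed decomposition.

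\textbf{Main obstacle.} The technical core is Step 1: extracting the correct short exact sequence in the \emph{wide open} (non-proper) setting from the semi-stable formalism of \cite{ITcrig}, and matching its first term with the concrete graph $\Gamma$. The matching must correctly incorporate both the dual graph of the closed stable reduction (which, being a tree, contributes nothing on its own) and the $2q$ ends of the wide open curve, which is precisely what the auxiliary vertex $P_\infty$ of $\Gamma$ encodes. Once this identification is in place, Steps 2 and 3 are routine applications of the previously established component-level and graph-theoretic computations.
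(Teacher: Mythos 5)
Your proposal follows the same route as the paper's proof: the exact sequence is obtained from \cite[Theorem 5.3]{ITcrig} together with Lemma \ref{connect} and Poincar\'e duality, the middle term's structure is read off from Propositions \ref{Yact}, \ref{oddact}, and \ref{evenact}, and $H^1(\Gamma)$ is computed via Lemmas \ref{dYact} and \ref{iYact} after observing that the action factors through $k^\times$. The only additions are explanatory (spelling out the tree structure of the dual graph and making the Euler-characteristic bookkeeping explicit), which the paper leaves implicit but which do not change the argument.
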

\begin{proof}
The existence of 
the exact sequence follows from 
\cite[Theorem 5.3]{ITcrig} and Lemma \ref{connect} 
using Poincar\'{e} duality 
(cf. \cite[Proposition 5.9.2]{FaCohLLC}). 
We know the structure of 
$H^1 (\mathcal{X}_1(\mathfrak{p}^3)_{k^{\mathrm{ac}}}, 
 \overline{\mathbb{Q}}_{\ell})$ 
by Proposition \ref{Yact}, 
Proposition \ref{oddact} and 
Proposition \ref{evenact}. 

We study the structure of 
$H^1(\Gamma,\overline{\mathbb{Q}}_{\ell} )$. 
By Lemma \ref{dYact} and 
Lemma \ref{iYact}, 
the action of $I_K \times \mathcal{O}_D^{\times}$ 
on $H^1(\Gamma,\overline{\mathbb{Q}}_{\ell} )$ 
factors through $k^{\times}$. 
We can check that 
\[
 H^1(\Gamma,\overline{\mathbb{Q}}_{\ell} ) \simeq 
 1 \oplus \bigoplus_{\chi \in (k^{\times})^{\vee}} 
 \chi^{\oplus 2} 
\]
as representations of $k^{\times}$. 
Hence, the claim follows from 
Lemma \ref{dYact} and Lemma \ref{iYact}. 
\end{proof}

\noindent
Naoki Imai\\ 
Graduate School of Mathematical Sciences, 
the University of Tokyo, 3-8-1 Komaba, Meguro-ku, 
Tokyo 153-8914, Japan\\ 
naoki@ms.u-tokyo.ac.jp\\ 

\noindent
Takahiro Tsushima\\ 
Department of Mathematics and Informatics, 
Faculty of Science, Chiba University, 
1-33 Yayoi-cho, Inage, Chiba, 263-8522, Japan\\
tsushima@math.s.chiba-u.ac.jp

\end{document}